\newtheorem{theorem}{Theorem}
\newtheorem{corollary}[theorem]{Corollary}
\newtheorem{lemma}[theorem]{Lemma}
\theoremstyle{definition}
\newtheorem{definition}[theorem]{Definition}
\newtheorem{remark}[theorem]{Remark}
\newtheorem{example}[theorem]{Example}
\DeclareMathOperator{\id}{id}
\DeclareMathOperator{\Hom}{Hom}
\newcommand{\lab}{\left\langle}
\newcommand{\rab}{\right\rangle}
\def\inv{{\hspace{0.5pt}\text{-} \hspace{-0.5pt}1}} %%
\newcommand{\Rarrow}[1]{%                                                   %%
    \begin{aligned}%                                                        %%
	    \begin{tikzpicture}%                                                %%
		    \node [scale=0.9] at (0.5,0.75) {\ensuremath{#1}};%             %%
		    \draw [double equal sign distance, -Implies] (0,0.5) -- (1,0.5);%%
	    \end{tikzpicture}%                                                  %%
	\end{aligned}%                                                          %%
		                }%                                                  %%
\newcommand{\RLarrow}[2]{%                                                  %%
    \begin{aligned}%                                                        %%
	    \begin{tikzpicture}%                                                %%
		    \node [scale=0.9] at (0.5,0.75) {\ensuremath{#1}};%             %%
		    \draw [double equal sign distance, -Implies] (0,0.5) -- (1,0.5);%%
		    \draw [double equal sign distance, -Implies] (1,0) -- (0,0);%   %%
		    \node [scale=0.9] at (0.5,-0.35) {\ensuremath{#2}};%            %%
	    \end{tikzpicture}%                                                  %%
	\end{aligned}%                                                          %%
		                }%                                                  %%
\def\quotient#1#2{%                                 %%
    \raise1ex\hbox{$#1$}\Big/\lower1ex\hbox{$#2$}   %%
}                                                   %%
\def\slashedarrowfill@#1#2#3#4#5{%                                      %%
  $\m@th\thickmuskip0mu\medmuskip\thickmuskip\thinmuskip\thickmuskip    %%
   \relax#5#1\mkern-7mu%                                                %%
   \cleaders\hbox{$#5\mkern-2mu#2\mkern-2mu$}\hfill                     %%
   \mathclap{#3}\mathclap{#2}%                                          %%
   \cleaders\hbox{$#5\mkern-2mu#2\mkern-2mu$}\hfill                     %%
   \mkern-7mu#4$%                                                       %%
}                                                                       %%
\def\rightslashedarrowfill@{%                                           %% 
  \slashedarrowfill@\relbar\relbar\mapstochar\rightarrow}               %%
\newcommand\xslashedrightarrow[2][]{%                                   %%
  \ext@arrow 0055{\rightslashedarrowfill@}{#1}{#2}}                     %%
\newcommand{\profarrow}{\xslashedrightarrow{}}     						%%
\tikzset{tiny label/.style={%                                           %%
        shape=circle,%                                                  %%
        text=black,%                                                    %%
        draw=black,%                                                    %% 
        line width=1,%                                                  %%
        fill=white,%                                                    %%
        inner sep=0.05cm,%                                              %%
        minimum width=0.25cm,%                                          %%
        font=\small}}%                                                  %%
\tikzset{tiny red label/.style={%                                       %%
        shape=circle,%                                                  %%
        text=red,%                                                      %%
        draw=red,%                                                      %% 
        line width=0.5,%                                                %%
        fill=white,%                                                    %%
        inner sep=0.05cm,%                                              %%
        minimum width=0.25cm,%                                          %%
        font=\small}}%                                                  %%
\newenvironment{tz}{\begin{aligned} \begin{tikzpicture}}{\end{tikzpicture} \end{aligned}}
\newcommand{\CC}{\ensuremath{\mathbb{C}}}
\newcommand{\cC}{\mathcal{C}}
\newcommand{\cE}{\mathcal{E}}
\newcommand{\cD}{\mathcal{D}}
\newcommand{\interchangor}{\varphi}
\newcommand{\phileft}{\phi_l}
\newcommand{\phiright}{\phi_r}
\tikzset{double arrow scope/.style={every path/.style={double equal sign distance, -Implies}}}
\DeclareMathOperator{\Aut}{Aut}
\DeclareMathOperator{\II}{II}
\DeclareMathOperator{\Curves}{Curves}
\DeclareMathOperator{\Dim}{Dim}
\newcommand{\BV}{\ensuremath{\text{BV}(S^1)}}
\colorlet{LightGray}{black!15}
\colorlet{LightRed}{red!15}
\colorlet{LightBlue}{blue!30}
\colorlet{LightOrange}{orange!15}
\colorlet{DarkGreen}{green!50!black!50!}
\colorlet{LightGreen}{green!45}
\colorlet{VLG}{black!5}
\colorlet{VLB}{black!10}
\newcommand{\BlackDot}{%
\begin{tikzpicture}[scale=0.75]
\draw [fill] (0,0) circle (1.5pt);
\end{tikzpicture}%
}
\newcommand{\Bord}{\ensuremath{\mathbf{Bord}^{or}_{123}}}
\newcommand{\Bordn}{\ensuremath{\text{Bord}^{or}_{n}}}
\newcommand{\Prof}{\ensuremath{\mathbf{Prof}}}
\newcommand{\Vect}{\ensuremath{\text{Vect}}}
\newcommand{\TVect}{\ensuremath{\mathbf{2Vect}}}
\newcommand{\KV}{\ensuremath{\mathbf{KV}}}
\newcommand{\sgn}{\ensuremath{\Sigma_{g,n}}}
\newcommand{\scup}{\ensuremath{\Sigma_{0,1}}}
\newcommand{\spants}{\ensuremath{\Sigma_{0,3}}}
\newcommand{\shandle}{\ensuremath{\Sigma_{1,2}}}
\tikzset{bot=true}
\newcommand{\cobscale}{1}
\newcommand{\gscalecobordisms}[1]{\renewcommand{\cobscale}{#1}}
\tikzset{%                                           %%
    partial ellipse/.style args={#1:#2:#3}{%         %%
        insert path={+ (#1:#3) arc (#1:#2:#3)}%      %%
    }%                                               %%
}%                                                   %%
\def\gpdashlength{0.4*\pgflinewidth}                          %%
\tikzset{                                                     %%
         gp path/.style={dash pattern=on 7.5*\gpdashlength    %%
                                     off 7.5*\gpdashlength}}  %%
\newcommand{\tinypants}{\hspace{0.2mm}\raisebox{-0.75mm}{\ensuremath{  %% 
\begin{tikzpicture}[scale=0.1]                                         %%
		\draw (2.5,3) ellipse (1cm and 0.25cm);                        %%
		\draw [gp path] (1,0) [partial ellipse=0:180:1cm and 0.25cm];  %%
		\draw (1,0) [partial ellipse=180:360:1cm and 0.25cm];          %%
		\draw [gp path] (4,0) [partial ellipse=0:180:1cm and 0.25cm];  %%
		\draw (4,0) [partial ellipse=180:360:1cm and 0.25cm];          %%
		\draw (0,0) to [out=90, in=270, looseness=1] (1.5,3);          %%
		\draw (5,0) to [out=90, in=270, looseness=1] (3.5,3);          %%
		\draw (2,0) to [out=90, in=90, looseness=2.5] (3,0);           %%
\end{tikzpicture}                                                      %%
}                                                                      %%
}\hspace{-1mm}                                                         %%
}                                                                      %%
\newcommand{\tinycopants}{\hspace{0.2mm}\raisebox{-0.75mm}{\ensuremath{%% 
\begin{tikzpicture}[scale=0.1]                                         %%  
		\draw (1,3) ellipse (1cm and 0.25cm);                          %%
		\draw (4,3) ellipse (1cm and 0.25cm);                          %%
		\draw [gp path] (2.5,0) [partial ellipse=0:180:1cm and 0.25cm];%%
		\draw (2.5,0) [partial ellipse=180:360:1cm and 0.25cm];        %%
		\draw (1.5,0) to [out=90, in=270, looseness=1] (0,3);          %%
		\draw (3.5,0) to [out=90, in=270, looseness=1] (5,3);          %%
		\draw (2,3) to [out=270, in=270, looseness=2.5] (3,3);         %%
\end{tikzpicture}                                                      %%
}                                                                      %%
}\hspace{-1mm}                                                         %%
}                                                                      %%
\newcommand{\tinycap}{\hspace{0.2mm}\raisebox{-0.75mm}{\ensuremath{    %% 
\begin{tikzpicture}[scale=0.18]                                        %%
		\draw (0,0) to [out=90, in=90, looseness=3] (2,0);             %%
		\draw [gp path] (1,0) [partial ellipse=0:180:1cm and 0.25cm];  %%
		\draw (1,0) [partial ellipse=180:360:1cm and 0.25cm];          %%
\end{tikzpicture}                                                      %%
}                                                                      %%
}\hspace{-1mm}                                                         %%
}                                                                      %%
\newcommand{\tinycup}{\hspace{0.2mm}\raisebox{-1.9mm}{\ensuremath{     %% 
\begin{tikzpicture}[scale=0.18]                                        %%
		\draw (0,3) to [out=270, in=270, looseness=3] (2,3);           %%
		\draw (1,3) ellipse (1cm and 0.25cm);                          %%
\end{tikzpicture}                                                      %%
}                                                                      %%
}\hspace{-1mm}                                                         %%
}                                                                      %%
\newcommand{\tinycyl}{\hspace{0.2mm}\raisebox{-0.9mm}{\ensuremath{     %% 
\begin{tikzpicture}[scale=0.1]                                         %%
		\draw (1,3) ellipse (1cm and 0.25cm);                          %%
		\draw (0,3) -- (0,0);                                          %%
		\draw (2,3) -- (2,0);                                          %%
		\draw [gp path] (1,0) [partial ellipse=0:180:1cm and 0.25cm];  %%
		\draw (1,0) [partial ellipse=180:360:1cm and 0.25cm];          %%
\end{tikzpicture}                                                      %%
}                                                                      %%
}\hspace{-1mm}                                                         %%
}                                                                      %%
\newcommand{\cylwithtopcup}{%                                                       %%
\begin{tikzpicture}[scale=\cobscale]                                                %%
		\draw (1,3) ellipse (1cm and 0.25cm);                                       %%
		\draw (0,3) -- (0,0);                                                       %%
		\draw (2,3) -- (2,0);                                                       %%
		\draw [gp path] (1,0) [partial ellipse=0:180:1cm and 0.25cm];               %%
		\draw (1,0) [partial ellipse=180:360:1cm and 0.25cm];                       %%
		\draw [blue, thick] (0.65,2.75) to [out=270,in=270,looseness=2] (1.35,2.75);       %%
\end{tikzpicture}                                                                   %%
}                                                                                   %%
\begin{document}

\pagestyle{plain}

%%%%%%%%%%%%%%%%%%%%%%
%%                  %%
%% Document Begins! %%
%%                  %%
%%%%%%%%%%%%%%%%%%%%%%

\title{Extended TQFTs via Generators and Relations I: The Extended Toric Code.}

\author{Bruce Bartlett}
\address{Mathematics Division, Stellenbosch University, South Africa}
\email{bbartlett@sun.ac.za}

\author{Gerrit Goosen}
\address{Department of Mathematics and Applied Mathematics, School of Mathematical and Statistical Sciences,
North-West University, Potchefstroom, South Africa}
\email{Gerrit.Goosen@nwu.ac.za}

\thanks{This work was partially supported by NRF South Africa.}
\subjclass[2010]{Primary 57R56, Secondary 57M27}
\keywords{Topological Quantum Field Theory, Quantum Topology, String-net}

\begin{abstract}
In his PhD thesis \cite{goosen2018oriented}, Goosen combined the string-net and the generators-and-relations formalisms for arbitrary once-extended 3-dimensional TQFTs. In this paper we work this out in detail for the simplest non-trivial example, where the underlying spherical fusion category is the category of $\mathbb{Z}/2\mathbb{Z}$-graded vector spaces. This allows us to give an elementary string-net description of the linear maps associated to 3-dimensional bordisms. The string-net formalism also simplifies the description of the mapping class group action in the resulting TQFT. We conclude the paper by performing some example calculations from this viewpoint.

\end{abstract}

\maketitle

\section{Introduction}

% !TEX root = ../main.tex

The purpose of this paper is to explain the string-net description of the extended Turaev-Viro topological quantum field theory from \cite{goosen2018oriented} in the simplest non-trivial case, namely the {\em extended toric code}. 

\subsection*{Topological quantum field theories}

In mathematics, an $n$-dimensional {\em topological quantum field theory} (TQFT) is defined (following Atiyah and Segal) as a representation of the $n$-dimensional bordism category $\Bordn$. That is, an $n$-dimensional topological quantum field theory is a symmetric monoidal functor
\begin{equation} \label{TQFT_functor}
 Z : \Bordn \rightarrow \Vect
\end{equation}
where $\Bordn$ is the symmetric monoidal category whose objects are closed oriented $(n-1)$-manifolds and whose morphisms are equivalence classes of $n$-dimensional cobordisms between them, and $\Vect$ is the symmetric monoidal category  of finite-dimensional vector spaces and linear maps. 

Two-dimensional TQFTs are classified by commutative Frobenius algebras, while the study of three-dimensional TQFTs is particularly rich, bringing together various topics in low-dimensional topology and representation theory such as the Jones polynomial, Chern-Simons gauge theory, loop group representations, quantum groups, and affine Lie algebras. 

\subsection*{Extended topological quantum field theories}

A $n$-dimensional TQFT defined as in \eqref{TQFT_functor} is only required to respect gluing along $(n-1)$-dimensional boundaries. A TQFT which additionally respects gluing along $(n-2)$-dimensional boundaries (and possibly also higher codimensional boundaries) is called an {\em extended TQFT}. Different authors have defined extended TQFTs in slightly different ways. One common approach is to express the extra gluing laws in the language of higher categories, which is the approach we adopt in this paper. Specifically, by an {\em extended 3-dimensional TQFT} (also known as a 1-2-3 TQFT) we will mean a symmetric monoidal bifunctor
\begin{equation} \label{extendedTQFTdefn}
 Z : \Bord \rightarrow \Prof.
\end{equation}
Here $\Bord$ is the symmetric monoidal bicategory whose objects are closed oriented $1$-dimensional manifolds, whose $1$-morphisms are $2$-dimensional compact oriented cobordisms, and whose 2-morphisms are diffeomorphism classes of 3-dimensio\-nal compact oriented cobordisms (i.e. $3$-manifolds with corners, see \cite{CSPthesis}), while $\Prof$ is the symmetric monoidal bicategory whose objects are linear categories, morphisms are profunctors  and 2-morphisms are natural transformations (see Section \ref{modular_presentation_section}).  Note that $\Prof$ is equivalent to the bicategory $\TVect$ of 2-vector spaces (see Lemma \ref{Profequals2Vect}), which is more commonly used as the target bicategory for extended TQFTs. Using $\Prof$ as target is more convenient in our setting as it allows the graphical calculus to employ only string-nets labelled by the original spherical fusion category $A$, thereby avoiding Cauchy completion and string-nets labelled by the Drinfeld double $Z(A)$ as in \cite{kirillov2011string} (see Remark \ref{prof_remark}).

\subsection*{The Turaev-Viro model}

For every {\em spherical fusion category} $C$ (a finitely semimsimple tensor category with coherent duals, see \cite{ENO}), there is an associated 3-dimensional TQFT $Z^{TV}_C$ called the {\em Turaev-Viro} model. (In fact, Turaev and Viro only constructed it for the special case where $C = \text{Rep}\, U_q sl_2(\mathbb{C})$ at a root of unity $q$, while Barrett and Westbury generalized the construction to an arbitrary spherical fusion category $C$). The vector space $Z^{TV}_C(\Sigma)$ associated to closed surface $\Sigma$ in the  model is defined by first choosing a triangulation $\Delta$ for $\Sigma$, which gives rise to a vector space $W_{\Delta}$ and a projector $P$ acting on $W_{\Delta}$. The vector space $Z^{TV}_C(\Sigma)$ is the image of this projector, and is independent of the triangulation $\Delta$. In particular, when $G$ is a finite group, then we can take $C = \text{Vect}[G]$, the category of $G$-graded vector spaces. The resulting 3-dimensional TQFT $Z^{TV}_{\text{Vect}[G]}$ is known to be isomorphic to the Dijkgraaf-Witten model (which builds a TQFT by counting principal $G$-bundles). 

\subsection*{The extended Turaev-Viro model}

In a series of papers \cite{balsamphd, BalKirI, BalII, BalIII}, as an important part of their proof that the Turaev-Viro model associated to a spherical fusion category $C$ is equivalent to the Reshetikhin-Turaev model associated to the Drinfeld centre $Z(C)$ (a different proof is given in \cite{turaev2010two}), Balsam and Kirillov gave a definition of the Turaev-Viro model as an {\em extended} TQFT. Note that their definition of an extended TQFT is not expressed in the language of higher categories and cannot be directly compared with Definition \ref{extendedTQFTdefn}, but it is nevertheless very similar.

\subsection*{The toric code}
The {\em toric code} is a lattice model for fault-tolerant topological quantum computation introduced by Kitaev \cite{Kit03}. Given a closed surface $\Sigma$ with a triangulation $\Delta$, one can define a Hamiltonian (a sum of mutually commuting projectors) acting on a large vector space $\mathcal{H}(\Sigma, \Delta)$ whose ground state space $V(\Sigma)$ (lowest eigenvalue eigenspace) is the `topologically protected subspace', which is resistant to errors and does not depend on the triangulation $\Delta$. Levin and Wen showed \cite{LW05} that this ground state space $V(\Sigma)$ of the toric code can be interpreted as the TQFT vector space $Z(\Sigma)$ where $Z$ is the 3-dimensional Turaev-Viro TQFT coming from the finite group $\mathbb{Z}/2\mathbb{Z}$. Moreover, they showed that $V(\Sigma)$ can be defined elegantly without the construction of a lattice, in the language of {\em string-nets}.

\subsection*{Kirillov's string-net model}

This relationship between the Kitaev lattice model, the Levin-Wen string-net model, and the Turaev-Viro TQFT associated to $\mathbb{Z}/2\mathbb{Z}$, was explored further by Kirillov in \cite{Kir11} (see also \cite{balsam2012kitaev}). In this paper a careful mathematical treatment of the Levin-Wen string-net formulation of the model was given, for an arbitrary spherical fusion category $C$, and its relationship to the Turaev-Viro TQFT vector spaces explored. In particular, Kirillov showed how to formulate the string-net model on a {\em surface with boundary} by imposing {\em boundary conditions} on the string-nets.  Moreover, he showed that the resulting vector space of string-nets on a surface with boundary is isomorphic to the vector spaces in the extended Turaev-Viro model, as defined in \cite{BalIII}. 

\subsection*{The action of cobordisms on string-nets} 

Although \cite{Kir11} described the vector spaces of string-nets associated to closed surfaces as well as surfaces with boundary, it did not describe how the linear maps associated to 3-dimensional cobordisms act on these vector spaces.   This is the problem that was investigated by Goosen \cite{goosen2018oriented} in his PhD thesis. By using the generators-and-relations description of a 1-2-3 TQFT from \cite{BDSPV2014, BDSPV2015}, Goosen was able to give explicit graphical rules for how the linear maps $Z(M)$ associated to 3-dimensional cobordisms $M$ act on string-nets. This gave, for the first time, a graphical description of the Turaev-Viro TQFT {\em completely in the language of string-nets}. In addition, this is a description of Turaev-Viro theory as an {\em extended} TQFT in the sense of \eqref{extendedTQFTdefn}.

\subsection*{Extended toric code via string-nets}

The purpose of this paper is to spell out the string-net description of the extended Turaev-Viro TQFT from \cite{goosen2018oriented} in the simplest non-trivial case. Namely, the finite group model associated to $G=\mathbb{Z}/2\mathbb{Z}$; that is, the {\em extended toric code}. It makes sense to work out the theory explicitly for the toric code as this is the example which motivated the entire formalism of string-nets, as explained above. In this example, the spherical fusion category is $\text{Vect}[\mathbb{Z}/2\mathbb{Z}]$, which has a very simple graphical calculus, making the linear maps associated to the generating 3-dimensional cobordisms particularly easy to understand (see Remark \ref{rem:pushforward}). 

\subsection*{Outline of paper}

In Section \ref{modular_presentation_section} we recall the generators-and-relations presentation of the oriented 1-2-3 bordism bicategory $\Bord$ from \cite{BDSPV2014}, and we describe explicitly what it means to formulate an extended TQFT in this language. In Section \ref{string_net_section} we specialize the general definition of the string-net space of a surface with boundary from \cite{Kir11} to the case of the toric code, resulting in an elementary graphical definition (see Definition \ref{defn_string_net_space}). In Section \ref{action_of_generators_section} we spell out explicitly, at the level of string-nets, what the generating objects, generating 1-morphisms and generating 2-morphisms of $\Bord$ are sent to in $\Prof$, and show that they satisfy the relations from \cite{BDSPV2014}. This proves that these assignments define an extended TQFT. Finally, in Section \ref{computations_section} we perform some example calculations to illustrate how this formalism works in practice. Namely, we compute the dimension of the string-net vector spaces, the mapping class group action on the torus and the quantum invariants of lens spaces, using the string-net formalism that we have defined in the preceding  sections.

\section{The Modular Presentation}

% !TEX root = ../main.tex
\label{modular_presentation_section}

The generators-and-relations description of 1-2-3 TQFTs, discovered by Bartlett, Douglas, Schommer-Pries, and Vicary \cite{BDSPV, BDSPV2, BDSPV2014, BDSPV2015} is one of our main ingredients, and is used throughout the remainder of the paper. In order to make the paper as self-contained as possible (as well as to establish notational conventions), we review their construction briefly here.\\

We recall some definitions. In what follows, we write $\Vect$ for the category of finite-dimensional complex vector spaces. 

\begin{definition} The bicategory $\Prof$ is defined as follows:
\begin{itemize}
	\item An object is a $\Vect$-enriched category $C$. 
	\item A morphism $F : C \profarrow D$ is a $\Vect$-enriched profunctor\footnote{We shall simply call them profunctors from now on, with values in $\Vect$ being understood.}, i.e. a $\Vect$-enriched functor $F : \cD^{op} \boxtimes \cC \to \Vect$, where $\boxtimes$ is the enriched tensor product of $\Vect$-enriched categories\footnote{The objects of $\cC \boxtimes \cD$ consist of pairs of objects $(c,d) \in \cC \times \cD$, and the morphism vector spaces are given by $\text{Hom}_{\cC \boxtimes \cD}((c,d),(c',d')) = \text{Hom}_{\cC}(c,c') \otimes_k \text{Hom}_{\cD}(d,d')$.}. For two profunctors $F : \cC \profarrow \cD$ and $G : \cD \profarrow \cE$, the composite
	
\begin{equation}
G \circ F : \cE^{op} \boxtimes \cC \to \Vect
\end{equation}

\noindent is defined by

\begin{equation} \label{eqn:prof_comp}
(G \circ F)(e,c) := \bigoplus\limits_{d \in \cD} \quotient{(G(e,d) \otimes F(d,c))}{\sim}
\end{equation}

\noindent where $\sim$ is the equivalence relation generated by the relation $g \cdot x \otimes f \sim g \otimes x \cdot f$ for all $g \in G(e,d)$, $f \in F(d',c)$, and $x \in \text{Hom}_{\cD}(d,d')$.
  \item The 2-morphisms in $\Prof$ are natural transformations between the associated $\Vect$-valued ordinary functors.
  \item The monoidal structure is given by the enriched tensor product and the symmetric monoidal structure is given by the canonical `swap' profunctors.
\end{itemize}
\end{definition}

\begin{definition} The bicategory $\Bord$ is defined\footnote{We only provide an intuitive definition of $\Bord$. For all the technical details, see \cite{CSPthesis}.} as follows:
\begin{itemize}
\item An object is a closed oriented $1$-manifold, i.e. a disjoint unions of a finite (possibly zero) number of circles.
\begin{equation}
\scalecobordisms{1}
\begin{tz}
\node[Cyl, top, height scale=0]  at (0,0) {};
\node[Cyl, top, height scale=0]  at (1.5,0) {};
\node[Cyl, top, height scale=0]  at (3,0) {};
\node at (4.5,0) {$\cdots$};
\node[Cyl, top, height scale=0]  at (6,0) {};
\end{tz}
\end{equation}
\item A $1$-morphism is a compact oriented $2$-dimensional cobordisms between the objects. For instance, the picture below is a $1$-morphism from one copy of $S^1$ to two copies of $S^1$.
\begin{equation}
\begin{tz} 
\node[Copants, top, anchor=belt] at (0,0) {};
\end{tz}
\end{equation}
\noindent Composition works by gluing the manifolds together in the obvious way.
\item A $2$-morphism is a $3$-manifold-with-corners which is a cobordism between the $1$-morphisms. For example, the $3$-manifold realizing the $2$-morphism
\begin{equation}
\begin{tz} 
\node[Cyl, tall, top, bot] (A) at (0,0) {};
\node[Cyl, tall, top, bot] (B) at (2*\cobwidth, 0) {};
\end{tz}
\quad
\Longrightarrow
\quad
\begin{tz} 
\node[Pants, bot] (A) at (0,0) {};
\node[Copants, top, bot, anchor=belt] at (A.belt) {};
\end{tz}
\end{equation}
\noindent may be visualized as the ``fusing together'' of the two cylinders over time.
\item The monoidal structure is given by disjoint union and the symmetric monoidal structure is given by the canonical `swap' cobordisms.
\end{itemize}
\end{definition}

\noindent In this paper, we adopt the following `profunctor' definition of an oriented 1-2-3 TQFT (see Remark \ref{prof_remark}). 

\begin{definition}
An oriented 1-2-3 TQFT is a symmetric monoidal pseudofunctor%
\begin{equation}%
Z : \Bord \to \Prof.
\end{equation}
\end{definition}
Note that an alternative approach would be to define an oriented 1-2-3 TQFT as a symmetric monoidal pseudofunctor
\[
  Z : \Bord \to \TVect
\]
where $\TVect$ is the symmetric monoidal bicategory which has Cauchy-complete $\Vect$-enriched categories as objects, $\Vect$-functors as morphisms, and natural transformations as 2-morphisms\footnote{See for instance \cite{tillmann1998sscr, BDSPV2015}. Note that the collection $\KV$ of Kapranov-Voevodsky 2-vector spaces \cite{kapranov1991braided} (i.e. categories equivalent to $\Vect^n$ for some $n$) forms a full monoidal sub-bicategory of $\TVect$. The properties of $\Bord$ imply that $Z(S^1)$ must be finitely semisimple \cite{tillmann1998sscr, BDSPV2015}, so it makes no difference if one uses $\KV$ or the bigger bicategory $\TVect$ as the target.}.

However, the two definitions are equivalent by the following lemma. Given a Vect-enriched linear category $C$, we write $\hat{C}$ for its Cauchy completion (see \cite{borceux1994handbook}).
\begin{lemma} \label{Profequals2Vect} There is a canonical equivalence of symmetric monoidal bicategories $\TVect \simeq \Prof$. 
\end{lemma}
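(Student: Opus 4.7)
The plan is to construct an explicit pseudofunctor $\Phi \colon \TVect \to \Prof$ and verify that it is a biequivalence preserving the symmetric monoidal structure. On objects, $\Phi$ is the inclusion, since a Cauchy-complete $\Vect$-enriched category is in particular a $\Vect$-enriched category. On a $\Vect$-functor $F \colon C \to D$, set
\[
\Phi(F)(d,c) \;:=\; \Hom_D(d, F(c)),
\]
the representable profunctor. On a natural transformation $\alpha \colon F \Rightarrow G$, define $\Phi(\alpha)_{(d,c)}$ by post-composition with $\alpha_c$. The required coherence isomorphism $\Phi(G) \circ \Phi(F) \cong \Phi(GF)$ follows from the coend formula \eqref{eqn:prof_comp} together with the co-Yoneda (``density'') lemma, which reduces $\int^{d \in D} \Hom_E(e, G(d)) \otimes \Hom_D(d, F(c))$ to $\Hom_E(e, GF(c))$.

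To see $\Phi$ is essentially surjective on objects, use the fact that every $\Vect$-enriched category $C$ is Morita equivalent to its Cauchy completion $\widehat C$: the profunctor $\widehat C(-, i(c))$ arising from the canonical inclusion $i \colon C \hookrightarrow \widehat C$ is an equivalence in $\Prof$. Thus every object of $\Prof$ is equivalent to one lying in the image of $\Phi$. For local equivalence, fix Cauchy-complete $C, D$ and analyse $\Phi_{C,D} \colon \Hom_{\TVect}(C,D) \to \Hom_{\Prof}(C,D)$. Full faithfulness is immediate from the enriched Yoneda lemma: a natural transformation $\Hom_D(-,F(c)) \Rightarrow \Hom_D(-,G(c))$ is determined by its value at $d = F(c)$ on $\id_{F(c)}$, producing the desired components $F(c) \to G(c)$. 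Local essential surjectivity amounts to showing that every profunctor $P \colon C \profarrow D$ with $D$ Cauchy complete is representable: the curried $\Vect$-functor $c \mapsto P(-,c)$ from $C$ to $[D^{op}, \Vect]$ factors through $\widehat D \simeq D$ by the universal property of Cauchy completion, using that $\Vect$-enriched functors automatically preserve absolute colimits (finite biproducts and split idempotents).

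The main obstacle lies in this last step, which is the classical content of enriched Morita theory and depends crucially on the finite-dimensional enrichment in use (so that profunctor values remain in $\Vect$ and the resulting representing objects remain inside $\widehat D$); a reference such as \cite{borceux1994handbook} supplies the precise statements needed. Once this is established, preservation of the symmetric monoidal structure is largely formal: the enriched tensor product of representable profunctors is the representable of the product functor, and the canonical swap profunctors agree with $\Phi$ applied to the swap functors, so the associator, unitor and braiding coherences transport along $\Phi$ without difficulty.
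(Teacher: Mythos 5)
Your proposal is essentially the paper's argument run in the opposite direction: the paper takes Cauchy completion $C \mapsto \hat{C}$ as the functor $\Prof \to \TVect$, while you take the pseudo-inverse inclusion $\TVect \to \Prof$ via representable profunctors; both hinge on exactly the same two facts, namely that $C \simeq \hat{C}$ in $\Prof$ (Morita invariance) and that every profunctor into a Cauchy-complete category is representable, with the same reference to Borceux. The one step where your justification is off target is local essential surjectivity: the claim that $c \mapsto P(-,c)$ ``factors through $\hat{D}$ by the universal property of Cauchy completion, using that $\Vect$-functors preserve absolute colimits'' does not work as stated. The universal property of $\hat{D}$ concerns extending functors \emph{out of} $D$ along $D \hookrightarrow \hat{D}$; it says nothing about an arbitrary presheaf $P(-,c) \in [D^{op},\Vect]$ landing in the subcategory $\hat{D}$, and indeed a general presheaf with finite-dimensional values need not be a retract of a finite sum of representables. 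What actually forces $P(-,c)$ into $\hat{D}$ is that $P$ admits a right adjoint profunctor, obtained by taking linear duals of the finite-dimensional hom spaces --- this is precisely the mechanism the paper invokes (profunctors with right adjoints are exactly those representable by functors into the Cauchy completion, \cite[Theorem 7.9.3]{borceux1994handbook}). Since you correctly isolate this step as the crux, flag the dependence on finite-dimensional enrichment, and cite the right source, the gap is one of attribution rather than substance; replacing the appeal to the universal property by the right-adjoint/linear-duals argument completes the proof along the same lines as the paper.
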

\begin{proof} The functor
\begin{align*}
    \Prof & \rightarrow \TVect     \\
    C &\mapsto \hat{C}
\end{align*}
yields the desired equivalence. This follows from
\begin{align*}  \Hom_{\Prof} (C, D) &\simeq \Hom_{\Prof}(\hat{C}, \hat{D}) && \text{(because $C \simeq \hat{C}$ and $D \simeq \hat{D}$ in $\Prof$)} \\
   & \simeq \Hom_{\TVect} (\hat{C}, \hat{D}) && \text{(because $\hat{D}$ is Cauchy complete)} 
\end{align*}
where the last equivalence uses the fact that enriching over {\em finite}-dimensional vector spaces means every profunctor in $\Prof$ has a right adjoint (by taking linear duals), and hence is representable as a functor \cite[Theorem 7.9.3]{borceux1994handbook}. 
\end{proof}
 
\begin{remark} \label{prof_remark}
 The reason we adopt $\Prof$ as target is a geometric one. In this paper, the category that the TQFT $Z$ will assign to the circle $S^1$ will be the `category of boundary conditions' $\BV$ (see \cite{kirillov2011string}), which can be phrased purely in terms of string nets (see Definition \ref{category_of_boundary_values}). However, $\BV$ is not a Cauchy complete category. In \cite{kirillov2011string}, Kirillov passes to the Cauchy completion $\widehat{\BV}$, and shows that in general, $\widehat{\BV} \simeq Z(A)$ where $A$ is the spherical fusion category used as input and $Z(A)$ is its Drinfeld centre\footnote{Warning: in this paper $\widehat{\BV}$ is the Cauchy completion of $\BV$, whereas in \cite{kirillov2011string} the notation works the other way.} (in our case, $A = \Vect[\mathbb{Z}/2\mathbb{Z}]$). This makes contact with the standard approach to extended 3-dimensional TQFT, where the category assigned to $S^1$ is $Z(A)$. 

However, passing to the Cauchy completion is less geometric as it requires us to complicate the string diagram graphical calculus by incorporating projectors on the boundary circles (this was the approach of \cite{goosen2018oriented}). By using profunctors, we can remain entirely in the world of string nets and avoid the Drinfeld double entirely. 
\end{remark}

\noindent Since $\Bord$ is very complicated, finding symmetric monoidal pseudofunctors out of it is a difficult problem. In this paper, our approach to simplify the problem shall be to make use of a presentation for $G$; a finite collection of data, generators and relations, from which $\Bord$ may be reconstructed.

\scalecobordisms{0.4}

\begin{theorem} \cite{BDSPV2014} The 3-dimensional oriented bordism bicategory $\Bord$ admits the following presentation.
\begin{itemize}
\item Generating object:

\[
\begin{tz}
        \node[Cyl, top, height scale=0]  at (0,0) {};
\end{tz}
\]
 
 \item Generating 1\-morphisms:
    \[
    \begin{tz}
        \node[Pants, top, bot] (A) at (0,0) {};
        \node[Copants, top, bot] (B) at (2,0) {};
        \node[Cup, top] (C) at (4,0.1) {};
        \node[Cap, bot] (D) at (6,-0.1) {};
    \end{tz}
    \]
 \item Invertible generating 2-morphisms:
    \begin{equation}
    \begin{tz}
        \node[Pants, top, bot, wide] (A) at (0,0) {};
        \node[Pants,  bot, anchor=belt] (B) at (A.leftleg) {};    
        \node[Cyl, bot, anchor=top] at (A.rightleg) {}; 
    \end{tz}
    \quad
    \RLarrow{\alpha}{\alpha^\inv}
    \quad
    \begin{tz}
        \node[Pants, top, bot, wide] (A) at (0,0) {};
        \node[Pants,  bot, anchor=belt] (B) at (A.rightleg) {};    
        \node[Cyl, bot, anchor=top] at (A.leftleg) {}; 
    \end{tz} 
    \qquad
    \qquad
    \begin{tz}
        \node[Pants, top, bot] (A) at (0,0) {};
        \node[Cyl, bot, anchor=top] at (A.leftleg) {};
        \node[Cup] at (A.rightleg) {};  
    \end{tz}
    \quad
    \RLarrow{\rho}{\rho^\inv}
    \quad
    \begin{tz}
        \node[Cyl, bot, top, tall] at (0,0) {};
    \end{tz}
    \quad
    \RLarrow{\lambda^\inv}{\lambda}
    \quad
    \begin{tz}
        \node[Pants, top, bot] (A) at (0,0) {};
        \node[Cyl, bot, anchor=top] at (A.rightleg) {};
        \node[Cup] at (A.leftleg) {};   
    \end{tz} \label{MC}
    \end{equation}

    \begin{equation}
    \begin{tz}
        \node[Pants, top, bot] (A) at (0,0) {};
    \end{tz}
    \quad
    \RLarrow{\beta}{\beta^\inv}
    \quad
    \begin{tz}
        \node[Pants, top, bot] (A) at (0,0) {};
        \node[BraidB, anchor=topleft, bot] at (A.leftleg) {};
    \end{tz}
    \qquad
    \qquad
    \begin{tz}
        \node[Cyl, top, bot, tall] (A) at (0,0) {};
    \end{tz}
    \quad
    \RLarrow{\theta}{\theta^\inv}
    \quad
    \begin{tz}
        \node[Cyl, top, bot, tall] (A) at (0,0) {};
    \end{tz} \label{RC}
    \end{equation}

 \item Non-invertible generating 2-morphisms:
    \begin{equation} 
    \begin{tz} 
        \node[Cyl, tall, top, bot] (A) at (0,0) {};
        \node[Cyl, tall, top, bot] (B) at (2*\cobwidth, 0) {};
    \end{tz}
    \quad
    \RLarrow{\eta}{\eta^\dagger}
    \quad
    \begin{tz} 
        \node[Pants, bot] (A) at (0,0) {};
        \node[Copants, top, bot, anchor=belt] at (A.belt) {};
    \end{tz}
    \qquad
    \qquad
    \begin{tz} 
        \node[Pants, top, bot] (A) at (0,0) {};
        \node[Copants, bot, anchor=leftleg] at (A.leftleg) {};
    \end{tz}
    \quad
    \RLarrow{\epsilon}{\epsilon^\dagger}
    \quad
    \begin{tz} 
        \node[Cyl, top, bot, tall] (A) at (0,0) {};
    \end{tz} \label{A1}
    \end{equation}
    
    \begin{equation} 
    \begin{tz}
        \draw[green] (0,0) rectangle (0.6, 0.6);  
    \end{tz}
    \quad
    \RLarrow{\nu}{\nu^\dagger}
    \quad
    \begin{tz}
        \node[Cap, bot] (A) at (0,0) {};
        \node[Cup] at (0,0) {};
    \end{tz}
    \qquad
    \qquad
    \begin{tz}
        \node[Cup, top] (A) at (0,0) {};
        \node[Cap, bot] (B) at (0,-2*\cobheight) {};
    \end{tz}
    \quad
    \RLarrow{\mu}{\mu^\dagger}
    \quad
    \begin{tz}
        \node[Cyl, top, bot, tall] (A) at (0,0) {};
    \end{tz}
    \label{A2}
    \end{equation}
 \end{itemize}
The relations are as follows:
\begin{itemize}
\item (Inverses) Each of the invertible generating 2-morphisms $\omega$ satisfies \mbox{$\omega \circ \omega^\inv = \id$} and $\omega^\inv \circ \omega = \id$. 
\item (Monoidal) The generating 2-morphisms in \eqref{MC} obey the pentagon and unit equations:
\begin{equation} \label{eq:pentagon}
\begin{tz}[xscale=2.1, yscale=1.2]
\node (1) at (0,0)
{
$\begin{tikzpicture}
        \node [Pants, wider, top] (A) at (0,0) {};
        \node [Pants, wide, anchor=belt] (B) at (A.leftleg) {};
        \node [Cyl, tall, anchor=top] (C) at (A.rightleg) {};
        \node[Pants, anchor=belt] (D) at (B.leftleg) {};
        \node[Cyl, anchor=top] (E) at (B.rightleg) {};
        \selectpart[green] {(A-belt) (B-leftleg) (A-rightleg)};
        \selectpart[red] {(A-leftleg) (D-leftleg) (B-rightleg)};
\end{tikzpicture}$
};
\node (2) at (1,1)
{
$\begin{tikzpicture}
        \node [Pants, verywide, top] (A) at (0,0) {};
        \node [Pants, anchor=belt] (B) at (A.rightleg) {};
        \node [Cyl, anchor=top] (C) at (A.leftleg) {};
        \node[Pants, anchor=belt] (D) at (C.bottom) {};
        \node[Cyl, anchor=top] (E) at (B.leftleg) {};
        \node[Cyl, anchor=top] (F) at (B.rightleg) {};
        \selectpart[green] {(A-leftleg) (A-rightleg) (D-leftleg) (F-bottom)};
\end{tikzpicture}$
};
\node (3) at (2,1)
{
$\begin{tikzpicture}
        \node [Pants, verywide, top] (A) at (0,0) {};
        \node [Pants, anchor=belt] (B) at (A.leftleg) {};
        \node[Cyl, anchor=top] (C) at (A.rightleg) {};
        \node[Cyl, anchor=top] (D) at (B.leftleg) {};
        \node[Cyl, anchor=top] (E) at (B.rightleg) {};
        \node[Pants, anchor=belt] (F) at (C.bottom) {};
        \selectpart[green] {(A-belt) (B-leftleg) (A-rightleg)};
\end{tikzpicture}$
};
\node (4) at (3,0)
{
$\begin{tikzpicture}
        \node [Pants, wider, top] (A) at (0,0) {};
        \node [Cyl, tall, anchor=top] (B) at (A.leftleg) {};
        \node[Pants, anchor=belt, wide] (C) at (A.rightleg) {};
        \node[Cyl, anchor=top] (D) at (C.leftleg) {};
        \node[Pants, anchor=belt] (E) at (C.rightleg) {};
\end{tikzpicture}$
};
\node (5) at (1,-1)
{
$\begin{tikzpicture}
        \node [Pants, veryverywide, top] (A) at (0,0) {};
        \node [Pants, wide, anchor=belt] (B) at (A.leftleg) {};
        \node [Cyl, tall, anchor=top] (C) at (A.rightleg) {};
        \node[Pants, anchor=belt] (D) at (B.rightleg) {};
        \node[Cyl, anchor=top] (E) at (B.leftleg) {};
                \selectpart[green] {(A-belt) (B-leftleg) (A-rightleg)};
\end{tikzpicture}$
};
\node (6) at (2,-1)
{
$\begin{tikzpicture}
        \node [Pants, veryverywide, top] (A) at (0,0) {};
        \node [Pants, wide, anchor=belt] (B) at (A.rightleg) {};
        \node [Cyl, tall, anchor=top] (C) at (A.leftleg) {};
        \node[Pants, anchor=belt] (D) at (B.leftleg) {};
        \node[Cyl, anchor=top] (E) at (B.rightleg) {};
                \selectpart[green] {(A-rightleg) (D-leftleg) (B-rightleg)};
\end{tikzpicture}$
};
\begin{scope}[double arrow scope]
    \draw (1) --  node[above left, green]{$\alpha$} (2);
    \draw (2) --  node[above]{$\interchangor$} (3);
    \draw (3) --  node[above]{$\alpha$} (4);
    \draw (1) --  node[below left, red]{$\alpha$} (5);
    \draw (5) --  node[below]{$\alpha$} (6);
    \draw (6) --  node[below right]{$\alpha$} (4);
\end{scope}
\end{tz}
\end{equation}
\begin{equation}
\label{eq:triangle}
\begin{tz}[xscale=1, yscale=2, every to/.style={out=down,in=up}]
\node (1) at (-1,1)
{
$\begin{tikzpicture}
    \node (A) [Pants] at (0,0) {};
    \node (B) [Pants, wide, top, anchor=leftleg] at (A.belt) {};
    \node (C) [Cup] at (A.rightleg) {};
    \node [Cyl, anchor=top] (D) at (A.leftleg) {};
    \node [Cyl, tall, anchor=top] (E) at (B.rightleg) {};
        \selectpart[green] {(B-belt) (A-leftleg) (B-rightleg)};
    \selectpart[red] {(B-leftleg) (D-bottom) (A-rightleg)};
\end{tikzpicture}$
};
\node (2) at (1,1)
{
$\begin{tikzpicture}
    \node (A) [Pants] at (0,0) {};
    \node (B) [Pants, wide, top, anchor=rightleg] at (A.belt) {};
    \node (C) [Cup] at (A.leftleg) {};
    \node (D) [Cyl, anchor=top] at (A.rightleg) {};
    \node [Cyl, tall, anchor=top] at (B.leftleg) {};
    \selectpart[green] {(B-rightleg) (A-leftleg) (D-bottom)};
\end{tikzpicture}$
};
\node (3) at (0,0)
{
$\begin{tikzpicture}
    \node (A) [Pants, top] at (0,0) {};
\end{tikzpicture}$
};
\begin{scope}[double arrow scope]
    \draw (1) --  node[above, green]{$\alpha$} (2);
    \draw (2) --  node[below right]{$\lambda$} (3);
    \draw (1) --  node[below left, red]{$\rho$} (3);
\end{scope}
\end{tz}
\end{equation}
 \item (Balanced) The data \eqref{MC} and \eqref{RC} forms a braided monoidal object equipped with a compatible twist:
\begin{equation}
\label{eq:hexagon}
\begin{tz}[xscale=2.2, yscale=1.2]
\node (1) at (0,0)
{
$\begin{tikzpicture}
    \node [Pants, wide, top] (A) at (0,0) {};
    \node [Pants, anchor=belt] (B) at (A.leftleg) {};
    \node [Cyl, anchor=top] (C) at (A.rightleg) {};
        \selectpart[red]{(A-leftleg) (B-leftleg) (B-rightleg)};
\end{tikzpicture}$
};

\node (2) at (0.75,1)
{
$\begin{tikzpicture}
    \node [Pants, wide, top] (A) at (0,0) {};
    \node [Pants, anchor=belt] (B) at (A.rightleg) {};
    \node [Cyl, anchor=top] (C) at (A.leftleg) {};
        \selectpart[green]{(A-belt) (A-leftleg) (A-rightleg)};
\end{tikzpicture}$
};

\node (3) at (1.5,1)
{
$\begin{tikzpicture}
    \node [Pants, wide, top] (A) at (0,0) {};
    \node[BraidB, wide, anchor=topleft] (B) at (A.leftleg) {};
    \node [Pants, anchor=belt] (C) at (B.bottomright) {};
    \node [Cyl, anchor=top] (D) at (B.bottomleft) {};
\end{tikzpicture}$
};

\node (4) at (2.25,1)
{
$\begin{tikzpicture}
    \node [Pants, wide, top] (A) at (0,0) {};
    \node [Pants, anchor=belt] (C) at (A.leftleg) {};
    \node [Cyl, anchor=top] (D) at (A.rightleg) {};
    \node [BraidB, anchor=topleft] (E) at (C.rightleg) {};
    \node[Cyl, anchor=top] (F) at (C.leftleg) {};
    \node[BraidB, anchor=topleft] (G) at (F.bottom) {};
    \node[Cyl, anchor=top] (H) at (G.bottomright) {};
    \node[Cyl, anchor=top] (I) at (G.bottomleft) {};
    \node[Cyl, anchor=top, tall] (J) at (E.bottomright) {};
    \selectpart[green]{(A-belt) (C-leftleg) (D-bottom)};
\end{tikzpicture}$
};

\node (5) at (3,0)
{
$\begin{tikzpicture}
    \node [Pants, wide, top] (A) at (0,0) {};
    \node [Pants, anchor=belt] (C) at (A.rightleg) {};
    \node [Cyl, anchor=top] (D) at (A.leftleg) {};
    \node [BraidB, anchor=topleft] (E) at (C.leftleg) {};
    \node[Cyl, tall, anchor=top] (F) at (A.leftleg) {};
    \node[BraidB, anchor=topleft] (G) at (F.bottom) {};
    \node[Cyl, anchor=top] (H) at (E.bottomright) {};
\end{tikzpicture}$
};

\node (6) at (1,-1)
{
$\begin{tikzpicture}
    \node [Pants, wide, top] (A) at (0,0) {};
    \node [Pants, anchor=belt] (B) at (A.leftleg) {};
    \node [Cyl, anchor=top, tall] (C) at (A.rightleg) {};
    \node [BraidB, anchor=topleft] (D) at (B.leftleg) {};
        \selectpart[green]{(A-belt) (B-leftleg) (A-rightleg)};
\end{tikzpicture}$
};

\node (7) at (2.0,-1)
{
$\begin{tikzpicture}
    \node [Pants, wide, top] (A) at (0,0) {};
    \node [Pants, anchor=belt] (B) at (A.rightleg) {};
    \node [Cyl, anchor=top] (C) at (A.leftleg) {};
    \node[BraidB, anchor=topleft] (D) at (C.bottom) {};
    \node[Cyl, anchor=top] (E) at (B.rightleg) {};
    \selectpart[green]{(A-rightleg) (B-leftleg) (B-rightleg)};
\end{tikzpicture}$
};

\begin{scope}[double arrow scope]
    \draw (1) --  node[above left]{$\alpha$} (2);
    \draw (2) --  node[above]{$\beta$} (3);
    \draw[-=] (3) --  (4);
    \draw (4) --  node[above right]{$\alpha$} (5);
    \draw (1) --  node[below left, red]{$\beta$} (6);
    \draw (6) --  node[below]{$\alpha$} (7);
    \draw (7) --  node[below right]{$\beta$} (5);
\end{scope}
\end{tz}
\end{equation}
\begin{equation} \label{balanced1}
\begin{tz}[xscale=1.4, yscale=1.5]

\node (1) at (0,0)
{
$\begin{tikzpicture}
        \node[Pants, top] (A) at (0,0) {};
        \selectpart[green, inner sep=1pt]{(A-belt)};
        \selectpart[red] {(A-leftleg) (A-rightleg) (A-belt)};
\end{tikzpicture}$
};
\node (2) at (1,0)
{
$\begin{tikzpicture}
        \node[Pants, top] (A) at (0,0) {};
\end{tikzpicture}$
};

\node (3) at (0,-1)
{
$\begin{tikzpicture}
        \node[Pants, top] (A) at (0,0) {};
        \selectpart[green, inner sep=1pt]{(A-leftleg)};
\end{tikzpicture}$
};

\node (4) at (1,-1)
{
$\begin{tikzpicture}
        \node[Pants, top] (A) at (0,0) {};
        \selectpart[green, inner sep=1pt]{(A-rightleg)};
\end{tikzpicture}$
};

\begin{scope}[double arrow scope]
    \draw (1) -- node[above, green] {$\theta$} (2);
    \draw (1) -- node[left, red] {$\beta^2$} (3);
    \draw (3) -- node[below] {$\theta$} (4);
    \draw (4) -- node[right] {$\theta$} (2);
\end{scope}
\end{tz}
\end{equation}
\begin{equation}
\label{balanced2}
\begin{tz}[xscale=1.4, yscale=2]
\node (1) at (0,0)
{
$\begin{tikzpicture}
    \node[Cup, top] (C) at (0,0) {};
        \selectpart[green, inner sep=1pt]{(C-center)};
\end{tikzpicture}$
};
\node (2) at (1,0)
{
$\begin{tikzpicture}
    \node[Cup, top] (C) at (0,0) {};
    \selectpart[green, inner sep=1pt]{(C-center)};

  \end{tikzpicture}$
};
\begin{scope}[double arrow scope]
    \draw (1) --  node[above]{$\theta$} (2);
\end{scope}d
\end{tz}
\quad = \quad
\begin{tz}[xscale=1.4, yscale=2]
\node (1) at (0,0)
{
$\begin{tikzpicture}
    \node[Cup, top] (C) at (0,0) {};
\end{tikzpicture}$
};
\node (2) at (1,0)
{
$\begin{tikzpicture}
    \node[Cup, top] (C) at (0,0) {};
  \end{tikzpicture}$
};
\begin{scope}[double arrow scope]
    \draw (1) --  node[above]{$\id$} (2);
\end{scope}
\end{tz}
\end{equation}
Note that the second hexagon axiom is redundant in the presence of a twist~~\cite{joyal1993braided}.
 \item (Rigidity) Write $\phileft$ for the following composite (`left Frobeniusator'):
 \begin{align} \label{defn_of_phileft}
 \phileft \quad&:=\quad
\begin{tz}
 \node[Pants, top, bot] (A) at (0,0) {};
 \node[Cyl, bot, anchor=top] (B) at (A.leftleg) {};
 \node[Copants, bot, anchor=leftleg] (C) at (A.rightleg) {};
 \node[Cyl, top, bot, anchor=bottom] (D) at (C.rightleg) {}; 
 \selectpart[green, inner sep=1pt] {(A-belt) (D-top)};
\end{tz}
\,\, \Rarrow{\eta} \,\,
\begin{tz}
 \node[Copants, top, wide, bot] (F) at (0,0) {};
 \node[Pants, bot, wide, anchor=belt] (G) at (F.belt) {};
 \node[Pants, bot, anchor=belt] (A) at (G.leftleg) {};
 \node[Cyl, bot, anchor=top] (B) at (A.leftleg) {};
 \node[Copants, bot, anchor=leftleg] (C) at (A.rightleg) {};
 \node[Cyl, bot, anchor=bottom] (X) at (C.rightleg) {}; 
 \selectpart[green] {(F-belt) (A-leftleg) (X-bottom)};
\end{tz}
\,\, \Rarrow{\alpha} \,\,
\begin{tz}
 \node[Copants, top, wide, bot] (F) at (0,0) {};
 \node[Pants, bot, wide, anchor=belt] (G) at (F.belt) {};
 \node[Pants, bot, anchor=belt] (A) at (G.rightleg) {};
 \node[Cyl, tall, bot, anchor=top] (B) at (G.leftleg) {};
 \node[Copants, bot, anchor=leftleg] (C) at (A.leftleg) {};
 \selectpart[green] {(G-rightleg) (A-leftleg) (A-rightleg) (C-belt)};
\end{tz}
\,\, \Rarrow{\epsilon} \,\,
\begin{tz}
        \node[Copants, top, bot] (A) at (0,0) {};
        \node[Pants, bot, anchor=belt] (B) at (A.belt) {};
\end{tz}
\intertext{The left rigidity relation says that $\phileft$ is invertible, with the following explicit inverse:}
\label{explicit_phileft_inverse}
\phileft^\inv \quad&=\quad  \begin{tz}
                \node[Copants, top] (A) at (0,0) {};
                \node[Pants, anchor=belt] (B) at (A.belt) {};
                \selectpart[green, inner sep=1pt] {(B-rightleg)};
\end{tz} 
\,\, \Rarrow{\epsilon^\dagger} \,\,
\begin{tz}
 \node[Copants, top, wide, bot] (F) at (0,0) {};
 \node[Pants, bot, wide, anchor=belt] (G) at (F.belt) {};
 \node[Pants, bot, anchor=belt] (A) at (G.rightleg) {};
 \node[Cyl, tall, bot, anchor=top] (B) at (G.leftleg) {};
 \node[Copants, bot, anchor=leftleg] (C) at (A.leftleg) {};
 \selectpart[green]{(F-belt) (G-leftleg) (A-rightleg)};
\end{tz}
\,\, \Rarrow{\alpha^\inv} \,\,
\begin{tz}
 \node[Copants, top, wide, bot] (F) at (0,0) {};
 \node[Pants, bot, wide, anchor=belt] (G) at (F.belt) {};
 \node[Pants, bot, anchor=belt] (A) at (G.leftleg) {};
 \node[Cyl, bot, anchor=top] (B) at (A.leftleg) {};
 \node[Copants, bot, anchor=leftleg] (C) at (A.rightleg) {};
 \node[Cyl, bot, anchor=bottom] (X) at (C.rightleg) {}; 
 \selectpart[green]{(F-leftleg) (F-rightleg) (G-leftleg) (G-rightleg)};
\end{tz}
\,\, \Rarrow{\eta^\dagger} \,\,
\begin{tz}
 \node[Pants, top, bot] (A) at (0,0) {};
 \node[Cyl, bot, anchor=top] (B) at (A.leftleg) {};
 \node[Copants, bot, anchor=leftleg] (C) at (A.rightleg) {};
 \node[Cyl, top, bot, anchor=bottom] (D) at (C.rightleg) {}; 
\end{tz}
\intertext{Similarly, write $\phiright$ for $\phileft$ rotated about the $z$-axis (`right Frobeniusator'):}
 \label{defn_of_phiright}
 \phiright \quad&:=\quad
\begin{tz}
 \node[Pants, top, bot] (A) at (0,0) {};
 \node[Cyl, bot, anchor=top] (B) at (A.rightleg) {};
 \node[Copants, bot, anchor=rightleg] (C) at (A.leftleg) {};
 \node[Cyl, top, bot, anchor=bottom] (D) at (C.leftleg) {}; 
 \selectpart[inner sep=1pt, green] {(D-top) (A-belt)};
 \end{tz}
\,\, \Rarrow{\eta} \,\,
\begin{tz}
 \node[Copants, top, wide, bot] (F) at (0,0) {};
 \node[Pants, bot, wide, anchor=belt] (G) at (F.belt) {};
 \node[Pants, bot, anchor=belt] (A) at (G.rightleg) {};
 \node[Cyl, bot, anchor=top] (B) at (A.rightleg) {};
 \node[Copants, bot, anchor=rightleg] (C) at (A.leftleg) {};
 \node[Cyl, bot, anchor=bottom] (X) at (C.leftleg) {}; 
 \selectpart[green] {(F-belt) (A-rightleg) (X-bottom)};
\end{tz}
\,\, \Rarrow{\alpha^\inv} \,\,
\begin{tz}
 \node[Copants, top, wide, bot] (F) at (0,0) {};
 \node[Pants, bot, wide, anchor=belt] (G) at (F.belt) {};
 \node[Pants, bot, anchor=belt] (A) at (G.leftleg) {};
 \node[Cyl, tall, bot, anchor=top] (B) at (G.rightleg) {};
 \node[Copants, bot, anchor=rightleg] (C) at (A.rightleg) {};
 \selectpart[green] {(G-leftleg) (A-leftleg) (A-rightleg) (C-belt)};
\end{tz}
\,\, \Rarrow{\epsilon} \,\,
\begin{tz}
        \node[Copants, top, bot] (A) at (0,0) {};
        \node[Pants, bot, anchor=belt] (B) at (A.belt) {};
\end{tz}
\intertext{The right rigidity relation says that $\phiright$ is invertible, with the following explicit inverse :}
\label{explicit_phiright_inverse}
\phiright^\inv \quad&=\quad
\begin{tz}
                \node[Copants, top] (A) at (0,0) {};
                \node[Pants, anchor=belt] (B) at (A.belt) {};
                \selectpart[green, inner sep=1pt] {(B-leftleg)};
\end{tz} 
\,\, \Rarrow{\epsilon^\dagger} \,\,
\begin{tz}
 \node[Copants, top, wide, bot] (F) at (0,0) {};
 \node[Pants, bot, wide, anchor=belt] (G) at (F.belt) {};
 \node[Pants, bot, anchor=belt] (A) at (G.leftleg) {};
 \node[Cyl, tall, bot, anchor=top] (B) at (G.rightleg) {};
 \node[Copants, bot, anchor=rightleg] (C) at (A.rightleg) {};
 \selectpart[green]{(F-belt) (A-leftleg) (G-rightleg)};
\end{tz}
\,\, \Rarrow{\alpha} \,\,
\begin{tz}
 \node[Copants, top, wide, bot] (F) at (0,0) {};
 \node[Pants, bot, wide, anchor=belt] (G) at (F.belt) {};
 \node[Pants, bot, anchor=belt] (A) at (G.rightleg) {};
 \node[Cyl, bot, anchor=top] (B) at (A.rightleg) {};
 \node[Copants, bot, anchor=rightleg] (C) at (A.leftleg) {};
 \node[Cyl, top, bot, anchor=bottom] (X) at (C.leftleg) {}; 
 \selectpart[green]{(F-leftleg) (F-rightleg) (G-leftleg) (G-rightleg)};
\end{tz}
\,\, \Rarrow{\eta^\dagger} \,\,
\begin{tz}
 \node[Pants, top, bot] (A) at (0,0) {};
 \node[Cyl, bot, anchor=top] (B) at (A.rightleg) {};
 \node[Copants, bot, anchor=rightleg] (C) at (A.leftleg) {};
 \node[Cyl, top, bot, anchor=bottom] (D) at (C.leftleg) {}; 
\end{tz}
\end{align}

\item (Ribbon) The twist satisfies the following equation:
\begin{equation}
\label{tortile}
\begin{tz}[xscale=1.4, yscale=2]
\node (1) at (0,0)
{
$\begin{tikzpicture}
        \node[Pants] (A) at (0,0) {};
        \node[Cap] at (A.belt) {};
        \selectpart[green, inner sep=1pt]{(A-leftleg)};
\end{tikzpicture}$
};
\node (2) at (1,0)
{
$\begin{tikzpicture}
        \node[Pants] (A) at (0,0) {};
        \node[Cap] at (A.belt) {};
\end{tikzpicture}$
};
\begin{scope}[double arrow scope]
    \draw (1) -- node[above] {$\theta$} (2);
\end{scope}
\end{tz}
\quad = \quad
\begin{tz}[xscale=1.4, yscale=2]
\node (1) at (0,0)
{
$\begin{tikzpicture}
        \node[Pants] (A) at (0,0) {};
        \node[Cap] at (A.belt) {};
        \selectpart[green, inner sep=1pt]{(A-rightleg)};
\end{tikzpicture}$
};
\node (2) at (1,0)
{
$\begin{tikzpicture}
        \node[Pants] (A) at (0,0) {};
        \node[Cap] at (A.belt) {};
\end{tikzpicture}$
};
\begin{scope}[double arrow scope]
    \draw (1) -- node[above] {$\theta$} (2);
\end{scope}
\end{tz} 
\end{equation}

\item (Biadjoint) The data \eqref{A1} expresses $\tikztinypants$ as the biadjoint of $\tikztinycopants$, while \eqref{A2} expresses $\tikztinycup$ as the biadjoint of $\tikztinycap$. That is, the following equations hold, along with daggers and rotations about the $x$-axis: 
\begin{align}
\label{adj_nu_mu}
\begin{aligned}
\begin{tikzpicture}[xscale=1.6, yscale=4, every to/.style={out=down,in=up}]
\node (A) at (0,0)
{
$\begin{tikzpicture}
    \node (1) [Cup, top] at (0,0) {};
    \node (2) [Cup, invisible] at (0,-1.5\cobheight) {};
    \node (3) [Cap, invisible] at (0,-1.5\cobheight) {};
    \fixboundingbox
    \selectpart[green]{(3) (2)}
\end{tikzpicture}$
};
\node (2) at (1,0)
{
$\begin{tikzpicture}
    \node (1) [Cup, top] at (0,0) {};
    \node (2) [Cap] at (0,-1.5\cobheight) {};
    \node (3) [Cup] at (0,-1.5\cobheight) {};
    \fixboundingbox
    \selectpart[green]{(1-center) (2-center)}
\end{tikzpicture}$
};
\node (3) at (2,0)
{
$\begin{tikzpicture}
        \node[Cyl, top] (X) at (0,0) {};
    \node[Cup] at (X.bottom) {};
\end{tikzpicture}$
};
\begin{scope}[double arrow scope]
    \draw (A) --  node[above]{$\nu$} (2);
    \draw (2) --  node[above]{$\mu$} (3);
\end{scope}
\end{tikzpicture}
\end{aligned}
\quad &= \quad
\begin{aligned}
\begin{tikzpicture}[xscale=1.6, yscale=4, every to/.style={out=down,in=up}]
\node (A) at (0,0)
{
$\begin{tikzpicture}
    \node (1) [Cup, top] at (0,0) {};
\end{tikzpicture}$
};
\node (2) at (1,0)
{
$\begin{tikzpicture}
    \node[Cup, top] at (0,0) {};
\end{tikzpicture}$
};
\begin{scope}[double arrow scope]
    \draw (A) --  node[above]{$\id$} (2);
\end{scope}
\end{tikzpicture}
\end{aligned}
\\
\label{adj_eta_epsilon}
\begin{aligned}
\begin{tikzpicture}[xscale=1.6, yscale=4]
\node (1) at (0,0)
{
$\begin{tikzpicture}
    \node [Pants, top] (A) at (0,0) {};
    \node[Cyl, anchor=top] (B) at (A.leftleg) {};
    \node[Cyl, anchor=top] (C) at (A.rightleg) {};
    \fixboundingbox
    \selectpart[green]{(A-leftleg) (A-rightleg) (B-bottom) (C-bottom)}
\end{tikzpicture}$
};
\node (2) at (1,0)
{
$\begin{tikzpicture}
    \node [Pants, bot, top] (A) at (0,0) {};
    \node [Copants, anchor=leftleg] (B) at (A.leftleg) {};
    \node [Pants, anchor=belt, bot] at (B.belt) {};
    \selectpart [green] {(A-leftleg) (A-belt) (A-rightleg) (B-belt)}; 
\end{tikzpicture}$
};
\node (3) at (2,0)
{
$\begin{tikzpicture}
    \node [Pants] (A) at (0,0) {};
    \node [Cyl, tall, bot, anchor=bot, top] at (A.belt) {};
\end{tikzpicture}$
};
\begin{scope}[double arrow scope]
    \draw (1) --  node[above]{$\eta$} (2);
    \draw (2) --  node[above]{$\epsilon$} (3);
\end{scope}
\end{tikzpicture}
\end{aligned}
\quad &= \quad
\begin{aligned}
\begin{tikzpicture}[xscale=1.6, yscale=4]
\node (1) at (0,0)
{
$\begin{tikzpicture}
    \node [Pants, top] (A) at (0,0) {};
\end{tikzpicture}$
};
\node (2) at (1,0)
{
$\begin{tikzpicture}
    \node [Pants, bot, top] (A) at (0,0) {};
\end{tikzpicture}$
};
\begin{scope}[double arrow scope]
    \draw (1) --  node[above]{$\id$} (2);
\end{scope}
\end{tikzpicture}
\end{aligned}
\end{align}
These are 8 equations in total.

 \item (Pivotality) The following equation holds, together with its rotation about the $z$-axis $\eqref{piv_on_sphere} {}^z$:  \begin{equation} \label{piv_on_sphere}
\begin{tz}[xscale=1.4, yscale=2]
\node (1) at (0,0)
{
$\begin{tikzpicture}
 \node[Cap] (A) at (0,0) {};
 \node[Cup, bot=false] (B) at (0,0) {};
 \node[Cobordism Bottom End 3D] (AA) at (0,0) {};              
 \selectpart[green, inner sep=1pt] {(AA)};
\end{tikzpicture}$
};

\node (2) at (1,0)
{
$\begin{tikzpicture}
    \node [Pants] (A) at (0,0) {};
        \node[Cap] at (A.belt) {};
    \node[Copants, anchor=leftleg] (B) at (A.leftleg) {};
    \node[Cup] at (B.belt) {};
    \selectpart[green, inner sep=1pt] {(A-rightleg)};
\end{tikzpicture}$
};

\node (3) at (2,0)
{
$\begin{tikzpicture}
    \node [Pants] (A) at (0,0) {};
        \node[Cap] at (A.belt) {};
    \node[Cyl, tall, anchor=top] (B) at (A.leftleg) {};
    \node[Cup] (C) at (A.rightleg) {};
    \node[Copants, anchor=leftleg] (D) at (B.bottom) {};
    \node[Cap, bot=false] (E) at (D.rightleg) {};
    \node [Cobordism Bottom End 3D] (B) at (D.rightleg) {};
    \node[Cup] at (D.belt) {};
    \selectpart[green] {(A-rightleg) (B)};
\end{tikzpicture}$
};

\node (4) at (3,0)
{
$\begin{tikzpicture}
    \node [Pants] (A) at (0,0) {};
        \node[Cap] (X) at (A.belt) {};
    \node[Copants, anchor=leftleg] (B) at (A.leftleg) {};
    \node[Cup] at (B.belt) {};
    \selectpart[green] {(X-center) (A-leftleg) (A-rightleg) (B-belt)};
\end{tikzpicture}$
};

\node (5) at (4,0)
{
$\begin{tikzpicture}
    \node [Cap] at (0,0) {};
    \node[Cup] at (0,0) {};
    \end{tikzpicture}$
};

\begin{scope}[double arrow scope]
    \draw (1) -- node[above] {$\epsilon^\dagger$} (2);
    \draw (2) -- node[above] {$\mu^\dagger$} (3);
    \draw (3) -- node[above] {$\mu$} (4);
    \draw (4) -- node[above] {$\epsilon$} (5);
\end{scope}
\end{tz}
\quad = \quad 
\begin{tz}[xscale=1.4, yscale=2]
\node (1) at (0,0)
{
$\begin{tikzpicture}
 \node[Cap] (A) at (0,0) {};
 \node[Cup] (B) at (0,0) {};
\end{tikzpicture}$
};

\node (2) at (1,0)
{
$\begin{tikzpicture}
 \node[Cap] (A) at (0,0) {};
 \node[Cup] (B) at (0,0) {};
\end{tikzpicture}$
};

\begin{scope}[double arrow scope]
    \draw (1) -- node[above] {$\id$} (2);
\end{scope}
\end{tz}
\end{equation}

\item (Modularity) The following equation holds, together with its rotation about the $z$-axis $\eqref{MOD}^z$:\begin{equation}
\label{MOD}
\begin{tz}[xscale=2, yscale=2]
\node (1) at (0,-0.5)
{$\begin{tikzpicture}
        \node[Cyl, top, tall] (A) at (0,0) {};
\end{tikzpicture}$};

\node (2) at (1,0)
{$\begin{tikzpicture}
        \node[Pants, top] (A) at (0,0) {};
        \node[Copants, anchor=leftleg] (B) at (A.leftleg) {};
        \selectpart[green, inner sep=1pt]{(A-leftleg)};
        \selectpart[red, inner sep=1pt] {(A-rightleg)};
\end{tikzpicture}$};

\node (3) at (2,0)
{$\begin{tikzpicture}
        \node[Pants, top] (A) at (0,0) {};
        \node[Copants, anchor=leftleg] (B) at (A.leftleg) {};
\end{tikzpicture}$};

\node (4) at (3,-0.5)
{$\begin{tikzpicture}
        \node[Cyl, top, tall] (A) at (0,0) {};
\end{tikzpicture}$};

\node(5) at (1.5, -1)
{$\begin{tikzpicture}
        \node[Cup, top] (A) at (0,0) {};
        \node[Cap] (B) at (0, -1.5*\cobheight) {};
\end{tikzpicture}$};
    
\begin{scope}[double arrow scope]
    \draw (1) -- node [above] {$\epsilon^\dagger$} (2);
    \draw[] ([xshift=0pt] 2.0) to node [above, inner sep=1pt] {${\color{green}\theta}, \color{red}{\theta^\inv}$} (3);
    \draw (3) -- node [above] {$\epsilon$} (4);
    \draw (1) -- node [below left] {$\mu^\dagger$} (5);
    \draw (5) -- node [below right] {$\mu$} (4);
\end{scope}
\end{tz}
\end{equation}

\item (Anomaly-freeness) The following equation holds:
\begin{equation} \label{AF}
\begin{tz}
        \node[Cap] (A) at (0,0) {};
        \node[Cup] (B) at (0,0) {};
        \node [Cobordism Bottom End 3D] (C) at (0,0) {};
        \selectpart[green, inner sep=1pt] {(C)};
\end{tz}
\,\,
\Rarrow{\epsilon^\dagger}
\,\,
\begin{tz}
        \node[Cap] (A) at (0,0) {};
        \node[Pants, anchor=belt] (B) at (A.center) {};
        \node[Copants, anchor=leftleg] (C) at (B.leftleg) {};
        \node[Cup] (D) at (C.belt) {};
        \selectpart[green, inner sep=1pt] {(B-leftleg)};
\end{tz}
\,\, \Rarrow{\theta} \,\,
\begin{tz}
        \node[Cap] (A) at (0,0) {};
        \node[Pants, anchor=belt] (B) at (A.center) {};
        \node[Copants, anchor=leftleg] (C) at (B.leftleg) {};
        \node[Cup] (B) at (C.belt) {};
        \selectpart[green] {(A-center) (B-leftleg) (B-rightleg) (C-belt)};
\end{tz}
\,\, \Rarrow{\epsilon} \,\,
\begin{tz}
        \node[Cap] (A) at (0,0) {};
        \node[Cup] (B) at (0,0) {};
\end{tz}
\quad = \quad
 \begin{tz}
        \node[Cap] (A) at (0,0) {};
        \node[Cup] (B) at (0,0) {};
\end{tz}
\,\, \Rarrow{\id} \,\,
 \begin{tz}
        \node[Cap] (A) at (0,0) {};
        \node[Cup] (B) at (0,0) {};
\end{tz}
\end{equation}

\end{itemize}
\end{theorem}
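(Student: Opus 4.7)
The plan is to establish the presentation in three stages: first that the listed data generate $\Bord$, second that each listed relation holds in $\Bord$, and third (the hard part) that these relations suffice.

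For generation at the level of objects, any closed oriented 1-manifold is a disjoint union of copies of $S^1$, so the single generating object together with the symmetric monoidal structure suffices. For 1-morphisms, I would apply Morse theory to a 2-dimensional cobordism $\Sigma$: after choosing a generic Morse function on $\Sigma$ relative to its boundary, one decomposes $\Sigma$ as a composite of elementary cobordisms corresponding to index $0$, $1$, and $2$ critical points, which are precisely the cap, the pair of pants (together with its reflection, the copair of pants), and the cup, interspersed with cylinders. For 2-morphisms, the analogous tool is Cerf theory applied to 3-manifolds with corners: a generic Morse $2$-function produces a Cerf graphic whose codimension-$0$ regions correspond to 1-morphisms of $\Bord$ and whose codimension-$1$ strata (fold crossings, births and deaths, handle slides) correspond exactly to the generating 2-morphisms listed in \eqref{MC}--\eqref{A2}.

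For the relations, I would verify each axiom directly. The pentagon, triangle, hexagon, balanced, and pivotality equations are coherence identities for the standard algebraic structure carried by $S^1$ in $\Bord$, and can be checked by exhibiting explicit isotopies of the corresponding 3-manifolds. The biadjoint equations are the snake identities, verified by straightening cylinders. The rigidity relations \eqref{defn_of_phileft}--\eqref{explicit_phiright_inverse} amount to standard handle-cancellation identities. The ribbon equation \eqref{tortile} is an isotopy in the solid torus. Finally, modularity \eqref{MOD} and anomaly-freeness \eqref{AF} are the substantive 3-dimensional topological inputs, encoding respectively the $S$-transformation identity on the torus and the absence of framing anomaly on the 2-sphere, each verifiable by explicit handle-slide or Kirby-move computations.

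The main obstacle is the completeness of the relations. The strategy here is to construct the symmetric monoidal bicategory $\cG$ freely presented by the listed generators and relations, define a pseudofunctor $\cG \to \Bord$ by the geometric realizations above, and show it is an equivalence. Essential surjectivity and fullness on 1-morphisms follow from the Morse-theoretic generation argument; the heart of the proof lies in showing faithfulness on 2-morphisms. Concretely, given two expressions of the same 3-manifold with corners as a composite of generating 2-morphisms, one must produce a sequence of the listed relations connecting them. This reduces to a Cerf-theoretic connectivity statement for the space of generic Morse $2$-functions on a fixed compact $3$-manifold with corners, saying that any two such functions are connected by a path whose wall-crossings realize only the listed relations. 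Establishing this is the genuinely hard step: it requires a complete classification of the codimension-$1$ and codimension-$2$ singularities of Morse $2$-functions, analogous to the Cerf--Hatcher--Thurston theorem used by Schommer-Pries in the 2-dimensional case, and verifying that every codimension-$2$ singularity yields a relation already derivable from those in the theorem.
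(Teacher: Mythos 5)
The first thing to note is that the paper does not prove this theorem at all: it is stated with the citation \cite{BDSPV2014} and imported wholesale as an input to the rest of the paper, so there is no internal proof to compare your attempt against. Your outline is nevertheless a fair description of the general strategy by which such presentation theorems are established in the cited work and in Schommer-Pries's thesis on which it builds: generation via Morse theory on the 1- and 2-morphism levels, soundness of each relation by explicit isotopy or handle cancellation, and completeness via an analysis of families of functions (Cerf theory).

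As a proof, however, your proposal has a genuine gap, and you essentially concede it yourself: the entire content of the theorem is the completeness of the listed relations, and your final paragraph reduces this to an unproved connectivity statement about spaces of Morse $2$-functions together with the unverified claim that ``every codimension-$2$ singularity yields a relation already derivable from those in the theorem.'' That last clause \emph{is} the theorem; naming it is not proving it. Two concrete omissions stand out. First, your Cerf-graphic framing does not account for the generators $\beta$ and $\theta$, which are mapping cylinders of diffeomorphisms rather than wall-crossings of a Morse $2$-function; completeness therefore requires showing that all relations in the mapping class groupoid of every compact surface (braid, chain, lantern, \dots) are derivable from the listed relations, which in \cite{BDSPV2014} is handled by invoking Gervais' finite presentation of the mapping class groupoid together with Hatcher--Thurston-type results --- a substantial input your sketch never mentions. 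Second, the relations (Modularity) and (Anomaly-freeness) are not generic Cerf phenomena: they encode specific handle cancellations in $S^2 \times S^1$ and $S^3$, and one must check both that they hold in $\Bord$ and that, conversely, the generic singularity analysis does not produce further independent relations of this kind. Without these two ingredients the faithfulness argument cannot be closed.
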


The generators have the following geometric interpretation:
\tikzset{surfacecurve/.style={line width=0.7pt}}
\tikzset{surfacedotted/.style={surfacecurve, densely dotted}}
\begin{itemize}
\item the generating object represents a circle;
\item the generating 1-morphisms represent the 2-dimensional bordisms naively suggested by the pictured surfaces, namely the pants, copants, cup, and cap;
\item the generating 2-morphisms $\alpha$, $\rho$, $\lambda$, and their inverses represent invertible 3-dimensional bordisms induced by the (boundary relative) ambient isotopies suggested by the pictured surfaces; 
\item the generating 2-morphisms $\eta ^\dagger$, $\epsilon$, and $\mu ^\dagger$ represent the bordism implementing the addition of a 2-handle about the curves
\begin{equation} \label{fig:surgery_curves}
\begin{aligned}
\includegraphics{../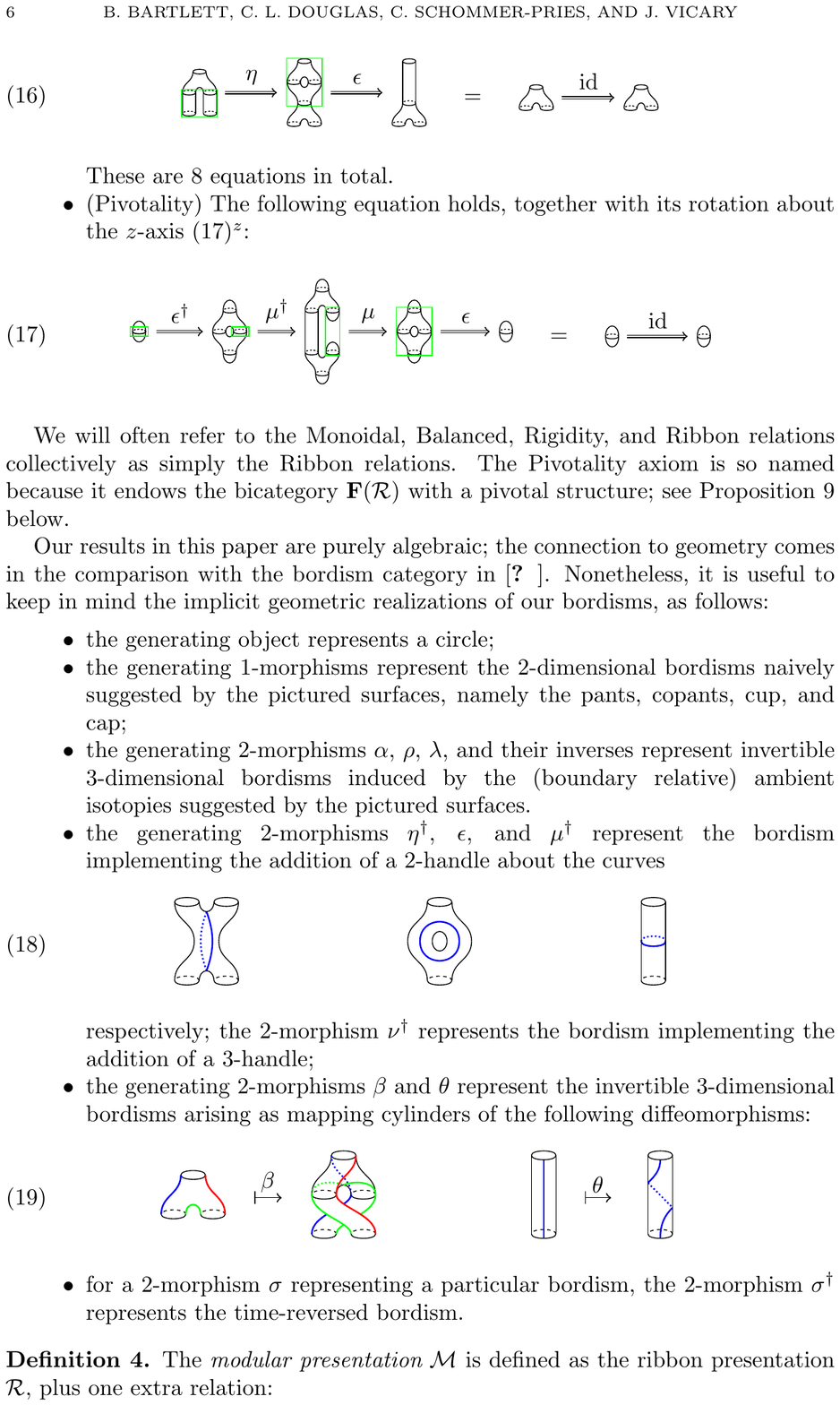}
\end{aligned}
\end{equation}
respectively; the 2-morphism $\nu ^\dagger$ represents the bordism implementing the addition of a 3-handle;
\item the generating 2-morphisms $\beta$ and $\theta$ represent the invertible 3-dimensional bordisms arising as mapping cylinders of the following diffeomorphisms:
\[
\begin{aligned}
\includegraphics[width=0.3\textwidth]{../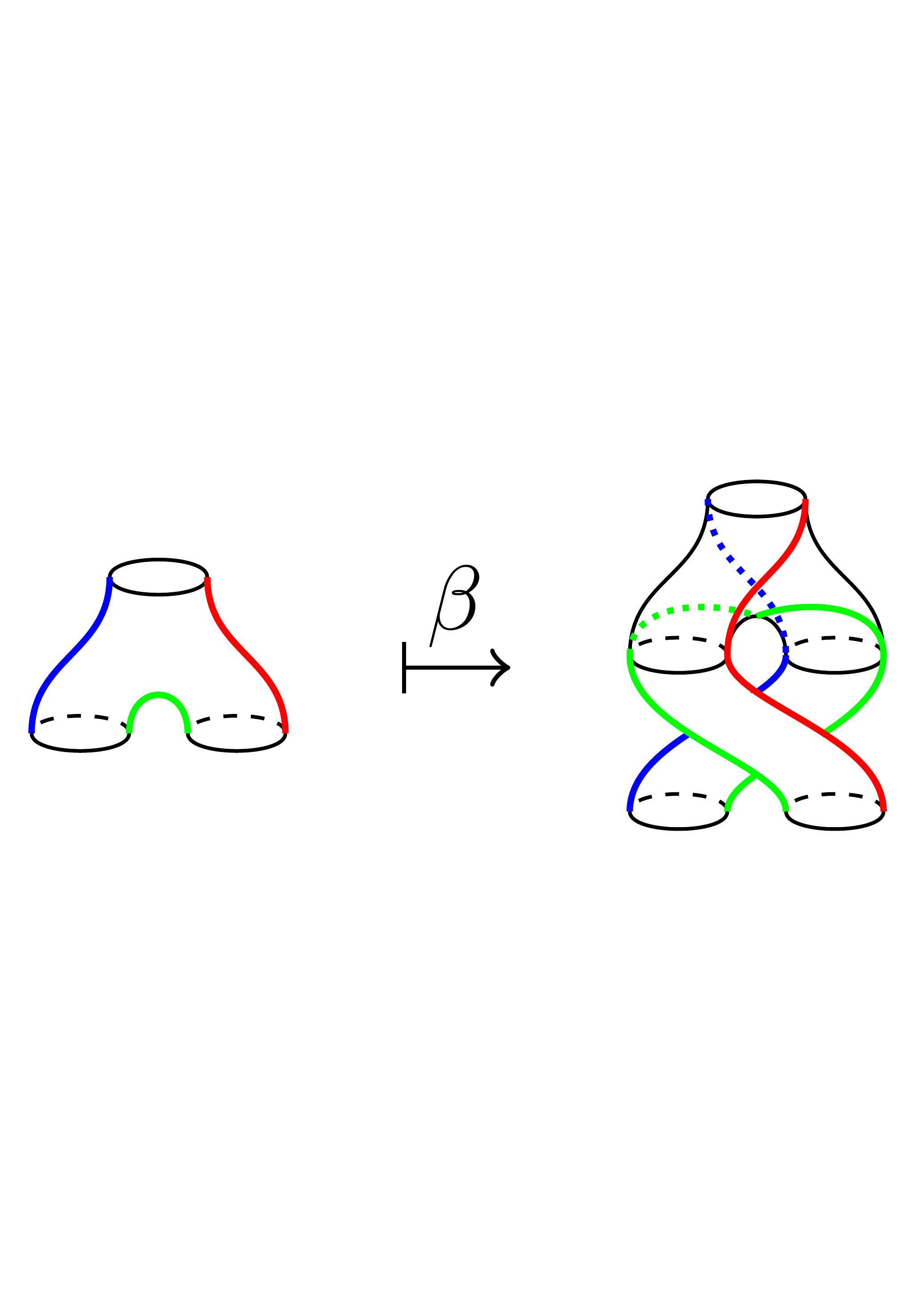}
\end{aligned} 
\qquad
\begin{aligned}
\includegraphics[width=0.2\textwidth]{../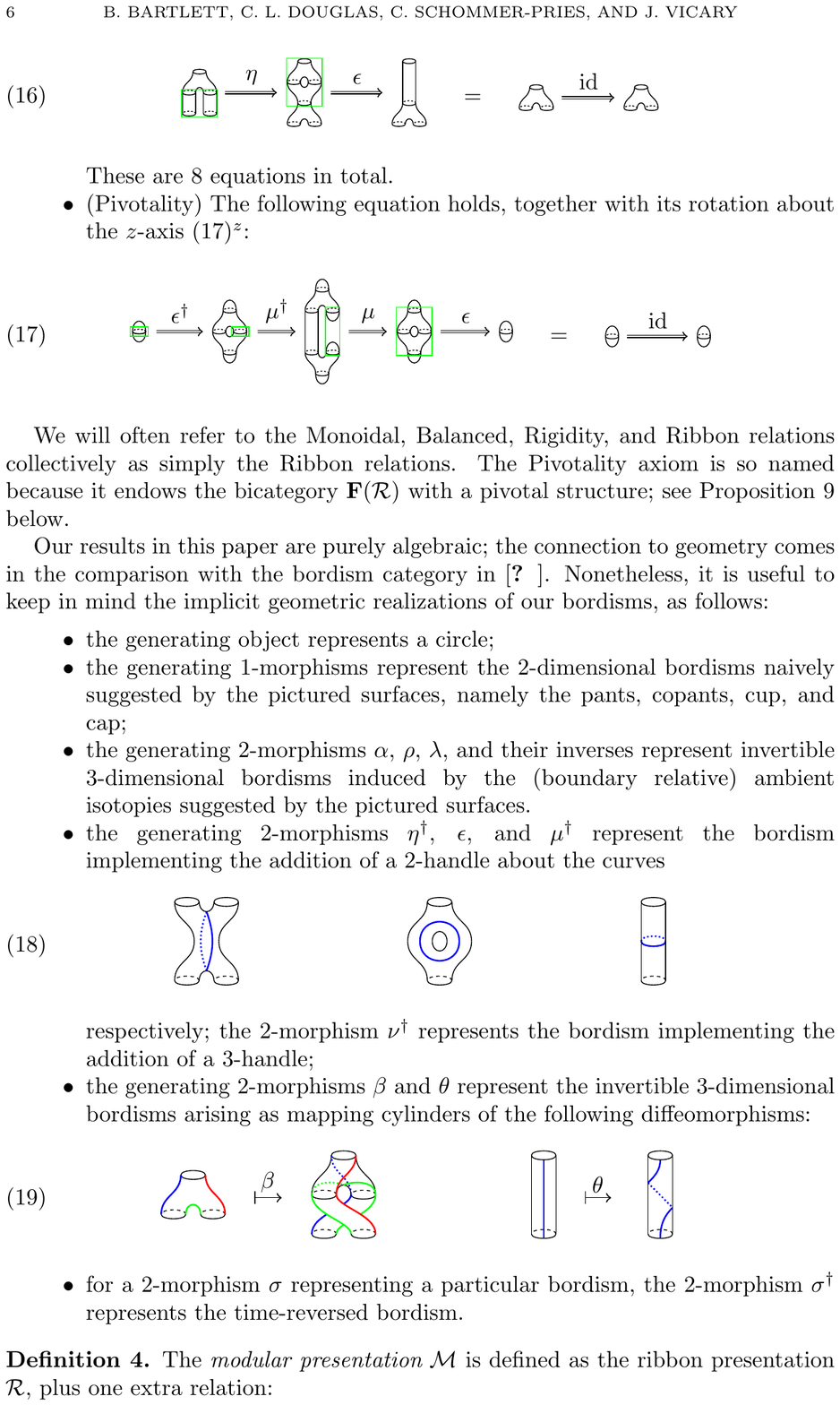}
\end{aligned}
\]
\item for a 2-morphism $\sigma$ representing a particular bordism, the 2-morphism $\sigma ^\dag$ represents the time-reversed bordism.
\end{itemize}

\begin{corollary} \label{co:123TQFTdata}
To give an oriented 1-2-3 TQFT $Z$, it suffices to give the following data:

\begin{itemize}
	\item (objects) A \Vect-enriched category $Z(S^1)$;
	\item (1-morphisms) for any obejects $A, B, C \in Z(S^1)$, vector spaces\footnote{These spaces are required to be functorial in the boundary labels.}
	    \begin{equation}
	        Z(\tinypants)^{A}_{B \boxtimes C}
	        \qquad
	        Z(\tinycopants)^{A \boxtimes B}_{C}
	        \qquad
	        Z(\tinycap)_{A}
	        \qquad
	        Z(\tinycup)^{A}
	    \end{equation}
	\item (2-morphisms) and for each generating 2-morphism $\Sigma_s \xRightarrow{~\kappa~} \Sigma_t$, a linear map
	    \begin{equation}
	        Z(\Sigma_s) \xrightarrow{~Z(\kappa)~} Z(\Sigma_t),
	    \end{equation}
	such that all the relations are satisfied.
\end{itemize}

\end{corollary}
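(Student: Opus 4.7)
The plan is to derive the corollary by unpacking the content of the presentation theorem in the target bicategory $\Prof$. The theorem asserts that $\Bord$ is generated, as a symmetric monoidal bicategory, by the listed objects, 1-morphisms and 2-morphisms subject to the listed relations. Standard 2-categorical machinery then tells us that, up to equivalence, a symmetric monoidal pseudofunctor out of a bicategory presented in this way is specified by its value on each generator, subject to the condition that every listed relation be preserved. So the whole content of the corollary is to translate this universal property into a concrete list of data in $\Prof$.

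The translation proceeds generator by generator. The single generating object $S^1$ must be sent to some object of $\Prof$, i.e.\ a $\Vect$-enriched category, which we call $Z(S^1)$. By symmetric monoidality, the $k$-fold disjoint union of circles is then forced to go to the enriched tensor power $Z(S^1)^{\boxtimes k}$. Each generating 1-morphism $\Sigma$ with source $(S^1)^{\sqcup m}$ and target $(S^1)^{\sqcup n}$ must therefore go to a profunctor $Z(S^1)^{\boxtimes m} \profarrow Z(S^1)^{\boxtimes n}$, which by definition is a $\Vect$-enriched functor $(Z(S^1)^{\boxtimes n})^{\mathrm{op}} \boxtimes Z(S^1)^{\boxtimes m} \to \Vect$. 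Such a functor is equivalently a family of vector spaces indexed by tuples of boundary objects, functorial in those objects; for the four generators $\tinypants$, $\tinycopants$, $\tinycap$, $\tinycup$ this is precisely the data displayed in the corollary. Likewise a 2-morphism in $\Prof$ between two such profunctors is exactly a family of linear maps between the associated vector spaces, natural in the boundary labels, so each generating 2-morphism $\kappa : \Sigma_s \Rightarrow \Sigma_t$ supplies a single linear map $Z(\Sigma_s) \to Z(\Sigma_t)$.

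I would then invoke the presentation theorem itself to conclude: since every relation listed there becomes an equation between composites of these linear maps in $\Prof$, the stated data determines, and is determined by, the pseudofunctor $Z$ up to equivalence. The subsidiary coherence data of a symmetric monoidal pseudofunctor (compositors, unitors, symmetry constraints) do not introduce additional freedom because the associator $\alpha$, unitors $\lambda, \rho$, and swap $\beta$ already appear explicitly among the generating 2-morphisms, so specifying their images is part of the data above, and the pentagon/triangle/hexagon/tortile relations appearing in the theorem are exactly the coherence constraints required.

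The main delicate point, and the one I would spell out carefully, is step two: the identification of a profunctor $Z(S^1)^{\boxtimes m} \profarrow Z(S^1)^{\boxtimes n}$ with a functorial family of vector spaces of the form $Z(\Sigma)^{A_1 \boxtimes \cdots \boxtimes A_n}_{B_1 \boxtimes \cdots \boxtimes B_m}$. This is essentially the definition of a $\Vect$-enriched functor out of an enriched tensor product of categories, but it is the point at which the notation in the corollary is pinned down and where the variance conventions (upper versus lower indices matching target versus source circles) must be checked against the profunctor composition formula \eqref{eqn:prof_comp}. Once this identification is in place, the rest of the argument is a direct appeal to the presentation theorem.
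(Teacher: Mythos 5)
Your proposal is correct and matches the paper's (implicit) reasoning: the corollary is stated without proof precisely because it is the direct unpacking of the presentation theorem in the target $\Prof$, which is exactly what you carry out. Your extra care with the identification of profunctors out of $Z(S^1)^{\boxtimes m}$ with functorial families of vector spaces, and with the variance of the upper/lower indices, fills in detail the paper leaves to the reader but does not change the argument.
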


\section{The String-net Space}

% !TEX root = ../main.tex

\label{string_net_section}

In this section we take the general definition of the string-net space for a surface with boundary (following \cite{Kir11}) and specialize it to the case of the toric code, where various simplifications can be made.

Throughout this section, $\Sigma$ denotes an oriented surface with boundary.

\begin{definition} \label{def:boundvalue}
A \emph{boundary value} for $\Sigma$ is a finite subset $B \subset \partial \Sigma$.
\end{definition}
 
%A boundary value immediately gives rise to a finite collection of \emph{boundary vertices}, which are simply those vertices which lay on
 
\begin{definition} \label{def:graph}
Let $B$ be a choice of boundary value for $\Sigma$. A \emph{multicurve on $\Sigma$ subject to $B$} is a finite collection of disjoint smoothly embedded arcs and loops in $\Sigma$, such that 
\begin{itemize}
	\item[(1)] for any arc, each of its endpoints is an element of $B$;
	\item[(2)] each point in $B$ has an incident arc; and
	\item[(3)] arcs meet $\partial \Sigma$ transversely.
\end{itemize}
We write $\Curves(\Sigma; B)$ for the collection of all multicurves on $\Sigma$ which are subject to the boundary condition $B$.
\end{definition}

See Figure \ref{fig:graph_example} for a simple example.

\begin{figure}
    \includegraphics[scale=0.6]{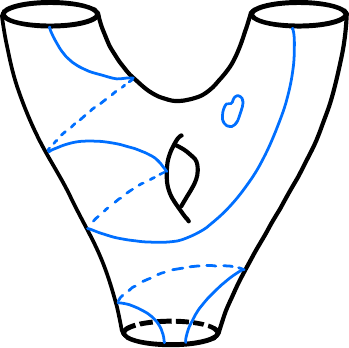}
		\caption{An example of a multicurve on $\Sigma$ subject to the boundary value $B$ consisting of a single point on each of the upper boundary circles and no point on the lower boundary circle.}
		\label{fig:graph_example}
\end{figure}

\begin{remark}
It is easy to see that there exist no multicurves on $\Sigma$ subject to $B$ when $|B|$ is odd.
\end{remark}

\begin{definition} \label{defn_string_net_space}
Let $\Sigma$ be an oriented surface with boundary and $B$ a boundary value for $\Sigma$. Two multicurves $\Gamma, \Gamma' \in \Curves(\Sigma; B)$ are {\em equivalent} if they differ by a finite sequence of the following moves: 
\begin{itemize}
    \item (isotopy invariance)
        \begin{equation} \label{isotopy_move}
        \begin{aligned}
        \begin{tikzpicture}[scale=0.33]
                \draw [thick, dashed] (0,0) circle (2cm);
                \draw [thick, blue] (0,-1.9) to [out=90,in=270,looseness=1] (-1.5,0);
                \draw [thick, blue] (-1.5,0) to [out=90,in=270,looseness=1] (0,1.9);
        \end{tikzpicture}
        \end{aligned}
        \quad
        \sim
        \quad
        \begin{aligned}
        \begin{tikzpicture}[scale=0.33]
                \draw [thick, dashed] (0,0) circle (2cm);
                \draw [thick, blue] (0,-1.9) to [out=90,in=270,looseness=1] (1.5,0);
                \draw [thick, blue] (1.5,0) to [out=90,in=270,looseness=1] (0,1.9);
        \end{tikzpicture}
        \end{aligned}
        \end{equation}
        
        %The name of this move is owing to the traditional usage of the letter $F$ when referring to the 6j symbols 
        
    \item (F-move\footnote{The name ``F-move'' is traditionally used in physics for the associator data in a fusion category. See \cite{LW05}.})
        \begin{equation} \label{F_move}
        \begin{aligned}
        \begin{tikzpicture}[scale=0.33]
                \draw [thick, dashed] (0,0) circle (2cm);
                \draw [thick, blue] (-1.35,1.35) to [out=300,in=240,looseness=1.5] (1.35,1.35);
                \draw [thick, blue] (-1.35,-1.35) to [out=60,in=120,looseness=1.5] (1.35,-1.35);
        \end{tikzpicture}
        \end{aligned}
        \quad
        \sim
        \quad
        \begin{aligned}
        \begin{tikzpicture}[scale=0.33, rotate=90]
                \draw [thick, dashed] (0,0) circle (2cm);
                \draw [thick, blue] (-1.35,1.35) to [out=300,in=240,looseness=1.5] (1.35,1.35);
                \draw [thick, blue] (-1.35,-1.35) to [out=60,in=120,looseness=1.5] (1.35,-1.35);
        \end{tikzpicture}
        \end{aligned}
        \end{equation}
    \item (loop contraction)
        \begin{equation} \label{loop_move}
        \begin{aligned}
        \begin{tikzpicture}[scale=0.33]
                \draw [thick, dashed] (0,0) circle (2cm);
                \draw [thick, blue] (0,0) circle (1.25cm);
        \end{tikzpicture}
        \end{aligned}
        \quad
        \sim
        \quad
        \begin{aligned}
        \begin{tikzpicture}[scale=0.33]
                \draw [thick, dashed] (0,0) circle (2cm);
        \end{tikzpicture}
        \end{aligned}
        \end{equation}
\end{itemize}
We define the \emph{string-net space of $\Sigma$ subject to $B$} as
\begin{equation} \label{string-net-space-defn}
 H(\Sigma; B) := \mathbb{C}[\Curves(M; B)/\sim] \, .
\end{equation}
Elements of $H^{string}(\Sigma;B)$ are called \emph{string-nets}. 
\end{definition}

\begin{remark}
For brevity, we shall usually omit saying ``subject to $B$'' whenever possible when referring to the string-net space. 
\end{remark}

\begin{remark} \label{homology_remark} Let $\Sigma$ be an oriented surface with boundary, and let $B_0$ be the empty boundary condition. Then a multicurve on $\Sigma$ subject to $B_0$ is the same thing as a 1-cycle in homology with $\mathbb{Z}/2\mathbb{Z}$ coefficients:
\[
 \Curves(\Sigma; B_0) = Z_1(\Sigma, \mathbb{Z}/2\mathbb{Z}) 
\]
Moreover, two multicurves $\Gamma, \Gamma'$ are equivalent according to the moves \eqref{isotopy_move}-\eqref{loop_move} precisely when they are homologous when considered as 1-cycles (see \cite{johnson1980spin}). Therefore,  
\[
\Curves(\Sigma; B_0) / \sim \,\, = \,\, H_1(\Sigma, \mathbb{Z}/2\mathbb{Z}).
\]
so that we can interpret the string-net space in terms of homology as
\[
 H(\Sigma; B_0) = \mathbb{C}[H_1(\Sigma, \mathbb{Z}/2\mathbb{Z})].
\]
For other boundary conditions, it is less clear how to interpret the string-net space in terms of homology (but see Lemma \ref{lem:zerotoone}). 
\end{remark}

\begin{remark} \label{rem:pushforward} The string-net space is functorial with respect to diffeomorphisms of surfaces. Let $\Sigma$ and $\Sigma'$ be oriented surfaces with boundary, with boundary conditions $B$ and $B'$ respectively, and let $\gamma: \Sigma \rightarrow \Sigma'$ be a diffeomorphism which restricts to a bijection $B \rightarrow B'$.  If $\Gamma$ is a multicurve on $\Sigma$ subject to $B$, then $\gamma(\Gamma)$ is a multicurve on $\Sigma'$ subject to $B'$. Thus $\gamma$ gives rise to an invertible push-forward linear map 
\[
 \gamma_* : H(\Sigma; B) \rightarrow H(\Sigma'; B').
\]
See Corollary \ref{push-forward-map-lem} for an application.
\end{remark}

\begin{remark} Given any spherical fusion category $A$, one can construct the string-net space associated to an oriented surface $\Sigma$ as the quotient of the vector space of formal combinations of $A$-labelled graphs on $\Sigma$ by the subspace spanned by null graphs supported in a disk (see \cite{Kir11}):
\begin{equation} \label{general-string-net}
H_A(\Sigma; B) = \mathbb{C}[\mathrm{Graphs}_A(\Sigma; B)] / \mathrm{NullGraphs}_A (\Sigma)
\end{equation}
The toric code corresponds to the case $A = \mathrm{Vect}[\mathbb{Z}/2\mathbb{Z}]$. In this case, the general definition \eqref{general-string-net} simplifies to \eqref{string-net-space-defn}, namely the free vector space on a set of equivalence classes of multicurves, as opposed to a quotient of vector spaces of graphs. 
\end{remark}

\begin{example}
If $\Sigma = S^2$ then any multicurve on $\Sigma$ is a collection of loops, all of whom contract to $1$ since $S^2$ is simply-connected. It follows that $H(S^2) \cong \CC$.
\end{example}

\begin{example}
The string-net space of the torus is
\begin{equation*}
%H^{string}(T) = \text{span}\left\{\raisebox{-2.5mm}{\includegraphics[scale=0.25]{images/torus_space.jpg}}\right\}.
H(T) = \text{span}\left\{ \raisebox{-2.5mm}{\includegraphics[scale=0.1]{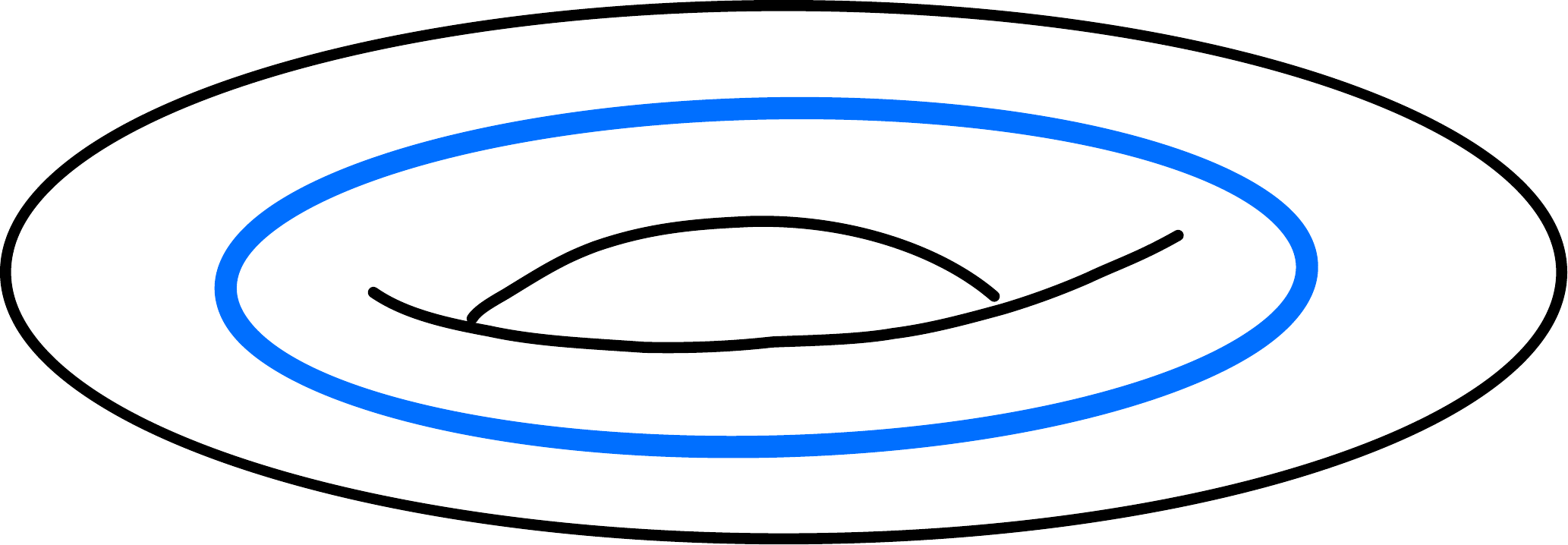}},\raisebox{-2.5mm}{\includegraphics[scale=0.1]{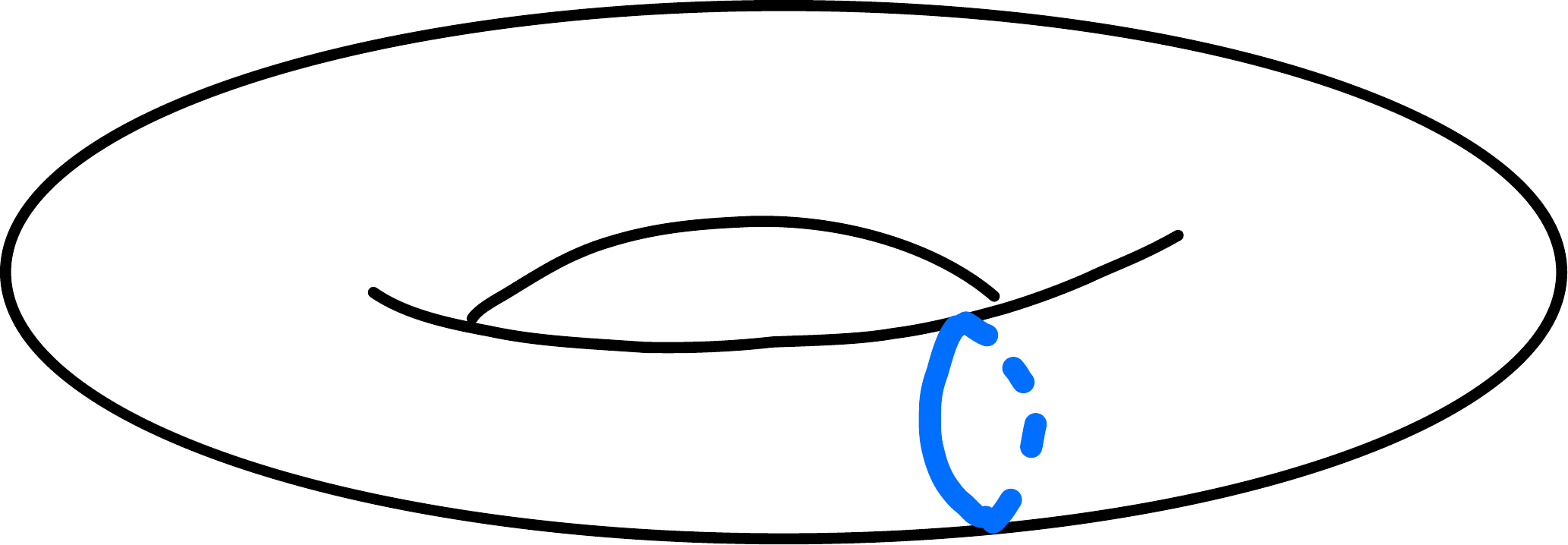}},\raisebox{-2.5mm}{\includegraphics[scale=0.1]{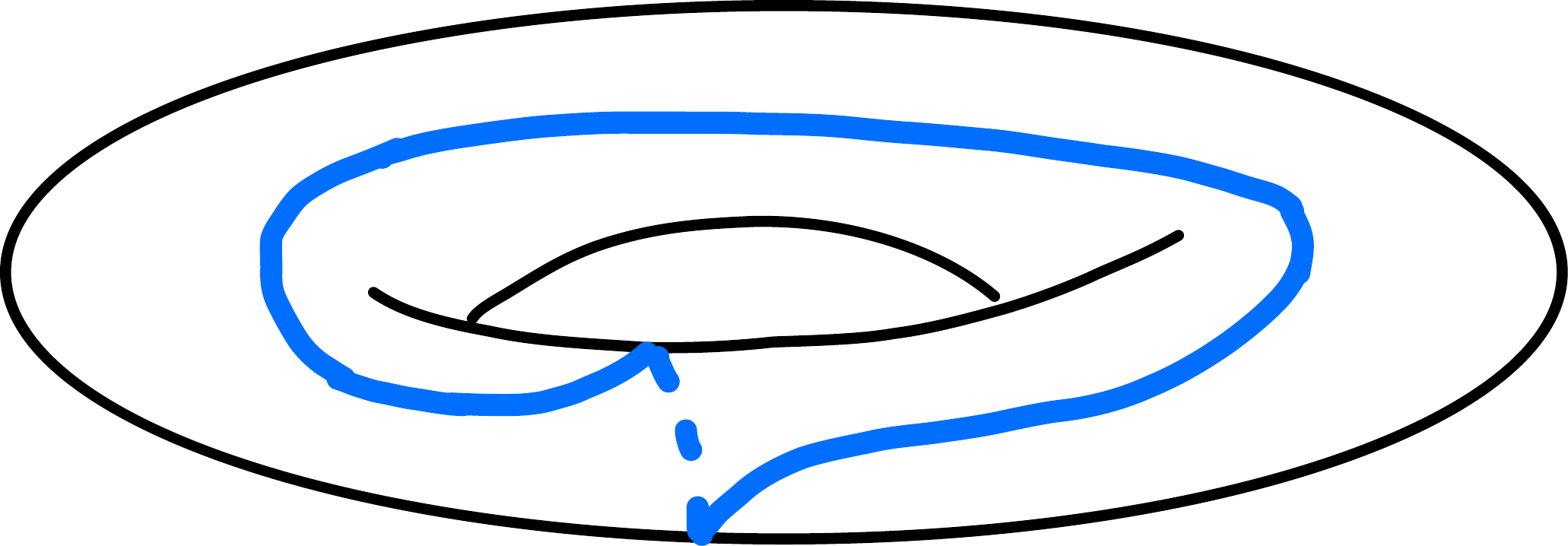}},\raisebox{-2.5mm}{\includegraphics[scale=0.1]{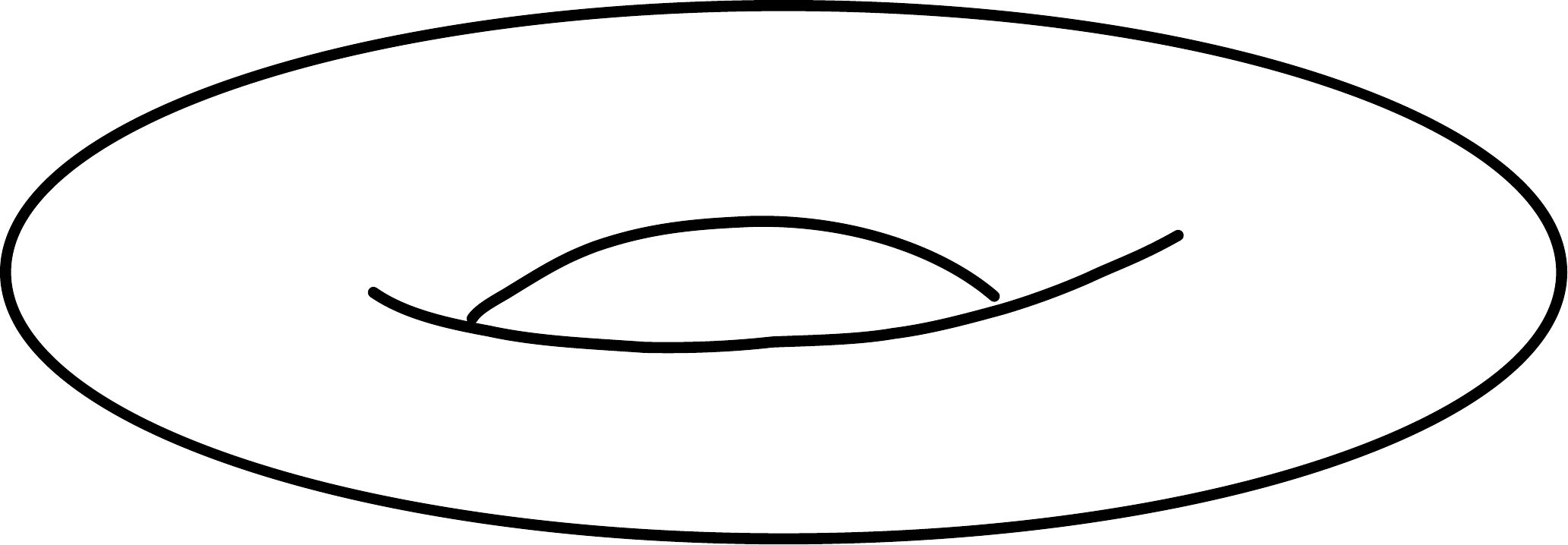}} \right\}.
\end{equation*}
\end{example}

\begin{lemma}[Cloaking] \label{lem:cloaking}
Let the orange string be defined by

\begin{equation}
        \begin{aligned}
        \begin{tikzpicture}[scale=0.33]
                \draw [thick, dashed] (0,0) circle (2cm);
                \draw [thick, orange] (0,-1.9) -- (0,1.9);
        \end{tikzpicture}
        \end{aligned}
        \quad
        =
        \quad
        \begin{aligned}
        \begin{tikzpicture}[scale=0.33]
                \draw [thick, dashed] (0,0) circle (2cm);
								\draw [thick, blue] (0,-1.9) -- (0,1.9);
        \end{tikzpicture}
        \end{aligned}
				\quad
				+
				\quad
				\begin{aligned}
        \begin{tikzpicture}[scale=0.33]
                \draw [thick, dashed] (0,0) circle (2cm);								
        \end{tikzpicture}
        \end{aligned}
        \end{equation}
Then the relation
\begin{equation} \label{eqn:cloaking}
\begin{tz}
\filldraw [LightGray] (1,1.5) circle (0.25cm);
\draw [thick, orange] (1,1.5) circle (0.4cm);
\draw [thick, blue] (1,2.5) to [out=270,in=90,looseness=1] (0.25,1.5);
\draw [thick, blue] (0.25,1.5) to [out=270,in=90,looseness=1] (1,0.5);
\end{tz}
\quad
=
\quad
\begin{tz}
\filldraw [LightGray] (1,1.5) circle (0.25cm);
\draw [thick, orange] (1,1.5) circle (0.4cm);
\draw [thick, blue] (1,2.5) to [out=270,in=90,looseness=1] (1.75,1.5);
\draw [thick, blue] (1.75,1.5) to [out=270,in=90,looseness=1] (1,0.5);
\end{tz}
\end{equation}
holds in any string-net space, irrespective of the contents of the shaded region.\footnote{It may contain any subgraph and any number of holes or punctures.}
\end{lemma}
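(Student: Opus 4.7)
The plan is to expand the orange loop via its defining relation (orange $=$ blue $+\,\varnothing$) on both sides of \eqref{eqn:cloaking}, thereby reducing the identity to matching four elementary string-nets in pairs. Writing $X$ for the shaded region, the LHS becomes
\[
\underbrace{\bigl[\text{blue loop around $X$, string on the left}\bigr]}_{(a)} \; +\; \underbrace{\bigl[\text{no loop, string on the left}\bigr]}_{(b)},
\]
while the RHS decomposes analogously as $(c)+(d)$, with $(c)$ (resp.\ $(d)$) the corresponding configuration in which the external string passes on the right of $X$. It therefore suffices to show $(a)=(d)$ in the string-net space; by the same argument applied on the right of $X$ one obtains $(c)=(b)$, and hence $\mathrm{LHS}=(a)+(b)=(d)+(c)=\mathrm{RHS}$.

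To establish $(a)=(d)$, I would apply a single F-move \eqref{F_move} in a small disk $D$ sandwiched between the external blue string and the boundary of the shaded region. Inside $D$ the two input arcs are the vertical segment of the external string (hugging the left side of $D$) and the vertical segment coming from the leftmost portion of the blue loop (hugging the right side of $D$); this is precisely the right-hand configuration of \eqref{F_move}. Replacing it by the left-hand configuration (two horizontal arcs) fuses the string with the loop: tracing the resulting multicurve globally, one finds a single embedded arc with the same endpoints as the original external string, running from the top endpoint down to $D$, across the top F-arc, up out of $D$ along the former blue loop, over the top of $X$, down its right side, under its bottom, back to $D$, across the bottom F-arc, and finally down to the bottom endpoint.

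An application of isotopy invariance \eqref{isotopy_move} then straightens this arc into the standard configuration of an external string passing on the right of $X$ with no loop remaining, giving $(a)=(d)$. The hard part will be verifying this final isotopy: one must check that the post-F-move arc is isotopic rel endpoints to the standard right-hand string. This holds because both arcs wind exactly once around $X$ on its right side (the post-F-move arc does so explicitly, by traversing the former blue loop once over the top, down the right, and under the bottom), and are therefore homotopic rel endpoints in the complement of $X$; since both are simple embedded arcs, rel-endpoint homotopy implies rel-endpoint isotopy. A reassuring feature is that the F-move is strictly local to the disk $D$, so the argument is insensitive to the contents of $X$---which may contain any subgraph, holes, or punctures---exactly as the lemma asserts.
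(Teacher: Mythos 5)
Your proposal is correct and follows essentially the same route as the paper: expand the orange loop as (blue loop)\,$+$\,(empty) on both sides, then use a single F-move to fuse the passing strand with the adjacent side of the blue loop, followed by an isotopy, to identify the ``loop with strand on one side'' term with the ``no loop with strand on the other side'' term. The paper presents this as a chain of equalities transforming the left-hand sum into the right-hand sum rather than matching the four terms crosswise, but the decomposition, the key F-move, and the concluding isotopy are the same; your explicit justification that the post-F-move embedded arc is isotopic rel endpoints to the standard right-passing arc is a welcome elaboration of a step the paper leaves to the pictures.
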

\begin{proof}

\begin{align}
\begin{tz}
\filldraw [LightGray] (1,1.5) circle (0.25cm);
\draw [thick, orange] (1,1.5) circle (0.4cm);
\draw [thick, blue] (1,2.5) to [out=270,in=90,looseness=1] (0.25,1.5);
\draw [thick, blue] (0.25,1.5) to [out=270,in=90,looseness=1] (1,0.5);
\end{tz}
\quad &=
\quad
\begin{tz}
\filldraw [LightGray] (1,1.5) circle (0.25cm);
\draw [thick, blue] (1,1.5) circle (0.4cm);
\draw [thick, blue] (1,2.5) to [out=270,in=90,looseness=1] (0.25,1.5);
\draw [thick, blue] (0.25,1.5) to [out=270,in=90,looseness=1] (1,0.5);
\end{tz}
\quad
+
\quad
\begin{tz}
\filldraw [LightGray] (1,1.5) circle (0.25cm);
%\draw [blue] (1,1.5) circle (0.4cm);
\draw [thick, blue] (1,2.5) to [out=270,in=90,looseness=1] (0.25,1.5);
\draw [thick, blue] (0.25,1.5) to [out=270,in=90,looseness=1] (1,0.5);
\end{tz} \\
&=
\quad
\begin{tz}
\filldraw [LightGray] (1,1.5) circle (0.25cm);
\draw [thick, blue] (1,2.5) to [out=270,in=90,looseness=1] (0.25,1.75) to [out=270,in=180,looseness=1] (1,1.9);
\draw [thick, blue] (1,1.5) [partial ellipse=0:90:0.4cm and 0.4cm];
\draw [thick, blue] (1,1.5) [partial ellipse=270:360:0.4cm and 0.4cm];
\draw [thick, blue] (1,0.5) to [out=90,in=270,looseness=1] (0.25,1.25) to [out=90,in=180,looseness=1] (1,1.1);
\end{tz}
\quad
+
\quad
\begin{tz}[rotate=180]
\filldraw [LightGray] (1,1.5) circle (0.25cm);
\draw [thick, blue] (1,2.5) to [out=270,in=90,looseness=1] (0.25,1.75) to [out=270,in=180,looseness=1] (1,1.9);
\draw [thick, blue] (1,1.5) [partial ellipse=0:90:0.4cm and 0.4cm];
\draw [thick, blue] (1,1.5) [partial ellipse=270:360:0.4cm and 0.4cm];
\draw [thick, blue] (1,0.5) to [out=90,in=270,looseness=1] (0.25,1.25) to [out=90,in=180,looseness=1] (1,1.1);
\end{tz} \\
&=
\quad
\begin{tz}
\filldraw [LightGray] (1,1.5) circle (0.25cm);
%\draw [blue] (1,1.5) circle (0.4cm);
\draw [thick, blue] (1,2.5) to [out=270,in=90,looseness=1] (1.75,1.5);
\draw [thick, blue] (1.75,1.5) to [out=270,in=90,looseness=1] (1,0.5);
\end{tz}
\quad
+
\quad
\begin{tz}
\filldraw [LightGray] (1,1.5) circle (0.25cm);
\draw [thick, blue] (1,1.5) circle (0.4cm);
\draw [thick, blue] (1,2.5) to [out=270,in=90,looseness=1] (1.75,1.5);
\draw [thick, blue] (1.75,1.5) to [out=270,in=90,looseness=1] (1,0.5);
\end{tz}
\quad
=
\quad
\begin{tz}
\filldraw [LightGray] (1,1.5) circle (0.25cm);
\draw [thick, orange] (1,1.5) circle (0.4cm);
\draw [thick, blue] (1,2.5) to [out=270,in=90,looseness=1] (1.75,1.5);
\draw [thick, blue] (1.75,1.5) to [out=270,in=90,looseness=1] (1,0.5);
\end{tz}
\end{align}
%The second equal sign follows by applying the F-move to the first term of the left-hand side and then using isotopy yields the second term on the right-hand side. Similarly, the second term on the left-hand side is equal to the first term on the right-hand side.
\end{proof}

We call Relation \ref{eqn:cloaking} above ``cloaking'', since the presence of the orange loop effectively cloaks the contents of the shaded region, allowing strings to pass over it unperturbed. Cloaking will play an important role in the following section.

\section{The Extended Toric Code}

% !TEX root = ../main.tex

\label{action_of_generators_section}

In this section we make use of Corollary \ref{co:123TQFTdata} in order to define our desired extended 3-dimensional TQFT, which we denote by $Z_{SN}$, based on string-nets.

\subsection{The Objects}

We require a \Vect-enriched category to assign to the generating object (i.e. the circle). 

\begin{definition} \label{category_of_boundary_values}
The \emph{category $\BV$ of boundary values}\footnote{This is a specialization of Definition 6.1 in \cite{Kir11}} is defined as follows:

\gscalecobordisms{0.25}

\begin{itemize}
    \item The objects of $\BV$ are finite subsets of $S^1$.
    \item Given objects $B$ and $B'$ of $\BV$, the space $\text{hom}(B,B')$ is defined to be the string-net space $H(S^1 \times [0,1]; B, B')$ with the boundary value $B$ at $S^1 \times \{1\}$ and $B'$ at $S^1 \times \{0\}$.
\end{itemize}

Composition in $\BV$ works by gluing the cylinders vertically and then rescaling. The identity morphisms are string-nets where each point of $B$ at the top of the cylinder is connected by a vertical arc to the corresponding point of $B$ at the bottom. 

\end{definition}

Note that the category $\BV$ is not Cauchy complete. (In fact, its Cauchy completion is the usual Drinfeld center of the category $\Vect[\mathbb{Z}/2\mathbb{Z}]$ of $\mathbb{Z}/2\mathbb{Z}$ graded vector spaces, which has 4 simple objects, but we do not adopt that viewpoint here. See Remark \ref{prof_remark}.)

\gscalecobordisms{0.25}
\begin{lemma} \label{lem:sec4_parityiso}
Let $B \in \BV$ where $|B| = k$. Let $B_0$ and $B_1$ denote the objects

\begin{equation}
B_0
\quad
=
\quad
\begin{tikzpicture}[scale=\cobscale]
		\draw (1,0) ellipse (1cm and 0.25cm);
\end{tikzpicture}
\qquad
\text{and}
\qquad
B_1
\quad
=
\quad
\raisebox{-1.35mm}{%
\begin{tikzpicture}[scale=\cobscale]
		\draw (1,0) ellipse (1cm and 0.25cm);
		\node at (1,-0.25) {\BlackDot};
\end{tikzpicture}
}
\end{equation}
Then in the category $\BV$, $B$ is canonically isomorphic to $B_0$ if $k$ is even, and $B$ is canonically isomorphic to  $B_1$ if $k$ is odd.
\end{lemma}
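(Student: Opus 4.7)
Set $\bar{k} = k \bmod 2$. The plan is to construct explicit mutually inverse morphisms $f_B : B \to B_{\bar{k}}$ and $g_B : B_{\bar{k}} \to B$, verifying that both composites reduce to identities using only the moves \eqref{isotopy_move}, \eqref{F_move}, and \eqref{loop_move}. Label the points of $B$ cyclically as $p_1, \ldots, p_k$ along $S^1$.

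\emph{Construction.} Define $f_B$ to be the string-net in the cylinder with $B$ on top and $B_{\bar{k}}$ on bottom, consisting of disjoint arcs joining $p_{2j-1}$ to $p_{2j}$ for $j = 1, \ldots, \lfloor k/2 \rfloor$, all drawn inside a narrow collar $S^1 \times [1-\varepsilon, 1]$ near the top; in the odd case, augment this with a single near-vertical arc from $p_k$ down to the bottom point of $B_1$. Define $g_B$ symmetrically, with the pairing arcs drawn in a collar near the bottom. Note these string-nets exist because $|B| \equiv |B_{\bar{k}}| \pmod 2$, as required for a valid multicurve.

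\emph{Checking $g_B \circ f_B = \id_B$.} The composite is a tall cylinder with $B$ on both ends, carrying, for each adjacent pair $(p_{2j-1},p_{2j})$, a U-arc near the top and a matching U-arc near the bottom, together with (in the odd case) two vertical arc-segments that concatenate into one. Because adjacent pairs occupy cyclically disjoint angular sectors of $S^1$, each pair lives inside its own disjoint rectangular disc $D_j \subset S^1 \times [0,1]$ meeting the boundary in four points $p_{2j-1}, p_{2j}, p_{2j-1}', p_{2j}'$. Inside each $D_j$ the local picture is one of the two non-crossing pairings of four boundary points, and the F-move \eqref{F_move} replaces it by the other pairing, namely two parallel vertical arcs. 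Performing this F-move in every $D_j$ converts the composite into the identity multicurve on $B$.

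\emph{Checking $f_B \circ g_B = \id_{B_{\bar{k}}}$.} Now the composite is a cylinder with $B_{\bar{k}}$ on both ends and the intermediate $B$-slice in the middle. For each adjacent pair, the U-arc from $g_B$ (just above the middle) and the U-arc from $f_B$ (just below) share both endpoints on $B$ and therefore combine into a simple closed loop bounding a ``lens'' disc transverse to the middle slice. Each such loop lies in a disc disjoint from the others, so the loop contraction move \eqref{loop_move} eliminates it. In the odd case the two vertical arc-segments from $g_B$ and $f_B$ concatenate to the single vertical arc of $\id_{B_1}$. All arcs having been eliminated or trivialized, the composite equals the identity string-net.

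\emph{Main obstacle and canonicity.} The technical point is ensuring that the local F-moves and loop contractions can be performed independently; this is achieved by routing each pairing arc in a small collar and pairing cyclically adjacent points, so that the relevant discs are pairwise disjoint. For the canonicity claim, one must further argue that different admissible pairings of $B$ (and different routings in the collar) give rise to the same morphism in $\BV$. This reduces to the fact that any two non-crossing pairings of an even set of points on the boundary of a disc are connected by a sequence of F-moves, which is the standard associativity consequence of \eqref{F_move} applied inside the collar.
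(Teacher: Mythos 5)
Your proof is correct and takes essentially the same approach as the paper: the isomorphism is the string-net of cups pairing off the points of $B$ near the top of the cylinder (plus a single through-strand in the odd case), with canonicity following from the $F$-move. The only difference is one of detail --- you explicitly exhibit the inverse and verify both composites via $F$-moves and loop contraction, a verification the paper leaves implicit.
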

\begin{proof}
If $k = 2n$, then the string-net consisting of $n$ cups at the top of the cylinder is an isomorphim from $B$ to $B_0$.\footnote{See figure \ref{fig:two_to_zero} for an example when $k = 2$.} This isomorphism is canonical since any two ways of cupping off the $2n$ boundary points are equivalent under repeated application of the $F$-move. If $k = 2n + 1$, then the string-net consisting of $n$ consecutive cups at the top of the cylinder, followed by connecting the final point of $B$ with the lone point on $B_1$, is an isomorphism from $B$ to $B_1$. This isomorphism is also canonical, again due to the $F$-move.
\end{proof}

\begin{remark}
A useful consequence of Lemma \ref{lem:sec4_parityiso} is that it suffices to define the actions of the generating 2-morphisms on string-net spaces subject to boundary conditions $B_0$ or $B_1$ on its boundary circles.
\end{remark}

\begin{figure}
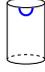

    \centering
    \cylwithtopcup
    \caption{A morphism in $\BV$ from the object with two elements to the object with no elements.}
    \label{fig:two_to_zero}
\end{figure}
\gscalecobordisms{0.25}

On objects we set

\begin{equation}
\begin{tikzpicture}[scale=\cobscale]
		\draw (1,0) ellipse (1cm and 0.25cm);
\end{tikzpicture}
\qquad
\xmapsto{~Z_{SN}~}
\qquad
\BV.
\end{equation}

\subsection{The Generating 1-Morphisms}

Below we specify the vector spaces assigned to the generating 1-morphisms. We define

\begin{align}
    Z_{SN}\left(\tinycup\right)^A &= H\left(\tinycup;A\right) \\
    Z_{SN}\left(\tinycap\right)_A &= H\left(\tinycap;A\right) \\
    Z_{SN}\left(\tinypants\right)^A_{B,C} &= H\left(\tinypants;A,B,C\right) \\
    Z_{SN}\left(\tinycopants\right)^{A,B}_C &= H\left(\tinycopants;A,B,C\right)
\end{align}

Here $A$, $B$, $C \in \{B_0,B_1\}$. In other words, the vector spaces assigned to the 1-generators are simply the string-net spaces on the surfaces naively suggested by the pictures, along with the stated boundary conditions.\\

Next we investigate the vector spaces assigned to a \emph{composite} of the generating 1-morphisms. Let $\Sigma$ be such a composite, with $m$ top (input) boundary circles and $n$ bottom (output) boundary circles, labeled by $X_1,\ldots,X_m$ and $Y_1,\ldots,Y_n$ respectively, with $X_i,Y_j \in \{B_0,B_1\}$ for each $i$ and $j$.  Then, according to Equation \ref{eqn:prof_comp}, we have

\begin{equation} \label{eqn:stringnet_1gencomposite}
    Z_{SN}\left( \Sigma \right)^{X_1,\ldots,X_m}_{Y_1,\ldots,Y_n} := \bigoplus_{L} \bigotimes_{\sigma} ~ \quotient{Z_{SN}(\sigma)}{\sim}
\end{equation}

where the tensor product ranges over all 1-generators $\sigma$ appearing in the decomposition of $\Sigma$ and the direct sum ranges over all possible labelings (with objects in \BV) of the internal boundary circles. Each vector spaces $Z_{SN}(\sigma)$ is computed using the boundary conditions induced by $L$ and the given boundary conditions on the top and bottom boundary circles.\\

Unpacking Equation \ref{eqn:stringnet_1gencomposite} we see that, in the string-net setting, the relation $\sim$ has the following graphical description:

\begin{equation}
\begin{aligned}
\begin{tikzpicture}[scale=0.65]

\draw [fill, white] (1.5,4.25) circle (1.5pt);
\draw [fill, white] (1.5,2.25) circle (1.5pt);

\draw [fill, LightBlue] (0,0) to [out=60,in=270] (0.5,1) -- (0.5,1.5) -- (2.5,1.5) -- (2.5,1) to [out=270,in=120] (3,0) -- (0,0);

\draw [fill, LightBlue] (0,8) to [out=-60,in=90] (0.5,7) -- (0.5,6.5) -- (2.5,6.5) -- (2.5,7) to [out=90,in=-120] (3,8) -- (0,8);

\draw [fill, pink] (1.5,5) ellipse (1cm and 0.25cm);
\draw [fill, pink] (0.5,1.5) -- (0.5,5) -- (2.5,5) -- (2.5,1.5) -- cycle;

\draw [fill, pink] (1.5,1.5) ellipse (1cm and 0.25cm);
\draw [dashed] (1.5,1.5) [partial ellipse=0:180:1cm and 0.25cm];
\draw (1.5,1.5) [partial ellipse=180:360:1cm and 0.25cm];

\draw [fill, LightBlue] (1.5,6.5) ellipse (1cm and 0.25cm);
\draw [dashed] (1.5,6.5) [partial ellipse=0:180:1cm and 0.25cm];
\draw (1.5,6.5) [partial ellipse=180:360:1cm and 0.25cm];

\draw (0,0) to [out=60,in=270] (0.5,1);
\draw (0.5,1) -- (0.5,5);
\draw (2.5,1) -- (2.5,5);
\draw (3,0) to [out=120,in=270] (2.5,1);

\draw (0.5,6.5) -- (0.5,7) to [out=90, in=-60] (0,8);
\draw (2.5,6.5) -- (2.5,7) to [out=90, in=-120] (3,8);

\draw (1.5,5) ellipse (1cm and 0.25cm);

\node at (1.5,7.5) {$\Gamma_1$};
\node at (1.5,0.5) {$\Gamma_2$};
\node at (1.5,3) {$\Gamma$};

\node at (0,6.5) {$X$};
\node at (0,5) {$X$};
\node at (0,1.5) {$Y$};

\end{tikzpicture}
\end{aligned}
\qquad
=
\qquad
\begin{aligned}
\begin{tikzpicture}[scale=0.65]

\draw [fill, pink] (1.5,3) ellipse (1cm and 0.25cm);
\draw [fill, pink] (0.5,3) -- (0.5,6.5) -- (2.5,6.5) -- (2.5,3) -- cycle;

\draw [fill, LightBlue] (0,0) to [out=60,in=270] (0.5,1) -- (0.5,1.5) -- (2.5,1.5) -- (2.5,1) to [out=270,in=120] (3,0) -- (0,0);
\draw [fill, VLB] (1.5,1.5) ellipse (1cm and 0.25cm);
\draw (1.5,1.5) ellipse (1cm and 0.25cm);

\draw (0,0) to [out=60,in=270] (0.5,1) -- (0.5,1.5);
\draw (3,0) to [out=120,in=270] (2.5,1) -- (2.5,1.5);

\draw [fill, LightBlue] (0,8) to [out=-60,in=90] (0.5,7) -- (0.5,6.5) -- (2.5,6.5) -- (2.5,7) to [out=90,in=-120] (3,8) -- (0,8);
\draw [dashed] (1.5,3) [partial ellipse=0:180:1cm and 0.25cm];
\draw (1.5,3) [partial ellipse=180:360:1cm and 0.25cm];
\draw [LightBlue, fill] (1.5,6.5) ellipse (1cm and 0.25cm);
\draw [dashed] (1.5,6.5) [partial ellipse=0:180:1cm and 0.25cm];
\draw (1.5,6.5) [partial ellipse=180:360:1cm and 0.25cm];
\draw (0.5,3) -- (0.5,7) to [out=90,in=-60] (0,8);
\draw (2.5,3) -- (2.5,7) to [out=90,in=-120] (3,8);

\node at (1.5,4.5) {$\Gamma$};
\node at (1.5,7.5) {$\Gamma_1$};
\node at (1.5,0.5) {$\Gamma_2$};

\node at (3,6.5) {$X$};
\node at (3,3) {$Y$};
\node at (3,1.5) {$Y$};

\end{tikzpicture}
\end{aligned}
\end{equation}

Here $\lab\Gamma_1\rab$ (resp. $\lab\Gamma_2\rab$) is a string-net local to a 1-generator $\sigma_1$ (resp. $\sigma_2$) appearing in $\Sigma$, with boundary value $X$ (resp. $Y$) on the displayed boundary circle. Finally, $\lab\Gamma\rab \in H(\tinycyl; X,Y)$.\\

In other words, the relation $\sim$ allows us to move string-net data living on $\sigma_1$ ``isotopically'' through in the internal boundary circle to $\sigma_2$, as though $\sigma_1$ and $\sigma_2$ were actually glued together. If we let $\widehat{\Sigma}$ denote the surface obtained by gluing together all the 1-generators in $\Sigma$ along the internal boundary circles in the obvious way, then $\sim$ suggests that elements of $Z_{SN}\left( \Sigma \right)^{X_1,\ldots,X_m}_{Y_1,\ldots,Y_n}$ behave just like the string-nets in $H(\widehat{\Sigma};X_1,\ldots,X_m,Y_1,\ldots,Y_n)$. We prove this fact below.

\begin{theorem} \label{thm:gluing}
Let $\Sigma$ and $\widehat{\Sigma}$ be defined as above (including the boundary values). Then there is a canonical isomorphism
\[
Z_{SN}\left( \Sigma \right)^{X_1,\ldots,X_m}_{Y_1,\ldots,Y_n}
\cong
H(\widehat{\Sigma};X_1,\ldots,X_m,Y_1,\ldots,Y_n) \, .
\]
\end{theorem}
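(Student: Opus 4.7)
The plan is to construct mutually inverse maps
\[
\Phi : Z_{SN}(\Sigma)^{X_1,\ldots,X_m}_{Y_1,\ldots,Y_n} \longleftrightarrow H(\widehat{\Sigma};X_1,\ldots,X_m,Y_1,\ldots,Y_n) : \Psi
\]
via a cut-and-glue construction along the internal boundary circles of $\Sigma$. A spanning element of $Z_{SN}(\Sigma)^{X_*}_{Y_*}$ is specified by a labeling $L$ of each internal circle by an object of $\BV$ together with a string-net class on each generator $\sigma$ compatible with $L$ and with the prescribed outer boundary values. Since by construction the boundary values induced by $L$ match on each internal circle, gluing the local multicurves produces a multicurve on $\widehat{\Sigma}$, and this prescription descends to a map $\Phi$.

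For the forward direction, well-definedness of $\Phi$ reduces to two checks: first, that each local move (isotopy, F-move, loop contraction) supported inside a single generator extends trivially to the same move on $\widehat{\Sigma}$; and second, that the identification $\sim$, which pushes string-net data across an internal circle, corresponds precisely to an ambient isotopy of the glued multicurve in $\widehat{\Sigma}$. Both are immediate from the graphical form of $\sim$ displayed just before the theorem.

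To construct $\Psi$, represent a class in $H(\widehat{\Sigma})$ by a multicurve $\Gamma$, perturbed by an ambient isotopy so as to be transverse to every internal circle. Each internal circle $C$ inherits a finite subset $L_C := \Gamma \cap C$, which serves as the required label from $\BV$. Cutting $\widehat{\Sigma}$ along the internal circles decomposes $\Gamma$ into a multicurve on each generator compatible with $L$, and the resulting tensor-product class in the summand indexed by $L$ defines $\Psi([\Gamma])$. Independence of the choice of transverse representative, and of the representative of $[\Gamma]$ itself, reduces to showing that any local move (isotopy, F-move, loop contraction) on $\widehat{\Sigma}$ supported in a disk $D$ can be accounted for: if $D$ is disjoint from all internal circles it is handled by the local string-net equivalence inside a single generator, while if $D$ meets an internal circle it is handled by $\sim$, possibly after a preliminary isotopy and an application of cloaking (Lemma~\ref{lem:cloaking}) to separate strings from the circle.

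The main obstacle is the finger-move case: an ambient isotopy which pushes a strand of $\Gamma$ across an internal circle $C$ alters the cardinality of $\Gamma \cap C$ by two, so the cut representative lands in a different summand of the direct sum over labelings. The content of the $\sim$ relation is precisely that these two summands are identified via the evident cup/arc configuration on the two sides of $C$; once this is unpacked carefully, the remaining verification that $\Phi \circ \Psi = \id$ and $\Psi \circ \Phi = \id$ is routine, as cutting and gluing along internal circles are mutually inverse operations at the level of multicurves.
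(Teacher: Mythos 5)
Your proposal is correct and follows essentially the same cut-and-glue argument as the paper: the paper defines the gluing map and argues injectivity and surjectivity directly, whereas you construct the explicit cutting inverse and isolate the finger-move across an internal circle as the key case handled by $\sim$ --- the same content, spelled out in somewhat more detail. (The appeal to cloaking in the well-definedness of the cutting map is superfluous, since no orange loops arise there, but it is harmless.)
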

\begin{proof}
We define a map

\[
\Psi : \bigoplus_{L} \bigotimes_{\sigma} ~ \quotient{Z_{SN}(\sigma)}{\sim}
\quad
\longrightarrow
\quad
H(\widehat{\Sigma};X_1,\ldots,X_m,Y_1,\ldots,Y_n)
\]

and demonstrate that it is an isomorphism. Fix a labeling $L$, and let $\Gamma_{\sigma}$ be a graph on $\sigma$ (subject to the obvious boundary conditions) for each 1-generator $\sigma$ appearing in the decomposition of $\Sigma$. Then we define $\Psi\left(\bigotimes_{\sigma} \lab\Gamma_{\sigma}\rab\right)$ to be the string-net on $\widehat{\Sigma}$ represented by the graph which looks like $\Gamma_{\sigma}$ in each component $\sigma$, where the graphs $\Gamma_{\sigma}$ have been glued together along the internal boundary circles in the obvious way.

Clearly, $\Psi$ is well-defined and one-to-one, since two elements are equivalent under $\sim$ precisely when their images are equivalent under isotopy as well as the usual string-net relations (which are \emph{local}) inside each 1-generator $\sigma$. Finally, $\Psi$ is onto since any graph on $\widehat{\Sigma}$ may be perturbed via isotopy to be in general position with respect to each internal boundary circle.
\end{proof}

In other words, $Z_{SN}$ assigns to $\Sigma$ the string-net space on the surface naively suggested by the composite pictures, subject to the given boundary conditions.

\subsection{The Generating 2-Morphisms}

It follows from Lemma \ref{lem:sec4_parityiso} that it suffices to define the actions of the generating 2-morphims on 1-morphisms where the boundary circles are labeled by $B_0$ or $B_1$. We therefore make this assumption throughout the remainder of the paper. The assignment for the generating 2-morphisms are split into two parts: the invertible ones and the non-invertible ones.

\subsubsection{The invertible 2-generators}  \label{invertible-2-gen}

Recall that each invertible 2-generator arises as the mapping cylinder of a diffeomorphism of the pictured surface. We therefore define the action to be the pushforward of the string-nets along these diffeomorphisms, as in from Remark \ref{rem:pushforward}.  We illustrate this with a few examples.\\

\begin{equation}
\begin{aligned}
\includegraphics[scale=0.05]{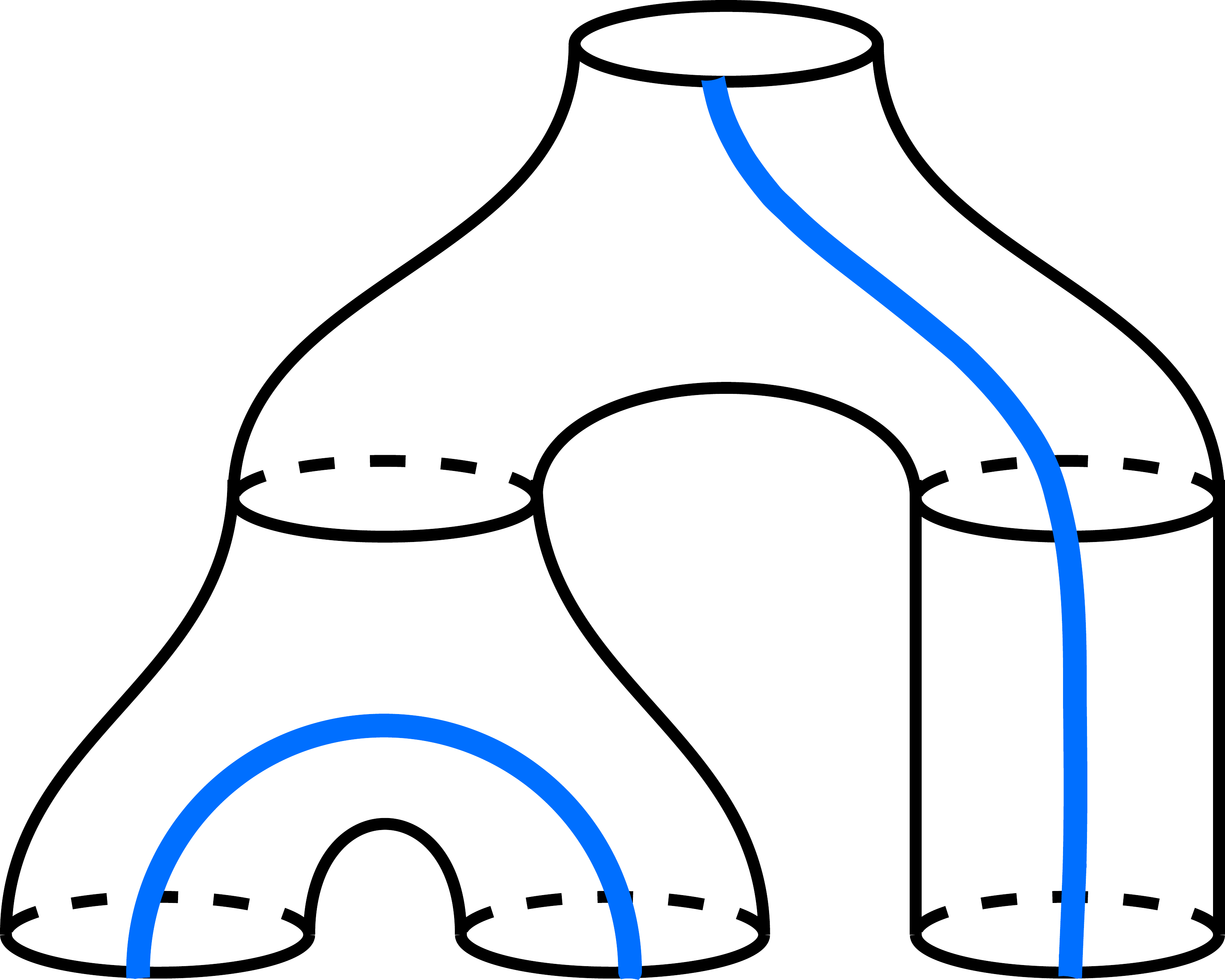}
\end{aligned}
\quad
\xmapsto{Z_{SN}(\alpha)}
\quad
\begin{aligned}
\includegraphics[scale=0.05]{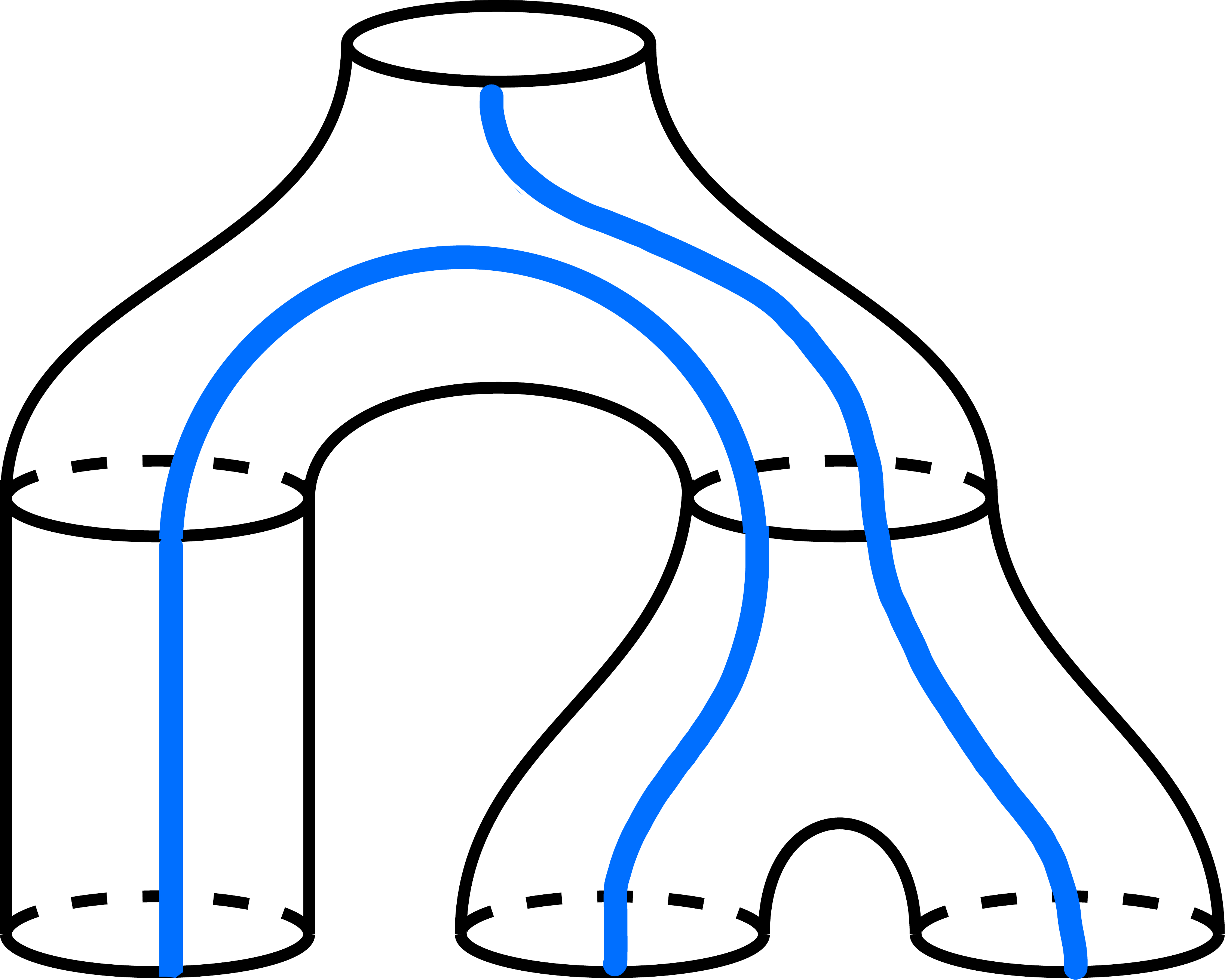}
\end{aligned}
\end{equation}

\begin{equation}
\begin{aligned}
\includegraphics[scale=0.05]{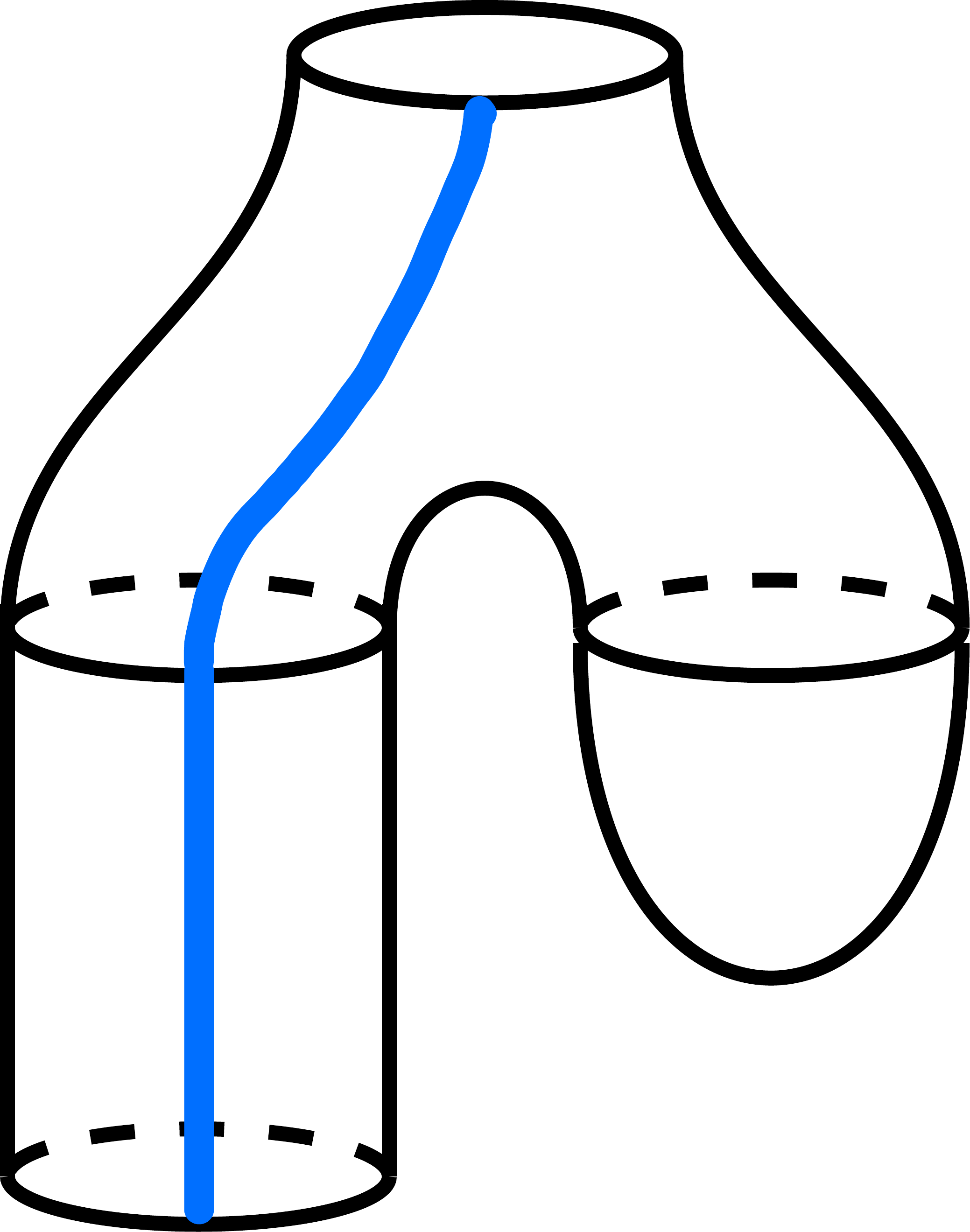}
\end{aligned}
\quad
\xmapsto{Z_{SN}(\rho)}
\quad
\begin{aligned}
\includegraphics[scale=0.05]{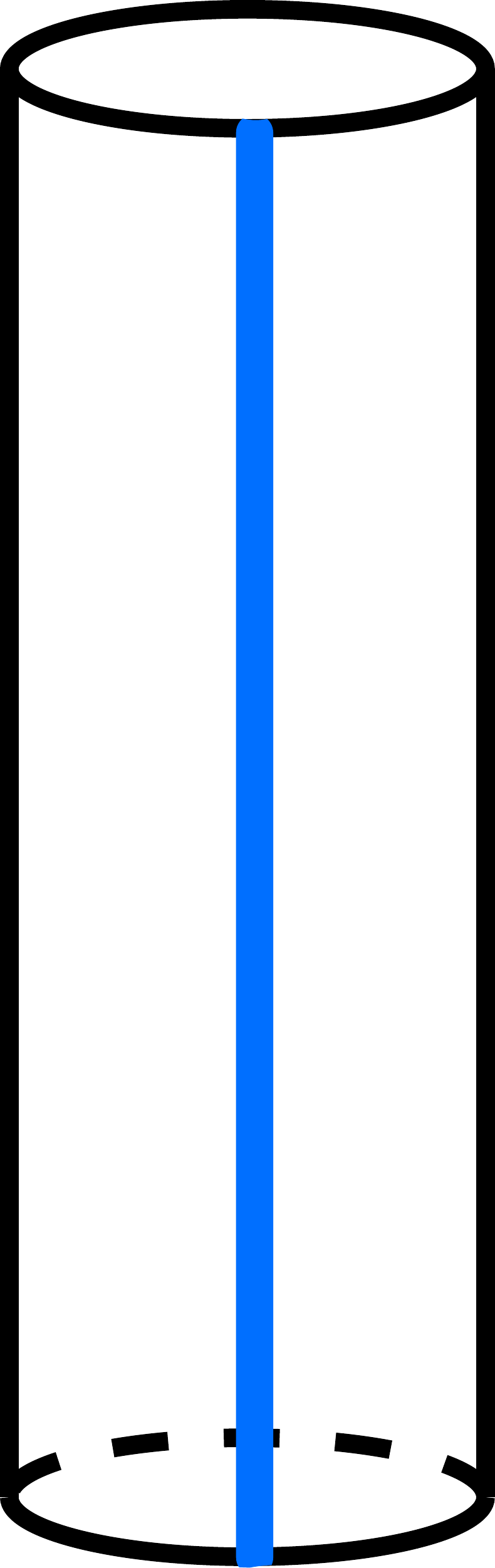}
\end{aligned}
\end{equation}

\begin{equation}
\begin{aligned}
\includegraphics[scale=0.05]{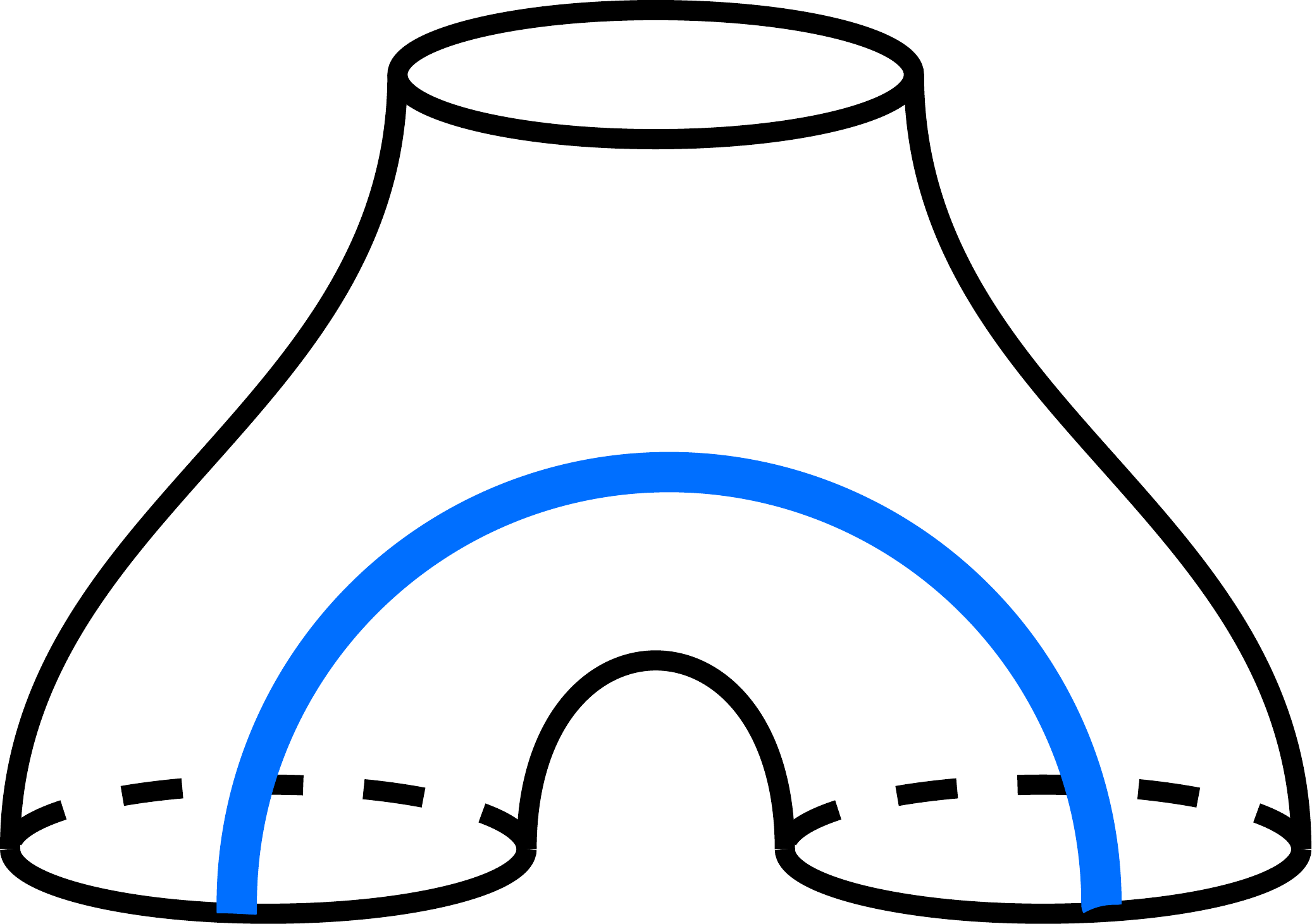}
\end{aligned}
\quad
\xmapsto{Z_{SN}(\beta)}
\quad
\begin{aligned}
\includegraphics[scale=0.05]{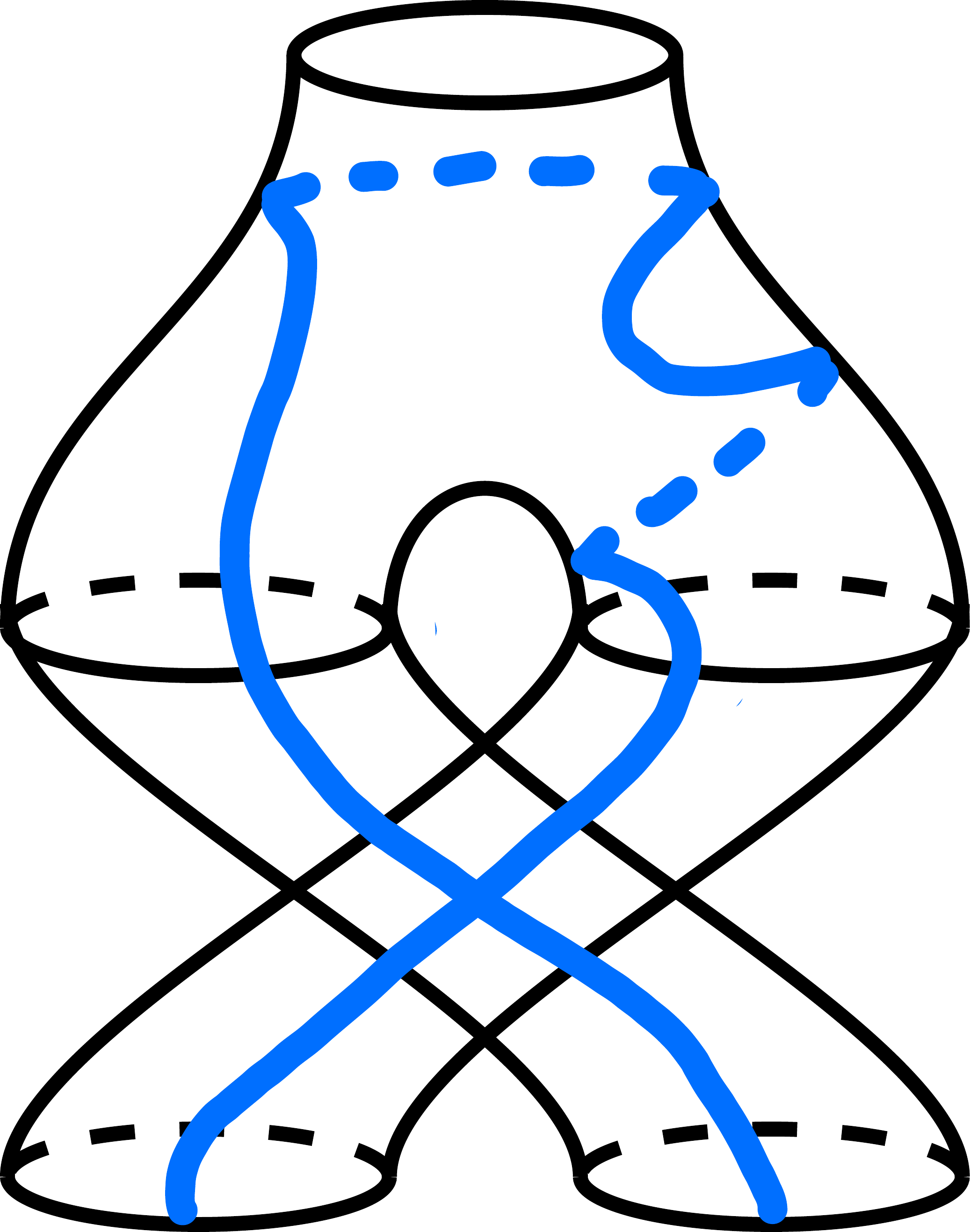}
\end{aligned}
\end{equation}

\begin{equation} \label{theta_map_action}
\begin{aligned}
\includegraphics[scale=0.05]{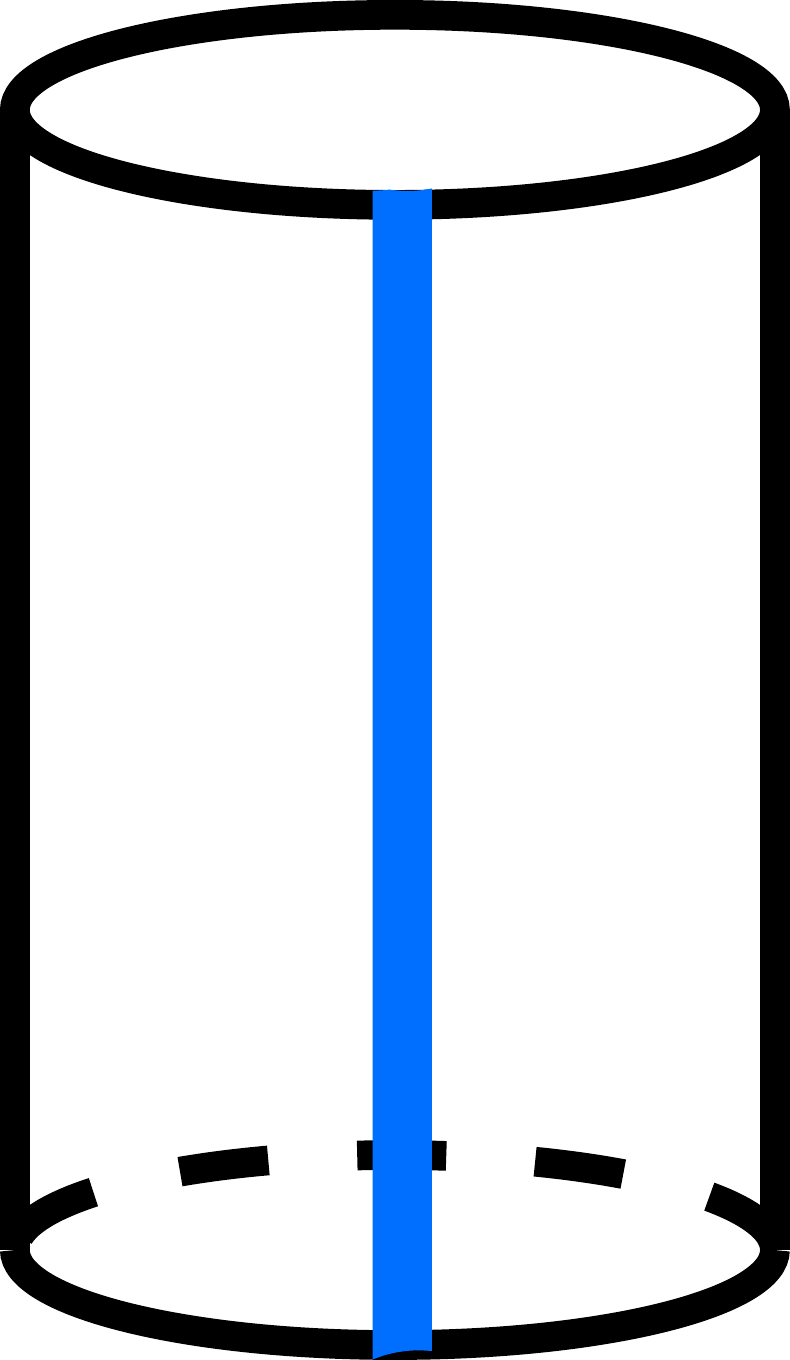}
\end{aligned}
\quad
\xmapsto{Z_{SN}(\theta)}
\quad
\begin{aligned}
\includegraphics[scale=0.05]{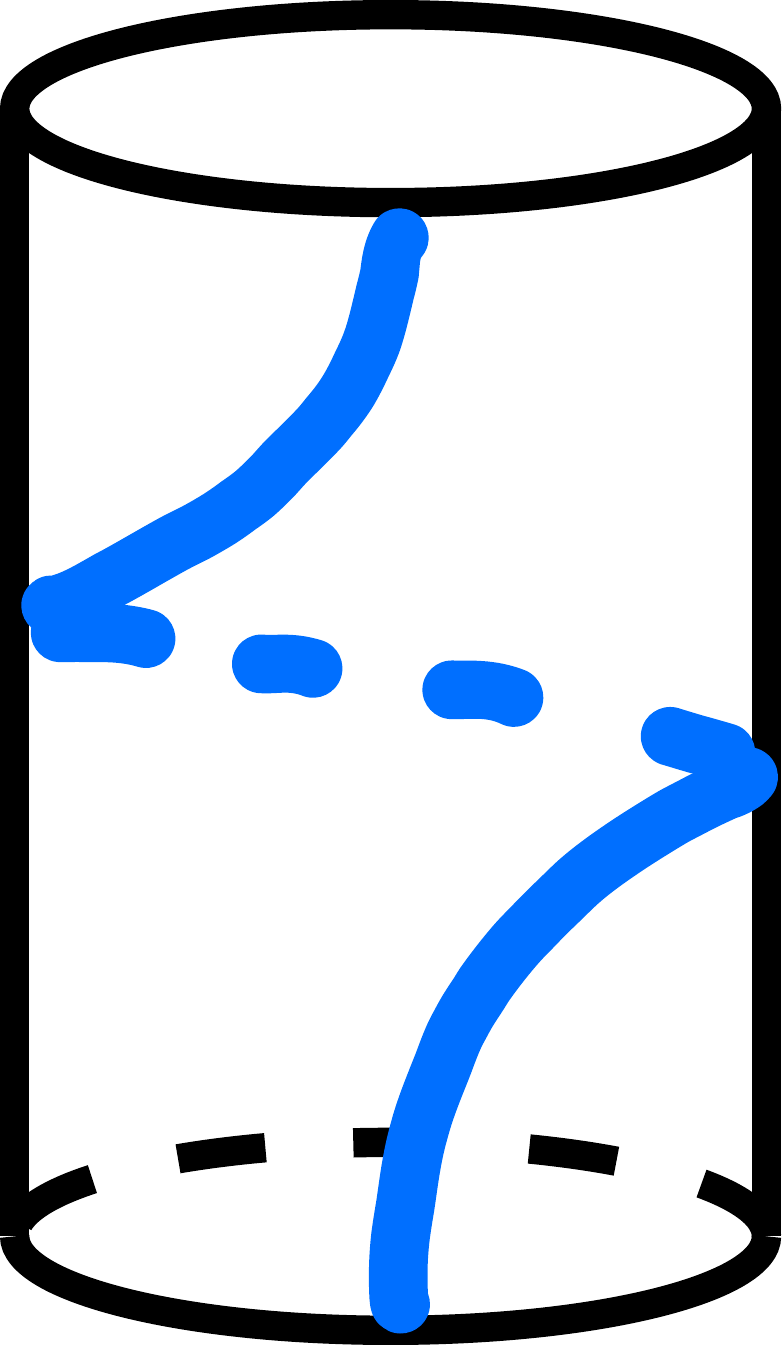}
\end{aligned}
\end{equation}

\subsubsection{The non-invertible 2-generators}  \label{noninvertible-2-gen}

We may divide the non-invertible 2-generators into three groups, based on their surgery actions.\footnote{See Equation \ref{fig:surgery_curves}.} 

\begin{itemize}

\item \textbf{Removing a 2-handle ($\mu$, $\eta$, and $\epsilon^{\dagger}$).} The first group consists of $\mu$, $\eta$, and $\epsilon^{\dagger}$, each of which represent the bordism implementing the removal of a 2-handle. That is, two disks are deleted and an annulus glued into their place. The recipe for how this procedure acts on the string-net space is the same in each case:

\begin{enumerate}
	\item Using isotopy, move all strings out of the area where the disks are to be removed.
	\item Cut out the disks, and glue in the annulus.
	\item Add an orange loop along the center of the annulus.
\end{enumerate}

\vspace{0.5cm}

\noindent Note that this action is \emph{locally} defined. Let us demonstrate the above procedure in ``slow motion'' for $\eta$:

\begin{equation}
	\begin{aligned}
	\includegraphics[scale=0.07]{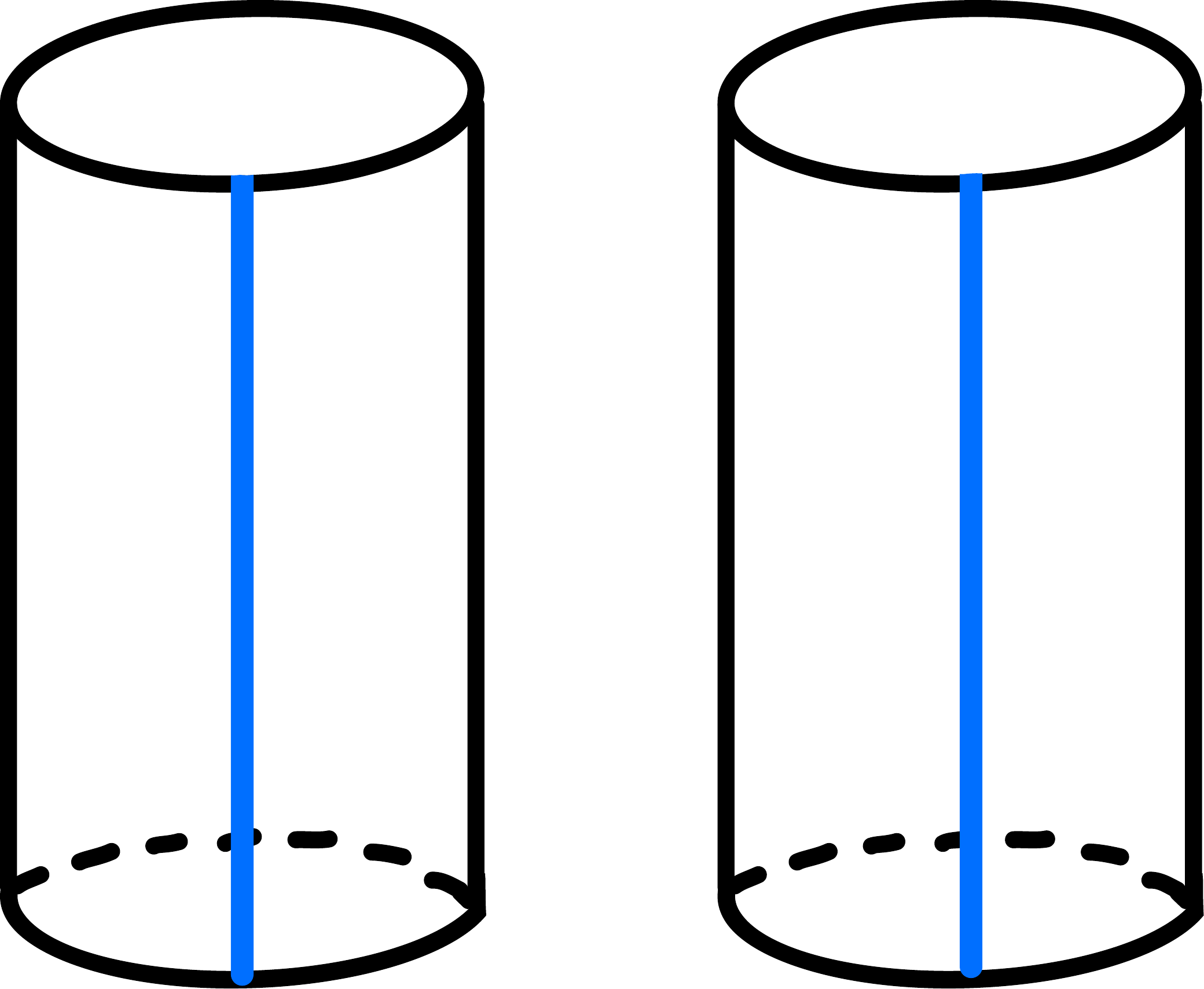}
	\end{aligned}
	\xrightarrow{\text{move}}
	\begin{aligned}
	\includegraphics[scale=0.07]{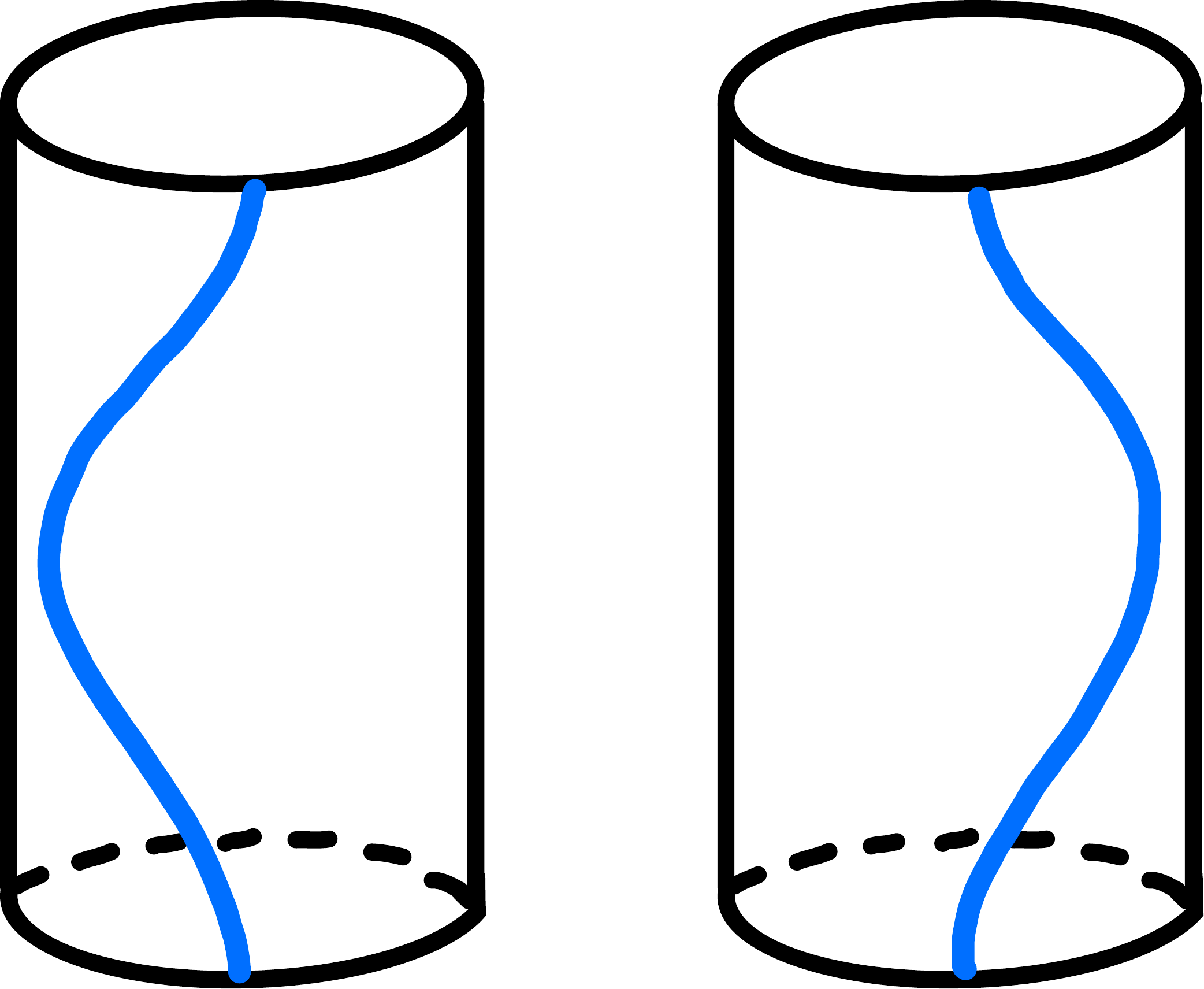}
	\end{aligned}
	\xrightarrow{\text{cut}}
	\begin{aligned}
	\includegraphics[scale=0.07]{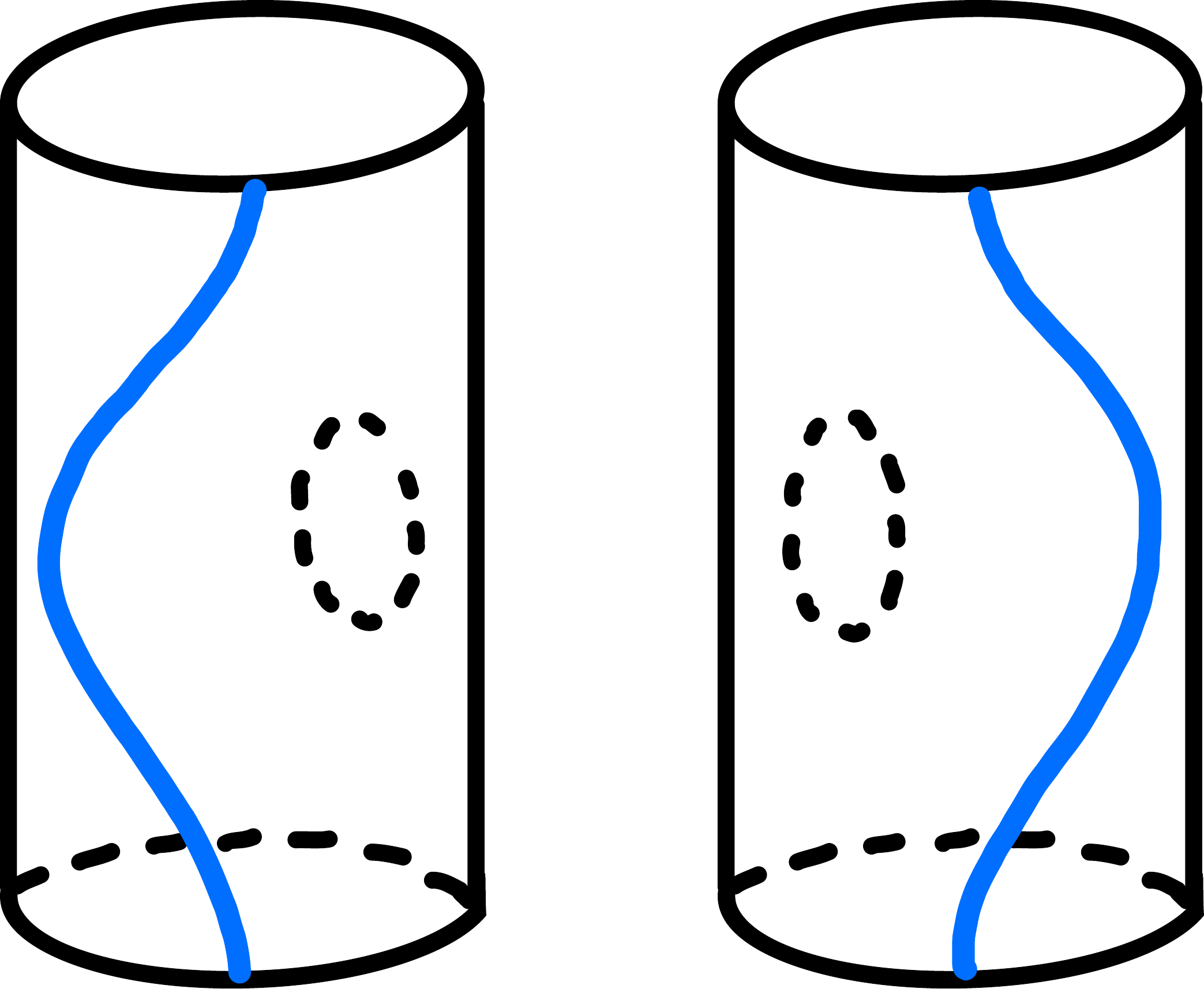}
	\end{aligned}
	\xrightarrow{\text{glue}}
	\begin{aligned}
	\includegraphics[scale=0.07]{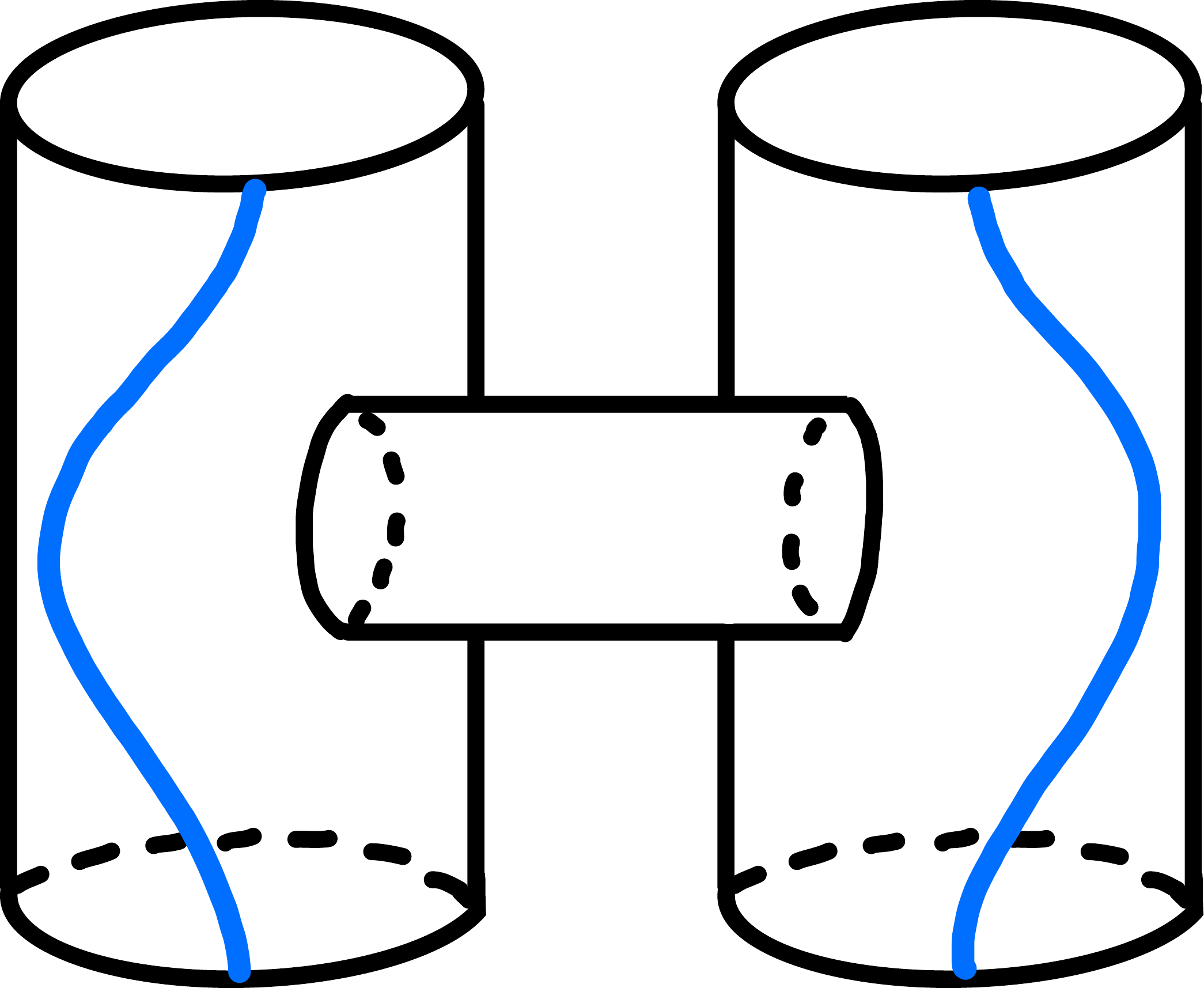}
	\end{aligned}
	\xrightarrow{\text{loop}}
	\begin{aligned}
	\includegraphics[scale=0.07]{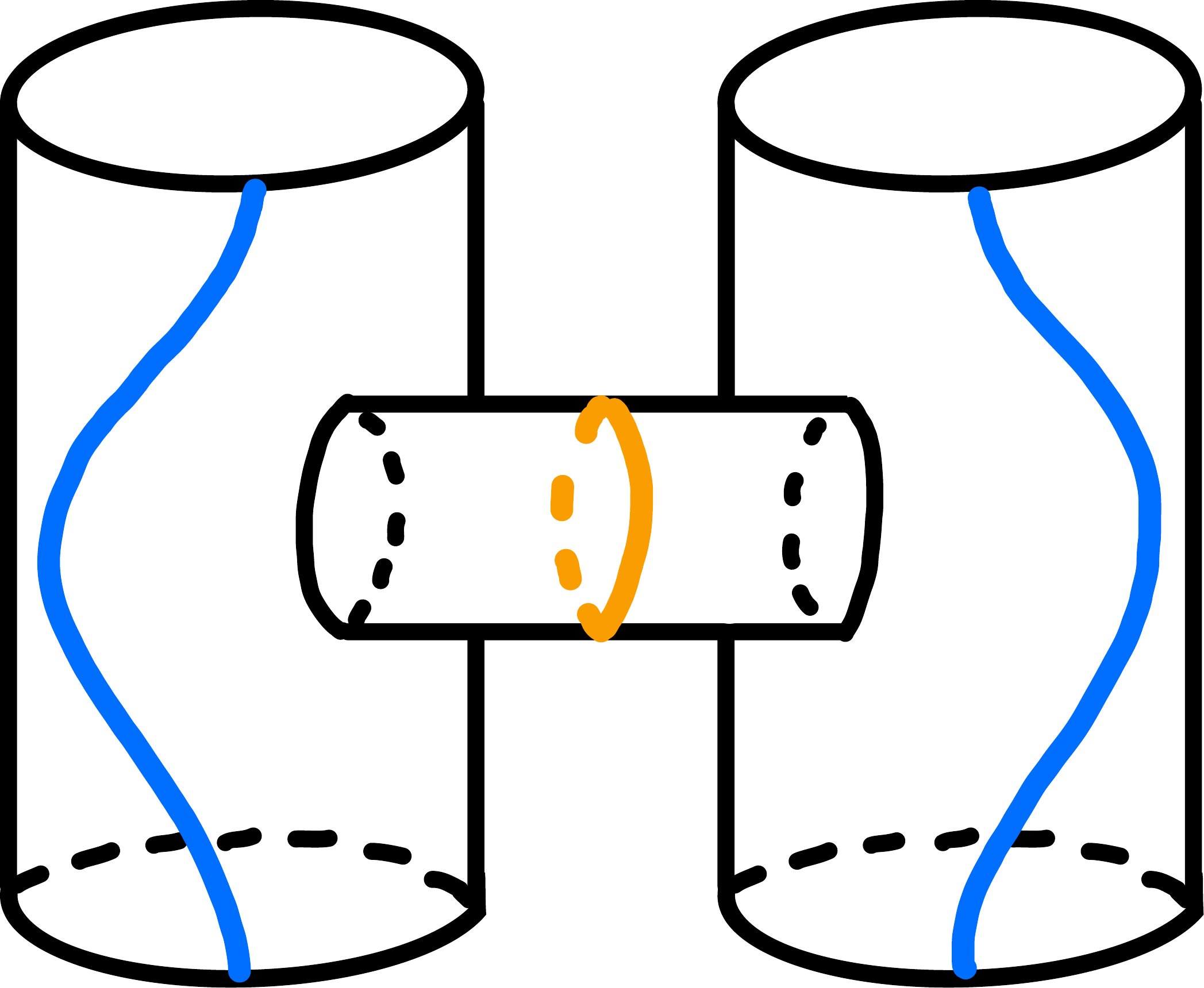}
	\end{aligned}
\end{equation}

\begin{remark}
Note that, via the \nameref{lem:cloaking} lemma, the presence of the orange loop removes the ambiguity in the manner in which strings are moved out of the cutting area in the first step.
\end{remark}

\noindent Below are some (global) examples showing how to use the (local) definition above.\\

%When expressed in terms of the generators, the above sequence becomes the action

\begin{equation}
	\begin{aligned}
	\includegraphics[scale=0.05]{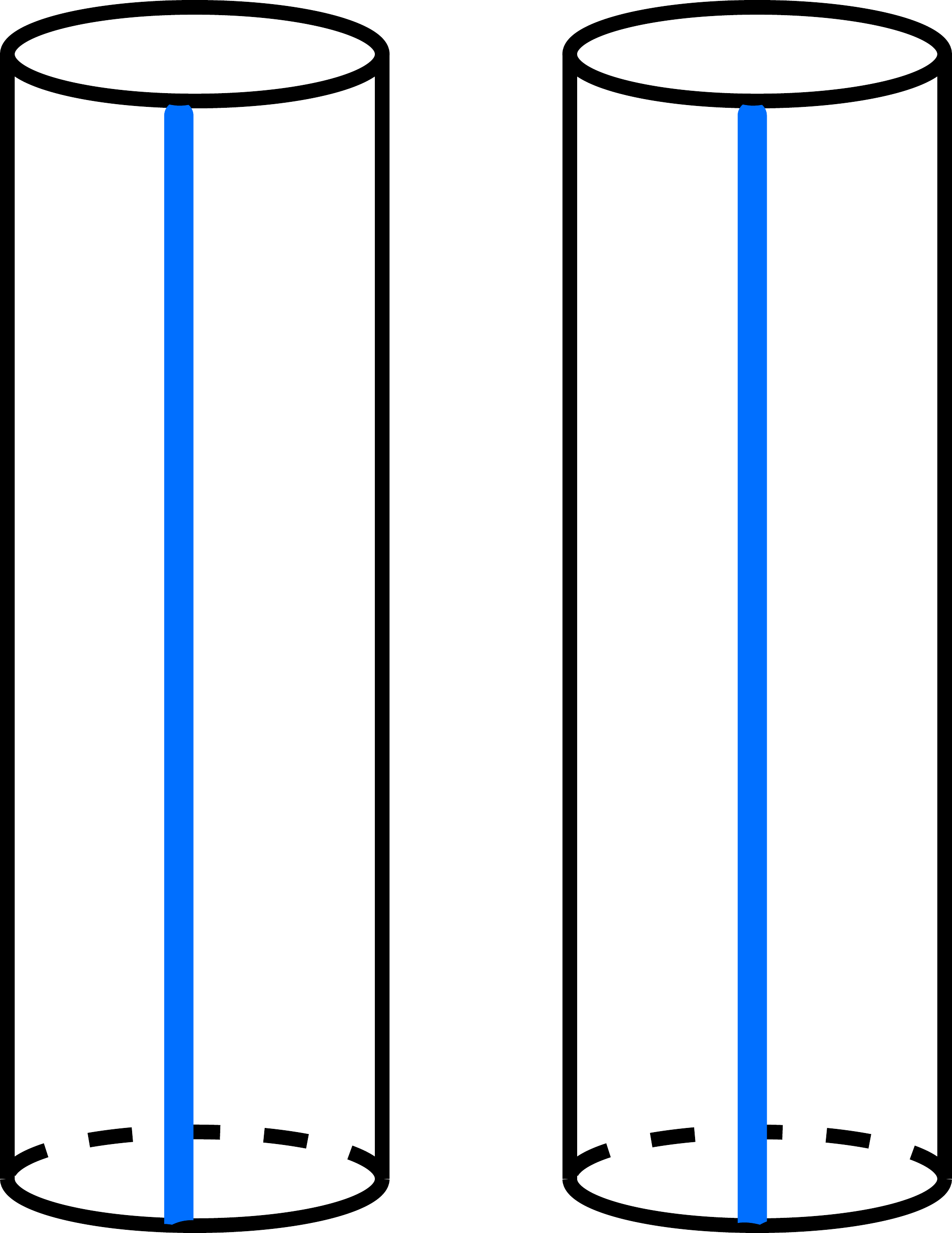}
	\end{aligned}
	\quad
	\xmapsto{Z_{SN}(\eta)}
	\quad
	\begin{aligned}
	\includegraphics[scale=0.05]{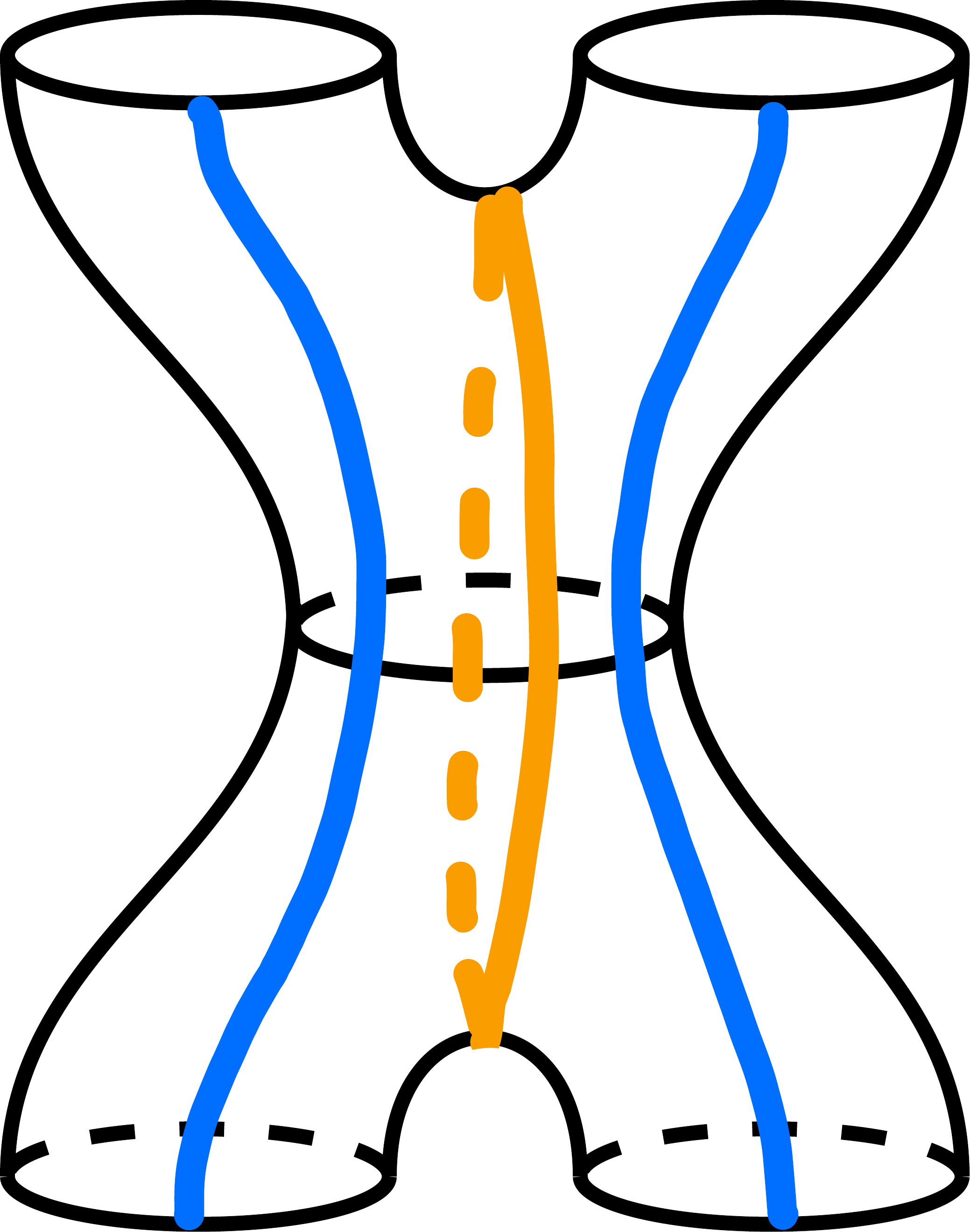}
	\end{aligned}
\end{equation}

\begin{equation}  \label{epsilon_dag_example_map}
	\begin{aligned}
	\includegraphics[scale=0.05]{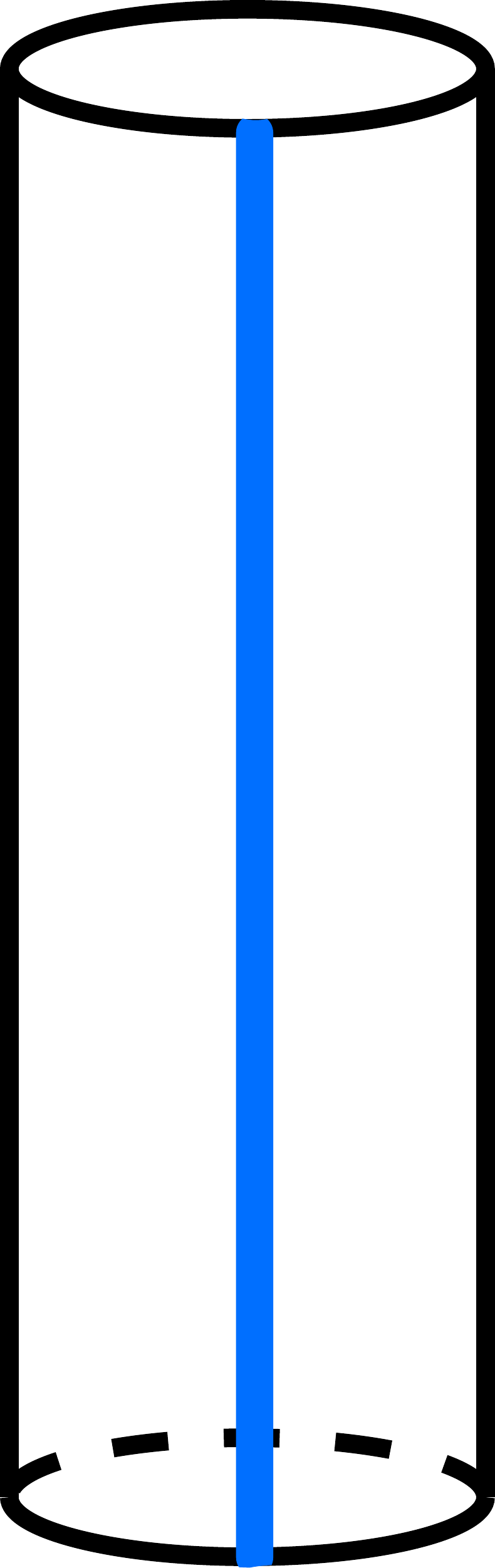}
	\end{aligned}
	\quad
	\xmapsto{Z_{SN}(\epsilon^{\dagger})}
	\quad
	\begin{aligned}
	\includegraphics[scale=0.05]{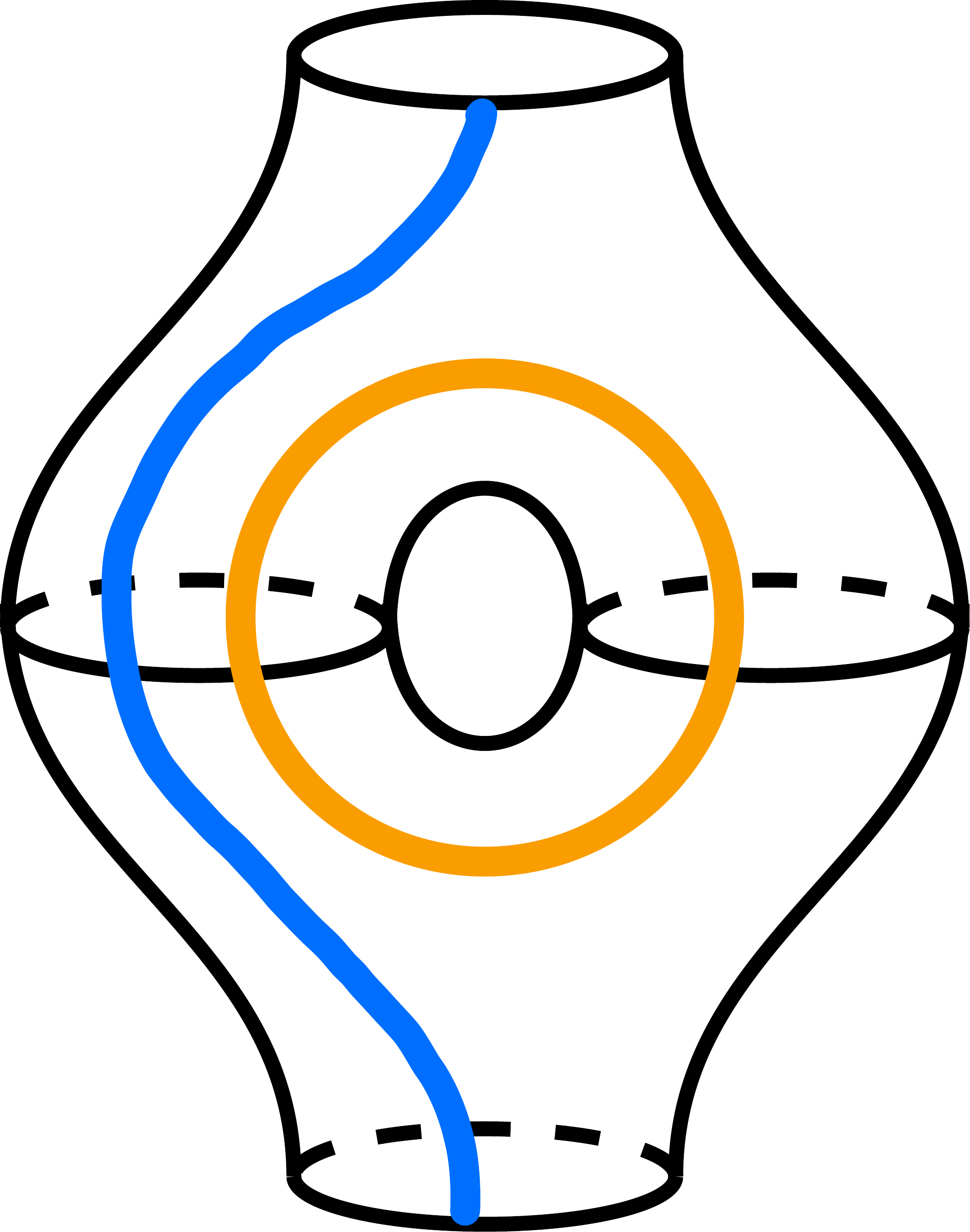}
	\end{aligned}
\end{equation}

\begin{equation}
	\begin{aligned}
	\includegraphics[scale=0.05]{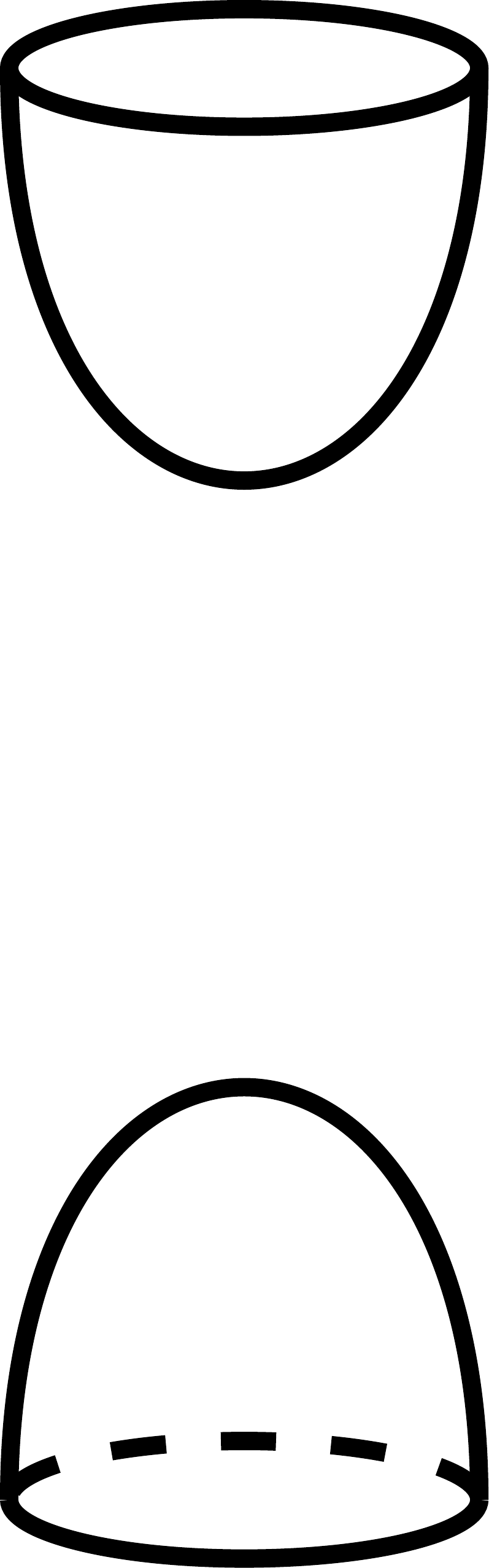}
	\end{aligned}
	\quad
	\xmapsto{Z_{SN}(\mu)}
	\quad
	\begin{aligned}
	\includegraphics[scale=0.05]{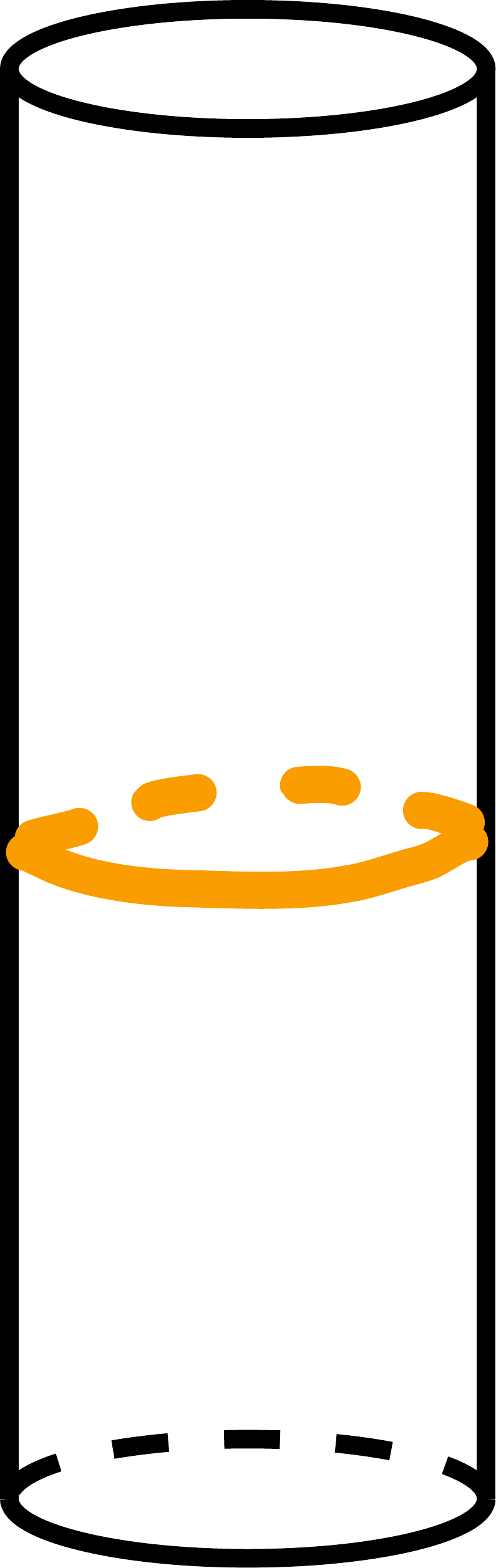}
	\end{aligned}
\end{equation}

\vspace{1cm}

\item \textbf{Adding a 2-handle ($\mu^{\dagger}$, $\eta^{\dagger}$, and $\epsilon$).} The second group of non-invertible 2-generators consists of $\mu^{\dagger}$, $\eta^{\dagger}$, and $\epsilon$, each of which represent the bordism implementing the addition of a 2-handle. That is, an annular region of the surface is excised and disks glued on in its place. Now because the annulus has nonzero genus, it is not always possible to ``evacuate'' strings safely before the annular region is removed. We interpret this as the trapped strings getting cut. In terms of the relations, if there are an even number of strings running through the annular region, then we cut by applying the F-move repeatedly. If there are an odd number of strings running through the annular region, then the string-net will get annihilated. We extend this procedure linearly over sums of embedded graphs.\\

\noindent Below are some (global) examples showing how to use the (local) definition above.\\

\begin{equation}
\begin{aligned}
\includegraphics[scale=0.75]{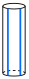}
\end{aligned}
\quad
\xmapsto{Z_{SN}(\mu^{\dagger})}
\quad
\begin{aligned}
\includegraphics[scale=0.75]{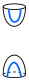}
\end{aligned}
\end{equation}

\begin{equation}
\begin{aligned}
\includegraphics[scale=0.05]{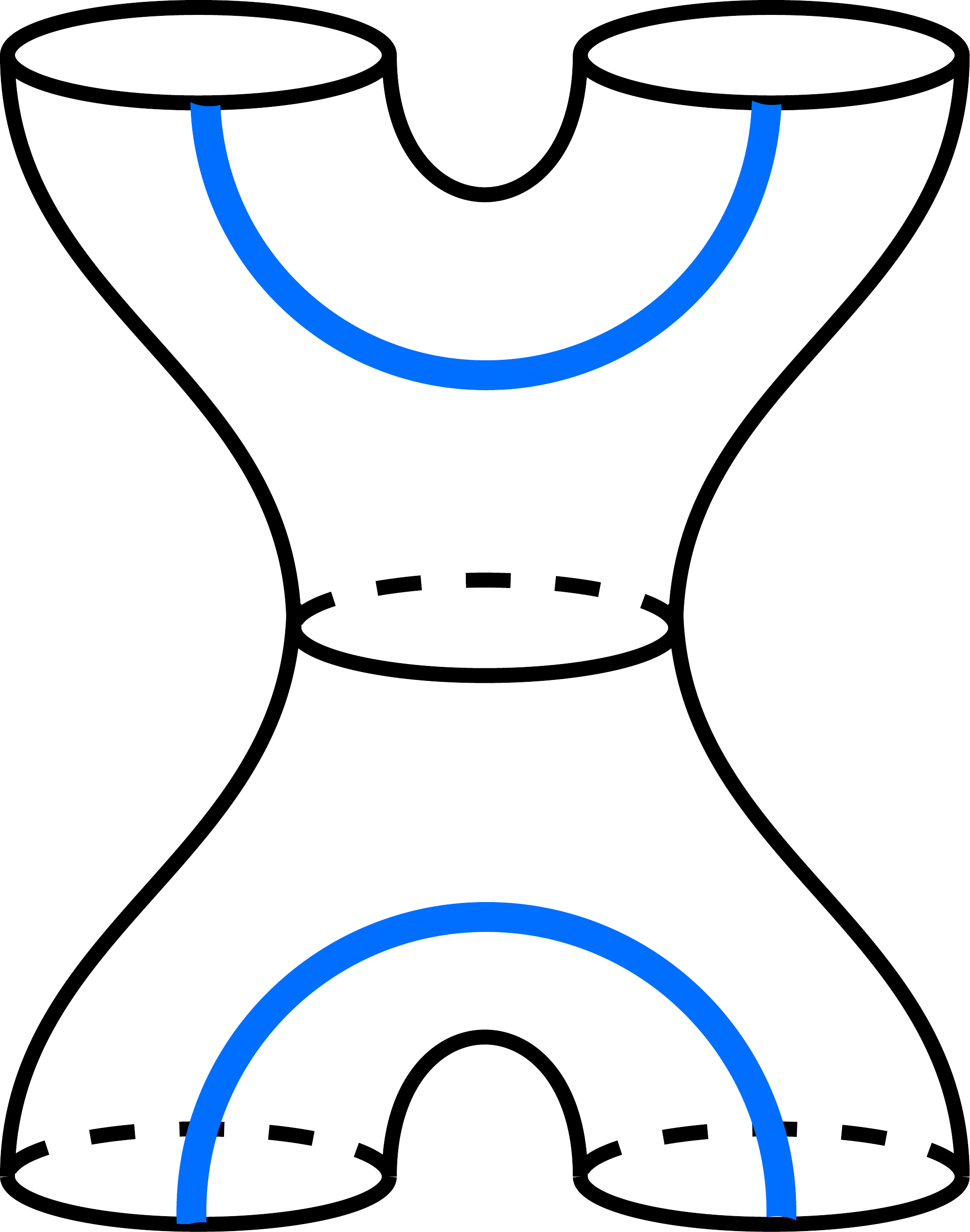}
\end{aligned}
\quad
\xmapsto{Z_{SN}(\eta^{\dagger})}
\quad
\begin{aligned}
\includegraphics[scale=0.05]{etadag_target.pdf}
\end{aligned}
\end{equation}

\begin{equation}
\begin{aligned}
\includegraphics[scale=0.05]{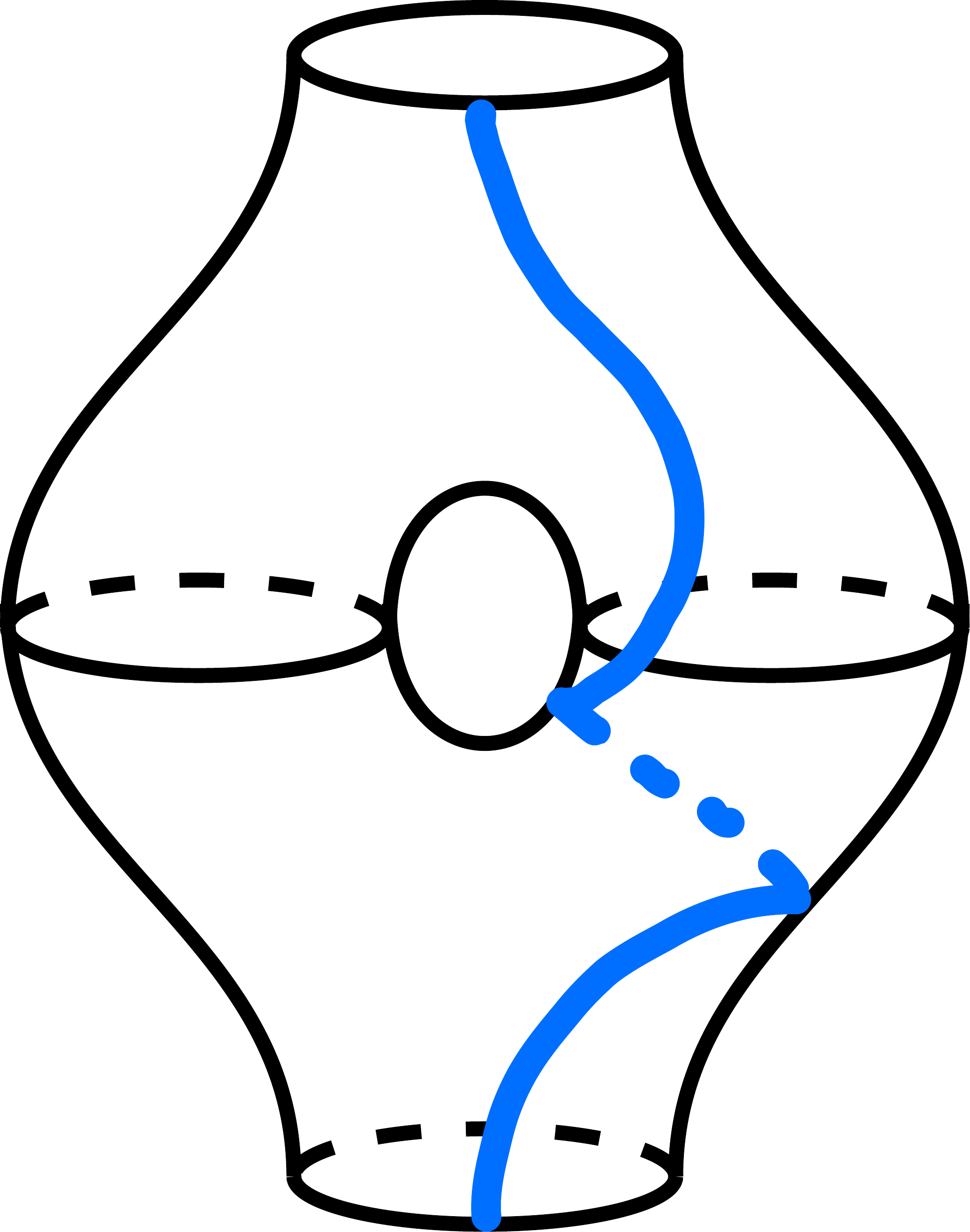}
\end{aligned}
\quad
\xmapsto{Z_{SN}(\epsilon)}
\quad
\begin{aligned}
0
\end{aligned}
\end{equation}

\item \textbf{Removing and adding a 3-handle ($\nu$ and $\nu^\dagger$).} The final group, consisting of $\nu$ and $\nu^\dagger$, have source and target string-net spaces each isomorphic to $\CC$, so in this case the actions are easier to guess.

\vspace{0.25cm}

\scalecobordisms{1}
\begin{equation}
\begin{tz}
\draw [green] (0,0) rectangle (1,1);
\end{tz}				
\quad
\xmapsto{~Z_{SN}(\nu)~}
\quad
\raisebox{0.5mm}{$\frac{1}{2}$}~
\begin{tz}
\node [Cap, bot] at (0,0) {};
\node [Cup] at (0,0) {};
\end{tz}
\qquad
\qquad
\begin{tz}
\node [Cap, bot] at (0,0) {};
\node [Cup] at (0,0) {};
\end{tz}
\quad
\xmapsto{~Z_{SN}(\nu^\dagger)~}
\quad
\begin{tz}
\draw [green] (0,0) rectangle (1,1);
\end{tz}
\end{equation}

The diagrams on the source and target (for $\nu$ or $\nu^{\dagger}$) represent the trivial string-nets on their respective spaces. The factor $\frac{1}{2}$ appearing above is difficult to motivate merely from the topology represented by $\nu$ - it is forced upon us by the adjunction relations.

%\begin{lemma}
%The actions corresponding to the 2-generators are well-defined. 
%\end{lemma}
%\begin{proof}
%\textcolor{red}{Is this lemma needed?}
%\end{proof}

\end{itemize}

\subsection{The Relations}

Finally, we come to the main result in this section where we show that, under the preceding definitions for the actions of the 2-generators, all the relations of the presentation for $\Bord$ are satisfied. 

\begin{theorem}
The relations are satisfied, and hence $Z_{SN}$ defines a 1-2-3 TQFT by Corollary \ref{co:123TQFTdata}.
\end{theorem}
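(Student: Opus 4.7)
The plan is to verify each of the nine families of relations (Inverses, Monoidal, Balanced, Rigidity, Ribbon, Biadjoint, Pivotality, Modularity, Anomaly-freeness) by computing both sides on an arbitrary string-net. By Lemma \ref{lem:sec4_parityiso} and the gluing isomorphism of Theorem \ref{thm:gluing}, it suffices to work with string-nets on the composite surfaces where every boundary circle is labelled by either $B_0$ or $B_1$; moreover, by linearity we may restrict to multicurves in generic position with respect to all the attaching circles. The cloaking lemma (Lemma \ref{lem:cloaking}) will be the main technical tool, since it guarantees that the orange loops deposited by the surgery 2-morphisms ($\mu$, $\eta$, $\epsilon^\dagger$) interact with any traversing strings in a canonical, isotopy-invariant way.

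First I would dispatch the relations involving only invertible 2-generators, namely the Inverses, Monoidal (pentagon and triangle), Balanced (hexagon and both twist compatibility equations) and Ribbon relations. Because $Z_{SN}$ sends each invertible 2-generator to the pushforward along the corresponding surface diffeomorphism (Section \ref{invertible-2-gen}), functoriality of pushforward reduces each of these identities to the statement that the relevant composites of diffeomorphisms of the pictured surfaces agree up to ambient isotopy rel boundary. These are standard facts about the mapping class group of the disc with marked points (for pentagon/triangle/hexagon) and of the annulus (for the twist equations), so each reduces to a classical isotopy argument.

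Next I would verify the mixed relations involving exactly one non-invertible 2-generator together with pushforwards, namely Biadjoint (the eight equations coming from \eqref{adj_nu_mu}--\eqref{adj_eta_epsilon}, together with their daggers and $x$-rotations), Rigidity (the assertion that the composites $\phileft$ and $\phiright$ defined in \eqref{defn_of_phileft}--\eqref{defn_of_phiright} really are inverted by the composites in \eqref{explicit_phileft_inverse}, \eqref{explicit_phiright_inverse}), and Pivotality \eqref{piv_on_sphere}. For each relation, I would draw the composite surface, apply the surgery recipe of Section \ref{noninvertible-2-gen} at each non-invertible step, and check that after all cuts, re-gluings and orange-loop insertions, the resulting string-net is equivalent (via isotopy, F-moves, loop contraction and cloaking) to the identity pushforward on the source. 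The factor $\tfrac{1}{2}$ attached to $\nu$ is forced precisely by the snake equation \eqref{adj_nu_mu}: the composite $\mu \circ \nu$ drops a deleted cap/cup pair which contributes a single closed loop whose contraction gives a factor of $2$, so that $\tfrac{1}{2}$ cancels it to leave the identity on $H(\tinycup; A)$.

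The main obstacle will be the Modularity relation \eqref{MOD} and the Anomaly-freeness relation \eqref{AF}, since both require computing the composite as a linear endomorphism on a multi-dimensional string-net space rather than just chasing a single generic multicurve. For Modularity I would decompose the four-punctured sphere traversed in the composite into the basis of multicurves described in the torus example, apply $\theta$ (respectively $\theta^{-1}$) as the pushforward along a Dehn twist, and then check that after the surgery cuts implementing $\epsilon$ and $\epsilon^\dagger$, the two $\theta/\theta^{-1}$ branches together produce the same operator as $\mu \circ \mu^\dagger$ on the cup--cap identity; this is a finite but genuinely combinatorial check because the action of the Dehn twist mixes basis elements. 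Anomaly-freeness is similar in spirit but simpler, as the string-net spaces involved are one-dimensional: the key will be to show that the extra orange loop introduced by $\epsilon^\dagger$ and then removed by $\epsilon$, after being acted on by $\theta$, contracts without generating a spurious scalar. With those two checks in hand, Corollary \ref{co:123TQFTdata} immediately upgrades the assignments of Sections \ref{invertible-2-gen} and \ref{noninvertible-2-gen} to an oriented 1-2-3 TQFT $Z_{SN} : \Bord \to \Prof$.
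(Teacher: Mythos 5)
Your proposal follows essentially the same route as the paper: the relations among invertible generators are dispatched by functoriality of the pushforward and standard mapping class group isotopies, the surgery relations (Biadjoint, Rigidity, Pivotality, Modularity, Anomaly-freeness) are verified by explicit string-net computations on basis vectors using Lemma \ref{lem:sec4_parityiso}, Theorem \ref{thm:gluing} and cloaking, and the factor $\tfrac{1}{2}$ on $\nu$ is pinned down by the $\nu$--$\mu$ adjunction exactly as in the paper. Two small slips worth correcting: the intermediate surface in the modularity relation \eqref{MOD} is a twice-punctured torus rather than a four-punctured sphere (the paper in fact runs the check on bases of the cylinder source spaces $H(\tinycyl; B_0 \boxtimes B_0)$ and $H(\tinycyl; B_1 \boxtimes B_1)$), and the factor of $2$ cancelled by $\nu$ comes from the \emph{orange} loop deposited by $\mu$, which expands to a blue loop plus the empty diagram, rather than from contraction of a single ordinary closed loop.
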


\noindent The proof of the theorem above occupies the remainder of the section.

\subsubsection{Monoidal} Each of these relations involve only actions which push string-nets forward along a diffeomorphim isotopic to the identity. It follows that these relations are trivially satisfied.

\subsubsection{Balanced} Each of these relations also only involve pushing forward along diffeomorphisms; that these diffeomorphisms commute follows from standard facts about mapping class groups.\footnote{See \cite{farb2011primer}.}

\subsubsection{Modularity} It follows trivially from the calculations in Section \textcolor{red}{5.1} that 

\[
H(\tinycyl; B_0 \boxtimes B_0) = \text{span}\left\{\raisebox{-4.5mm}{\includegraphics[scale=0.04]{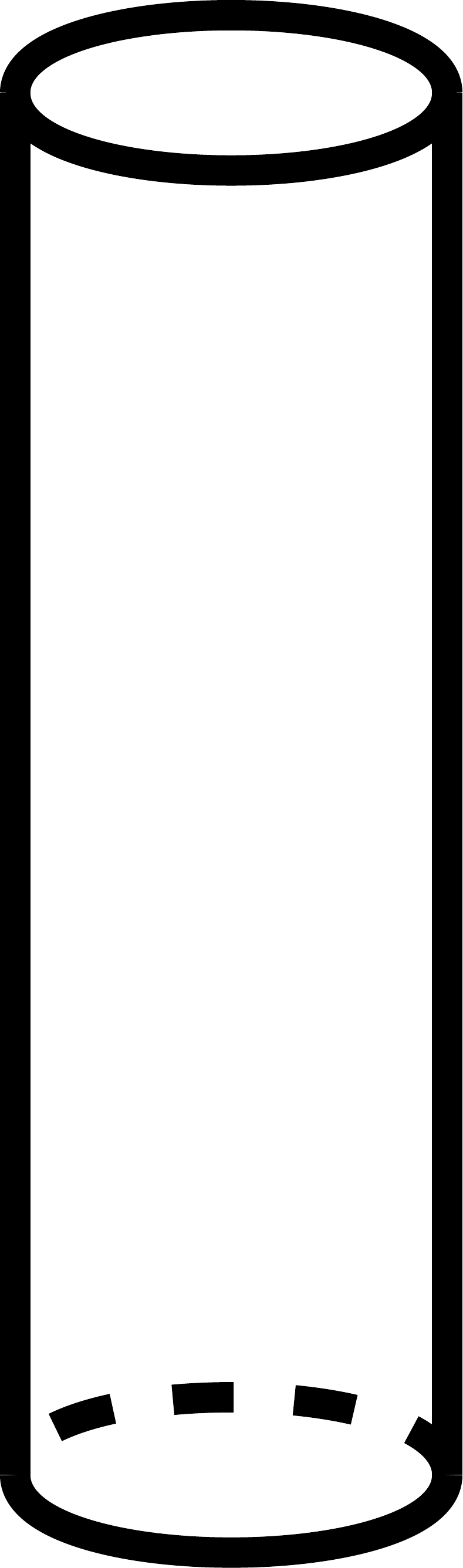}} \; , \raisebox{-4.5mm}{\includegraphics[scale=0.04]{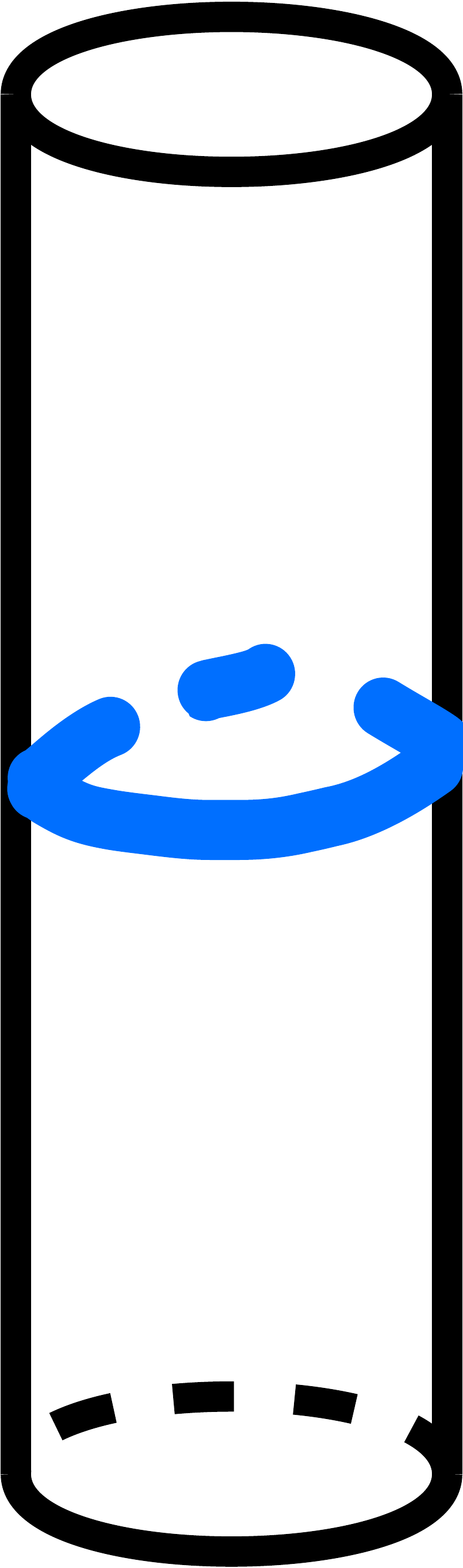}} \right\}
\]

\vspace{0.15cm}

\[
H(\tinycyl; B_1 \boxtimes B_1) = \text{span}\left\{\raisebox{-4.5mm}{\includegraphics[scale=0.04]{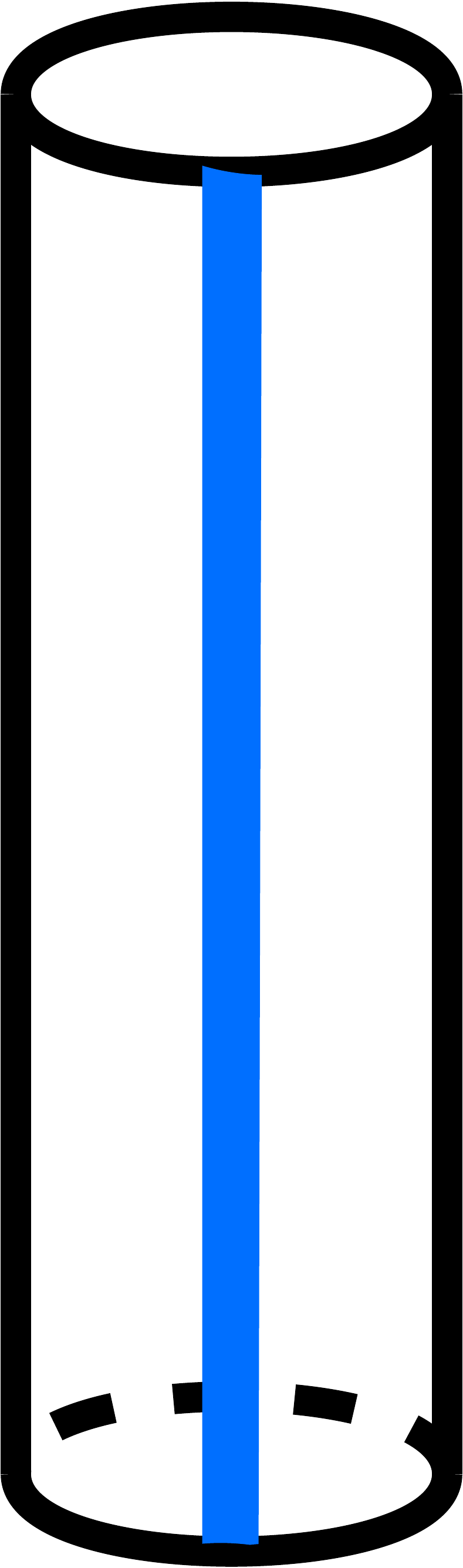}} \; , \raisebox{-4.5mm}{\includegraphics[scale=0.04]{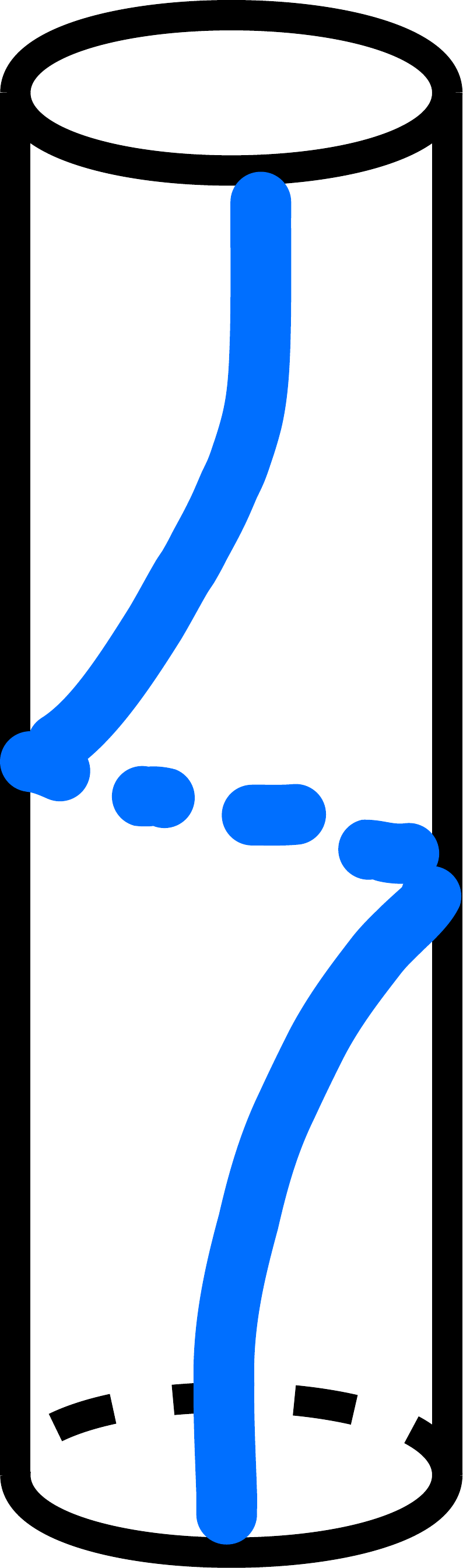}} \right\}
\]

\vspace{0.15cm}

\[
H(\tinycyl; B_0 \boxtimes B_1) = H(\tinycyl; B_1 \boxtimes B_0) = \{0\}
\]
We check that the relations are satisfied on the two relevant sets of basis vectors, and then extend linearly to the full space.

\newpage

\noindent The ``top'' composites are

	    \begin{align*}
			\begin{aligned}
			\includegraphics[scale=0.04]{modularity_1.pdf}
			\end{aligned}
			\quad
			\xmapsto{~Z_{SN}(\epsilon^{\dagger})~}
			\quad
			\begin{aligned}
			\includegraphics[scale=0.04]{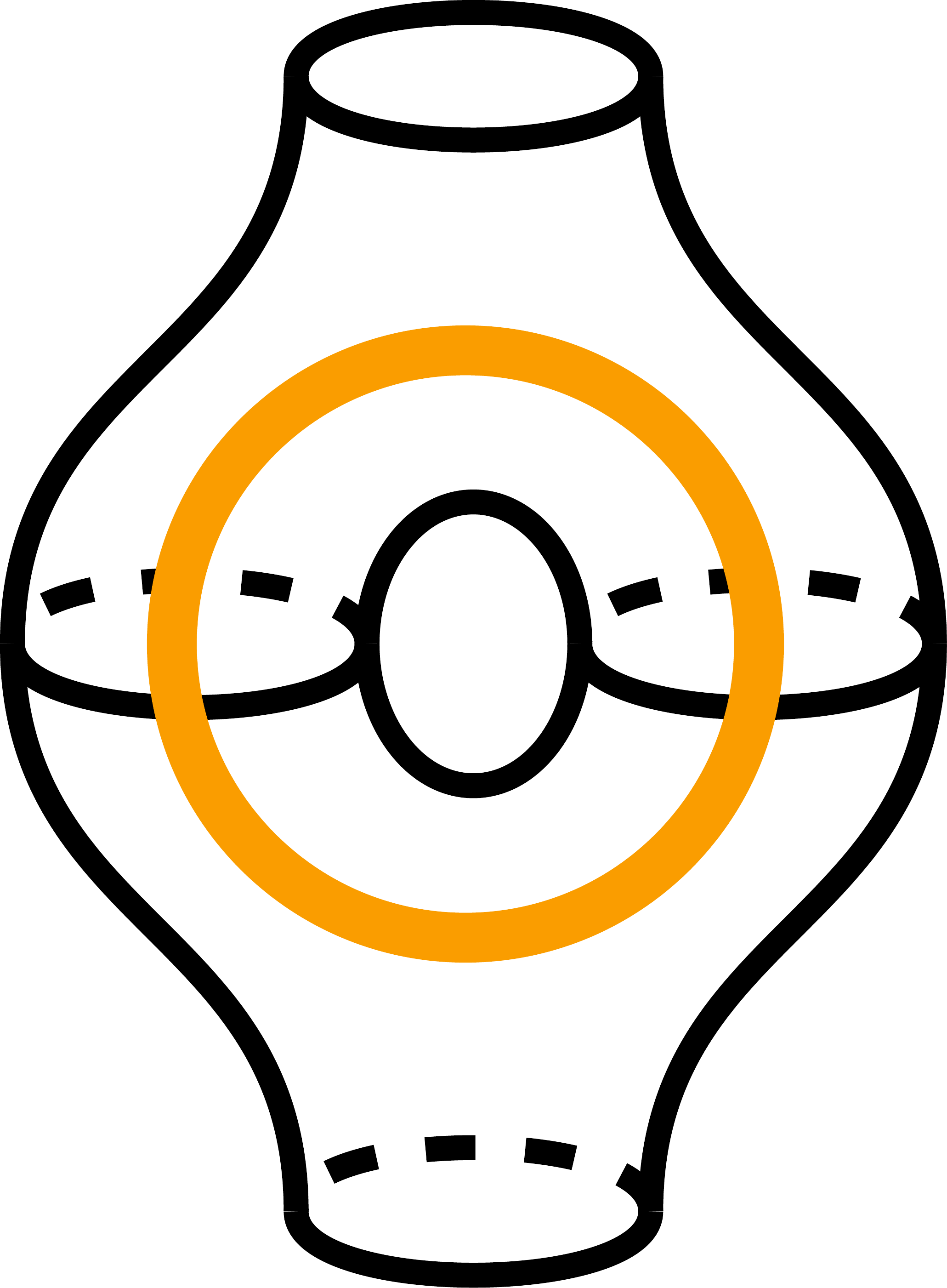}
			\end{aligned}
			\quad
			&\xmapsto{~Z_{SN}(\theta),\,Z_{SN}(\theta^{\inv})~}
			\quad
			\begin{aligned}
			\includegraphics[scale=0.04]{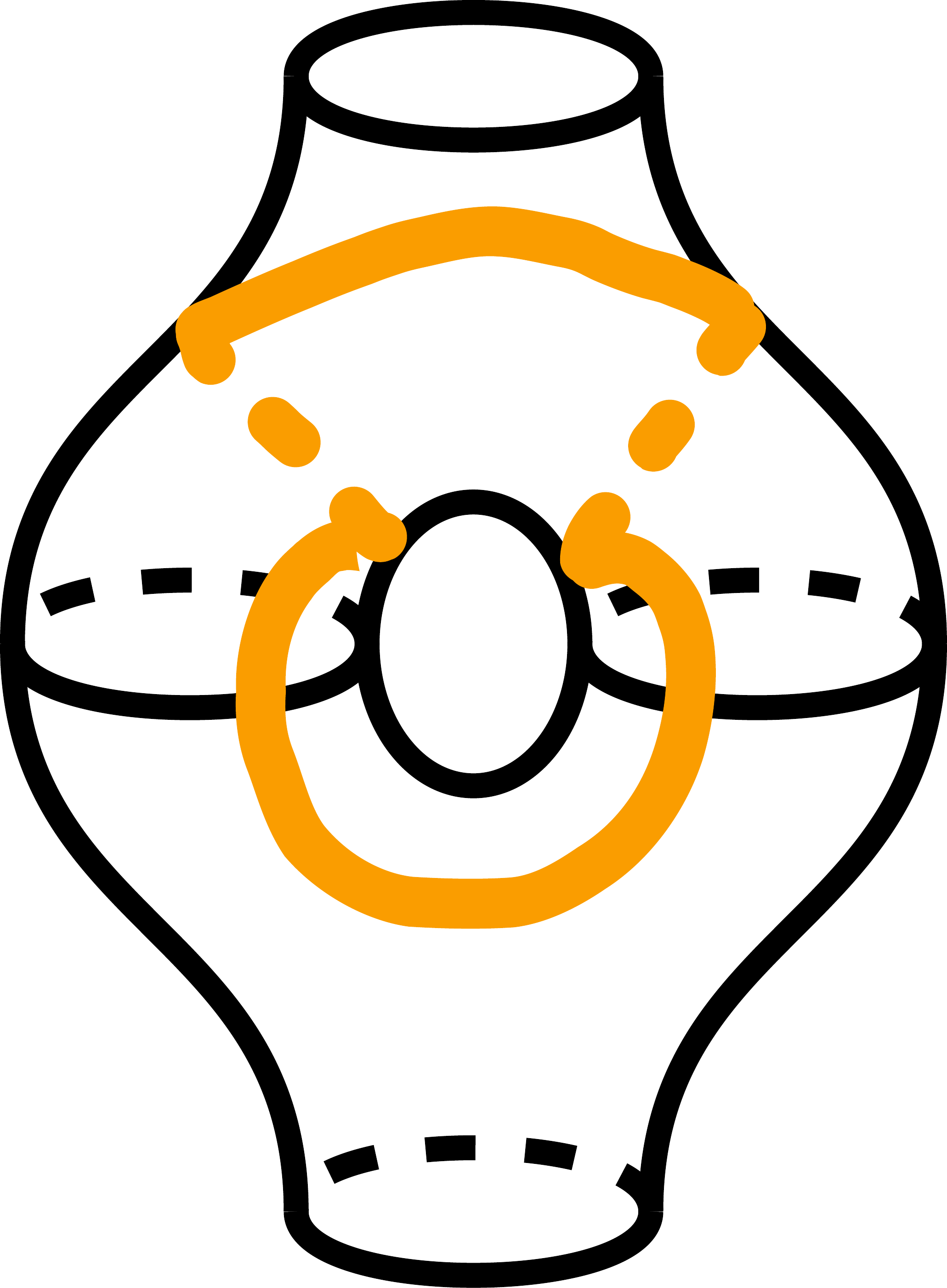}
			\end{aligned} \\
			&=
			\quad
			\begin{aligned}
			\includegraphics[scale=0.04]{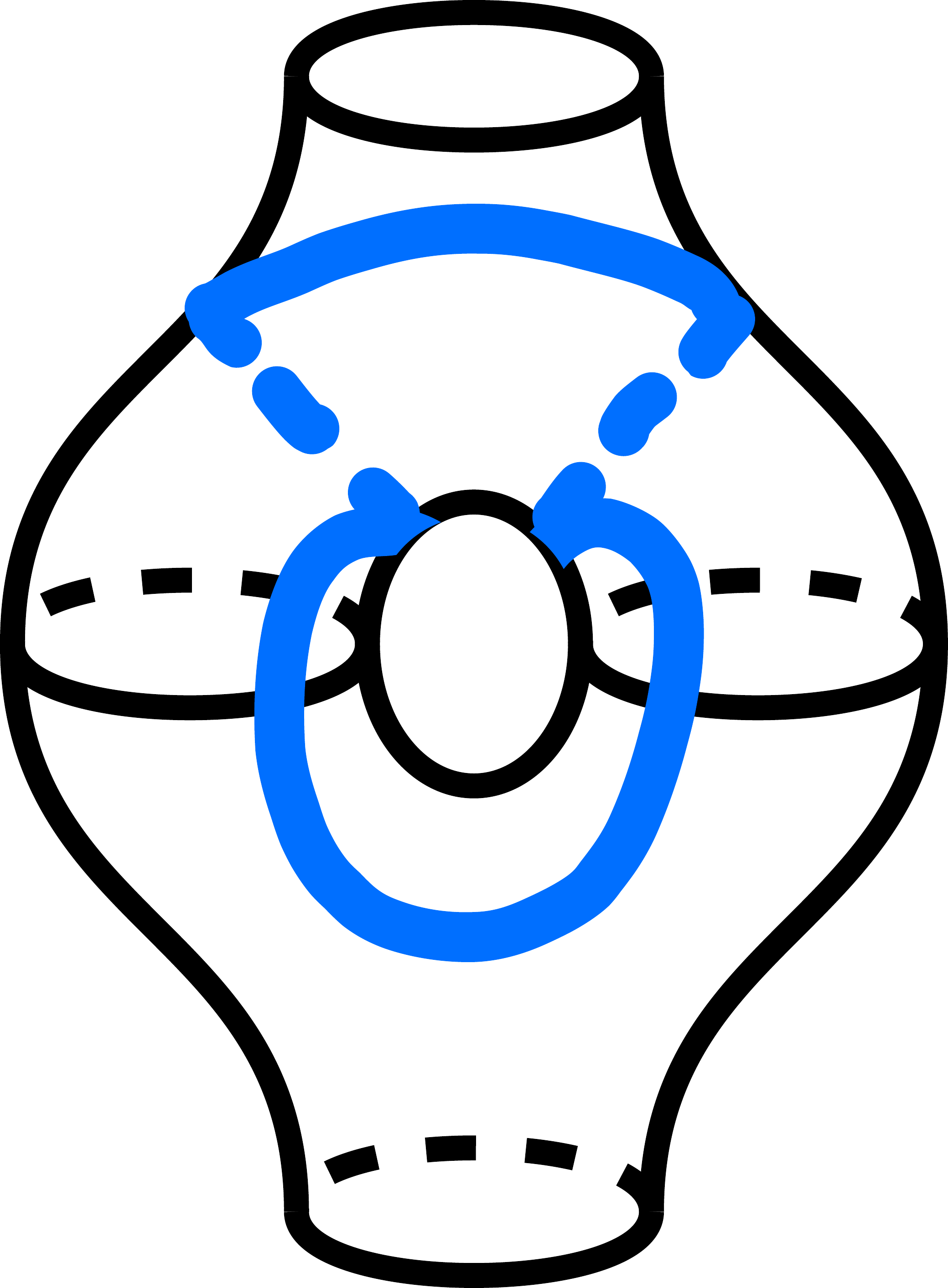}
			\end{aligned}
			\quad
			+
			\quad
			\begin{aligned}
			\includegraphics[scale=0.04]{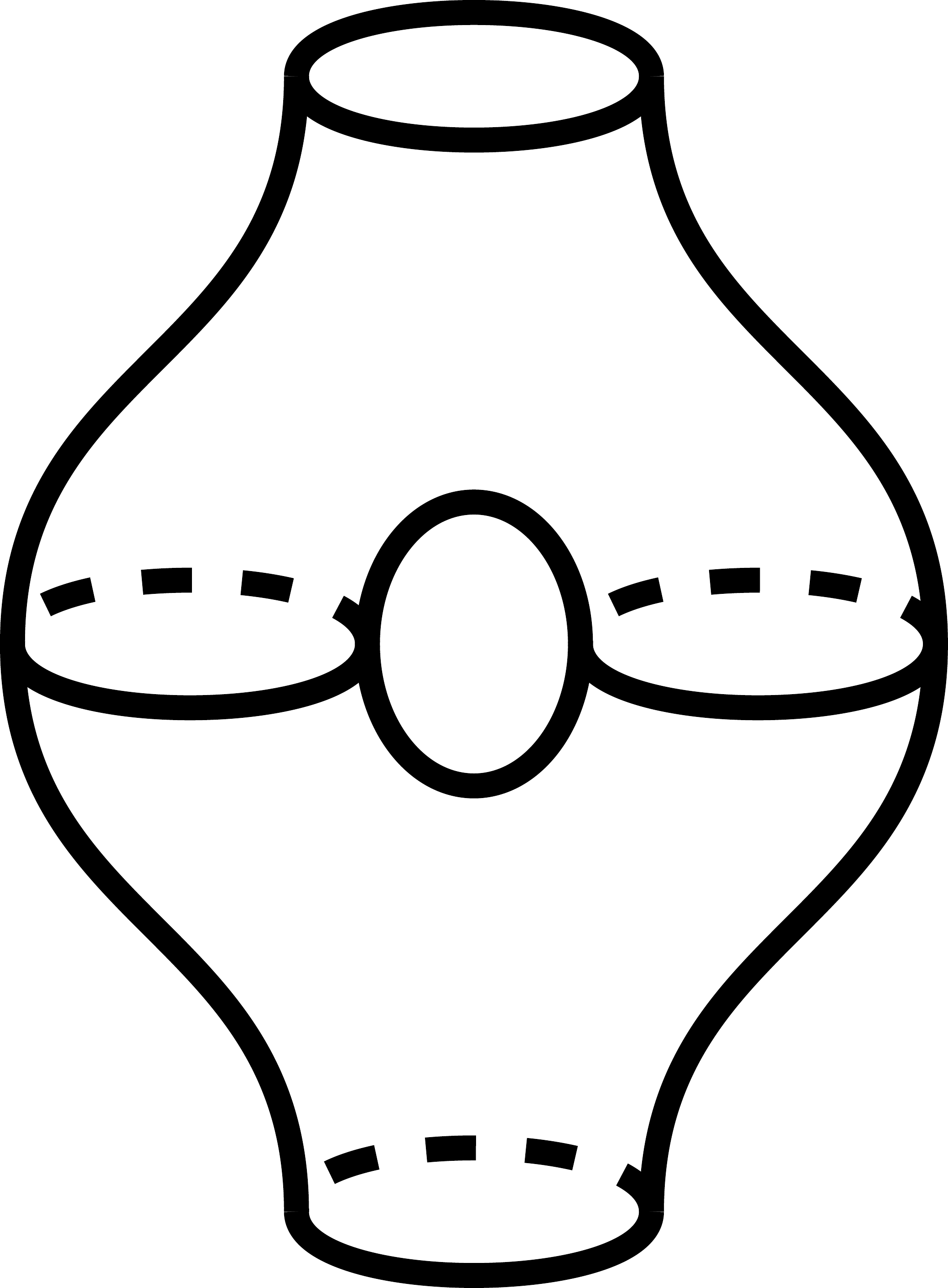}
			\end{aligned} \\
			&=
			\quad
			\begin{aligned}
			\includegraphics[scale=0.04]{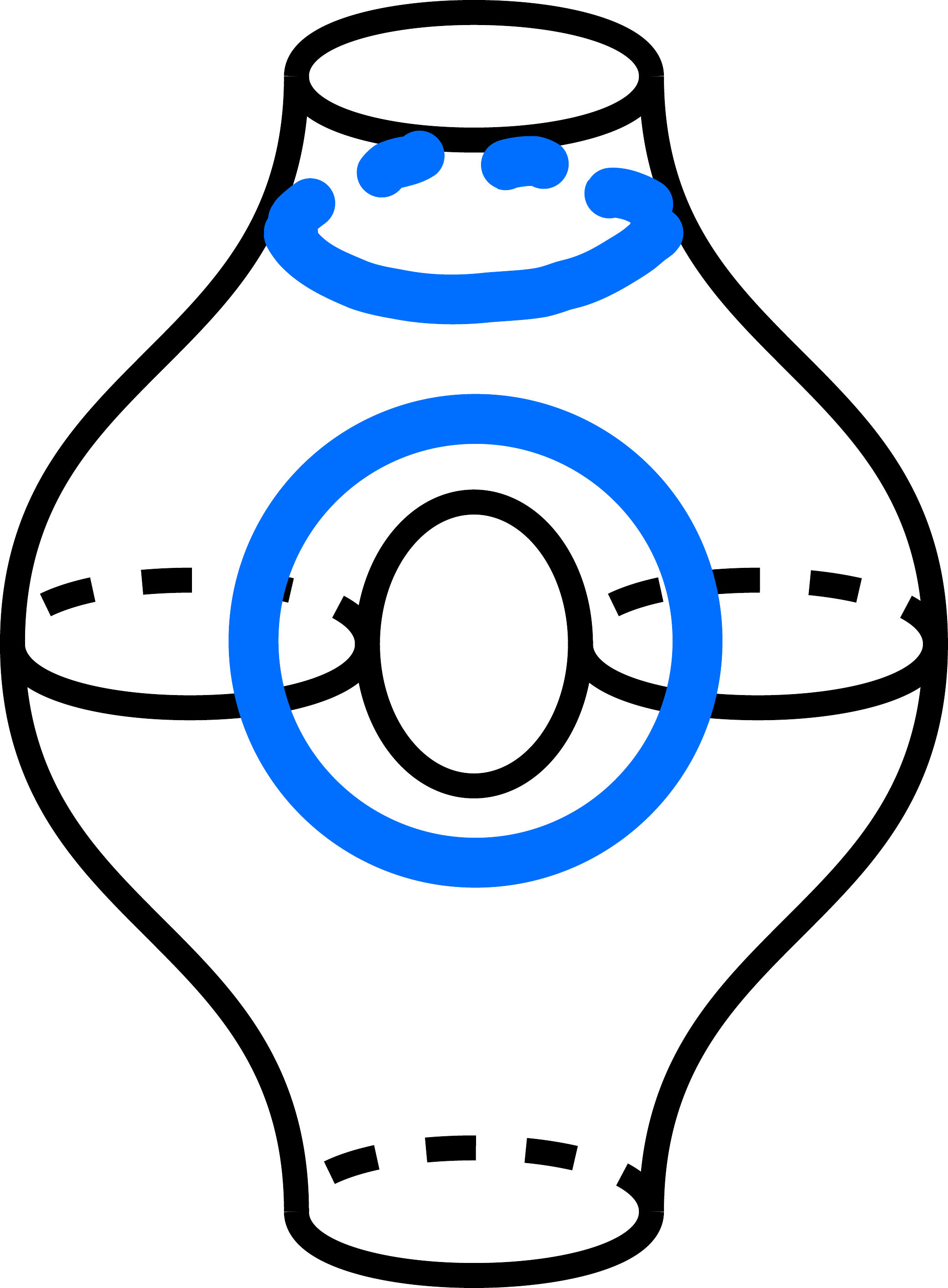}
			\end{aligned}
			\quad
			+
			\quad
			\begin{aligned}
			\includegraphics[scale=0.04]{modularity_25.pdf}
			\end{aligned} \\
			&\xmapsto{~Z_{SN}(\epsilon)~}
			\quad
			\begin{aligned}
			\includegraphics[scale=0.04]{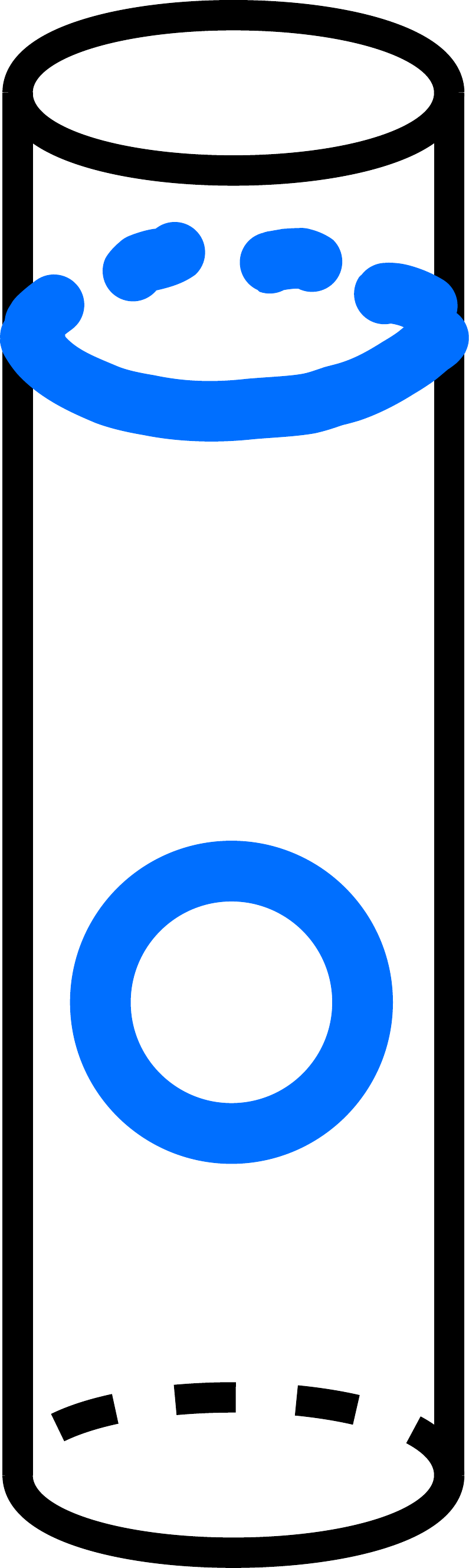}
			\end{aligned}
			\quad
			+
			\quad
			\begin{aligned}
			\includegraphics[scale=0.04]{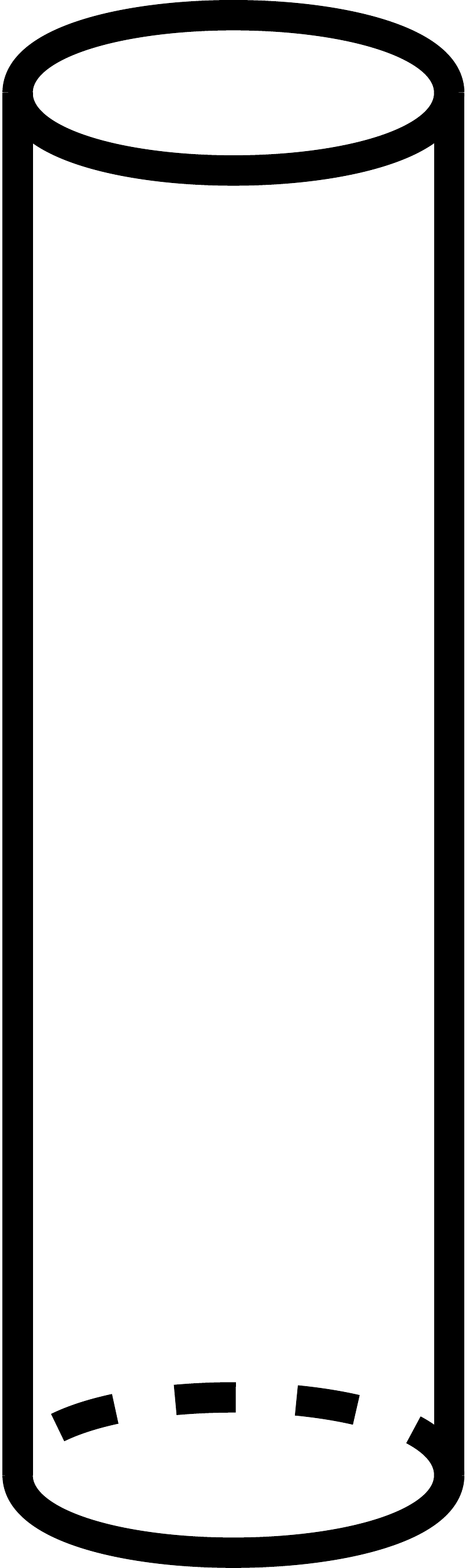}
			\end{aligned}
			\quad
			=
			\quad
			\begin{aligned}
			\includegraphics[scale=0.04]{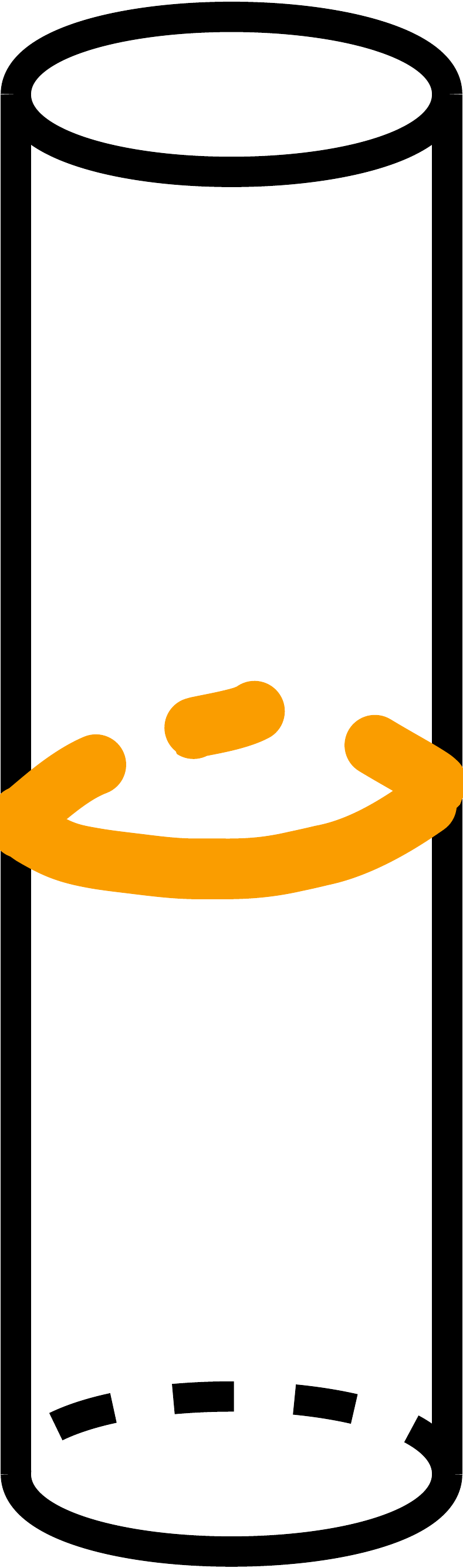}
			\end{aligned}
		\end{align*}
		
		\vspace{0.15cm}
		
		\begin{align*}
			\begin{aligned}
			\includegraphics[scale=0.04]{modularity_6.pdf}
			\end{aligned}
			\quad
			\xmapsto{~Z_{SN}(\epsilon^{\dagger})~}
			\quad
			\begin{aligned}
			\includegraphics[scale=0.04]{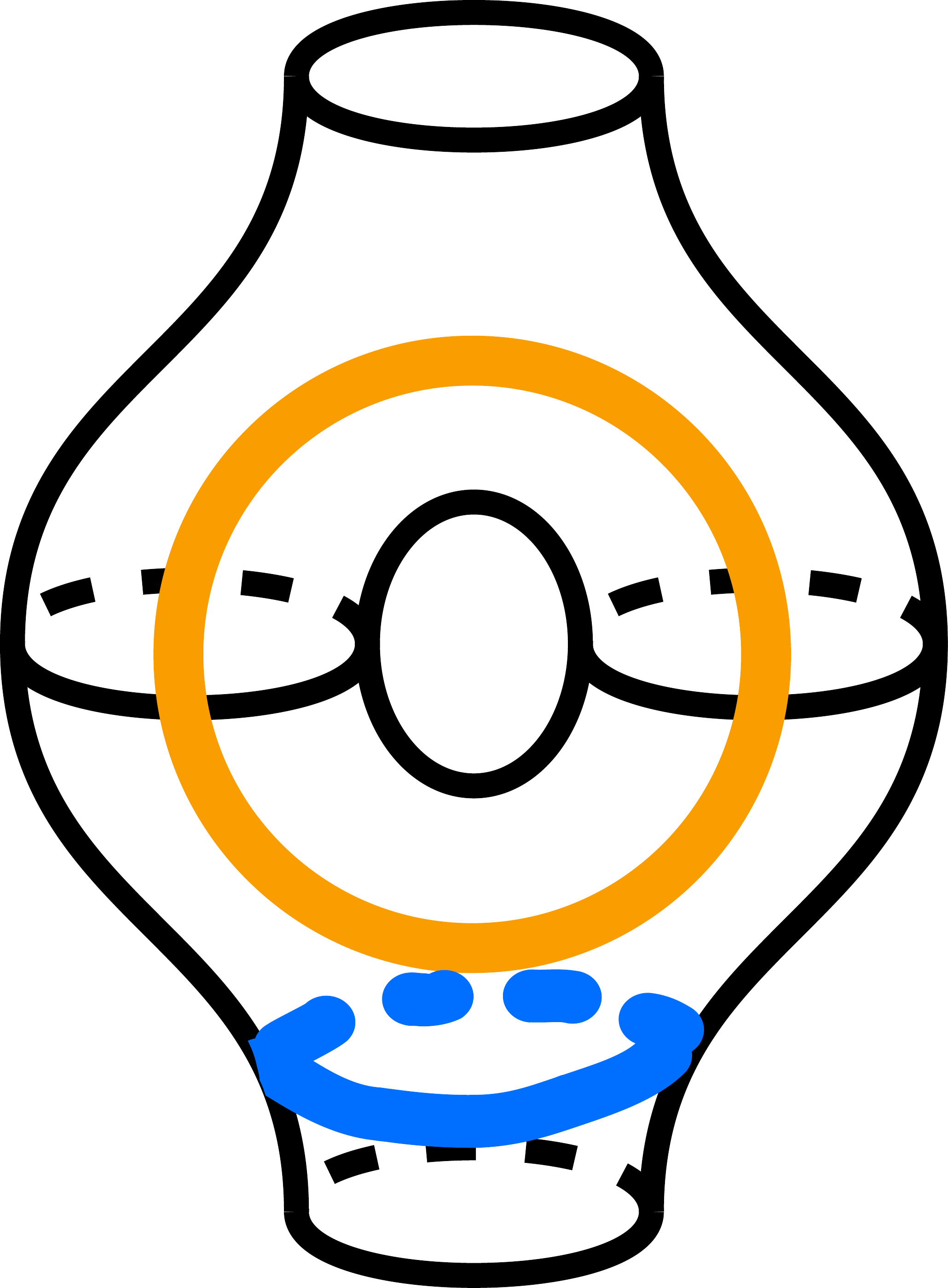}
			\end{aligned}
			\quad
			&\xmapsto{~Z_{SN}(\theta),\,Z_{SN}(\theta^{\inv})~}
			\quad
			\begin{aligned}
			\includegraphics[scale=0.04]{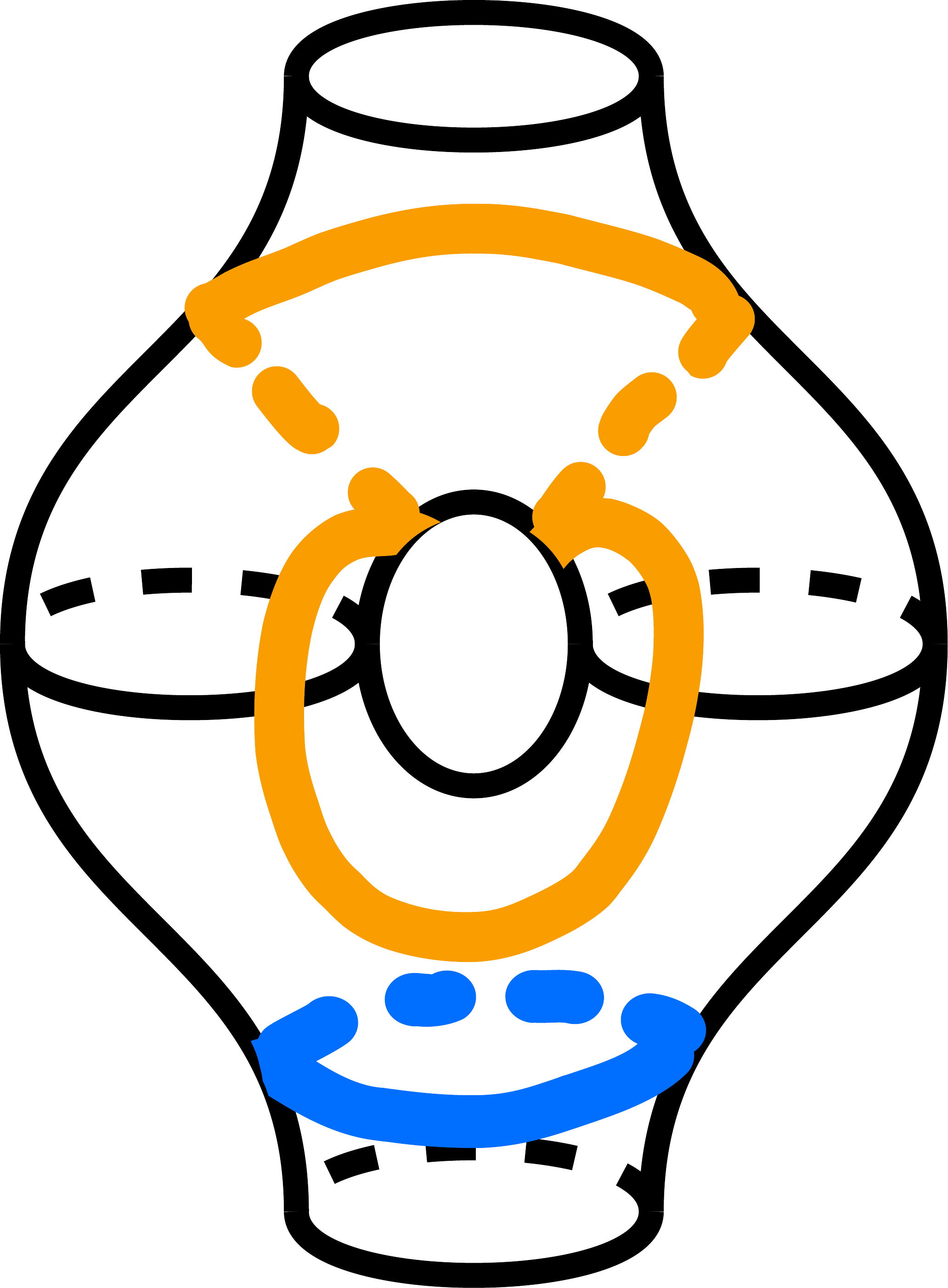}
			\end{aligned} \\
			&=
			\quad
			\begin{aligned}
			\includegraphics[scale=0.04]{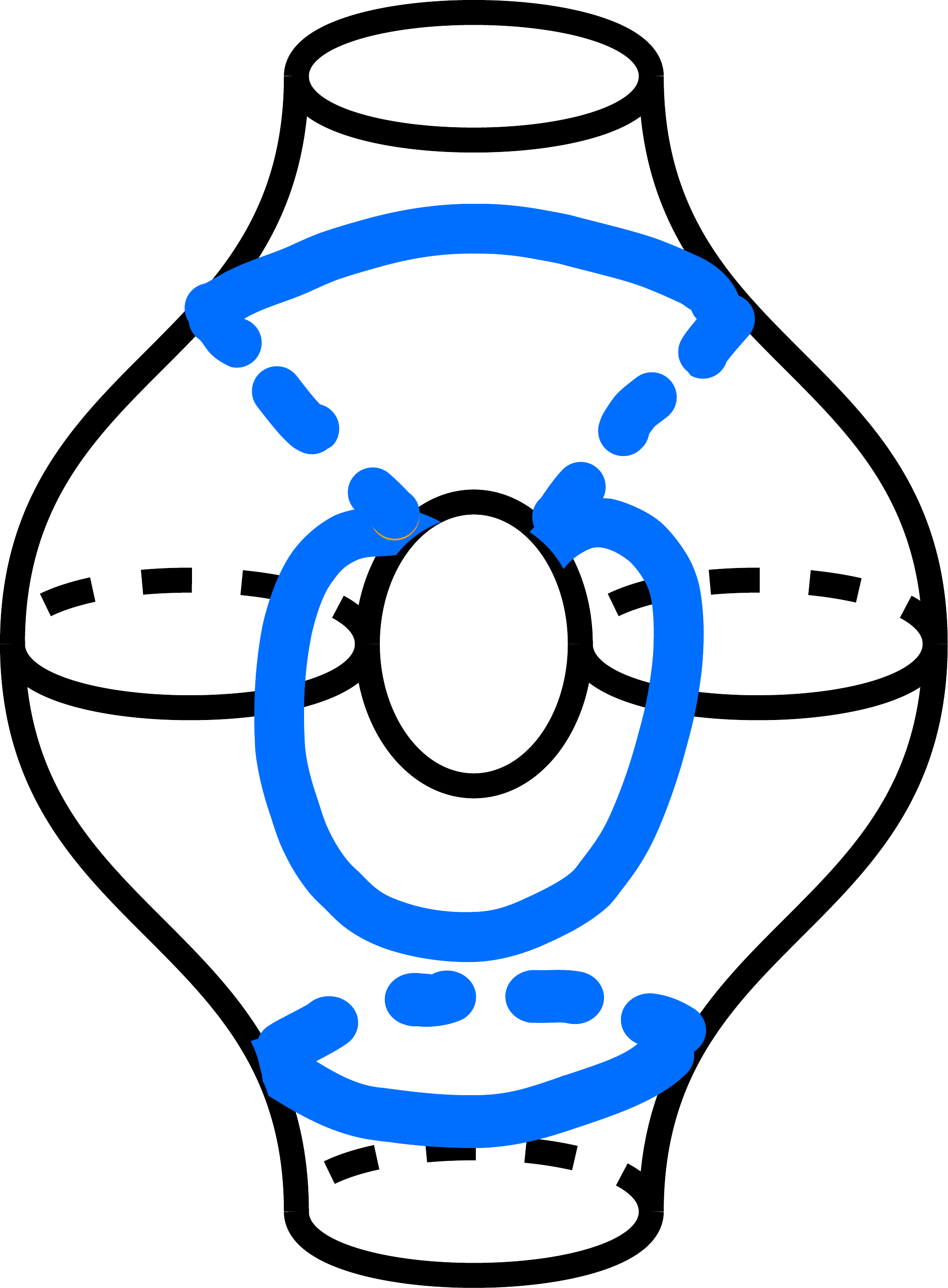}
			\end{aligned}
			\quad
			+
			\quad
			\begin{aligned}
			\includegraphics[scale=0.04]{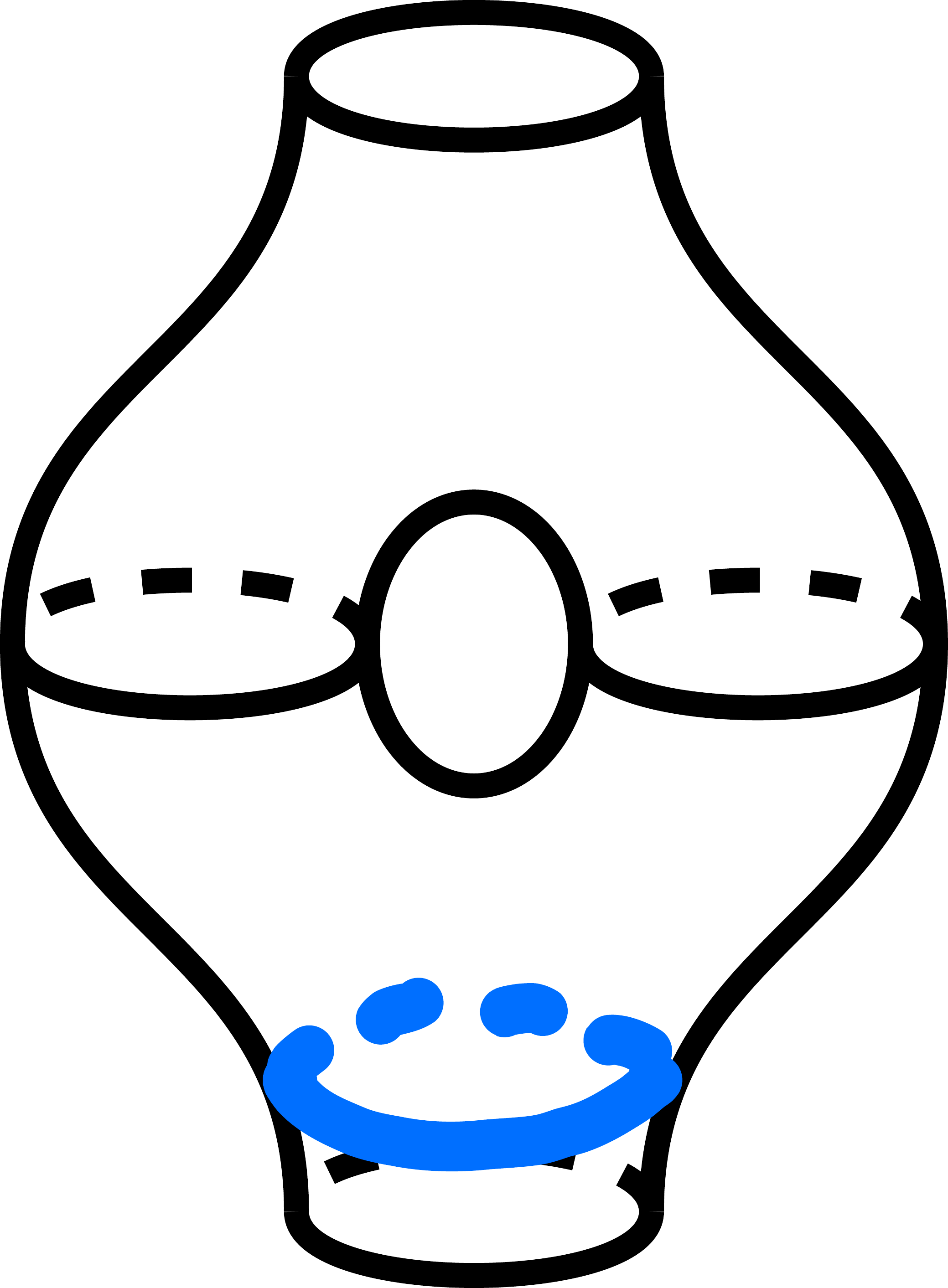}
			\end{aligned} \\
			&=
			\quad
			\begin{aligned}
			\includegraphics[scale=0.04]{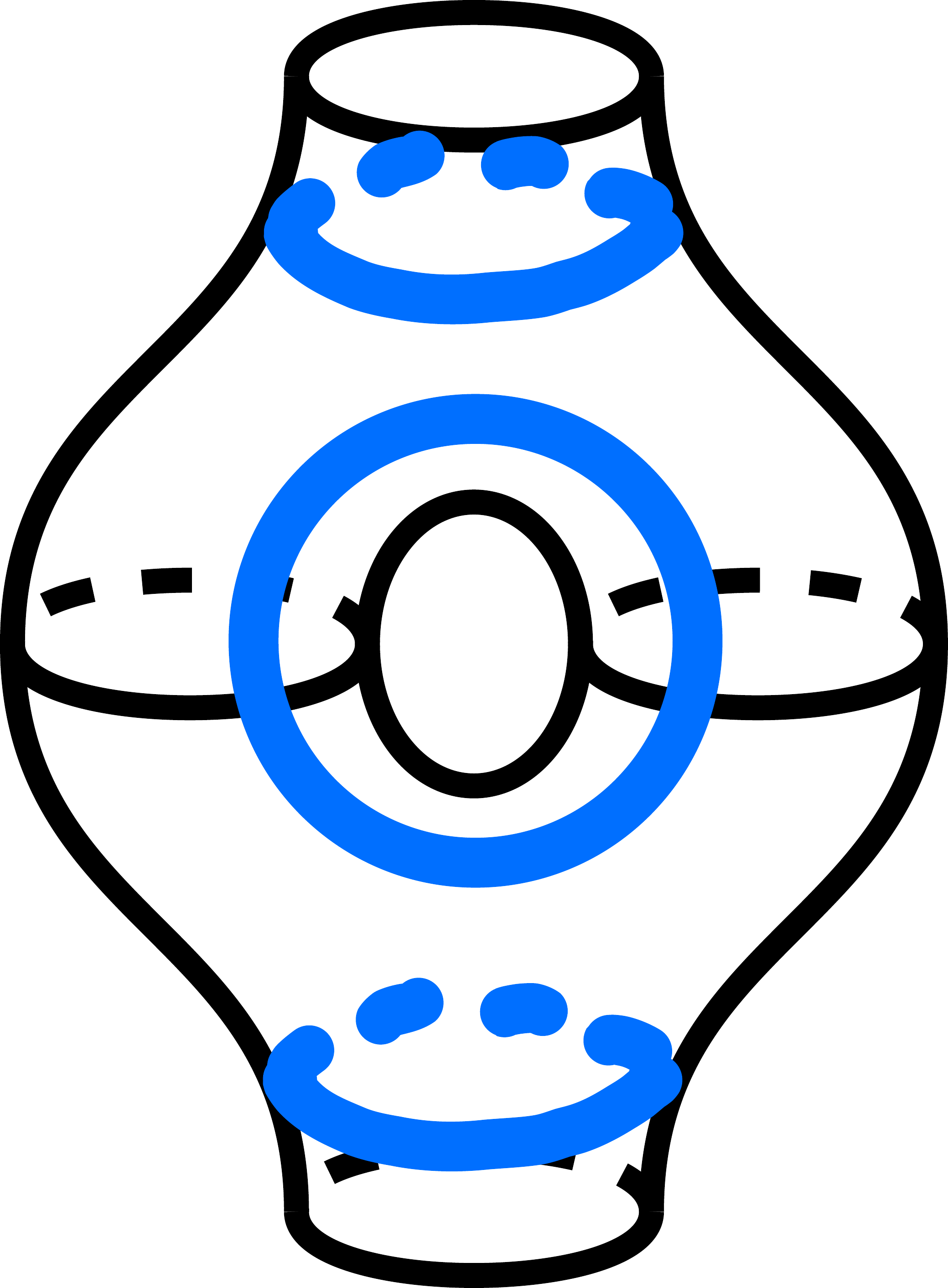}
			\end{aligned}
			\quad
			+
			\quad
			\begin{aligned}
			\includegraphics[scale=0.04]{modularity_30.pdf}
			\end{aligned} \\
			&\xmapsto{~Z_{SN}(\epsilon)~}
			\quad
			\begin{aligned}
			\includegraphics[scale=0.04]{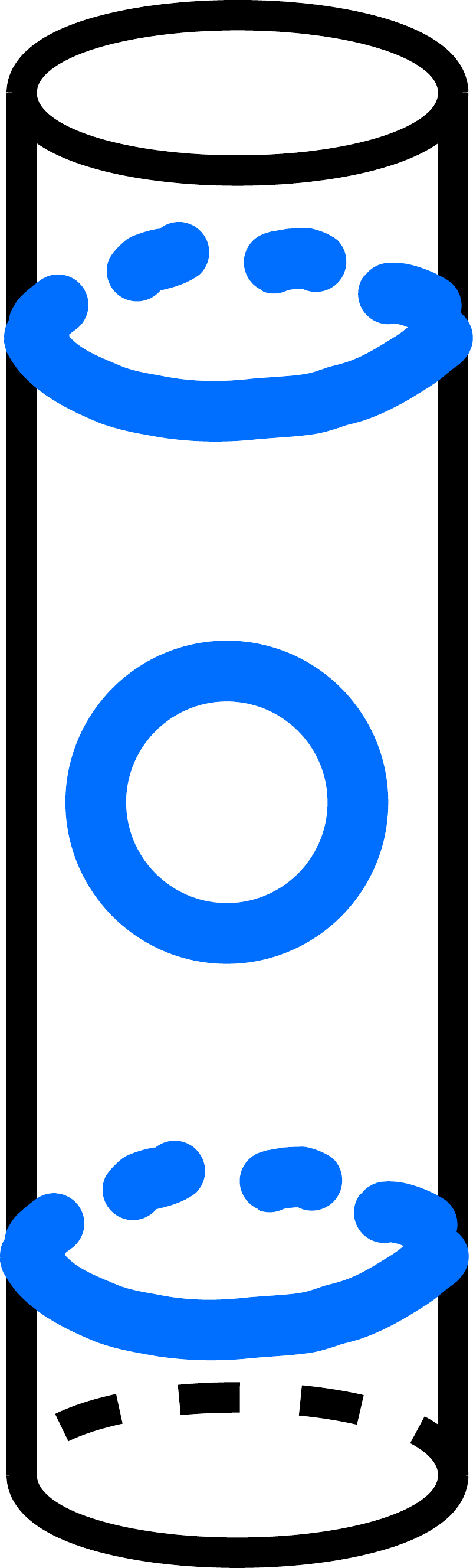}
			\end{aligned}
			\quad
			+
			\quad
			\begin{aligned}
			\includegraphics[scale=0.04]{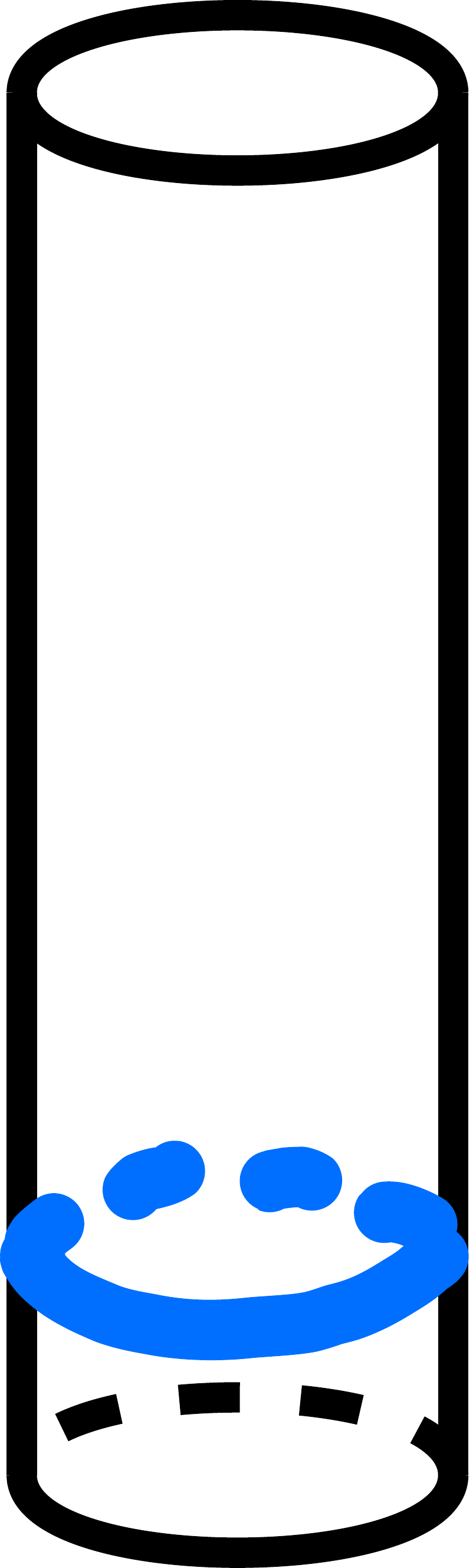}
			\end{aligned}
			\quad \\
			&=
			\quad
			\begin{aligned}
			\includegraphics[scale=0.04]{modularity_27.pdf}
			\end{aligned}
			\quad
			+
			\quad
			\begin{aligned}
			\includegraphics[scale=0.04]{modularity_32.pdf}
			\end{aligned}
			\quad
			=
			\quad
			\begin{aligned}
			\includegraphics[scale=0.04]{modularity_5.pdf}
			\end{aligned}
		\end{align*}
		
		\vspace{0.15cm}
		
% 		\begin{equation*}
% 			\begin{aligned}
% 			\includegraphics[scale=0.04]{images/modularity_6.pdf}
% 			\end{aligned}
% 			\quad
% 			\xmapsto{~Z_{SN}(\epsilon^{\dagger})~}
% 			\quad
% 			\begin{aligned}
% 			\includegraphics[scale=0.04]{modularity_8.pdf}
% 			\end{aligned}
% 			\quad
% 			\xmapsto{~Z_{SN}(\theta),\,Z_{SN}(\theta^{\inv})~}
% 			\quad
% 			\begin{aligned}
% 			\includegraphics[scale=0.04]{modularity_9.pdf}
% 			\end{aligned}
% 			\quad
% 			=
% 			\quad
% 			\begin{aligned}
% 			\includegraphics[scale=0.04]{}
% 			\end{aligned}
% 			\quad
% 			\xmapsto{~Z_{SN}(\epsilon)~}
% 			\quad
% 			\begin{aligned}
% 			\includegraphics[scale=0.04]{modularity_5.pdf}
% 			\end{aligned}
% 		\end{equation*}
		
% 		\vspace{0.15cm}
		
		\begin{align*}
			\begin{aligned}
			\includegraphics[scale=0.04]{modularity_7.pdf}
			\end{aligned}
			\quad
			&\xmapsto{~Z_{SN}(\epsilon^{\dagger})~}
			\quad
			\begin{aligned}
			\includegraphics[scale=0.04]{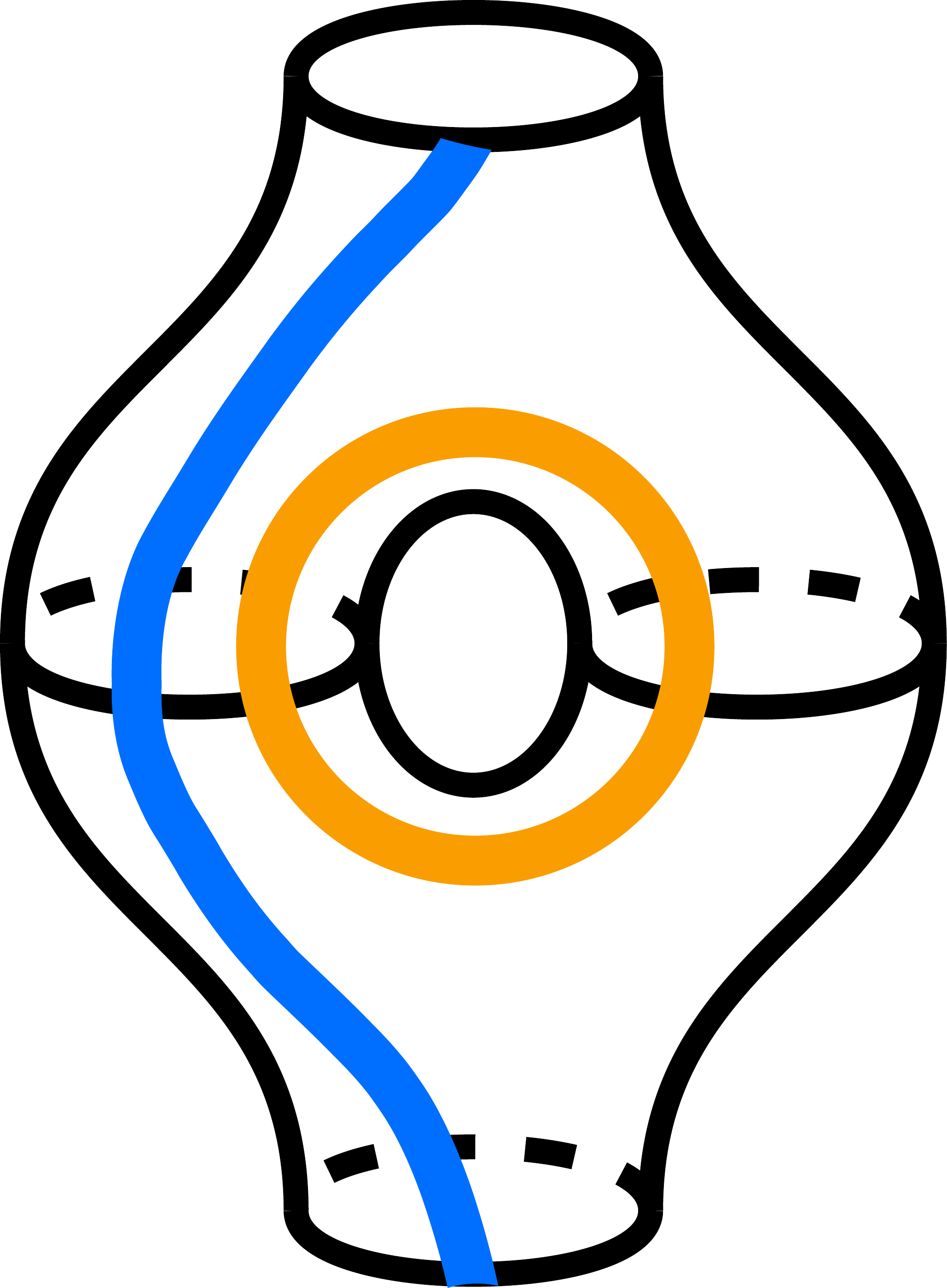}
			\end{aligned}
 			\quad
			=
			\quad
			\begin{aligned}
			\includegraphics[scale=0.04]{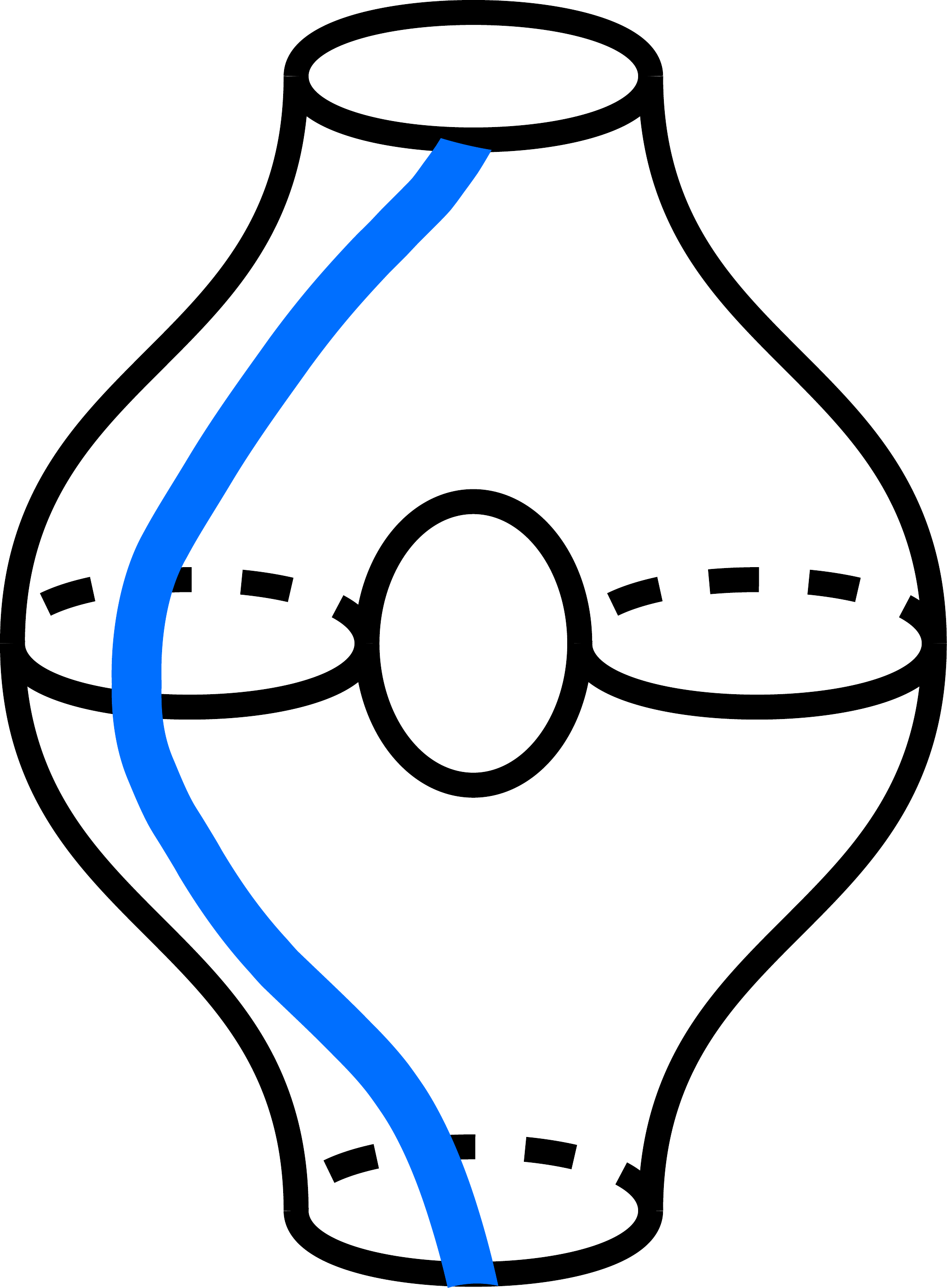}
			\end{aligned}
			\quad
			+
			\quad
			\begin{aligned}
			\includegraphics[scale=0.04]{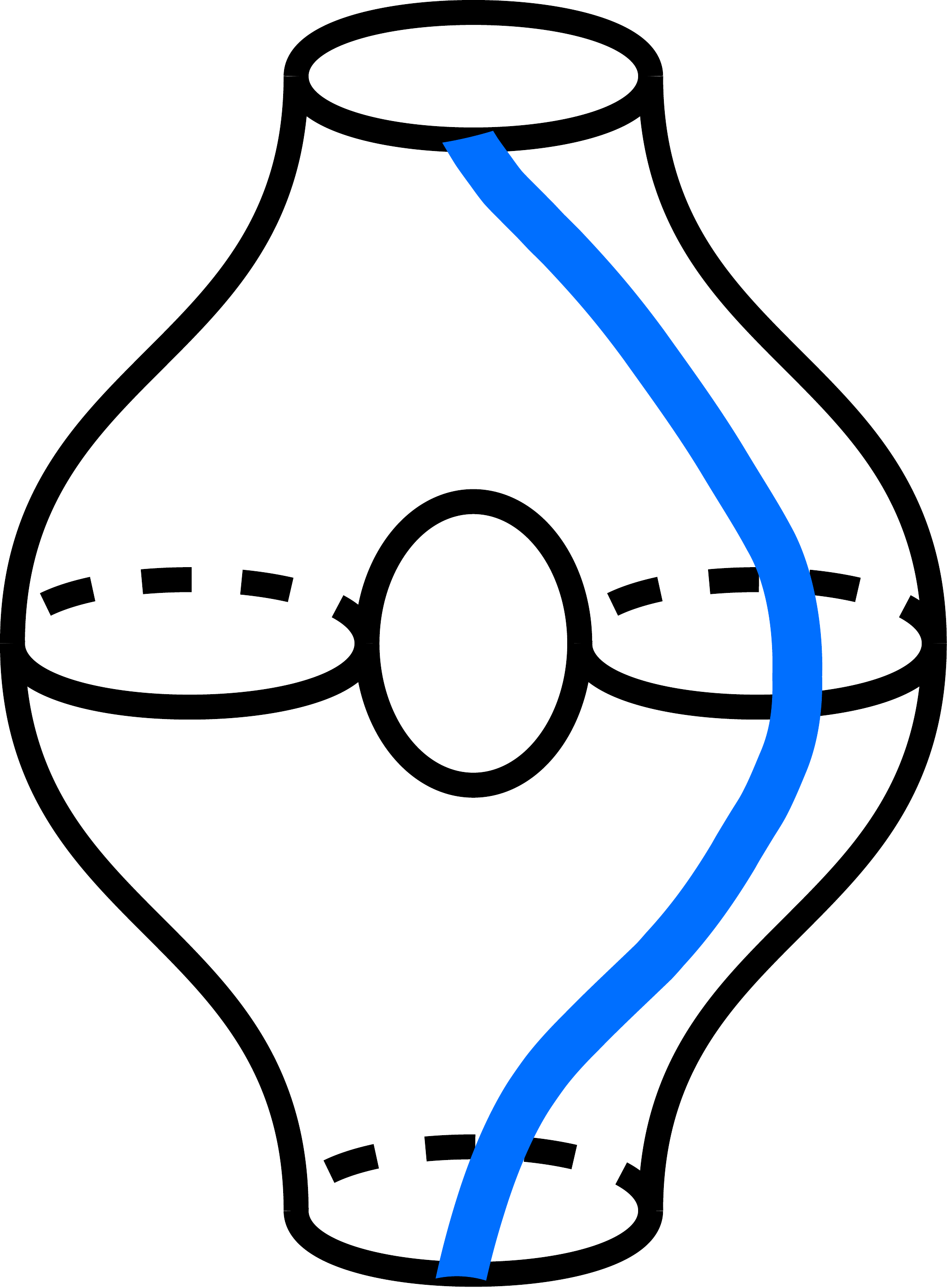}
			\end{aligned} \\
			&\xmapsto{~Z_{SN}(\theta),\,Z_{SN}(\theta^{\inv})~}
			\quad
			\begin{aligned}
			\includegraphics[scale=0.04]{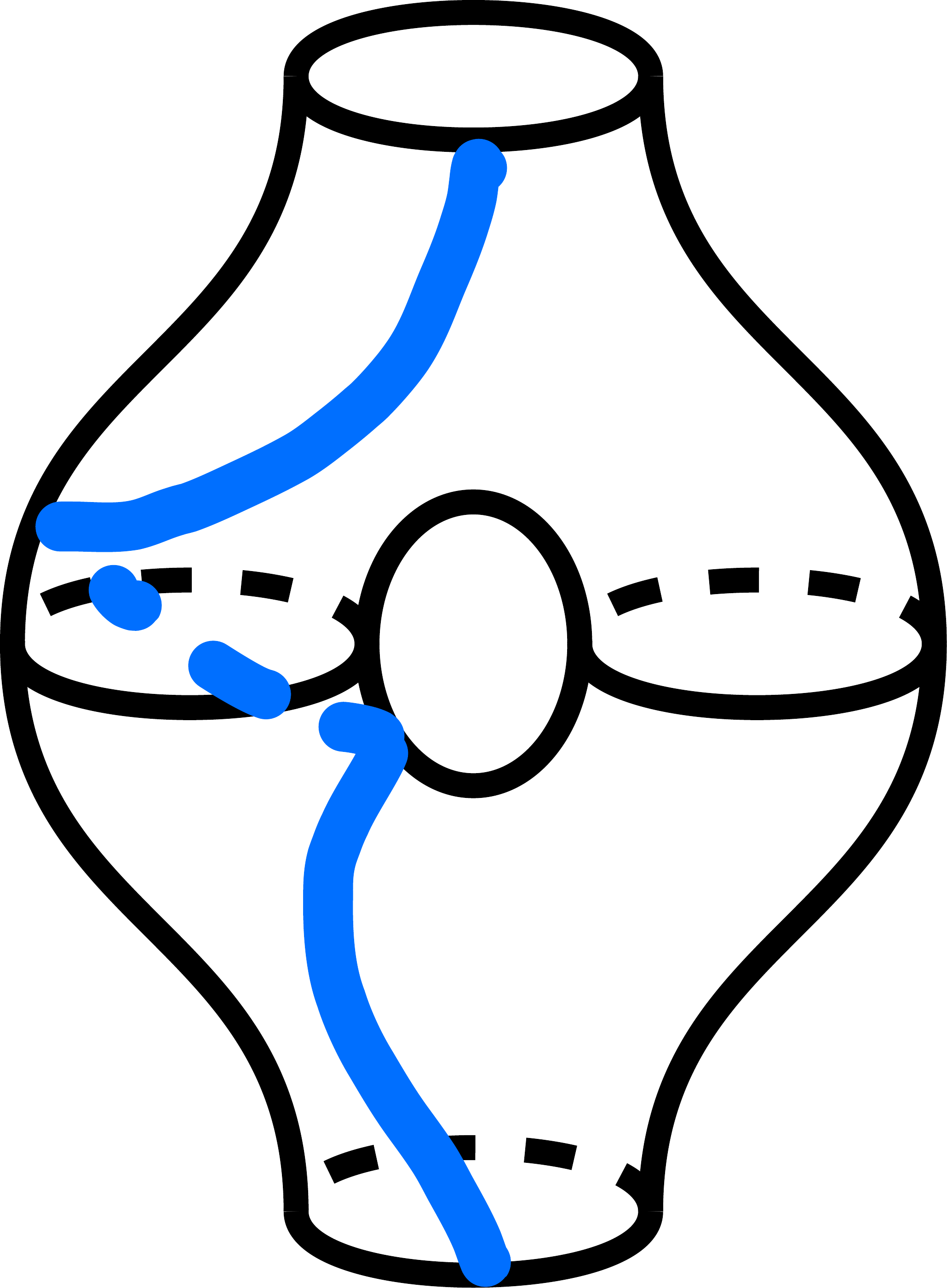}
			\end{aligned}
			\quad
			+
			\quad
			\begin{aligned}
			\includegraphics[scale=0.04]{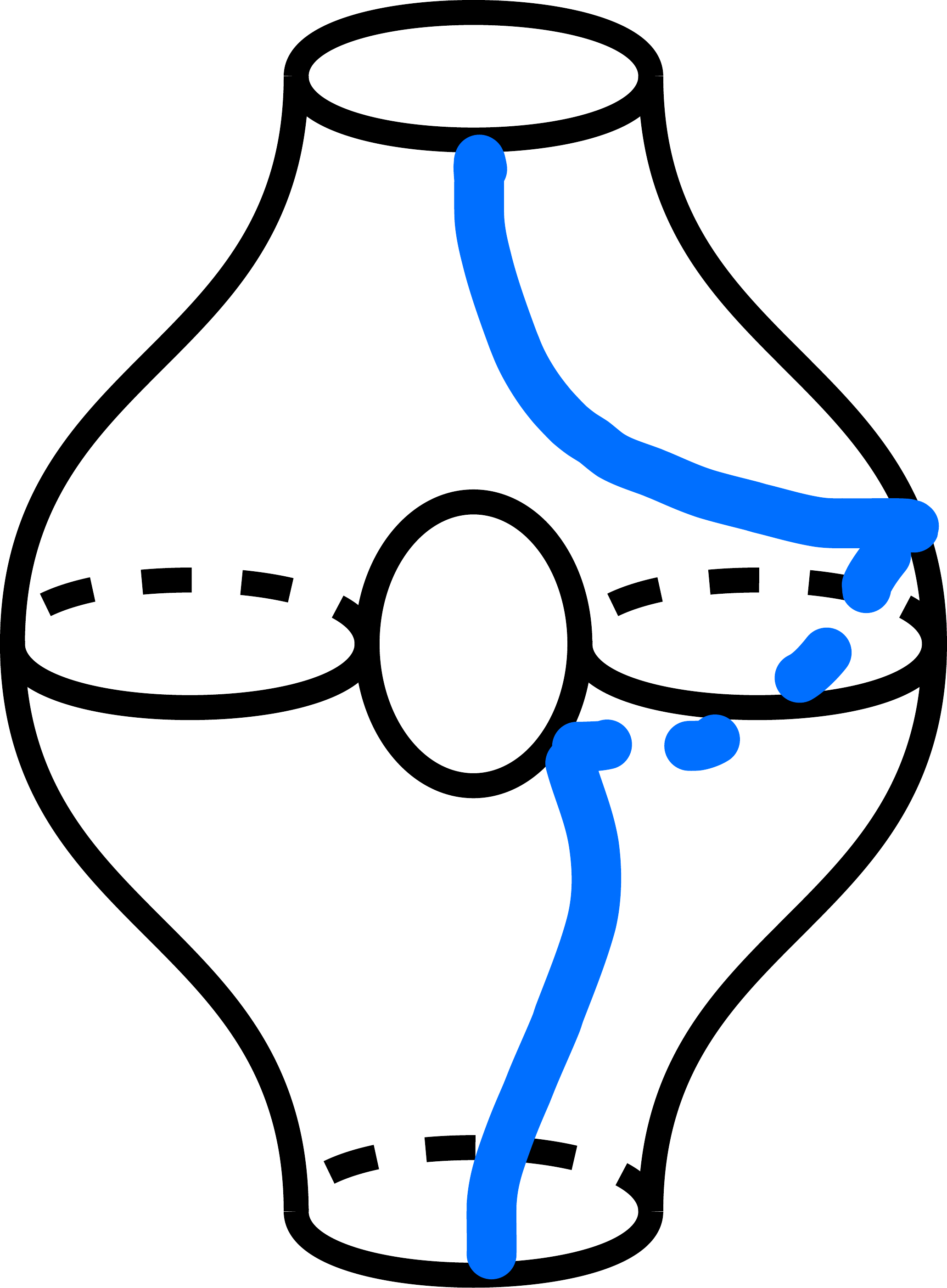}
			\end{aligned}
			\quad
			\xmapsto{~Z_{SN}(\epsilon)~}
			\quad
			0
		\end{align*}
		
		\vspace{0.15cm}
		
		\begin{align*}
			\begin{aligned}
			\includegraphics[scale=0.04]{modularity_10.pdf}
			\end{aligned}
			\quad
			&\xmapsto{~Z_{SN}(\epsilon^{\dagger})~}
			\quad
			\begin{aligned}
			\includegraphics[scale=0.04]{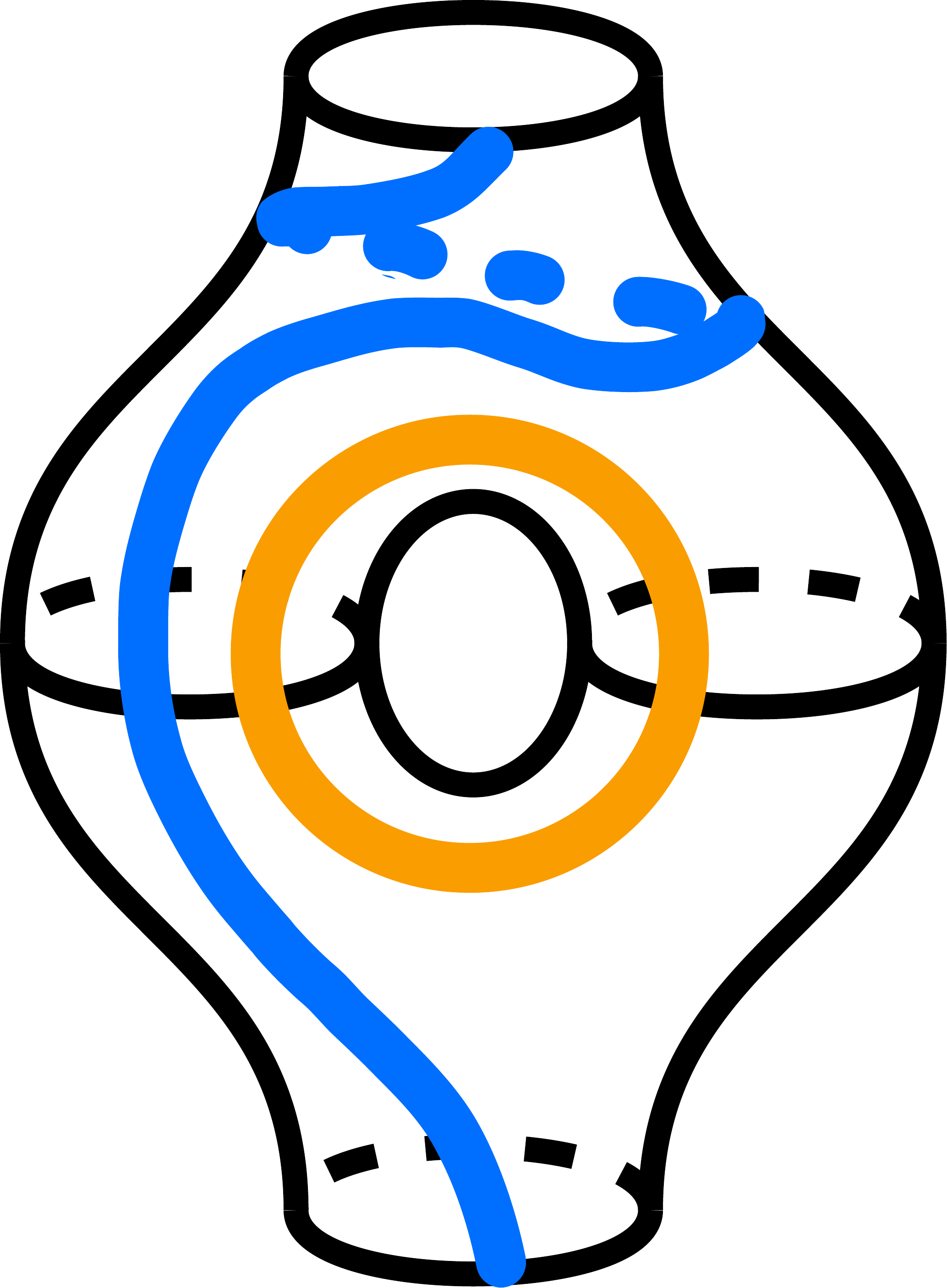}
			\end{aligned}
 			\quad
			=
			\quad
			\begin{aligned}
			\includegraphics[scale=0.04]{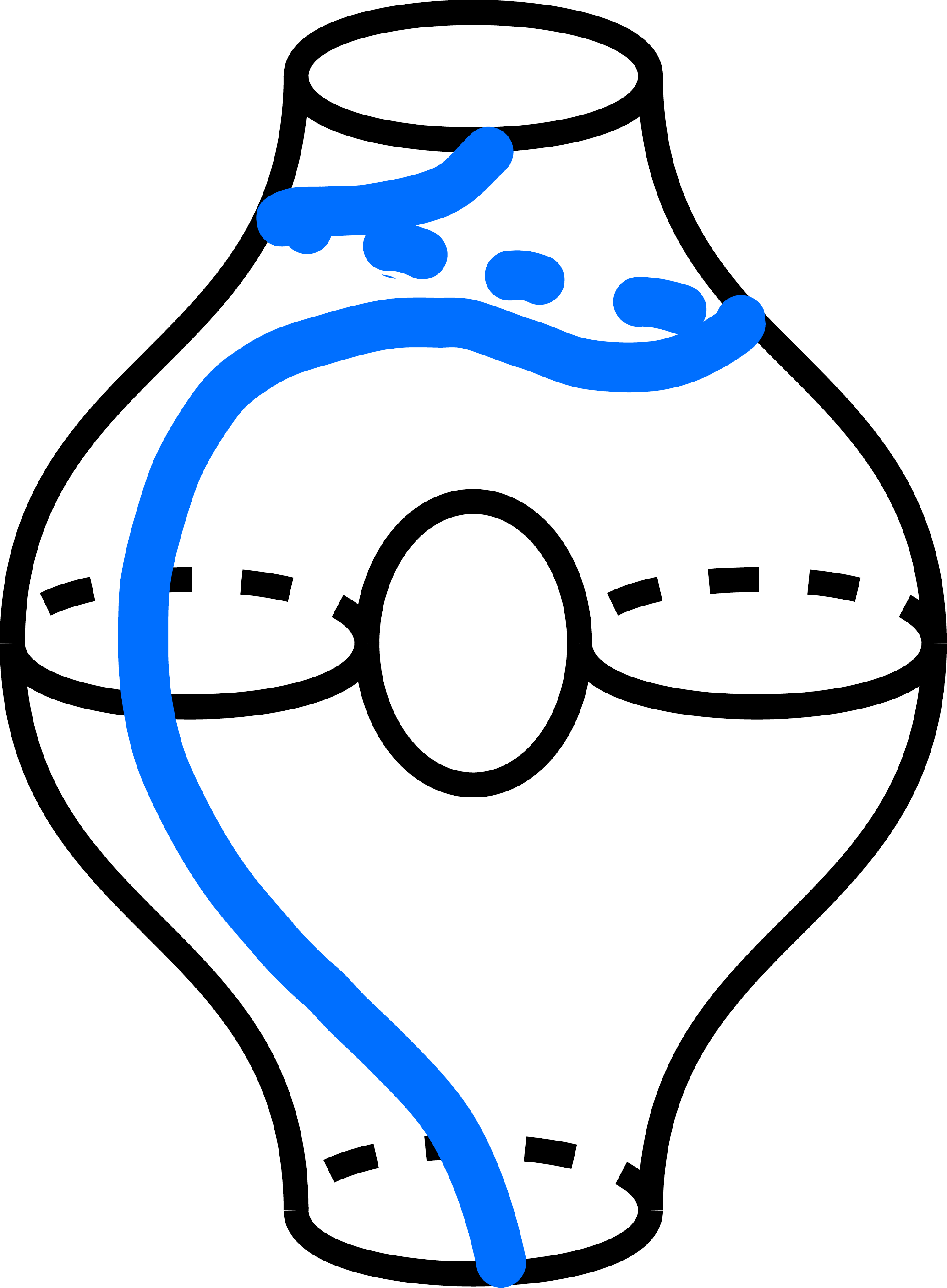}
			\end{aligned}
			\quad
			+
			\quad
			\begin{aligned}
			\includegraphics[scale=0.04]{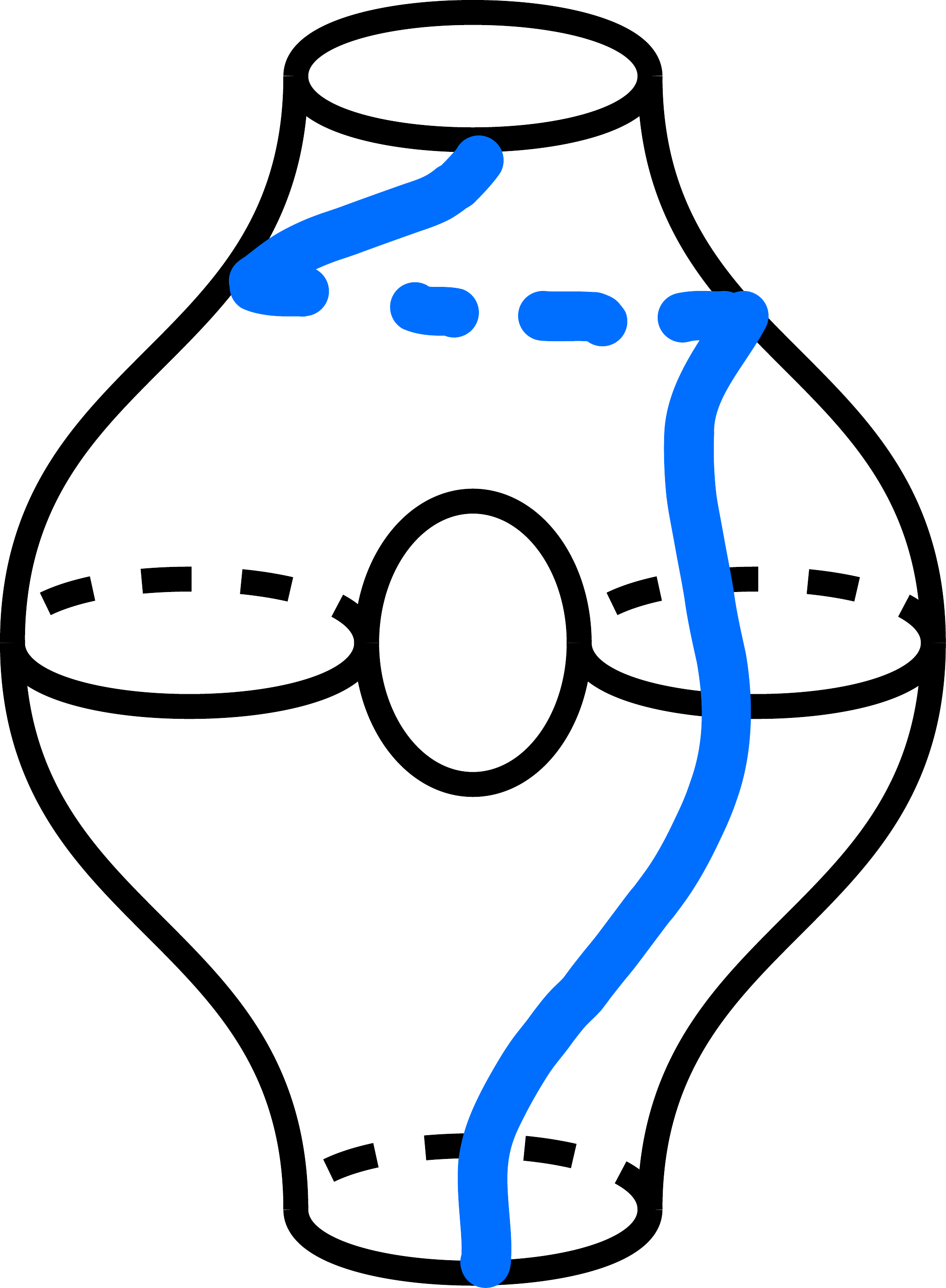}
			\end{aligned} \\
			&\xmapsto{~Z_{SN}(\theta),\,Z_{SN}(\theta^{\inv})~}
			\quad
			\begin{aligned}
			\includegraphics[scale=0.04]{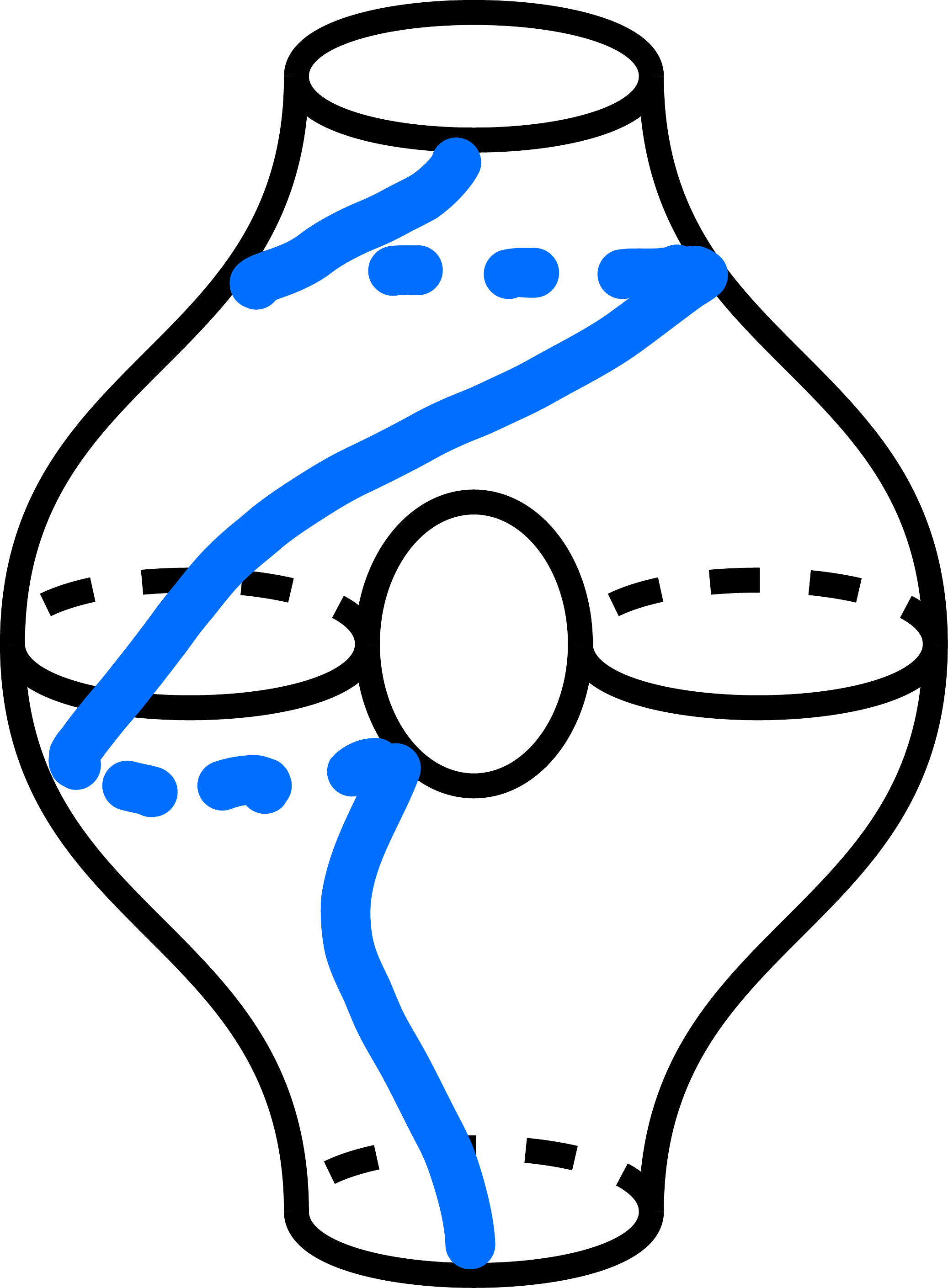}
			\end{aligned}
			\quad
			+
			\quad
			\begin{aligned}
			\includegraphics[scale=0.04]{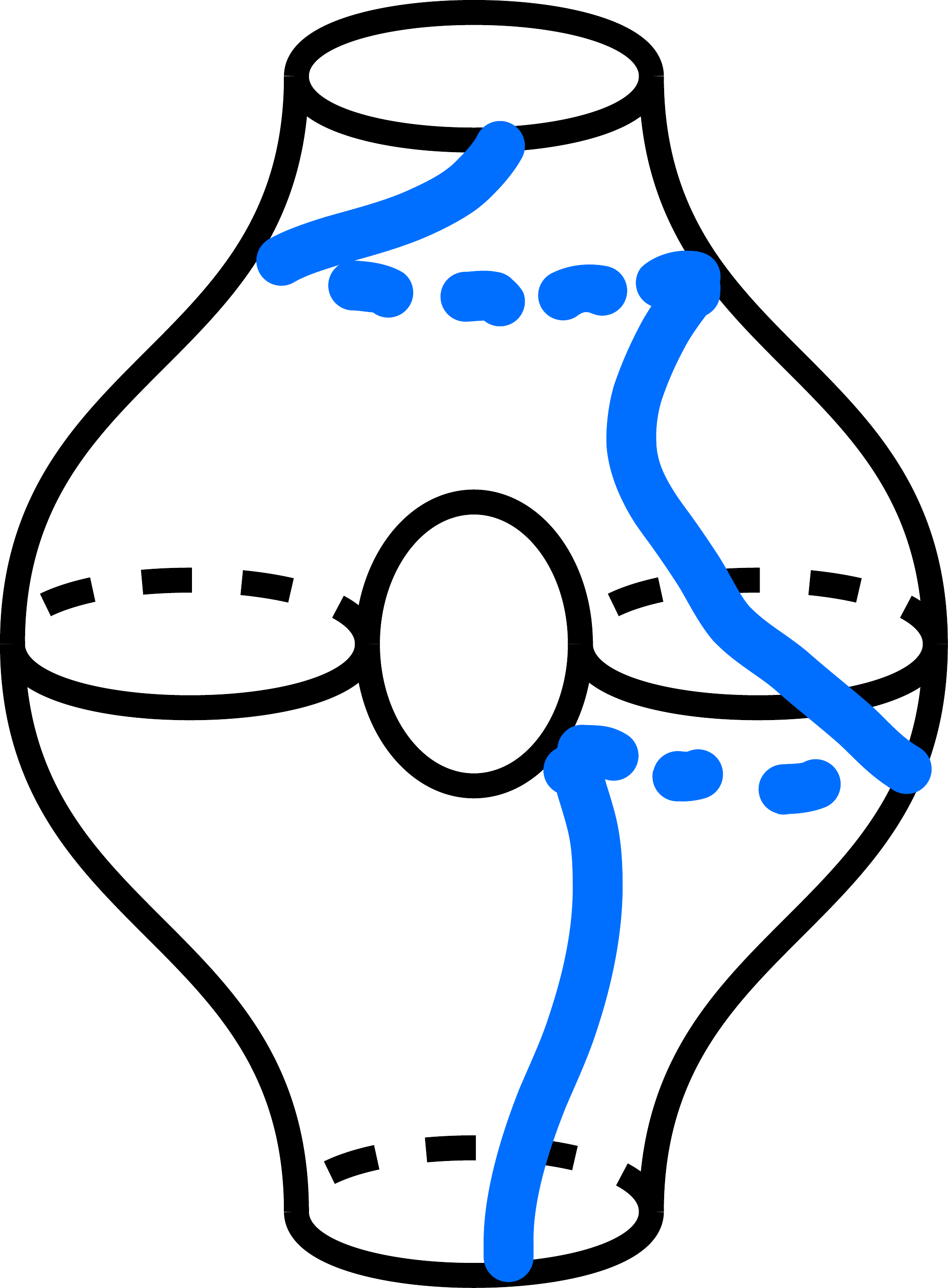}
			\end{aligned}
			\quad
			\xmapsto{~Z_{SN}(\epsilon)~}
			\quad
			0
		\end{align*}

\noindent On the other hand, the ``bottom'' composites are

        \begin{equation*}
			\begin{aligned}
			\includegraphics[scale=0.04]{modularity_1.pdf}
			\end{aligned}
			\quad
			\xmapsto{~Z_{SN}(\mu^{\dagger})~}
			\quad
			\begin{aligned}
			\includegraphics[scale=0.04]{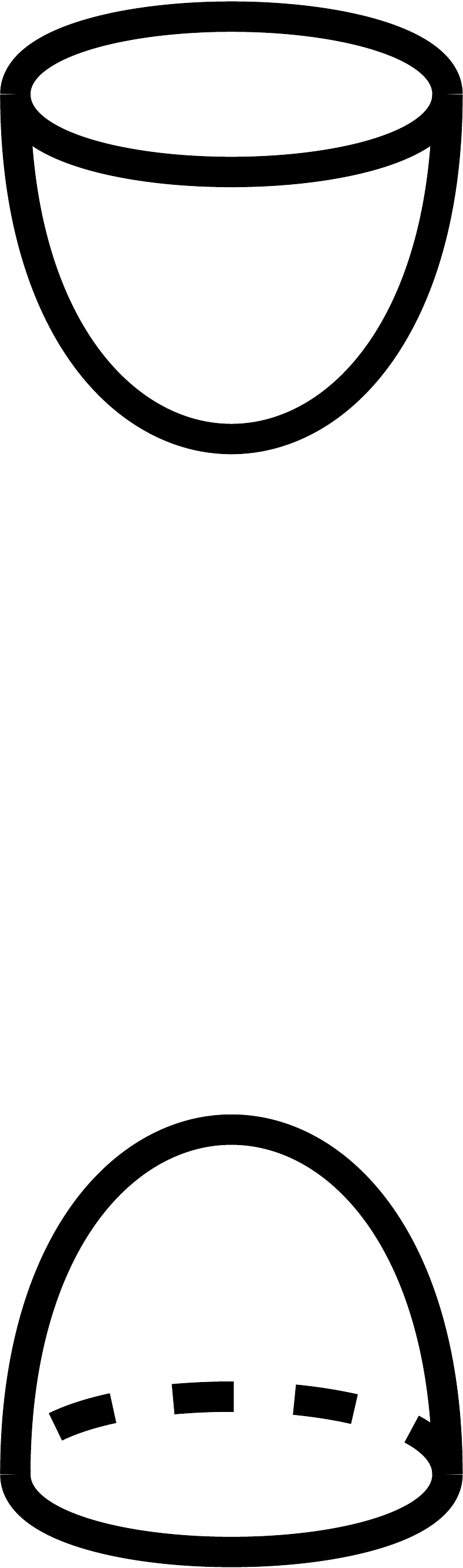}
			\end{aligned}
			\quad
			\xmapsto{~Z_{SN}(\mu)~}
			\quad
			\begin{aligned}
			\includegraphics[scale=0.04]{modularity_5.pdf}
			\end{aligned}
		\end{equation*}
		
		\vspace{0.15cm}
		
		\begin{equation*}
			\begin{aligned}
			\includegraphics[scale=0.04]{modularity_6.pdf}
			\end{aligned}
			\quad
			\xmapsto{~Z_{SN}(\mu^{\dagger})~}
			\quad
			\begin{aligned}
			\includegraphics[scale=0.04]{modularity_2_bot.pdf}
			\end{aligned}
			\quad
			\xmapsto{~Z_{SN}(\mu)~}
			\quad
			\begin{aligned}
			\includegraphics[scale=0.04]{modularity_5.pdf}
			\end{aligned}
		\end{equation*}
		
		\vspace{0.15cm}
		
		\begin{equation*}
			\begin{aligned}
			\includegraphics[scale=0.04]{modularity_7.pdf}
			\end{aligned}
			\quad
			\xmapsto{~Z_{SN}(\mu^{\dagger})~}
			\quad
			0
			\quad
			\xmapsto{~Z_{SN}(\mu)~}
			\quad
			0
		\end{equation*}
		
		\vspace{0.15cm}
		
		\begin{equation*}
			\begin{aligned}
			\includegraphics[scale=0.04]{modularity_10.pdf}
			\end{aligned}
			\quad
			\xmapsto{~Z_{SN}(\mu^{\dagger})~}
			\quad
			0
			\quad
			\xmapsto{~Z_{SN}(\mu)~}
			\quad
			0
		\end{equation*}

\noindent The corresponding pairs of composites are clearly equal.

\begin{remark}
Although we checked the modularity relation on each basis vector we will, for brevity, check some of the remaining relations only on sample vectors in the relevant source vector space.
\end{remark}

\subsubsection{Rigidity} The left Frobeniusator on a sample basis vector is given by the composite

\begin{equation*}
			\begin{aligned}
			\includegraphics[scale=0.04]{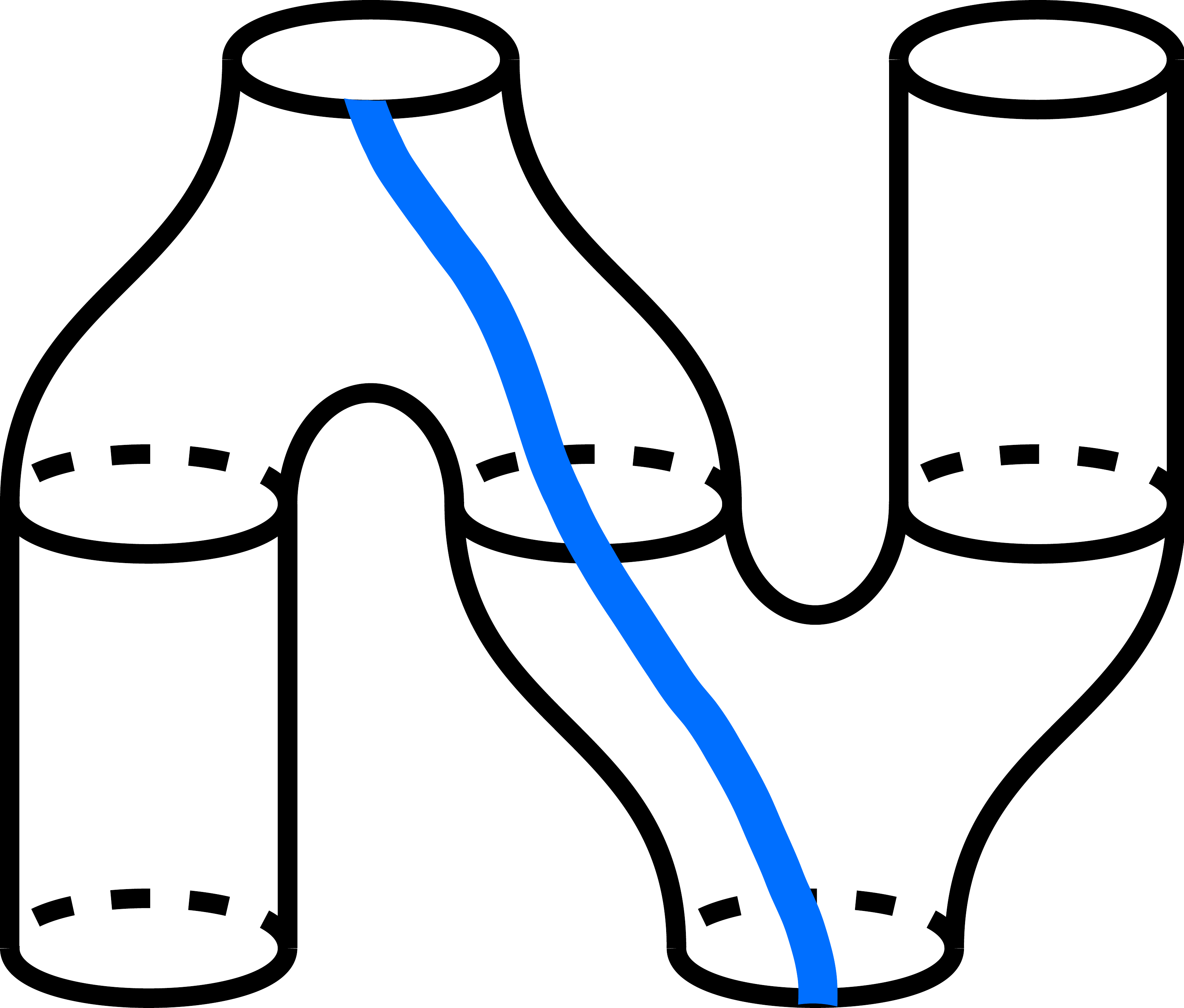}
			\end{aligned}
			\quad
			\xmapsto{~Z_{SN}(\eta)~}
			\quad
			\begin{aligned}
			\includegraphics[scale=0.04]{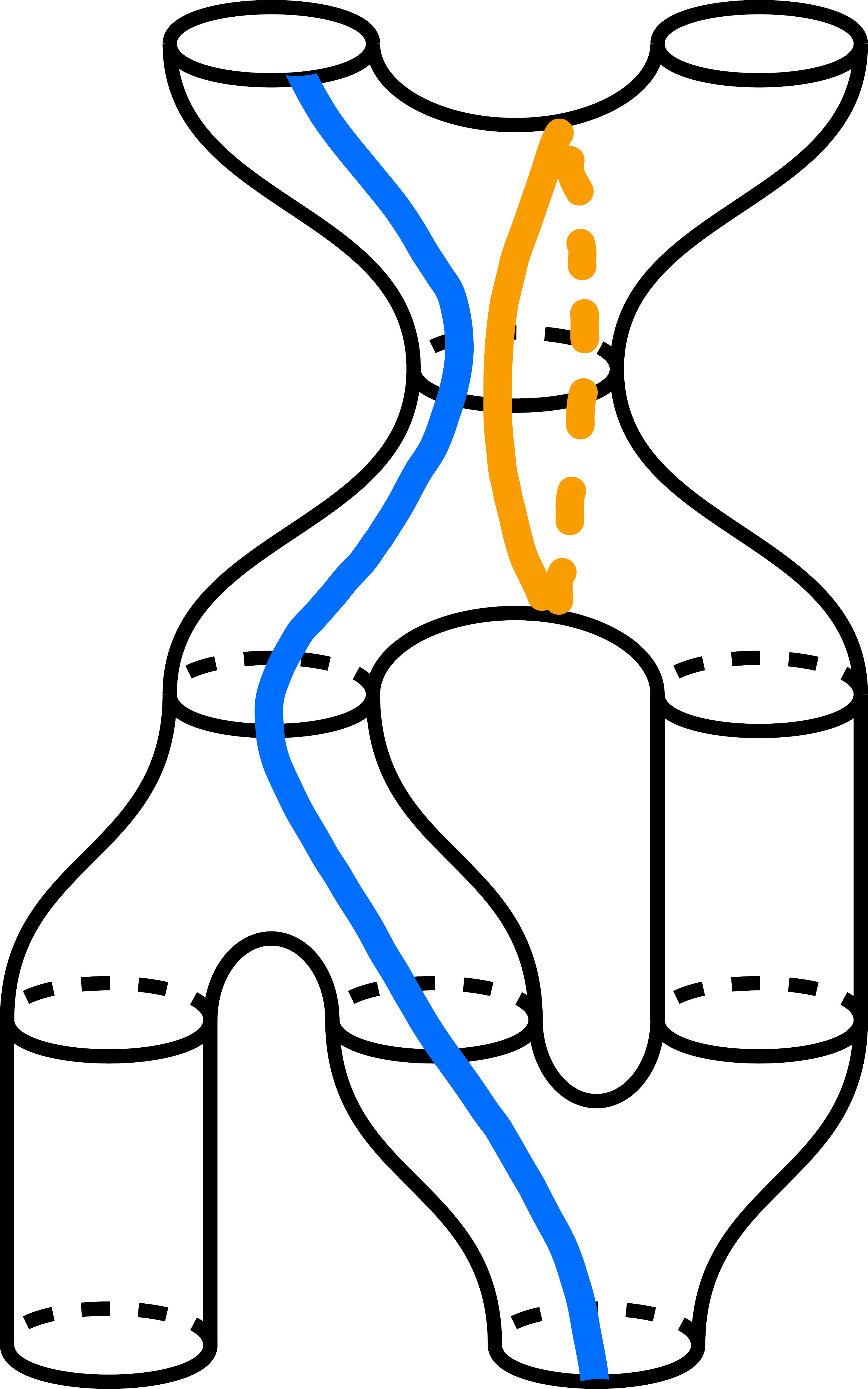}
			\end{aligned}
			\quad
			\xmapsto{~Z_{SN}(\alpha)~}
			\quad
			\begin{aligned}
			\includegraphics[scale=0.04]{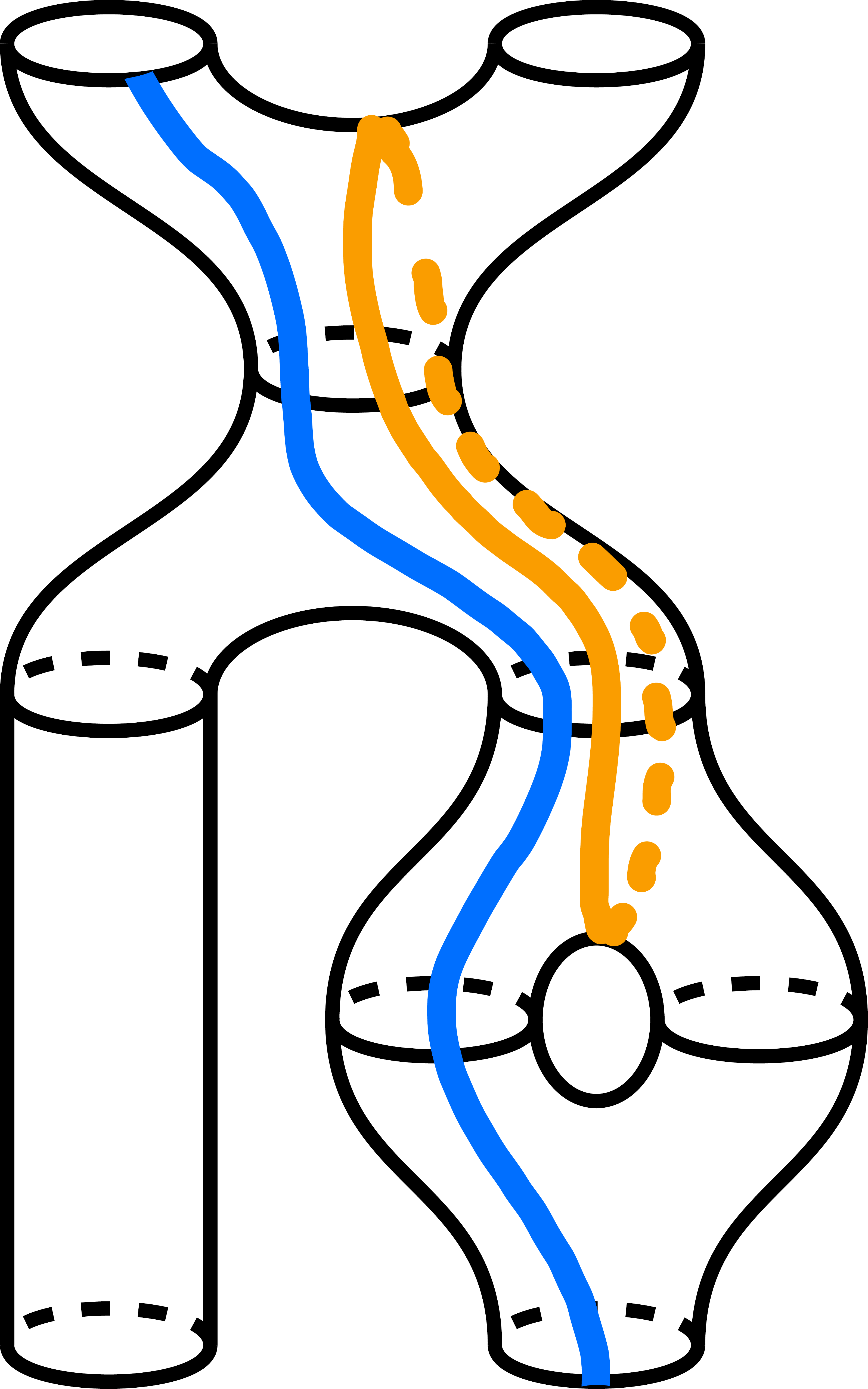}
			\end{aligned}
			\quad
			\xmapsto{~Z_{SN}(\epsilon)~}
			\quad
			\begin{aligned}
			\includegraphics[scale=0.04]{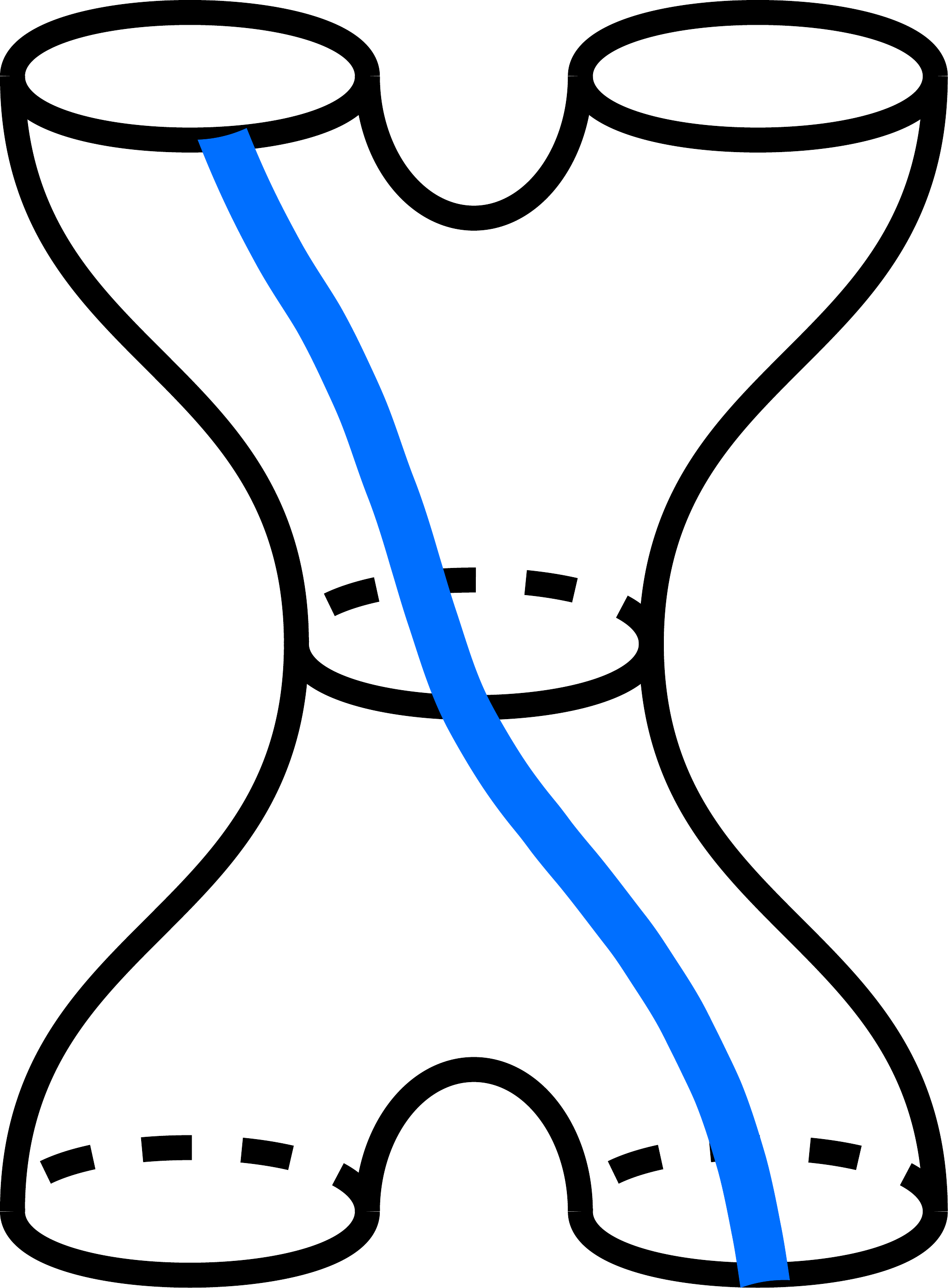}
			\end{aligned}
\end{equation*}

\noindent which is clearly the inverse of the composite

\begin{equation*}
			\begin{aligned}
			\includegraphics[scale=0.04]{rigidity_4.pdf}
			\end{aligned}
			\quad
			\xmapsto{~Z_{SN}(\epsilon^{\dagger})~}
			\quad
			\begin{aligned}
			\includegraphics[scale=0.04]{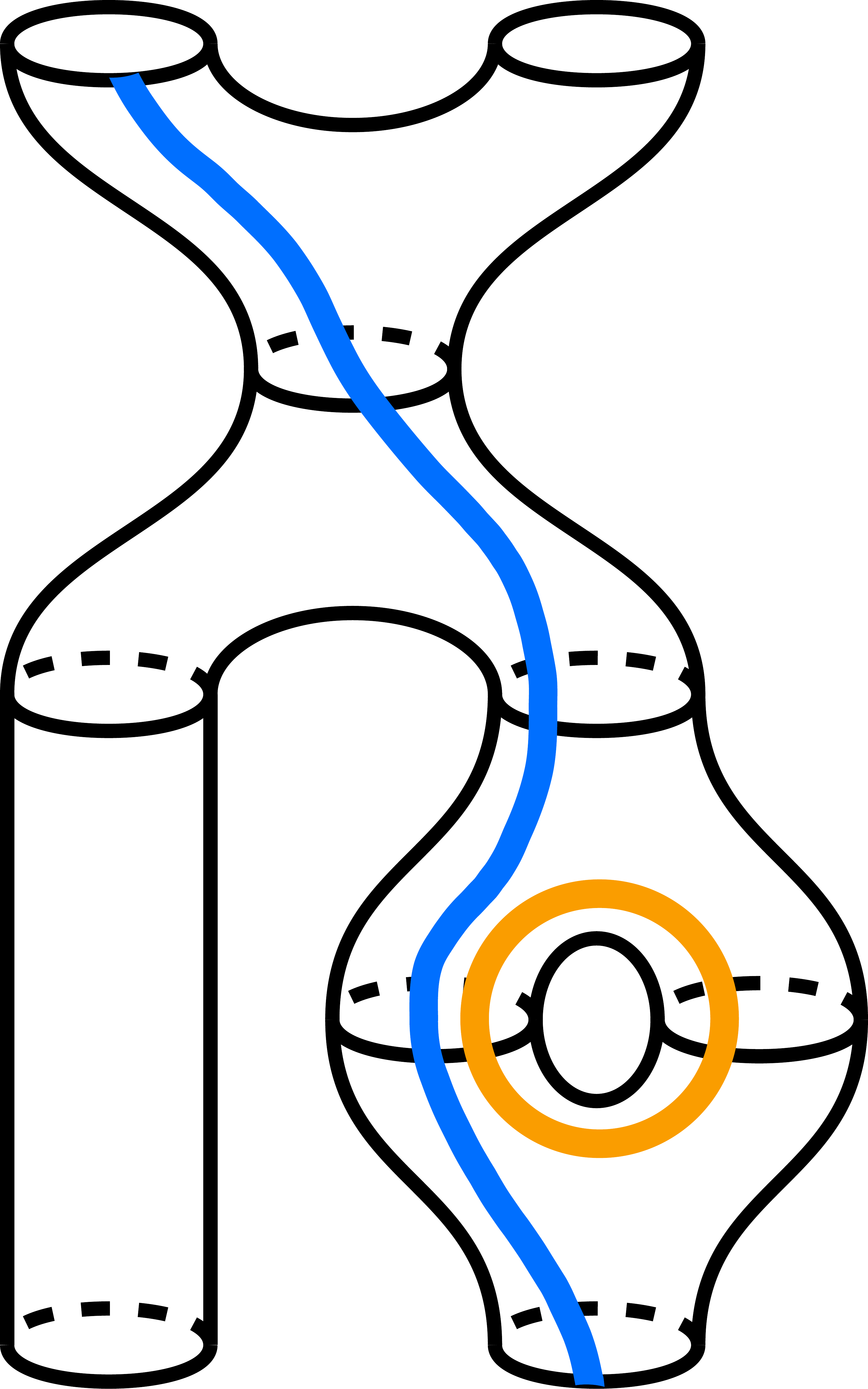}
			\end{aligned}
			\quad
			\xmapsto{~Z_{SN}(\alpha^{\inv})~}
			\quad
			\begin{aligned}
			\includegraphics[scale=0.04]{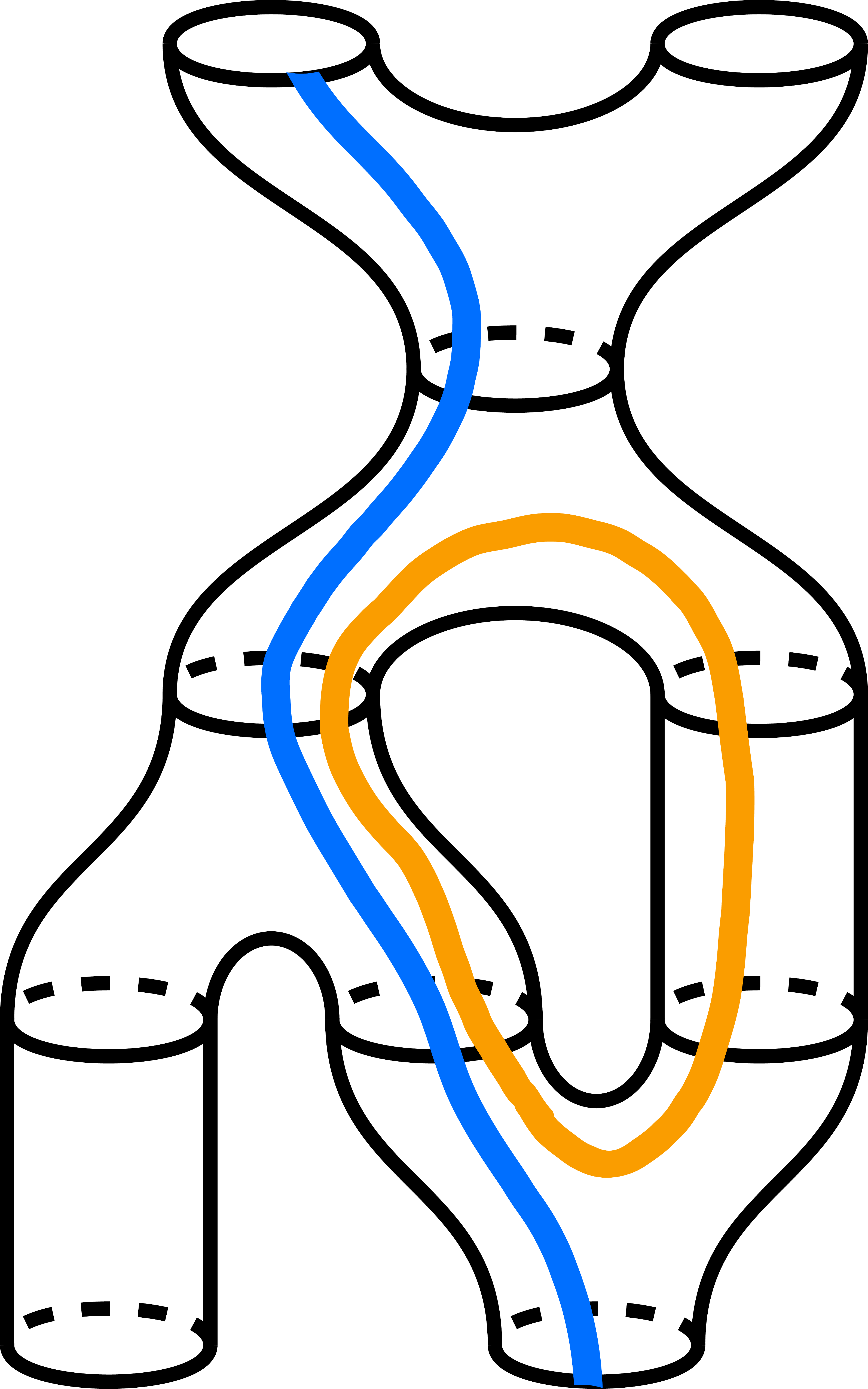}
			\end{aligned}
			\quad
			\xmapsto{~Z_{SN}(\eta^{\dagger})~}
			\quad
			\begin{aligned}
			\includegraphics[scale=0.04]{rigidity_1.pdf}
			\end{aligned}
\end{equation*}

\noindent The right Frobeniusator, and other cases, follows similarly.

\newpage

\subsubsection{Pivotality} The composite

	\begin{align*}
			\begin{aligned}
			\includegraphics[scale=0.04]{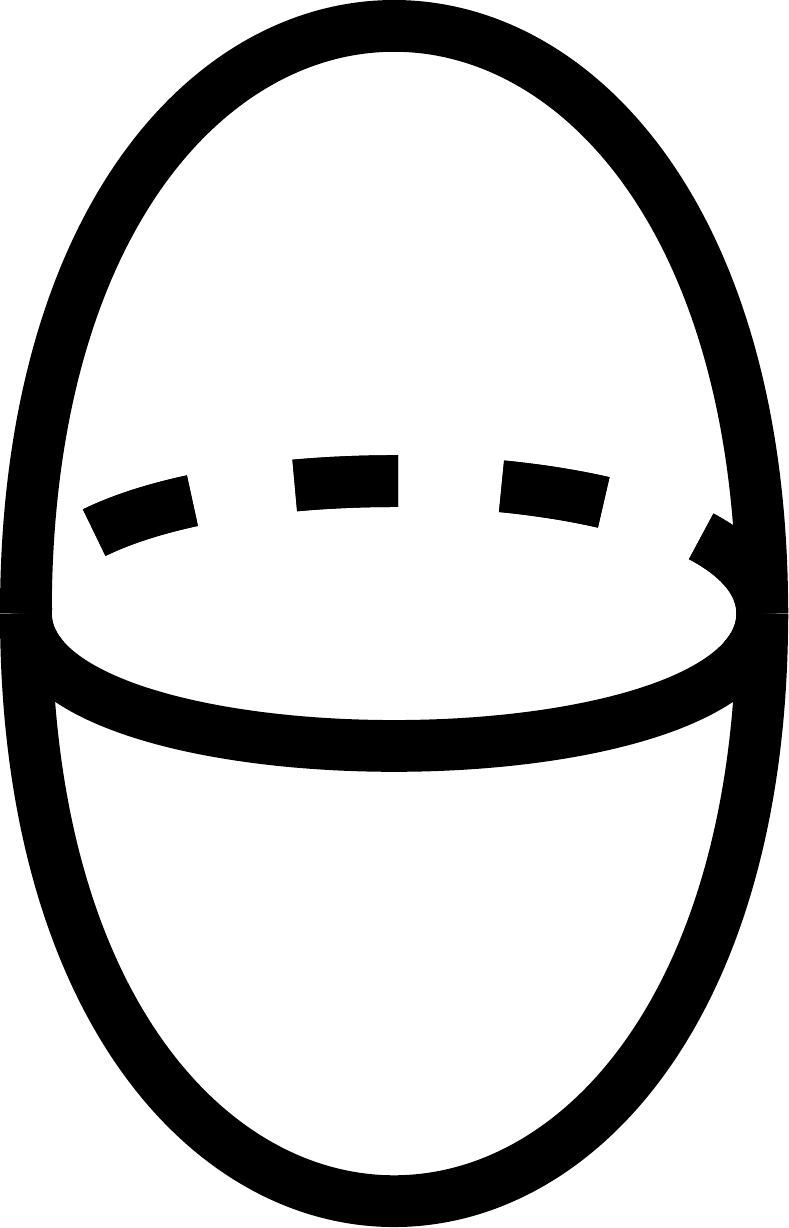}
			\end{aligned}
			\quad
			&\xmapsto{~Z_{SN}(\epsilon^{\dagger})~}
			\quad
			\begin{aligned}
			\includegraphics[scale=0.04]{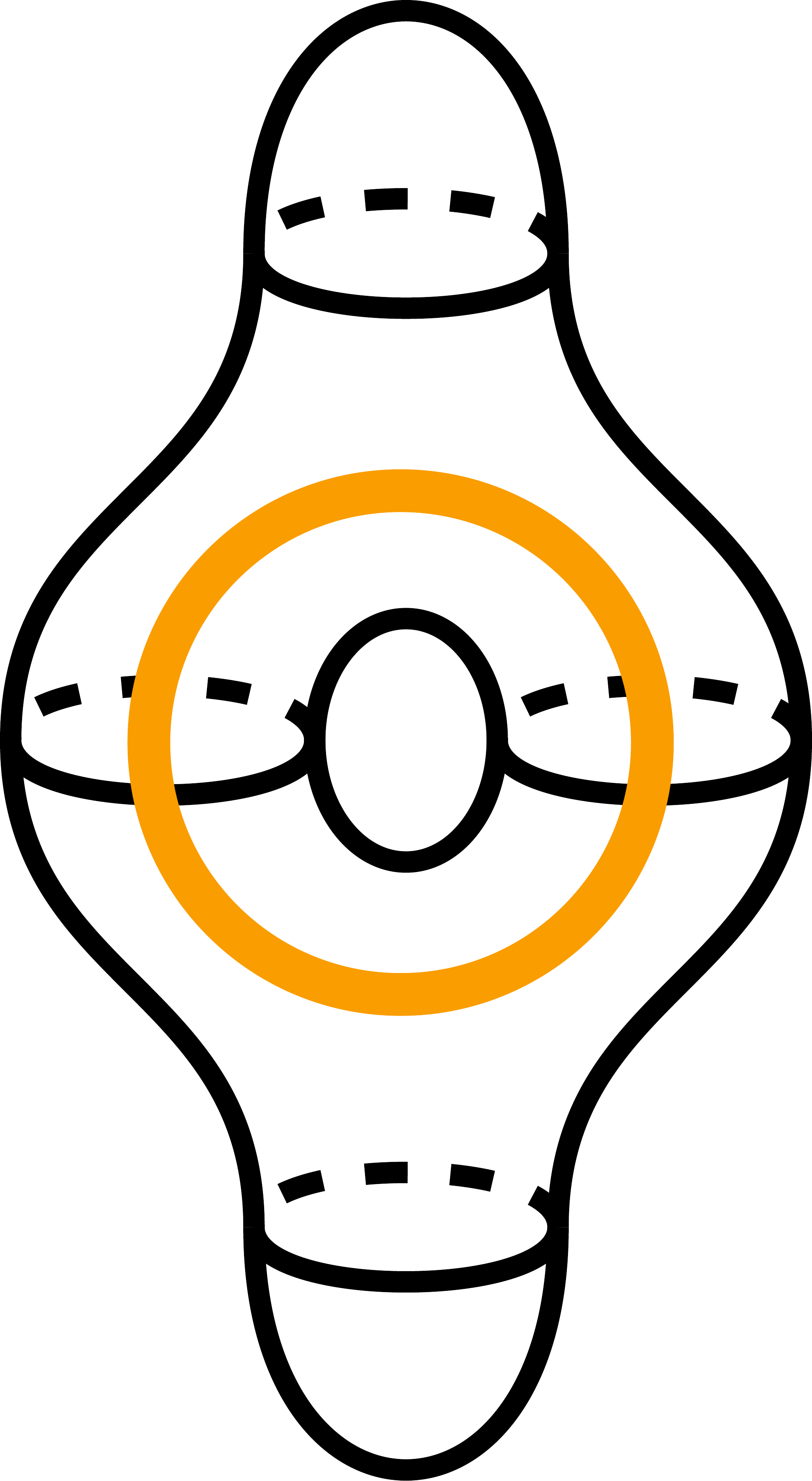}
			\end{aligned}
			\quad
			=
			\quad
			\begin{aligned}
			\includegraphics[scale=0.04]{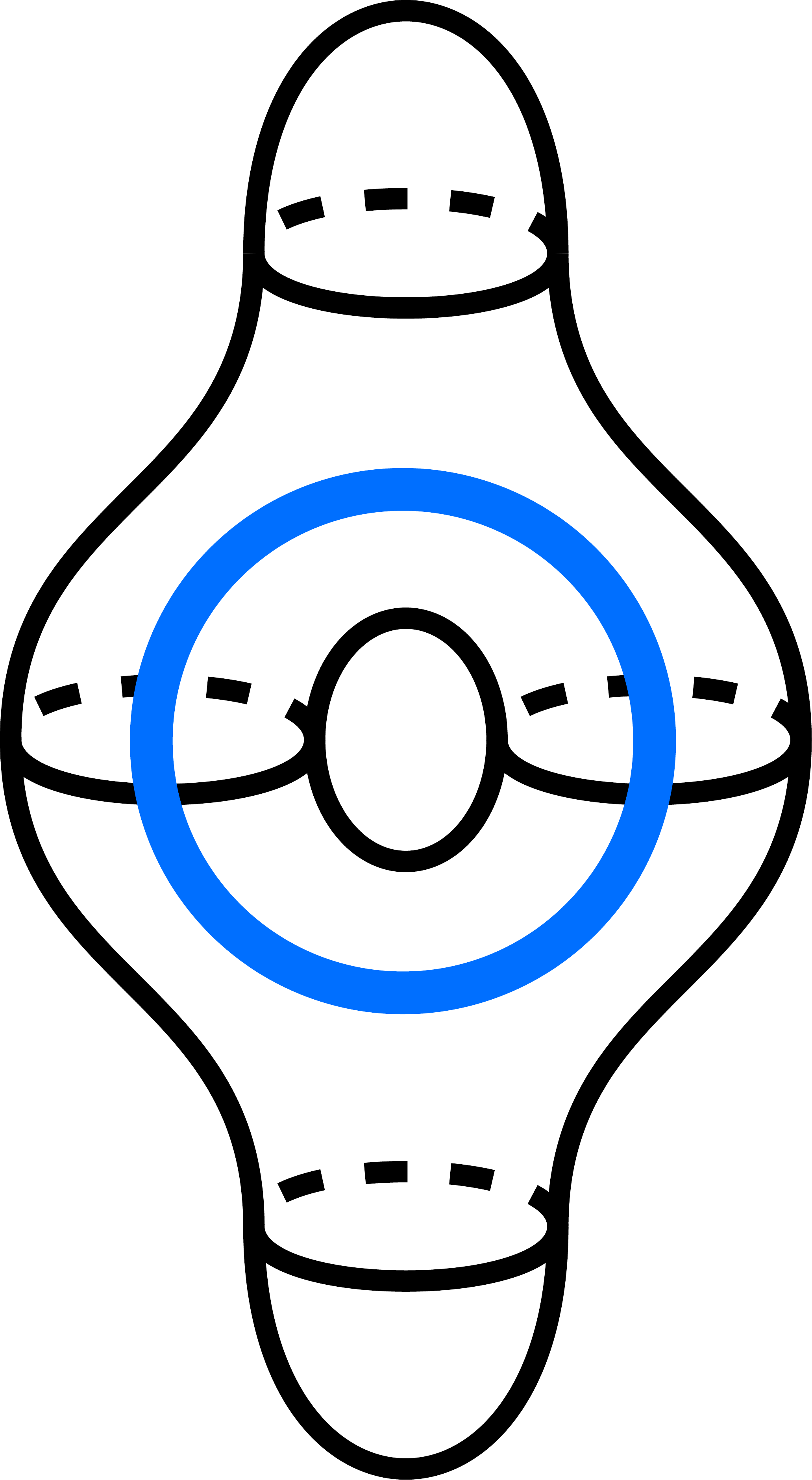}
			\end{aligned}
			\quad
			+
			\quad
			\begin{aligned}
			\includegraphics[scale=0.04]{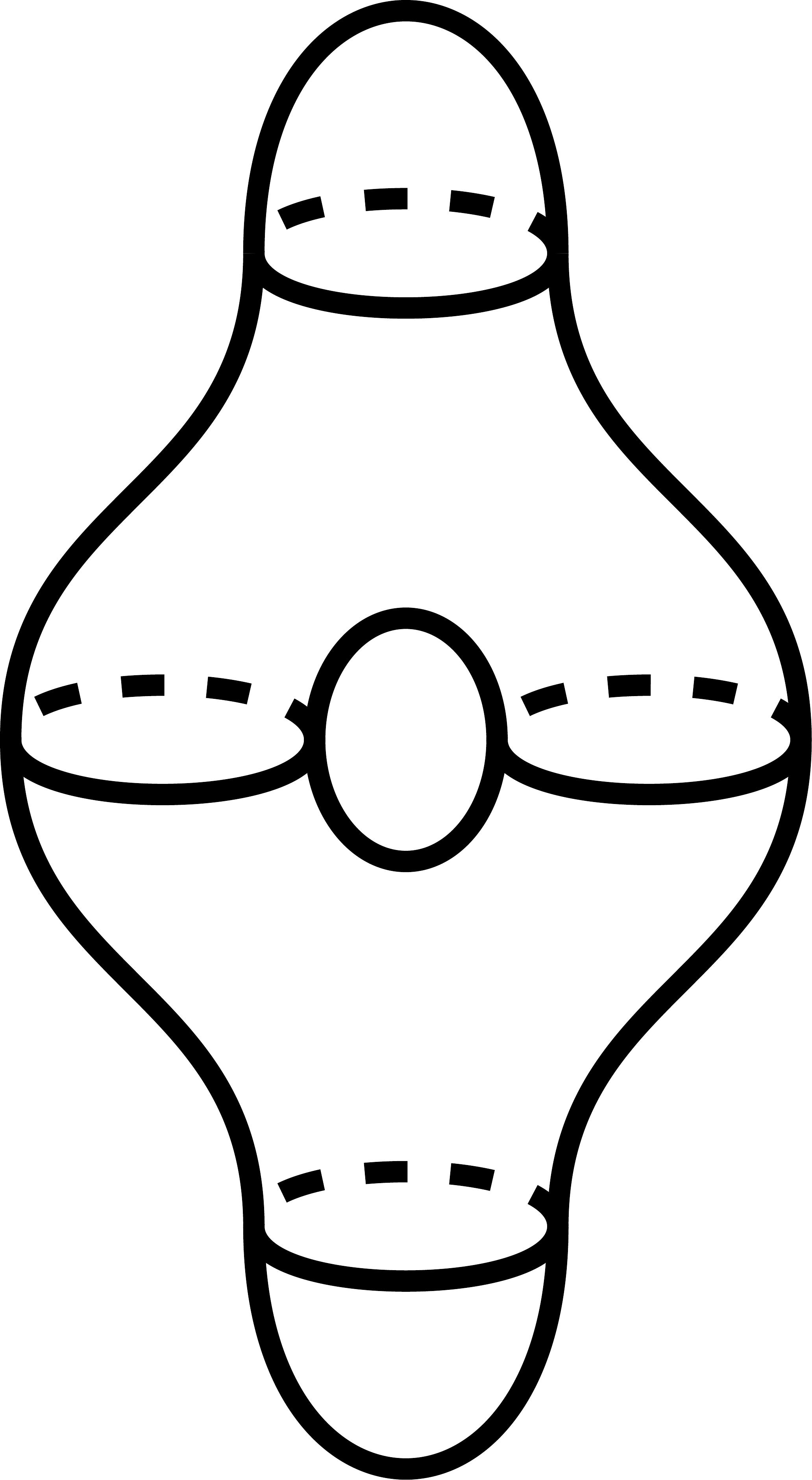}
			\end{aligned}\\
			&\xmapsto{~Z_{SN}(\mu^{\dagger})~}
			\quad
			0
			\quad
			+
			\quad
			\begin{aligned}
			\includegraphics[scale=0.04]{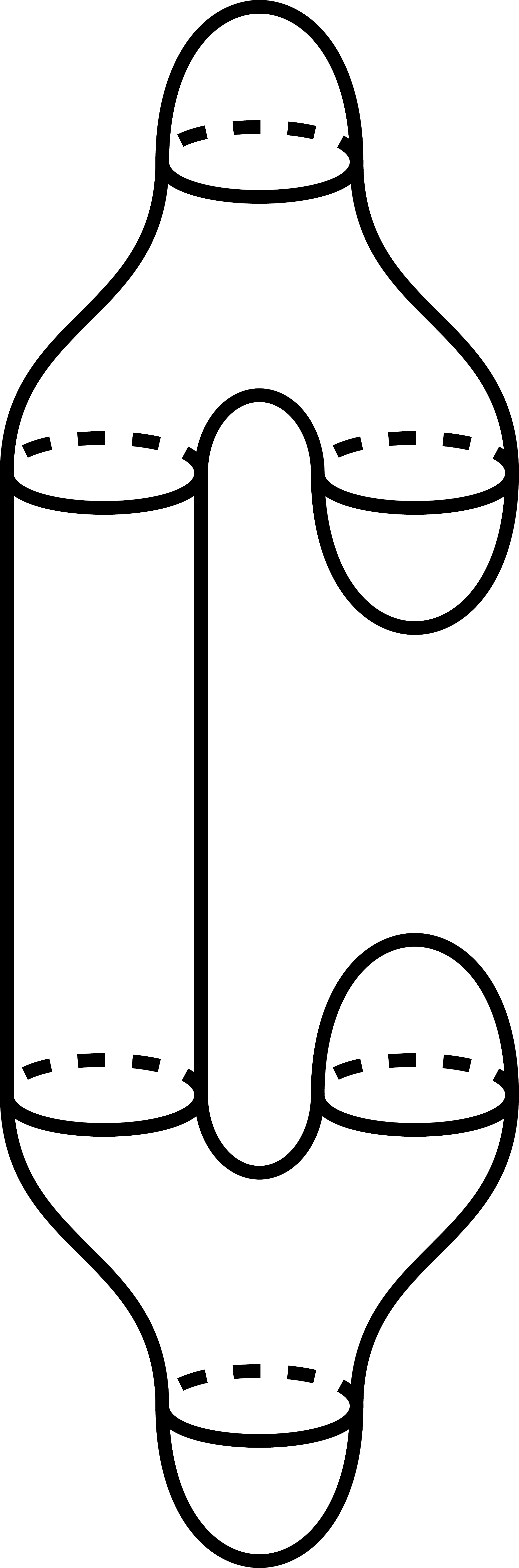}
			\end{aligned}\\
			&\xmapsto{~Z_{SN}(\mu)~}
			\quad
			\begin{aligned}
			\includegraphics[scale=0.04]{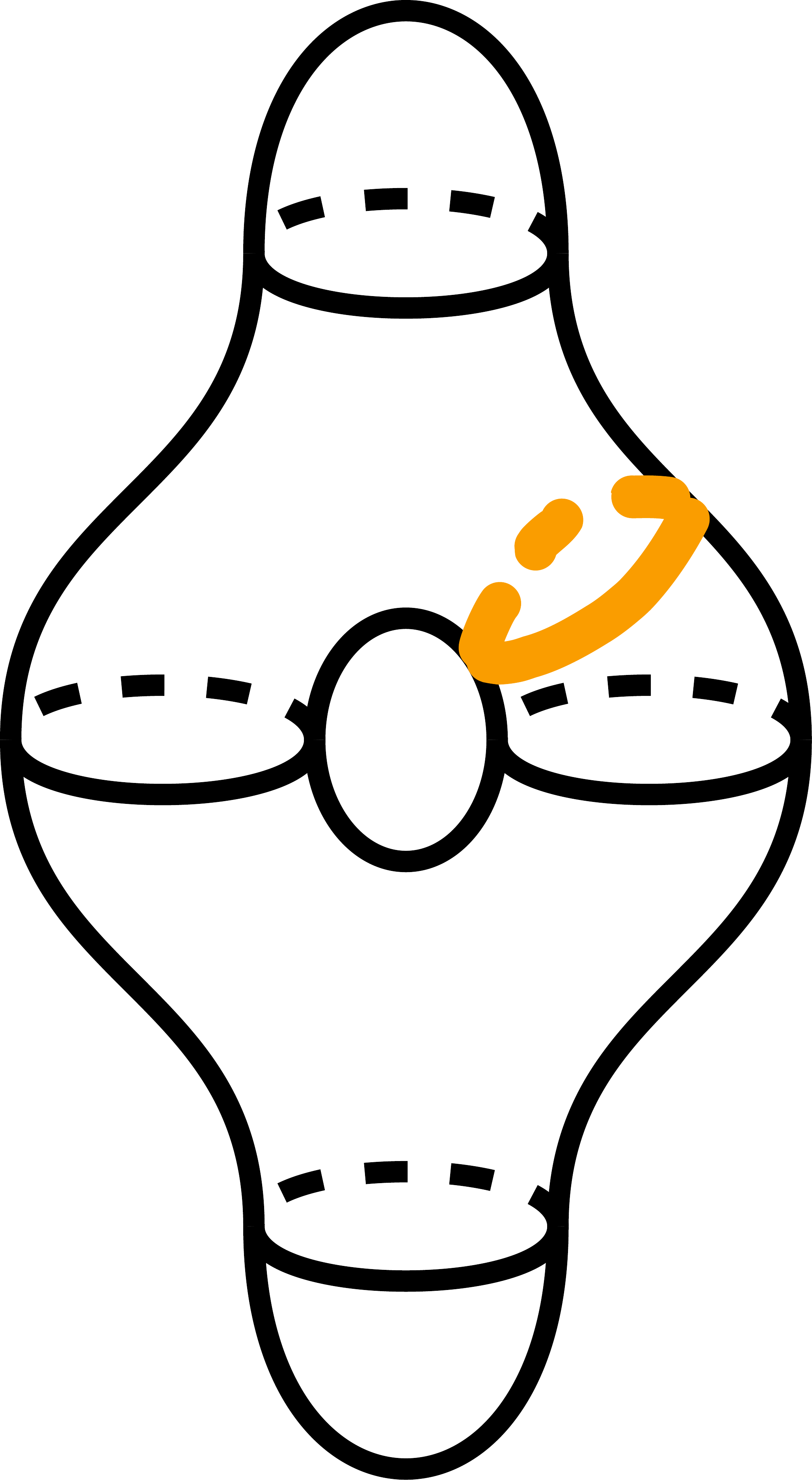}
			\end{aligned}
			\quad
			=
			\quad
			\begin{aligned}
			\includegraphics[scale=0.04]{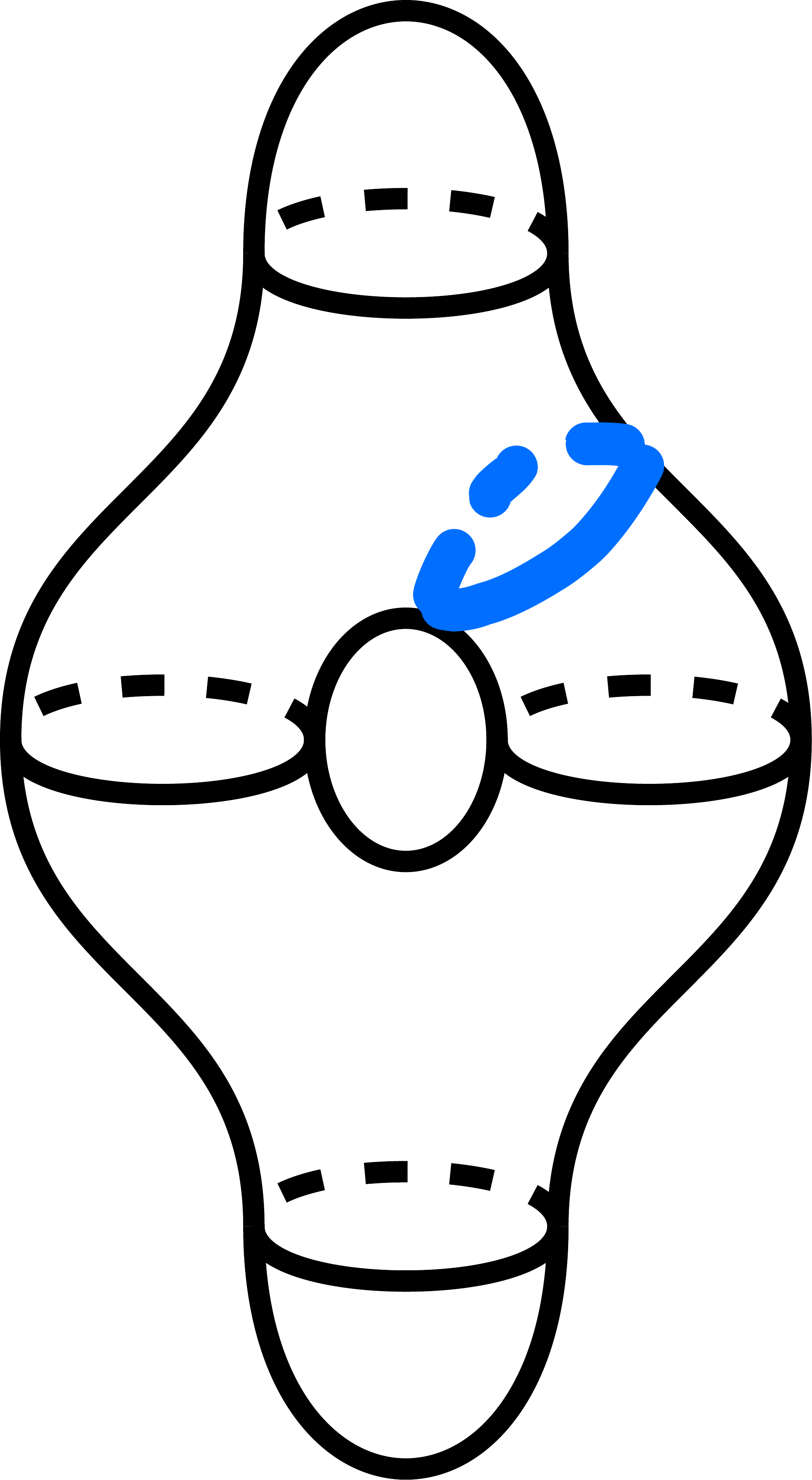}
			\end{aligned}
			\quad
			+
			\quad
			\begin{aligned}
			\includegraphics[scale=0.04]{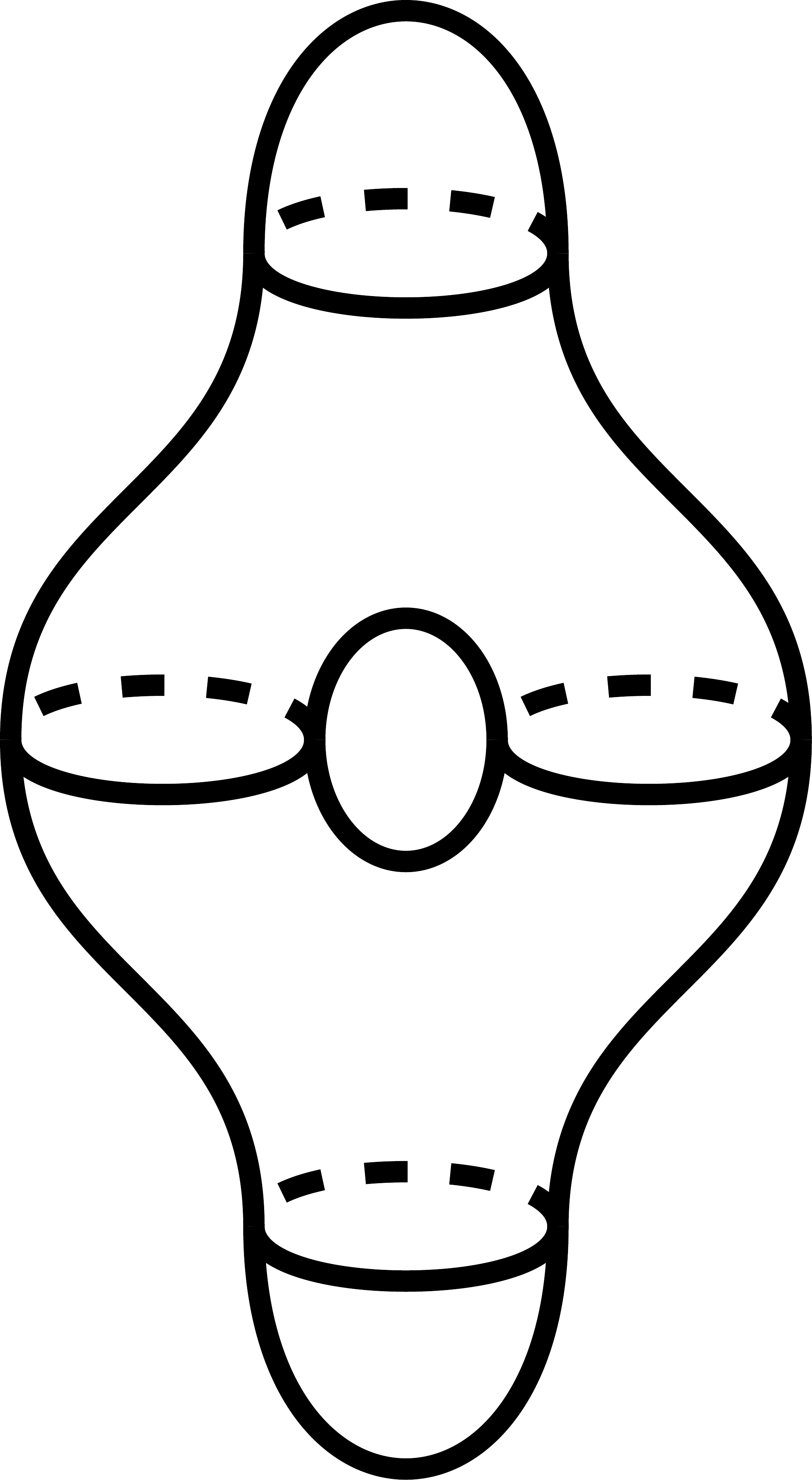}
			\end{aligned}\\
			&\xmapsto{~Z_{SN}(\epsilon)~}
			\quad
			0
			\quad
			+
			\quad
			\begin{aligned}
			\includegraphics[scale=0.04]{pivotality_1.pdf}
			\end{aligned}
	\end{align*}

\noindent clearly equals the identity\hspace{0.2cm} $\begin{aligned}
			\includegraphics[scale=0.04]{pivotality_1.pdf}
			\end{aligned}
			~~
			\xmapsto{~~\text{id}~~}
			~~
			\begin{aligned}
			\includegraphics[scale=0.04]{pivotality_1.pdf}
			\end{aligned}$. ~Note that this calculation is fully general since the sphere has a 1-dimensional string-net space.

\subsubsection{Adjunction} For the adjunction between $\nu$ and $\mu$, the equality
		
		\begin{equation*}
			\begin{aligned}
			\includegraphics[scale=0.04]{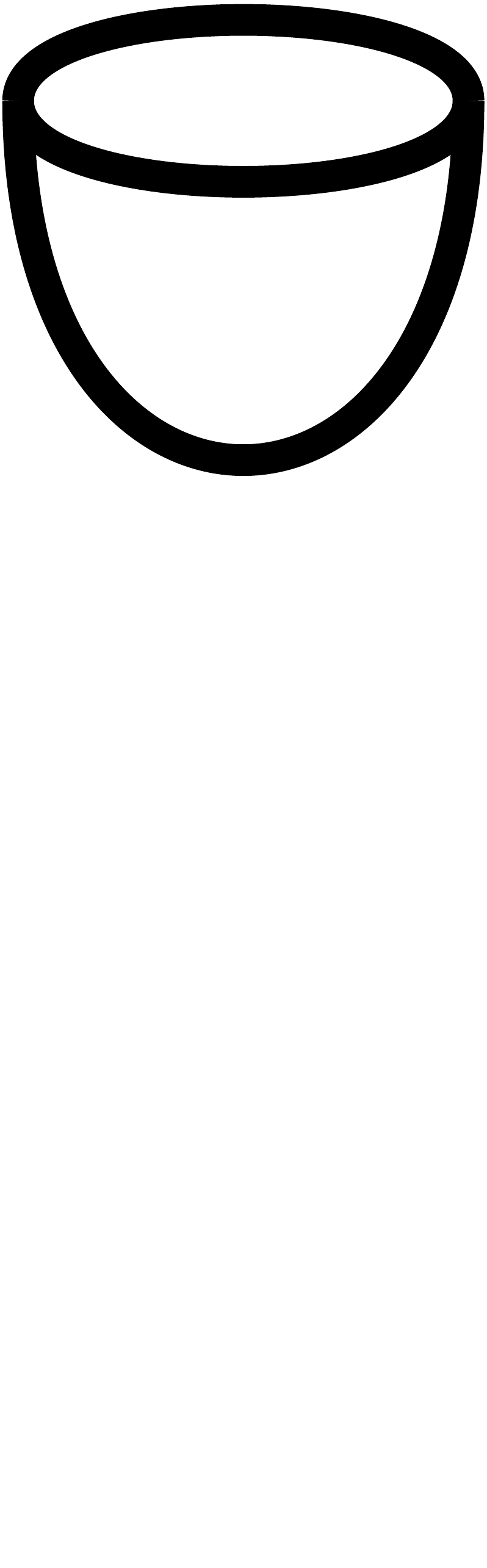}
			\end{aligned}
			\quad
			\xmapsto{~Z_{SN}(\nu)~}
			\quad
			\raisebox{0.2mm}{$\frac{1}{2}$}\;
			\begin{aligned}
			\includegraphics[scale=0.04]{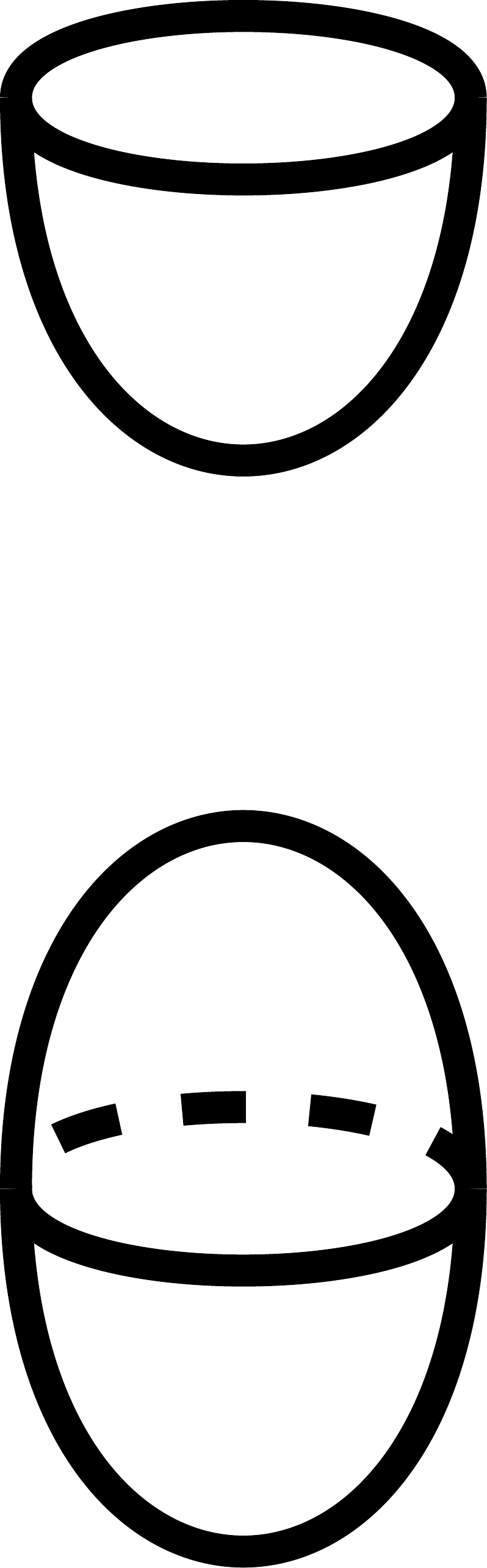}
			\end{aligned}
			\quad
			\xmapsto{~Z_{SN}(\mu)~}
			\quad
			\raisebox{0.2mm}{$\frac{1}{2}$}\;
			\begin{aligned}
			\includegraphics[scale=0.04]{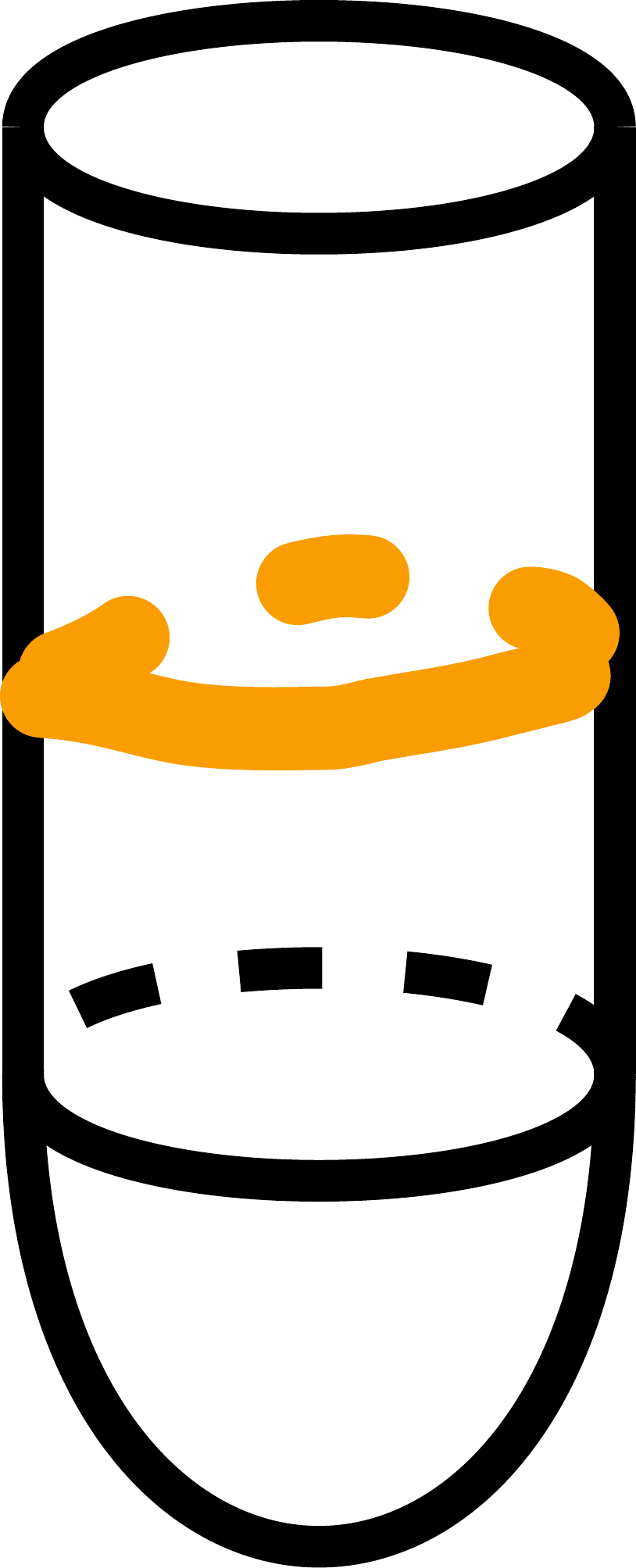}
			\end{aligned}
			\qquad
			=
			\qquad
			\begin{aligned}
			\includegraphics[scale=0.04]{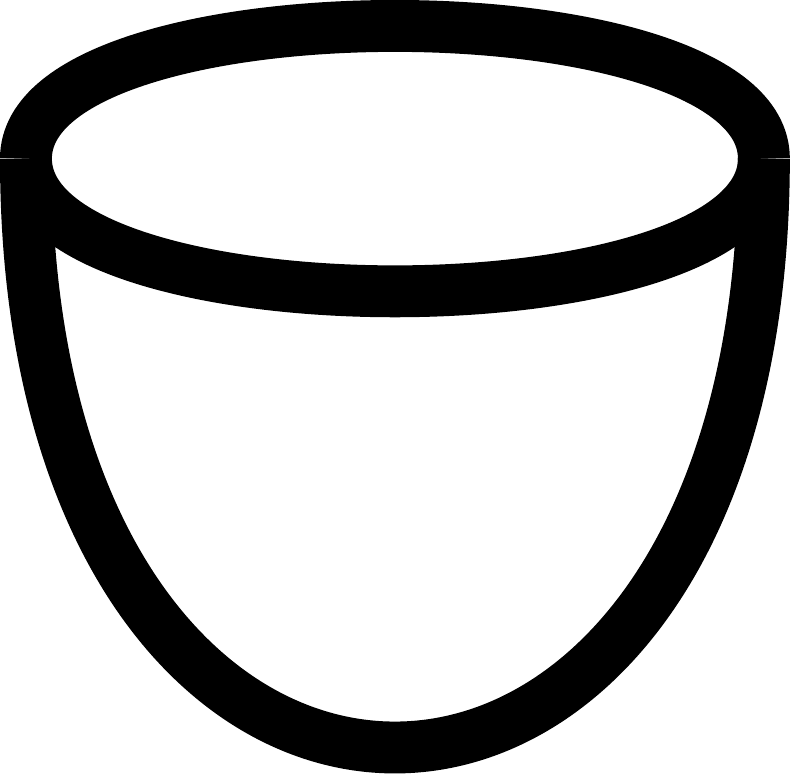}
			\end{aligned}
			\quad
			\xmapsto{~~\text{id}~~}
			\quad
			\begin{aligned}
			\includegraphics[scale=0.04]{adjunction_numu_4.pdf}
			\end{aligned}
		\end{equation*}
		
		\noindent holds since \hspace{0.15cm}$\begin{aligned} \includegraphics[scale=0.04]{adjunction_numu_3.pdf} \end{aligned} \;=\; \begin{aligned} \includegraphics[scale=0.04]{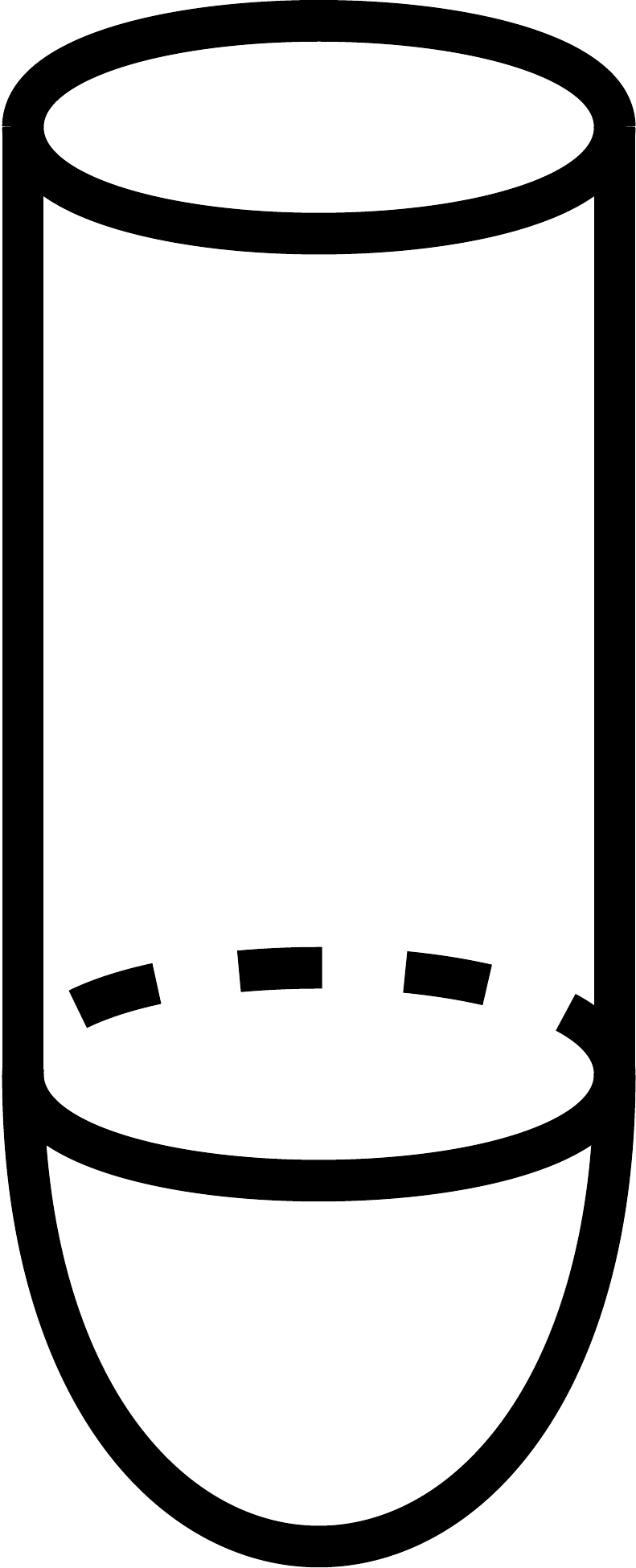} \end{aligned} \;+\; \begin{aligned} \includegraphics[scale=0.04]{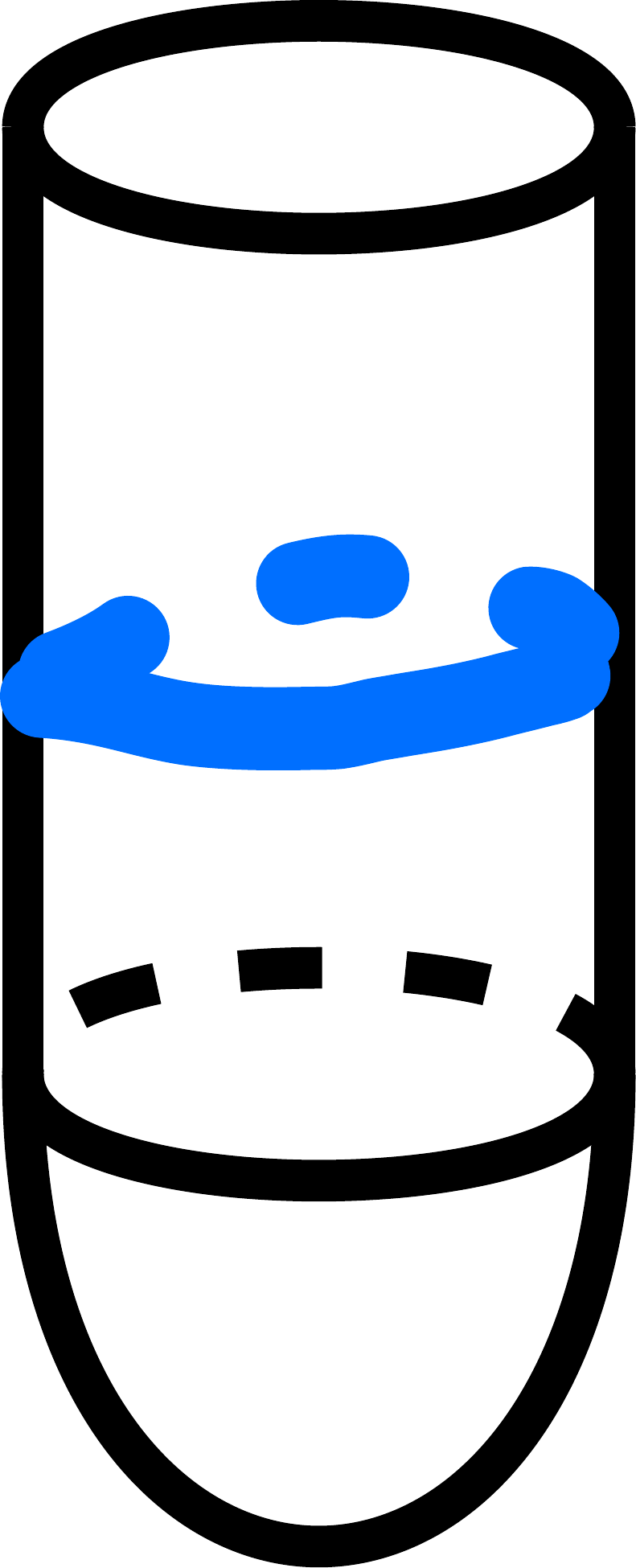} \end{aligned} \;=\; \raisebox{0.2mm}{$2$}\; \begin{aligned}
			\includegraphics[scale=0.04]{adjunction_numu_3a.pdf} \end{aligned}$. The daggered version of the above is given by
		
		\begin{equation*}
			\begin{aligned}
			\includegraphics[scale=0.04]{adjunction_numu_3a.pdf}
			\end{aligned}
			\quad
			\xmapsto{~Z_{SN}(\mu^{\dagger})~}
			\quad
			\begin{aligned}
			\includegraphics[scale=0.04]{adjunction_numu_2.pdf}
			\end{aligned}
			\quad
			\xmapsto{~Z_{SN}(\nu^{\dagger})~}
			\quad
			\begin{aligned}
			\includegraphics[scale=0.04]{adjunction_numu_1.pdf}
			\end{aligned}
			\qquad
			=
			\qquad
			\begin{aligned}
			\includegraphics[scale=0.04]{adjunction_numu_1.pdf}
			\end{aligned}
			\quad
			\xmapsto{~~\text{id}~~}
			\quad
			\begin{aligned}
			\includegraphics[scale=0.04]{adjunction_numu_1.pdf}
			\end{aligned}
		\end{equation*}
		
		\noindent The calculations above are once again fully general since the spaces involved are 1-dimensional.\\
		
		\noindent For the adjunction between $\eta$ and $\epsilon$, on a sample basis vector, the equality
		
		\begin{equation*}
			\begin{aligned}
			\includegraphics[scale=0.04]{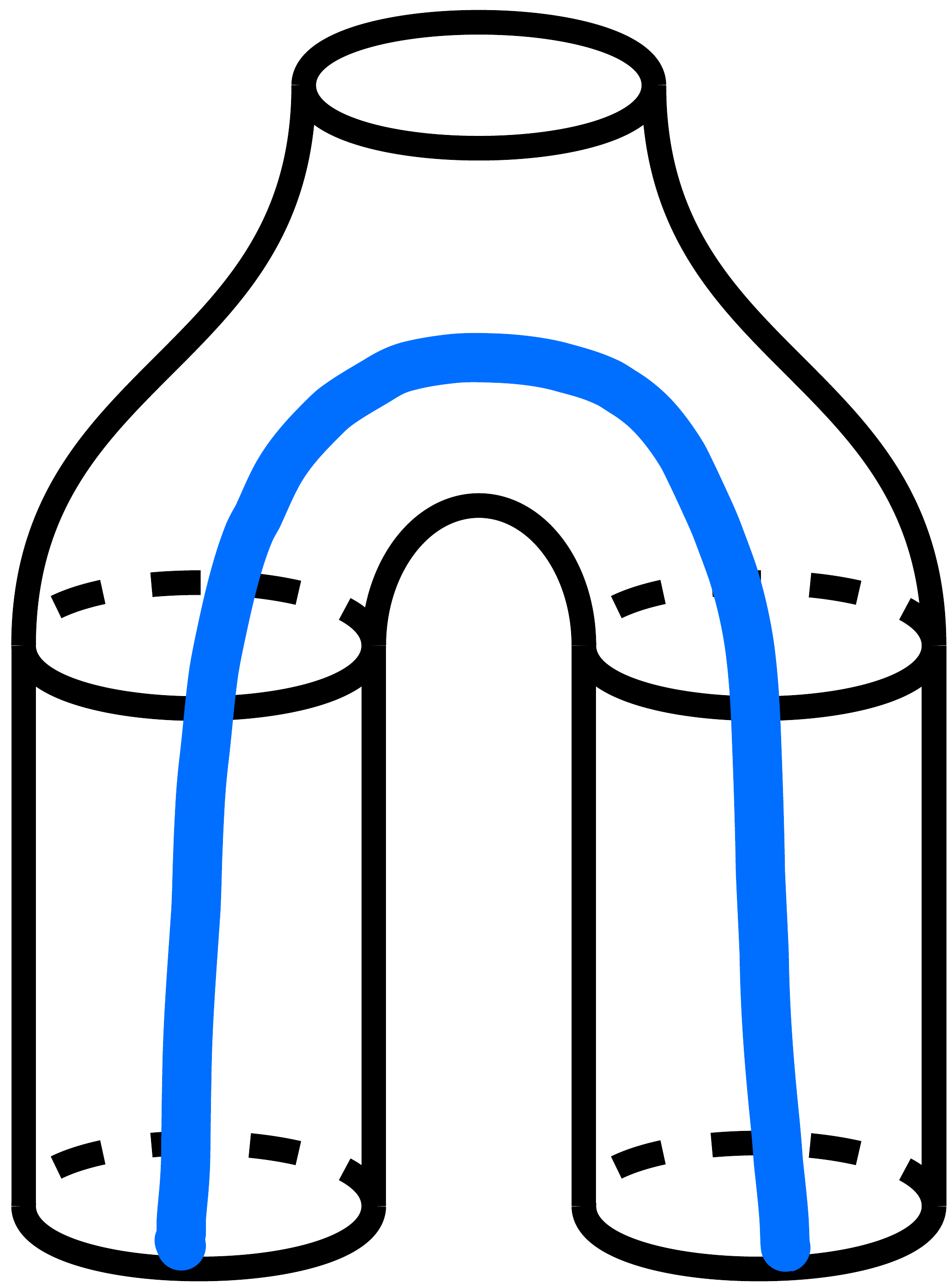}
			\end{aligned}
			\quad
			\xmapsto{~Z_{SN}(\eta)~}
			\quad
			\begin{aligned}
			\includegraphics[scale=0.04]{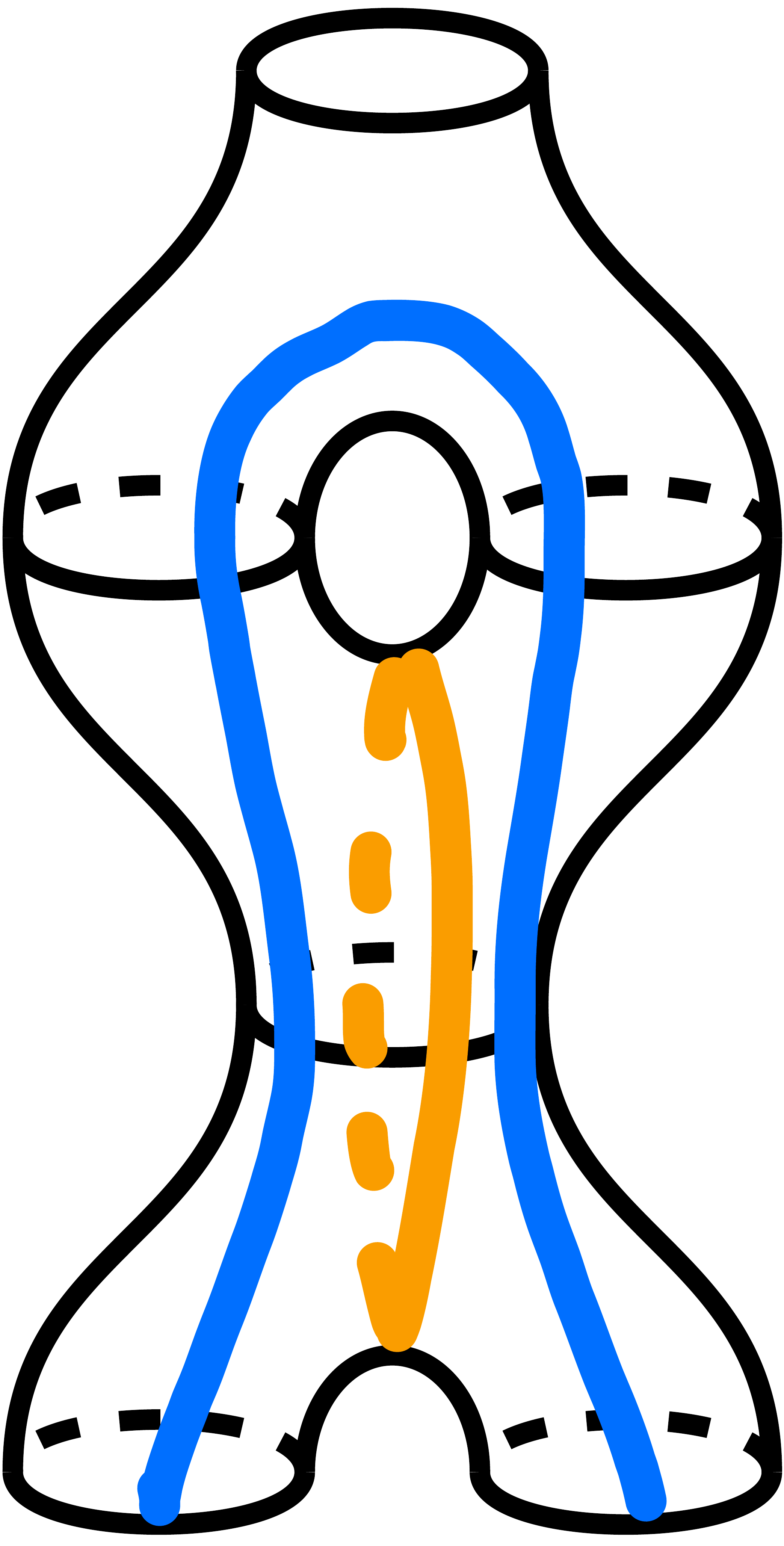}
			\end{aligned}
			\quad
			\xmapsto{~Z_{SN}(\epsilon)~}
			\quad
			\begin{aligned}
			\includegraphics[scale=0.04]{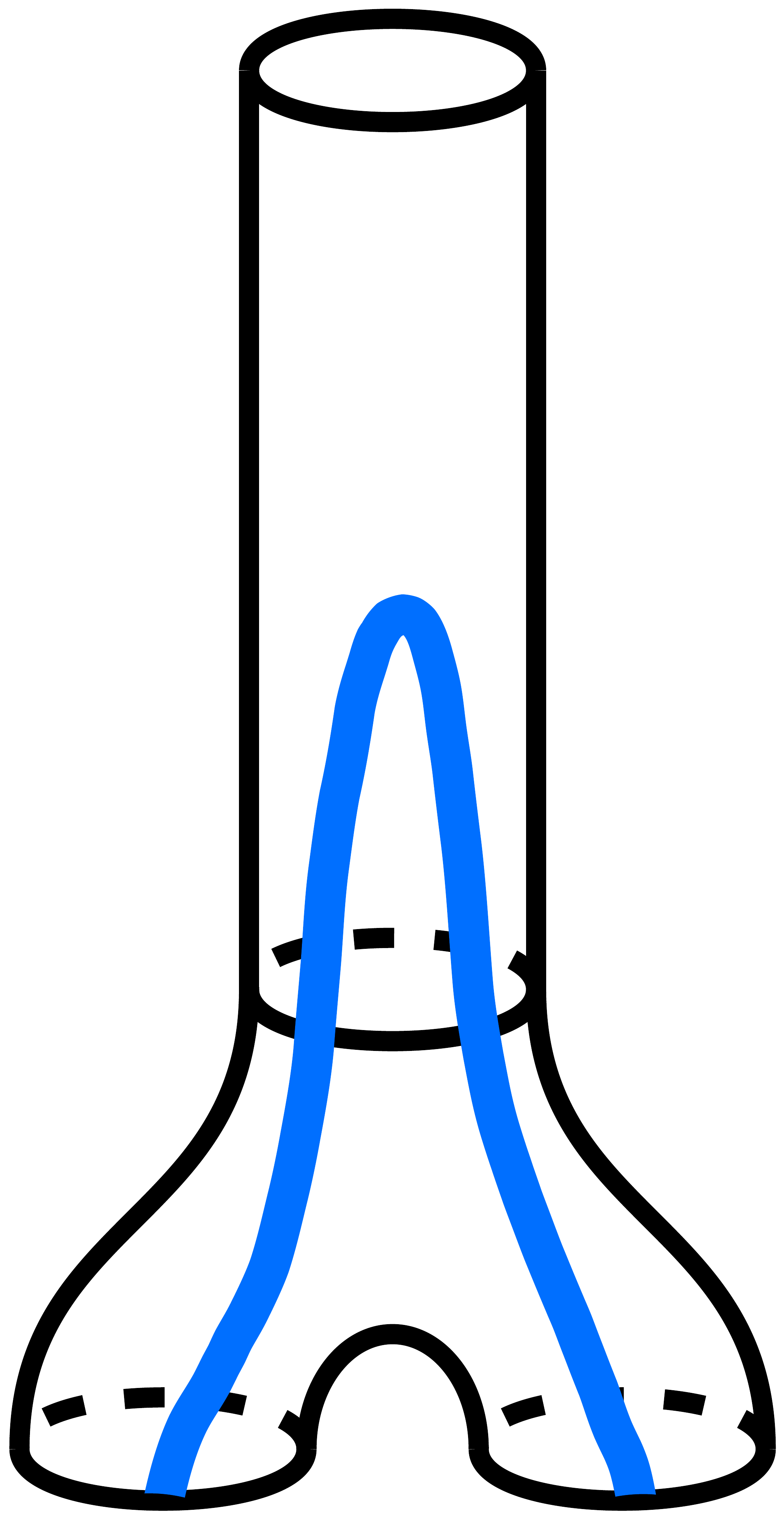}
			\end{aligned}
			\qquad
			=
			\qquad
			\begin{aligned}
			\includegraphics[scale=0.04]{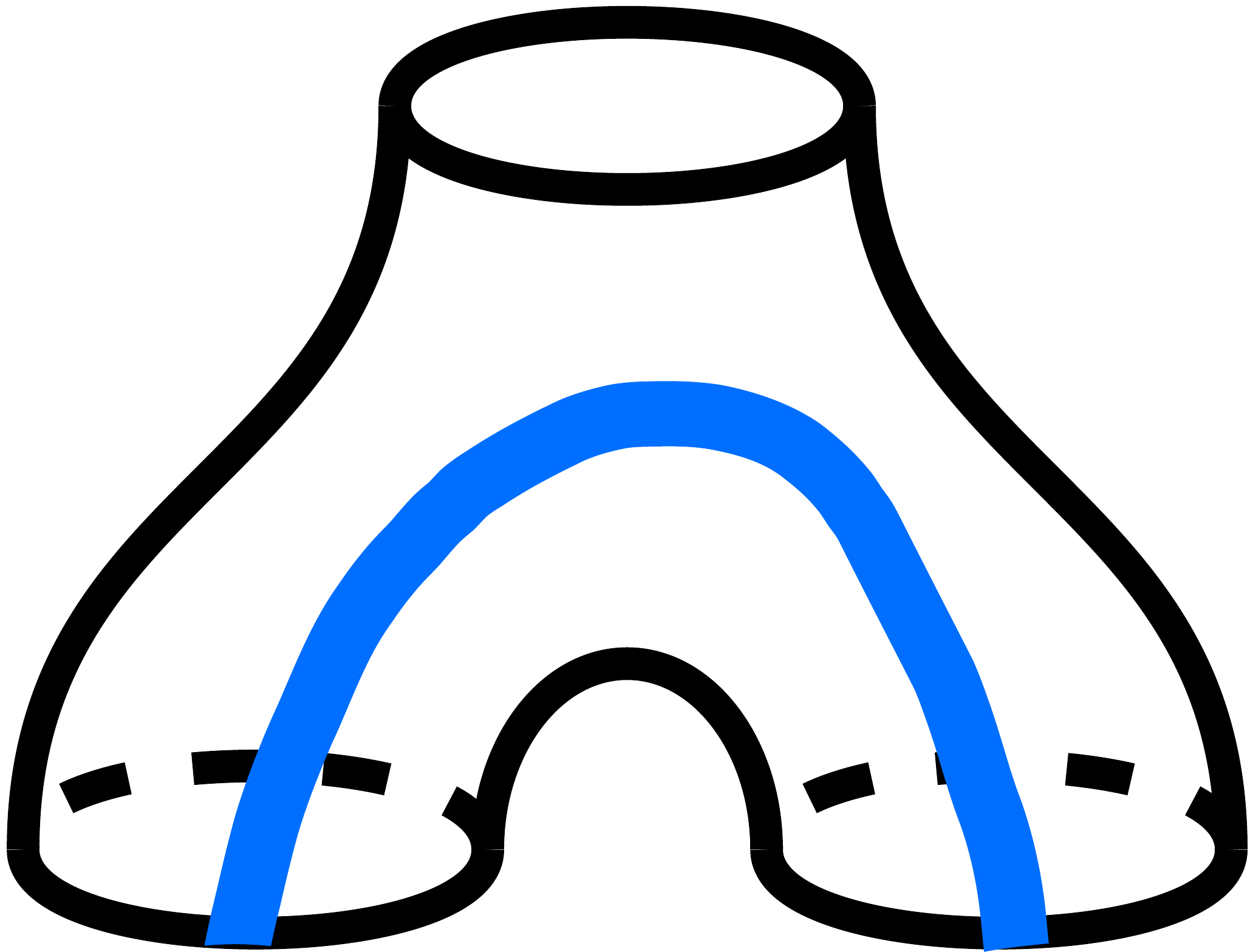}
			\end{aligned}
			\quad
			\xmapsto{~~\text{id}~~}
			\quad
			\begin{aligned}
			\includegraphics[scale=0.04]{adjunction_etaepsilon_4.pdf}
			\end{aligned}
		\end{equation*}
		
		\noindent clearly holds. The daggered version of the above is given by
		
		\begin{equation*}
			\begin{aligned}
			\includegraphics[scale=0.04]{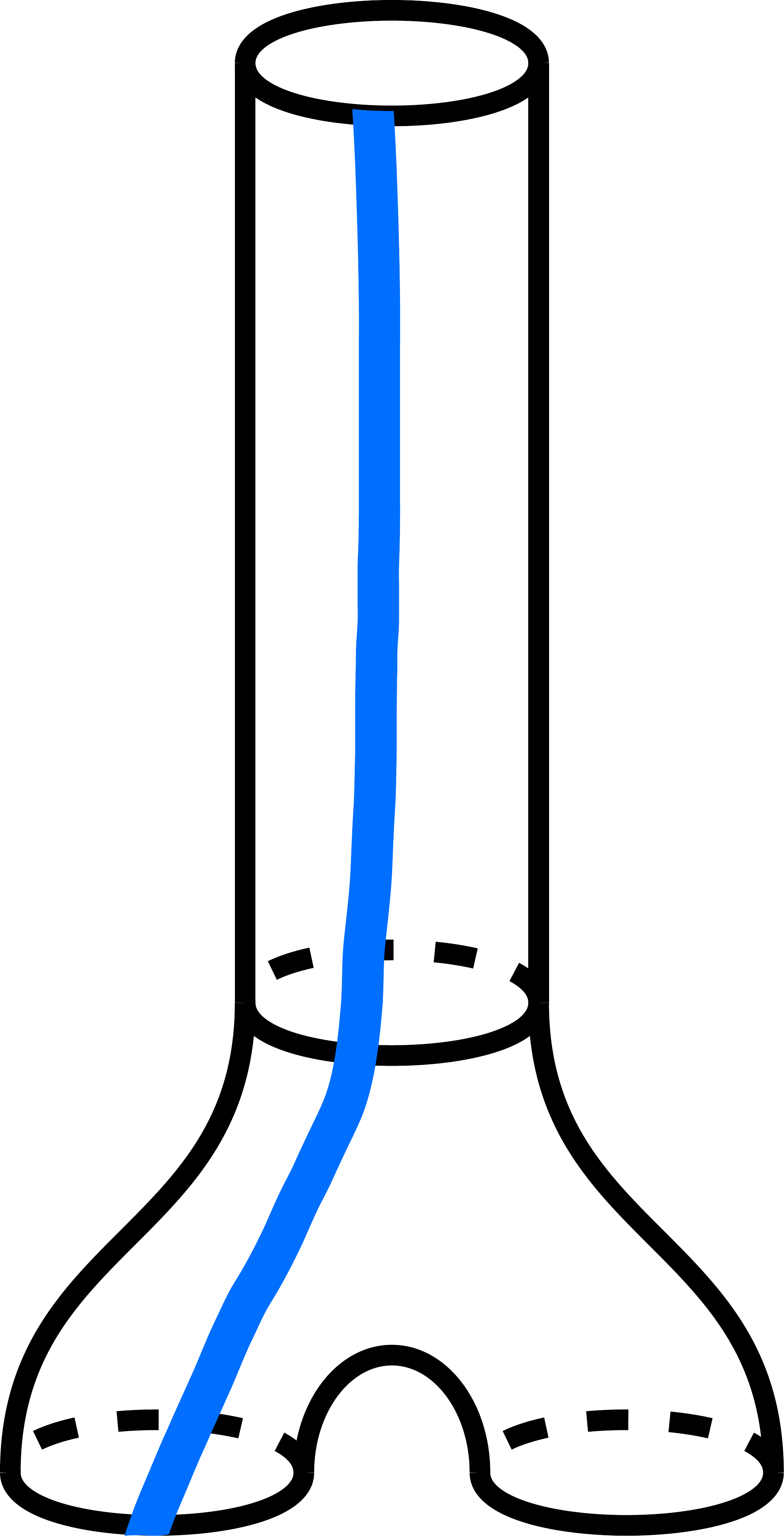}
			\end{aligned}
			\quad
			\xmapsto{~Z_{SN}(\epsilon^{\dagger})~}
			\quad
			\begin{aligned}
			\includegraphics[scale=0.04]{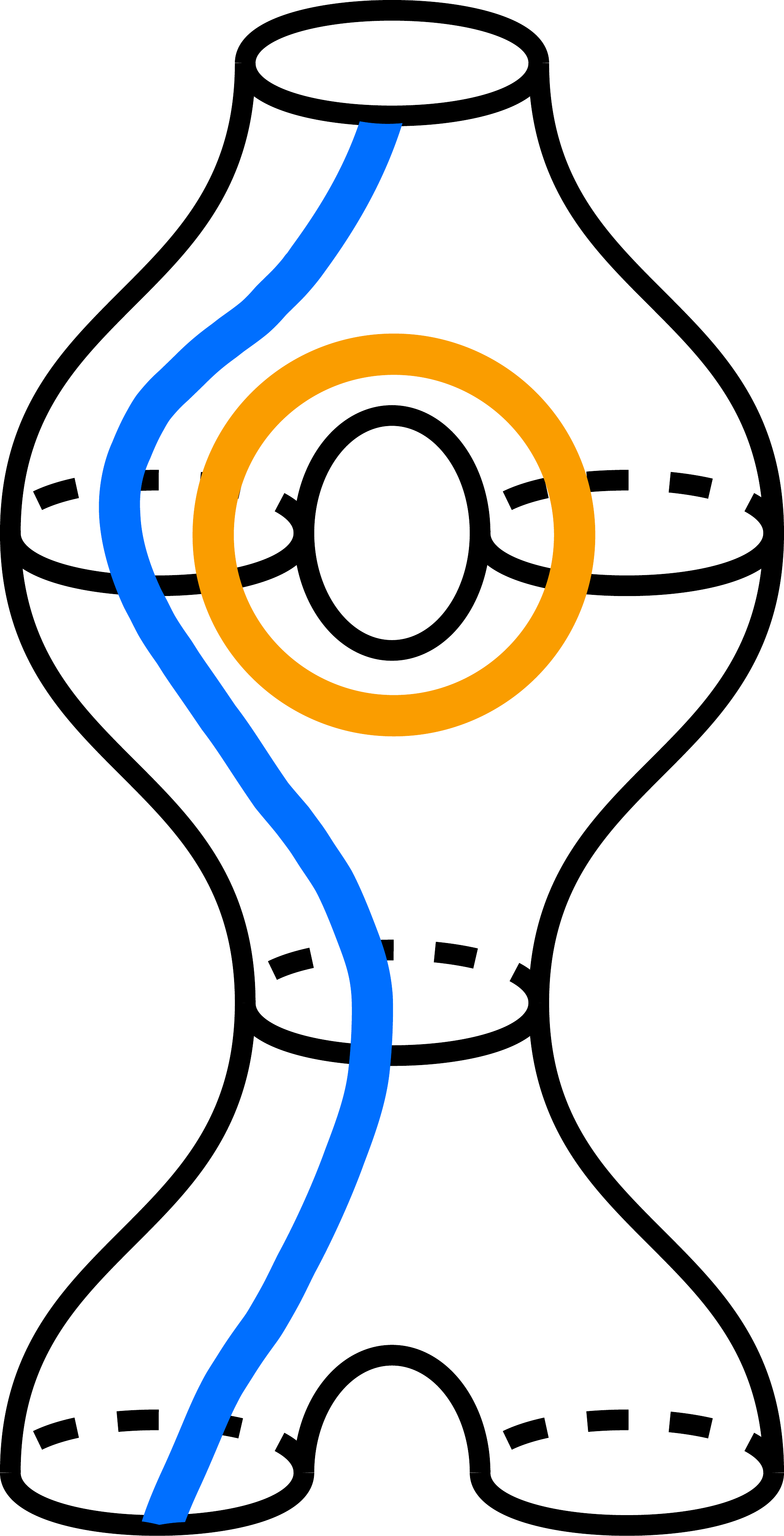}
			\end{aligned}
			\quad
			\xmapsto{~Z_{SN}(\eta^{\dagger})~}
			\quad
			\begin{aligned}
			\includegraphics[scale=0.04]{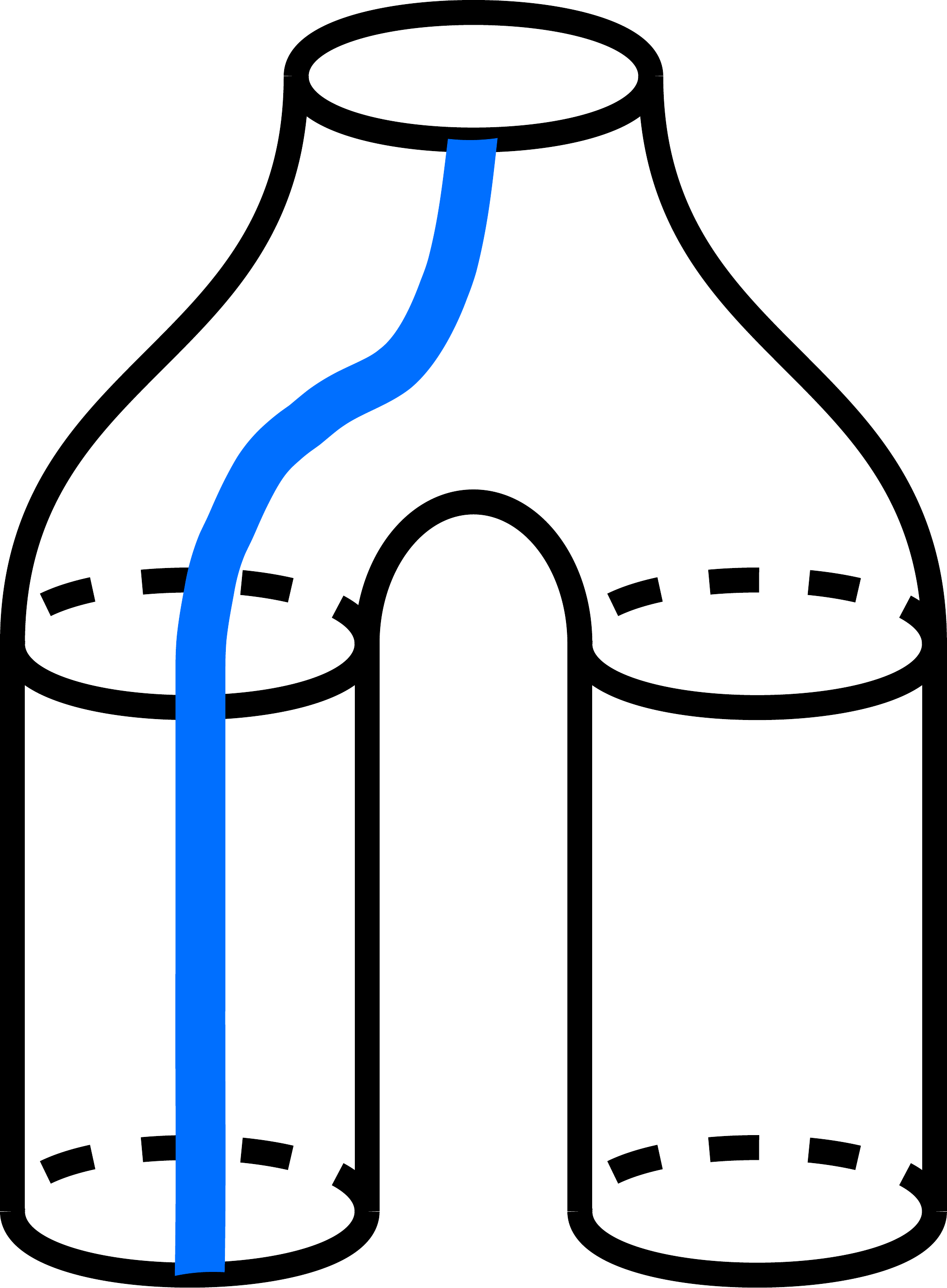}
			\end{aligned}
			\qquad
			=
			\qquad
			\begin{aligned}
			\includegraphics[scale=0.04]{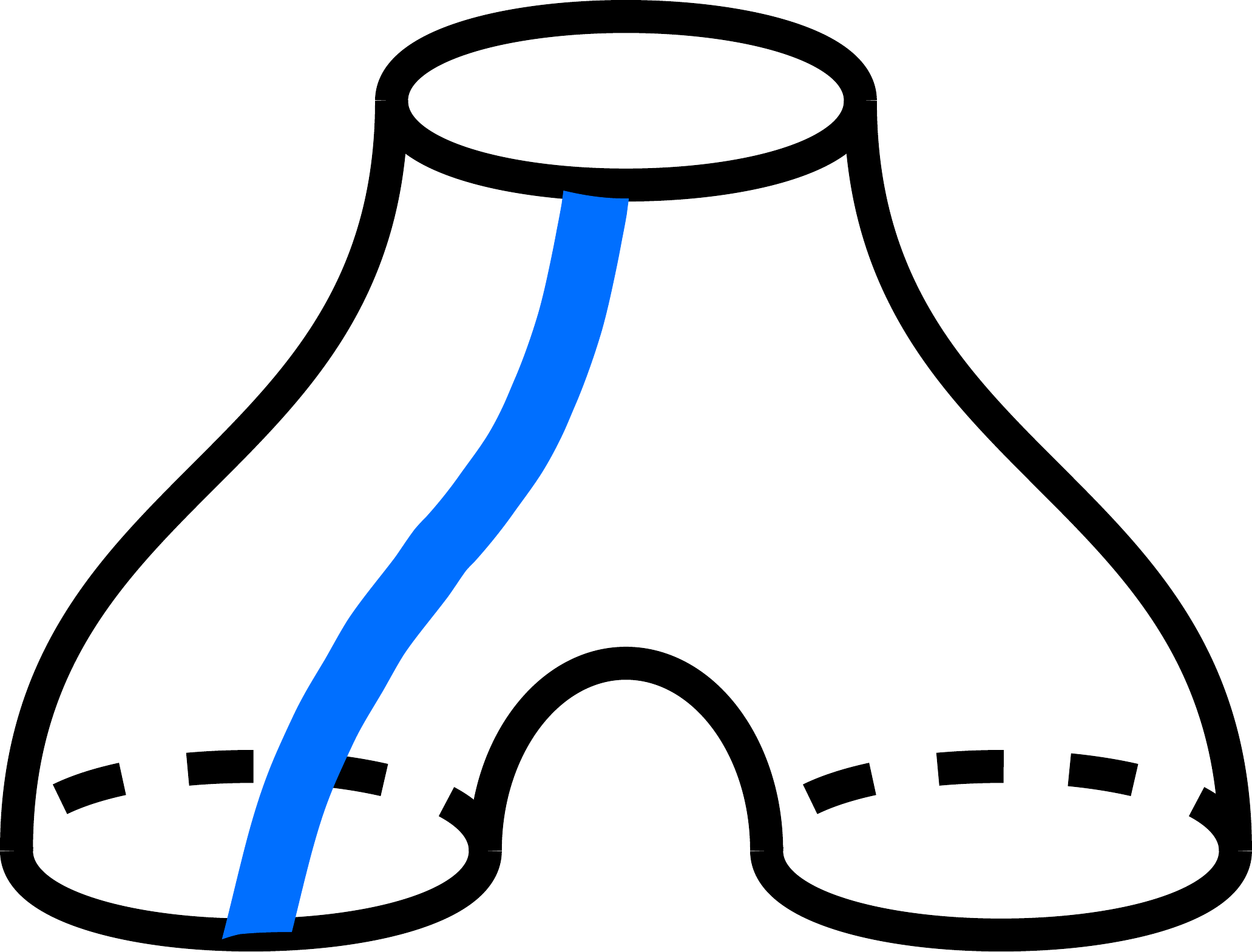}
			\end{aligned}
			\quad
			\xmapsto{~~\text{id}~~}
			\quad
			\begin{aligned}
			\includegraphics[scale=0.04]{adjunction_etaepsilon_dagger_4.pdf}
			\end{aligned}
		\end{equation*}
\noindent Other cases are similar.

\subsubsection{Anomaly-freeness} The composite

\begin{align*}
			\begin{aligned}
			\includegraphics[scale=0.04]{pivotality_1.pdf}
			\end{aligned}
			\quad
			&\xmapsto{~Z_{SN}(\epsilon^{\dagger})~}
			\quad
			\begin{aligned}
			\includegraphics[scale=0.04]{pivotality_2.pdf}
			\end{aligned}
			\quad
			=
			\quad
			\begin{aligned}
			\includegraphics[scale=0.04]{pivotality_2a.pdf}
			\end{aligned}
			\quad
			+
			\quad
			\begin{aligned}
			\includegraphics[scale=0.04]{pivotality_2b.pdf}
			\end{aligned}\\
			&\xmapsto{~Z_{SN}(\theta)~}
			\quad
			\begin{aligned}
			\includegraphics[scale=0.04]{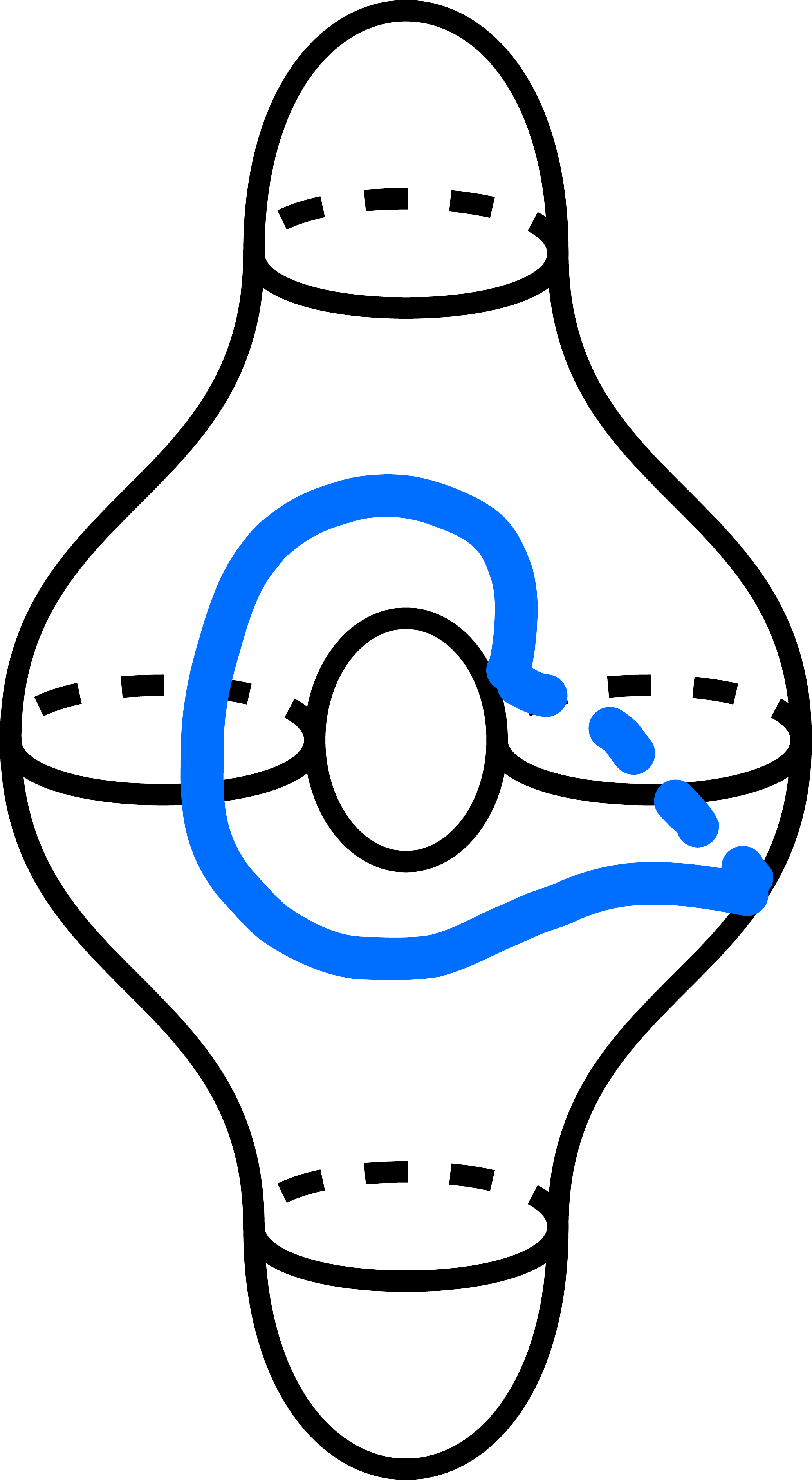}
			\end{aligned}
			\quad
			+
			\quad
			\begin{aligned}
			\includegraphics[scale=0.04]{pivotality_2b.pdf}
			\end{aligned}\\
			&\xmapsto{~Z_{SN}(\epsilon)~}
			\quad
			0
			\quad
			+
			\quad
			\begin{aligned}
			\includegraphics[scale=0.04]{pivotality_1.pdf}
			\end{aligned}
	\end{align*}

\noindent clearly equals the identity\hspace{0.2cm} $\begin{aligned}
			\includegraphics[scale=0.04]{pivotality_1.pdf}
			\end{aligned}
			~~
			\xmapsto{~~\text{id}~~}
			~~
			\begin{aligned}
			\includegraphics[scale=0.04]{pivotality_1.pdf}
			\end{aligned}$. ~Note that this calculation is fully general since the sphere has a 1-dimensional string-net space.

\section{Example Calculations}

% !TEX root = ../main.tex

\label{computations_section}

\noindent In this section we demonstrate how to apply the string-net formalism we have developed in this paper to carry out some standard TQFT calculations. 

\subsection{Dimension of string net spaces}
\scalecobordisms{0.5}

Let \sgn ~denote the oriented surface with genus $g$ and $n$ boundary circles. In this section we compute the dimension of $H(\sgn ; B)$, where $B$ is any admissable boundary condition. In the case of $B_0$ boundary conditions, we can interpret the string-net space in terms of homology (see Remark \ref{homology_remark}) and the dimension calculation is a standard result in algebraic topology. It is nevertheless instructive to derive the same result here, for arbitary boundary conditions, using only simple string-net arguments. 

\subsubsection{Reduction to $B_0$ case}

The following lemma shows that it is sufficent to consider the case where each boundary component is coloured by $B_0$. We leave the proof to the reader. 

\begin{lemma}  \label{lem:zerotoone}
Let $B$ be any boundary value on \sgn, and let $A$ be a collection of disjoints arcs connecting the $B_1$ coloured boundary components in pairs. Consider the map
\begin{align*}
f_A : H(\sgn;B_0^{\boxtimes n}) & \to H(\sgn;B) \\
\Gamma & \mapsto \operatorname{smoothing}( \Gamma \cup A )
\end{align*}
\noindent where by `smoothing' we mean the resolution of any possible crossings introduced. (There are two possibilities for smoothing a crossing but they are equivalent by the $F$-move). Then $f_A$ is an isomorphism.
\end{lemma}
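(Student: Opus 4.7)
The plan is to construct an explicit two-sided inverse $g_A$ using the same symmetric-difference-style formula, and verify that both composites recover the identity via the cancellation of doubled arcs.

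First, define $g_A \colon H(\sgn;B) \to H(\sgn;B_0^{\boxtimes n})$ by $\Gamma' \mapsto \operatorname{smoothing}(\Gamma' \cup A)$, with the convention that at each $B_1$ endpoint of $\Gamma'$ the arc of $\Gamma'$ is concatenated with the adjacent arc of $A$, producing a multicurve with no endpoints (as required by the $B_0^{\boxtimes n}$ target). Both $f_A$ and $g_A$ descend to equivalence classes because the string-net relations are local: any isotopy, $F$-move, or loop contraction on a multicurve is carried out inside a small disk, and by an initial isotopy making $A$ transverse to that disk and resolving internal crossings via the $F$-move \eqref{F_move}, the move lifts verbatim to the result after adjoining $A$ and smoothing.

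Next, the composites $g_A \circ f_A$ and $f_A \circ g_A$ both send $[\Gamma]$ to the class of $\operatorname{smoothing}(\Gamma \cup A \cup A)$. After a preliminary ambient isotopy, the two copies of $A$ may be taken parallel and disjoint, lying inside a tubular neighborhood of $A$. In the tubular neighborhood of each arc $a \in A$ (which is a disk), the two parallel copies of $a$ together with the concatenations at their shared $B_1$ endpoints form a simple closed curve bounding a disk in that neighborhood. Any crossings with strands of $\Gamma$ traversing the neighborhood occur in matched pairs (since $\Gamma$ crosses the two parallel copies the same number of times) and are removed by the $F$-move \eqref{F_move}; the remaining contractible loop then vanishes by the loop contraction relation \eqref{loop_move}. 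Hence both composites act as the identity, proving $f_A$ is invertible.

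The main technical obstacle is making the ``doubling kills $A$'' step fully rigorous when $\Gamma$ interacts with $A$ in a complicated tangled way; the local-in-tubular-neighborhood argument above is the cleanest way around this. An alternative, slicker route is to extend the homological interpretation of Remark \ref{homology_remark} to the relative setting: one can show that $\Curves(\sgn;B)/\sim$ is naturally a torsor over $H_1(\sgn;\mathbb{Z}/2\mathbb{Z})$ (nonempty since a matching $A$ exists), and that $f_A$ is precisely the bijection identifying this torsor with its underlying group by declaring $[A]$ to be the basepoint. The lemma then reduces to the tautology that such a basepoint-induced map is a bijection, which linearly extends to the claimed isomorphism of vector spaces.
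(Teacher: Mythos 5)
The paper offers no proof to compare against --- Lemma \ref{lem:zerotoone} is explicitly left to the reader --- so your argument can only be judged on its own terms, and the explicit-inverse strategy is the natural one. Your well-definedness argument and your treatment of $g_A \circ f_A$ are sound: in that composite the second copy of $A$ really is concatenated to the first at the marked points, so each doubled arc closes up into a loop bounding a thin bigon, strands of $\Gamma$ meet that loop in pairs, and the $F$-move followed by loop contraction removes it (the same local computation as in the proof of Lemma \ref{lem:cloaking}).

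However, your description of $f_A\circ g_A$ does not match the actual configuration, and this is the one genuine gap. In $f_A(g_A(\Gamma'))$ the first copy $a'$ of an arc $a\in A$ is concatenated at each marked point with the arc of $\Gamma'$ ending there (that is what your convention for $g_A$ prescribes), while the second copy $a$ simply terminates at the marked point (that is what $f_A$ produces). So the two parallel copies are \emph{not} joined to each other at their shared endpoints and do not form the simple closed curve your argument contracts; instead, at each marked point $p$ one has three arc-germs $\gamma$, $a'$, $a$, with $\gamma$ joined to $a'$ and $a$ free. The missing step is an $F$-move applied in a small interior disk near $p$ containing the $\gamma$-to-$a'$ U-turn and a segment of $a$ (four transversal boundary intersections: $\gamma$ once, $a'$ once, $a$ twice); it reroutes the connections so that $\gamma$ terminates at $p$ and $a$ is joined to $a'$. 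After performing this at both endpoints of each arc, the doubled arcs do close up into contractible loops and your argument goes through. I would also caution against the ``alternative slicker route'': freeness and transitivity of the asserted torsor action are essentially equivalent to injectivity and surjectivity of $f_A$, and Remark \ref{homology_remark} points to this very lemma as the means of obtaining the relative homological interpretation, so that route is close to circular unless the torsor claim is proved independently.
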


\noindent As a simple example, we compute $f_A$ on the following string net:

\begin{equation}
    \begin{aligned}
	\includegraphics[scale=0.06]{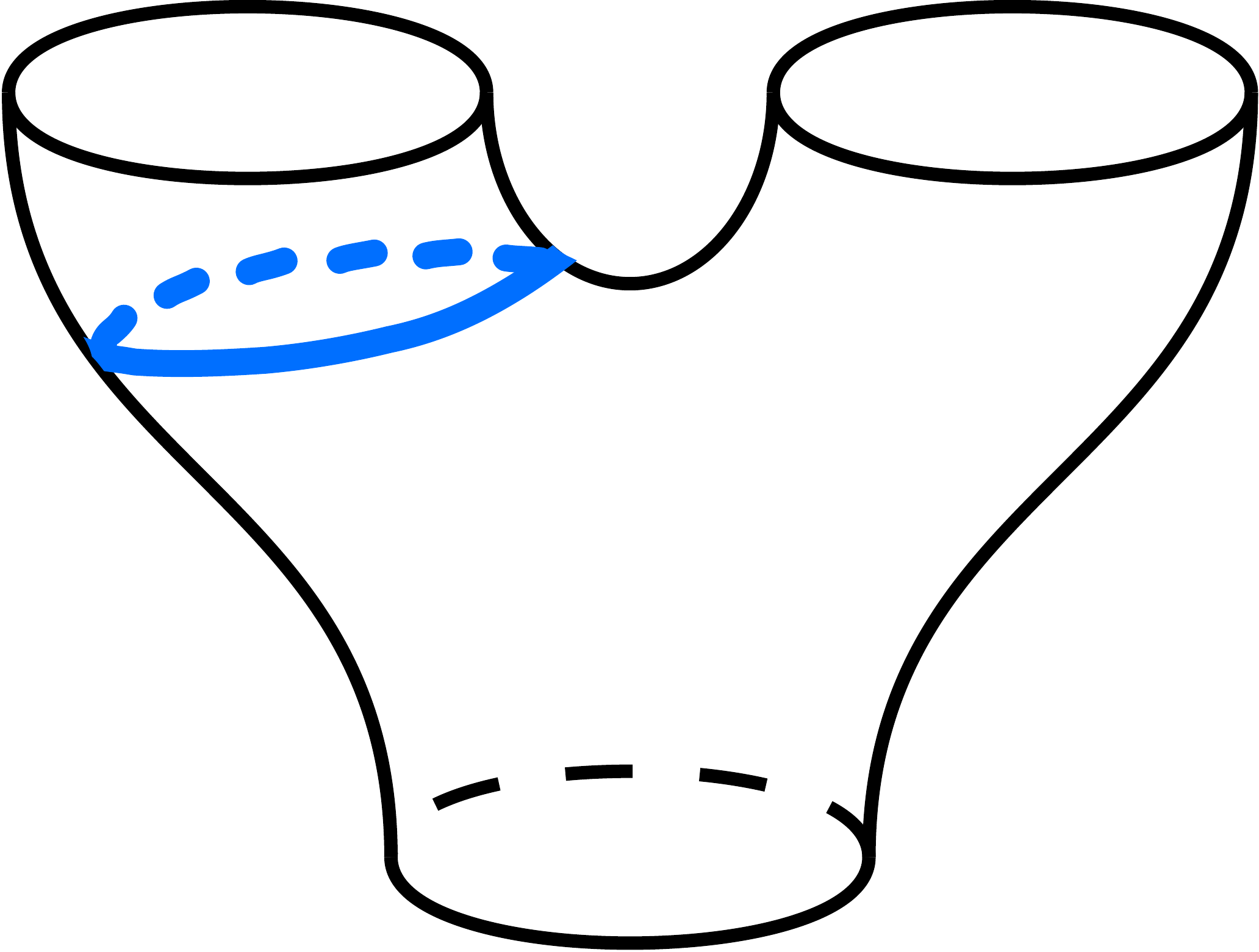}
	\end{aligned}
	\quad
	\xmapsto{-\, \cup A}
	\quad
	\begin{aligned}
	\includegraphics[scale=0.06]{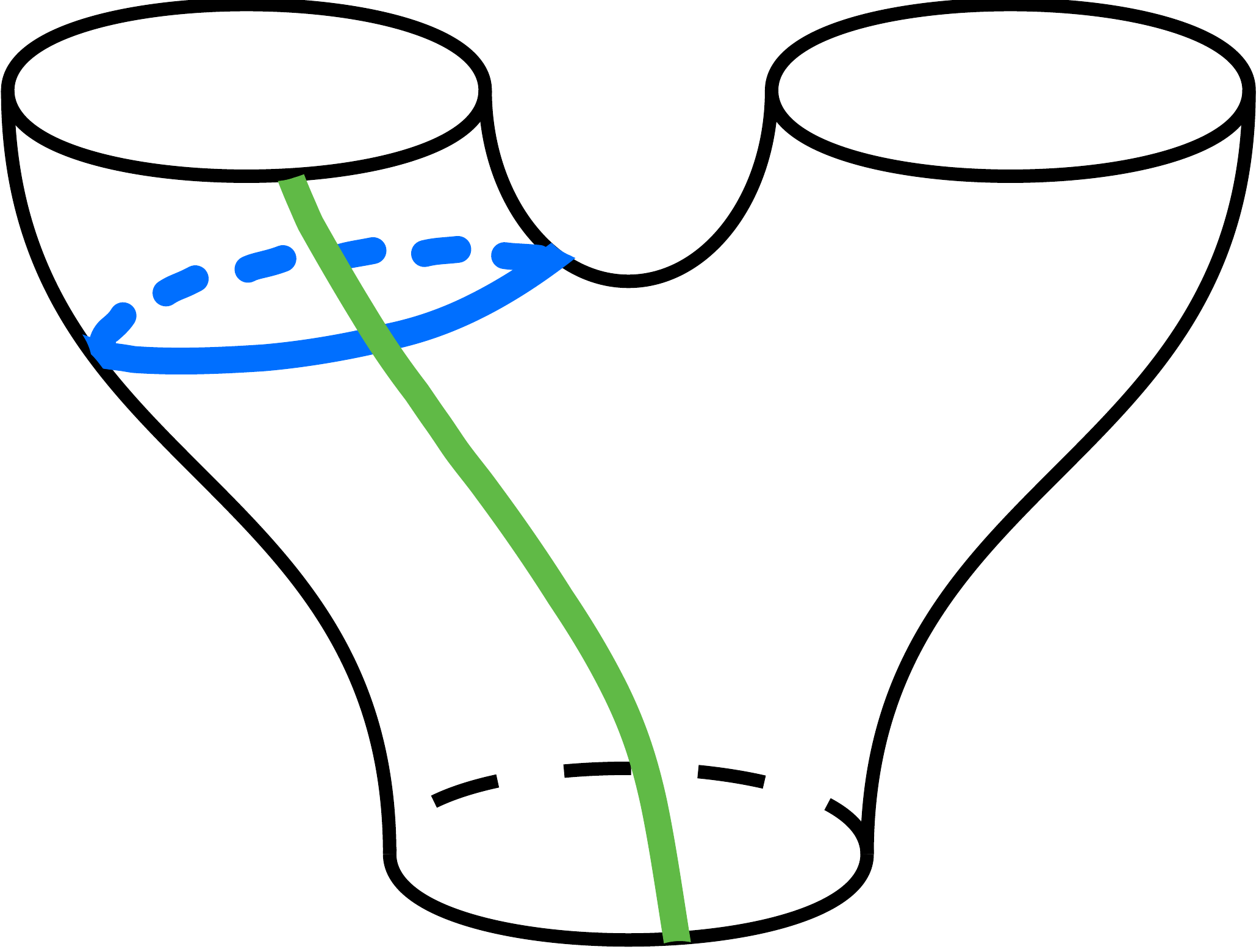}
	\end{aligned}
	\quad
	\xmapsto{\text{smooth crossings}}
	\quad
	\begin{aligned}
	\includegraphics[scale=0.06]{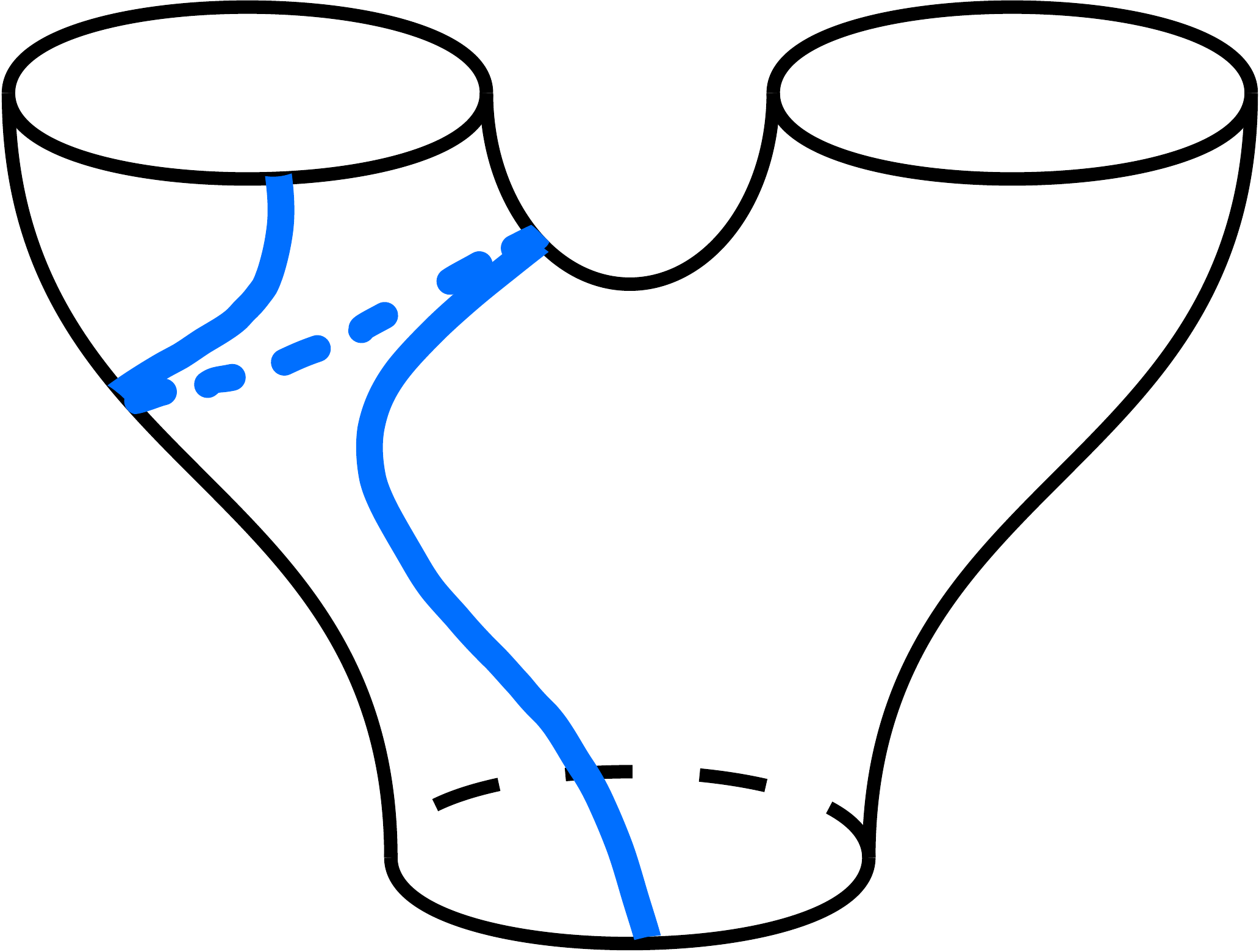}
	\end{aligned}
\end{equation}

\subsubsection{Dimensions for $B_0$ boundary conditions}

We work in stages, beginning with the surfaces underlying the 1-generators, and then generalize to $\Sigma_{g,n}$ via repeated applications of Theorem \ref{thm:gluing}.\\ %We shall mostly just give proof sketches, since the arguments involved consist almost exclusively of simple ``fun and games'' with the relations - the reader will easily be able to fill in the details.\\

\noindent  The surface underlying the cup and cap is $\Sigma_{0,1}$. In this case the string-net space is very simple; one easily sees that

\begin{equation}
H(\Sigma_{0,1};B_0) = \text{span}\left\{\tinycup\right\} \quad \text{and} \quad H(\Sigma_{0,1};B_1) = 0.
\end{equation}

\noindent Note that $\Dim \, H(\Sigma_{0,1};B_0) = 1 = 2^{2 \cdot 0 + 1 - 1}$. Next we proceed to the pants and copants; the surface underlying these bordisms is $\Sigma_{0,3}$. In this case there are only two (up to diffeomorphism of \sgn) admissible boundary conditions: $B = B_0 \boxtimes B_0 \boxtimes B_0$ and $B' = B_0 \boxtimes B_1 \boxtimes B_1$. In the first case, one can show that

\begin{equation} \label{eq:basis_3punc_sphere}
			H(\spants;B)
			\quad
			=
			\quad
			\text{span}\left\{\;
			\begin{aligned}
			\includegraphics[scale=0.06]{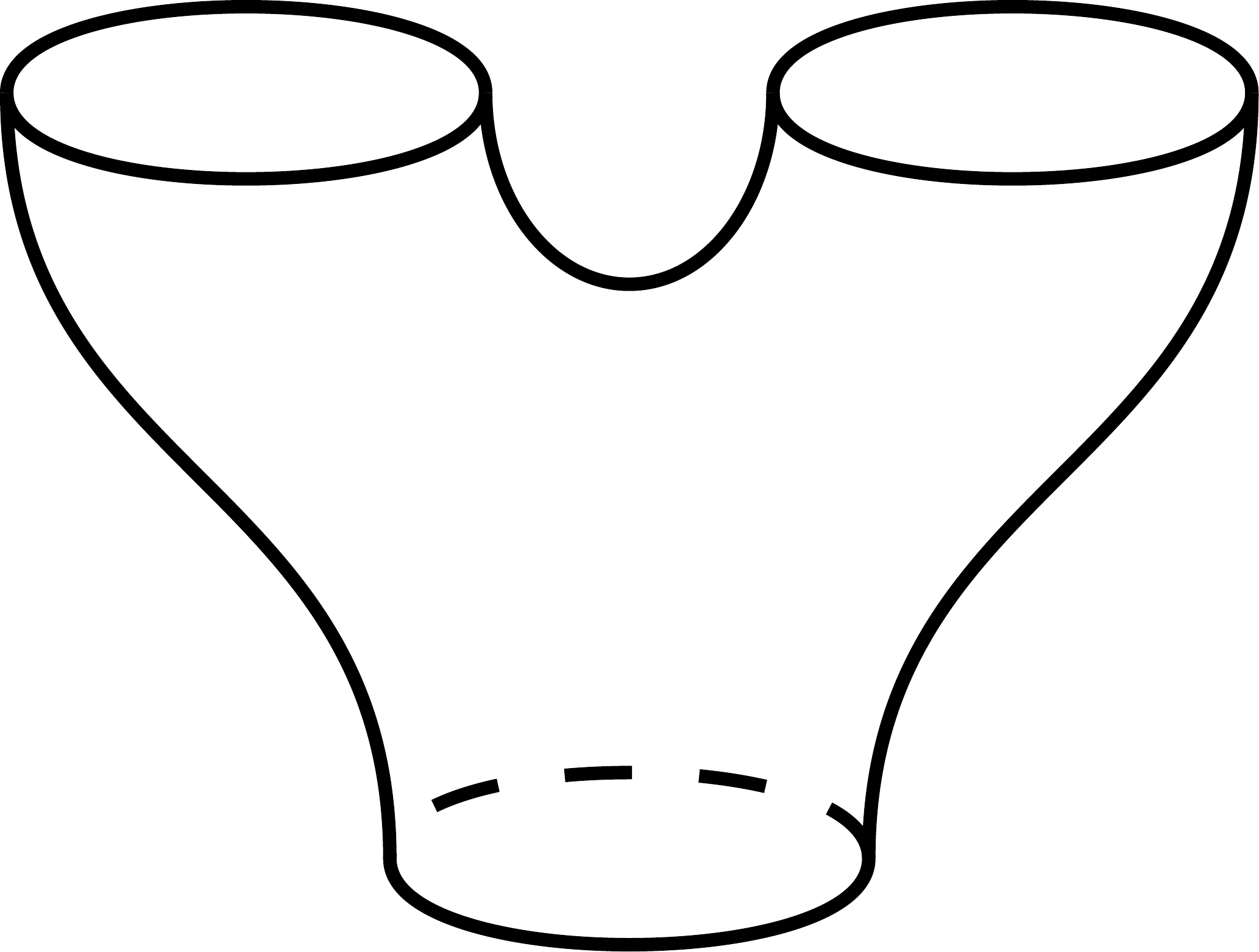}\;,\;
			\end{aligned}
			\begin{aligned}
			\includegraphics[scale=0.06]{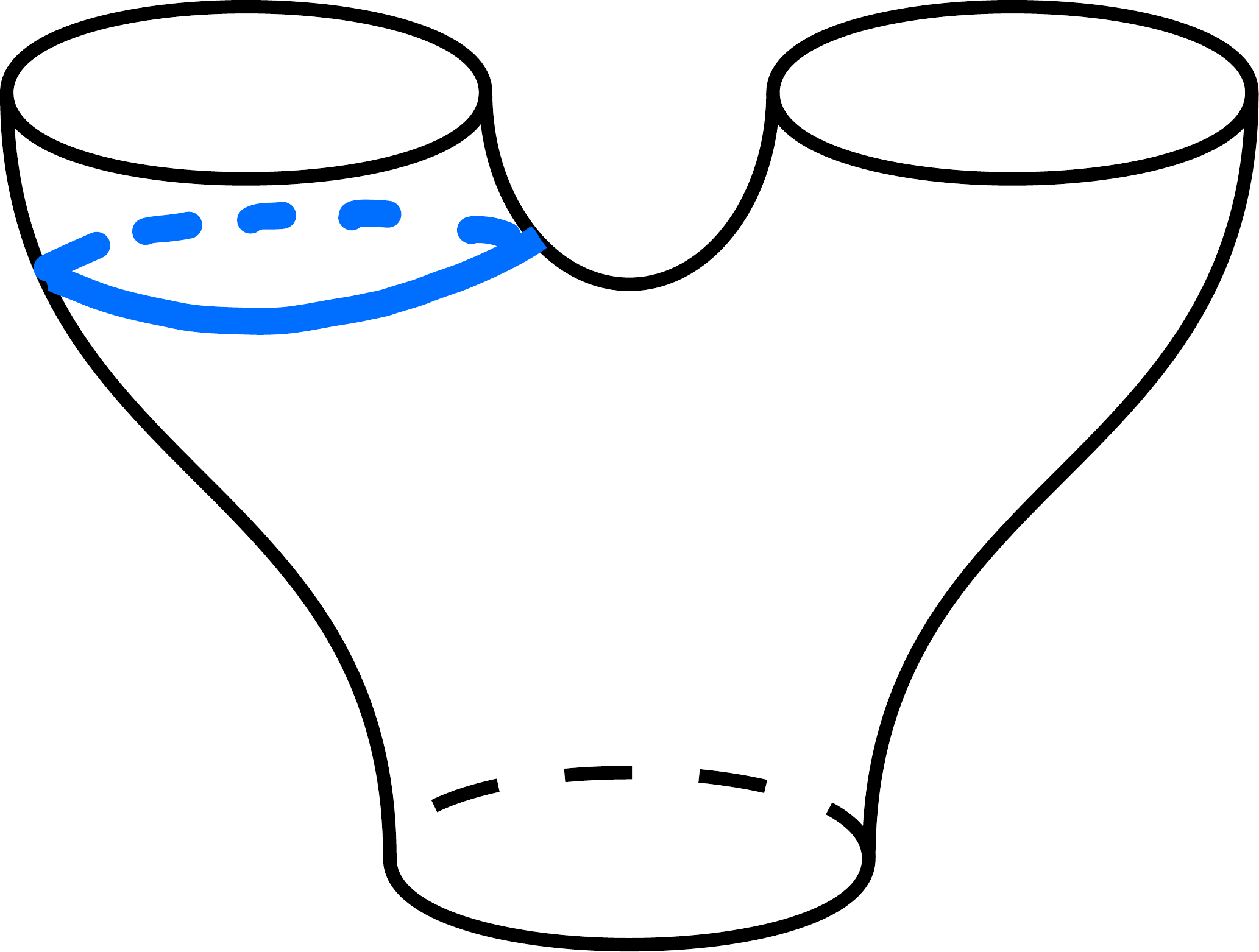}\;,\;
			\end{aligned}
			\begin{aligned}
			\includegraphics[scale=0.06]{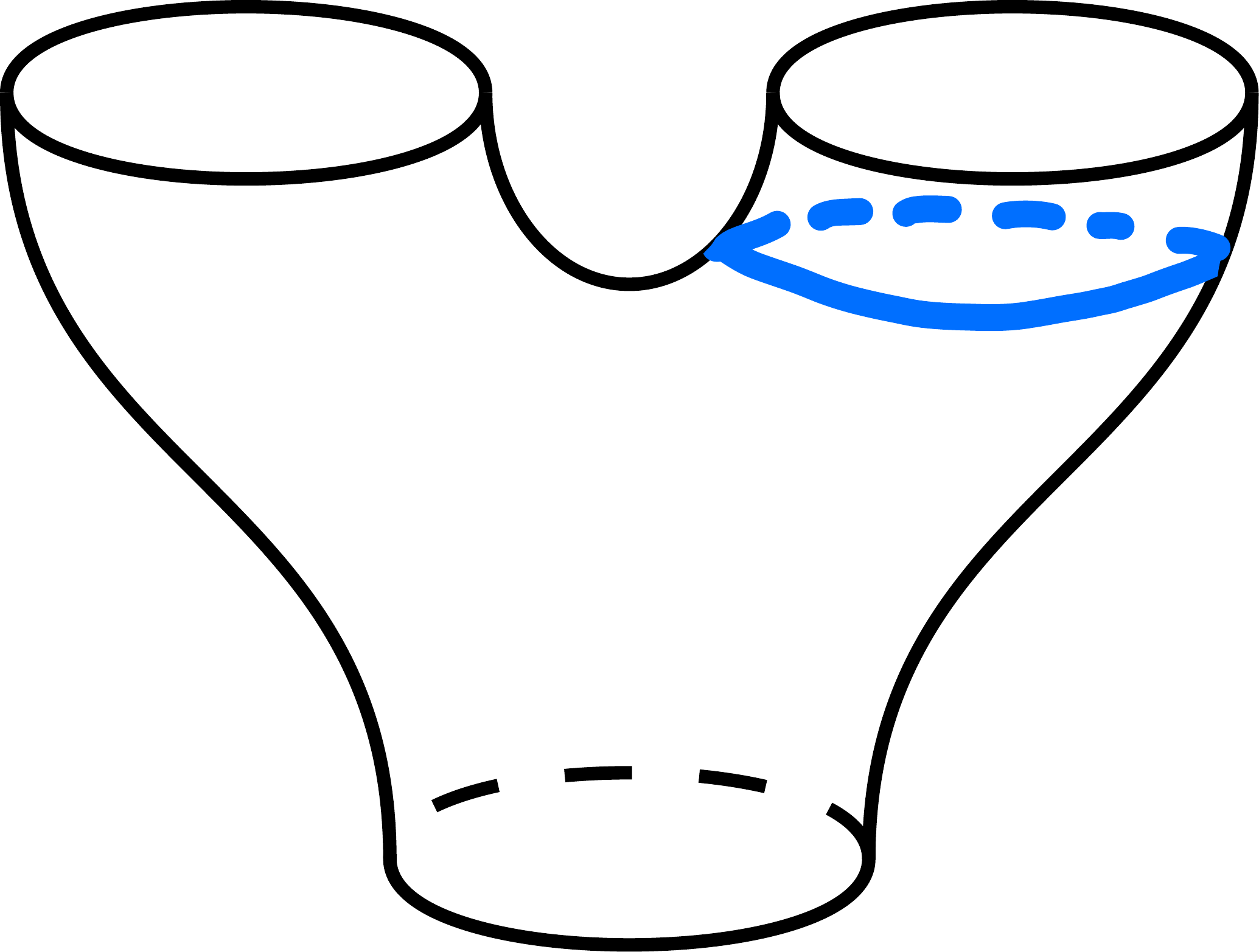}\;,\;
			\end{aligned}
			\begin{aligned}
			\includegraphics[scale=0.06]{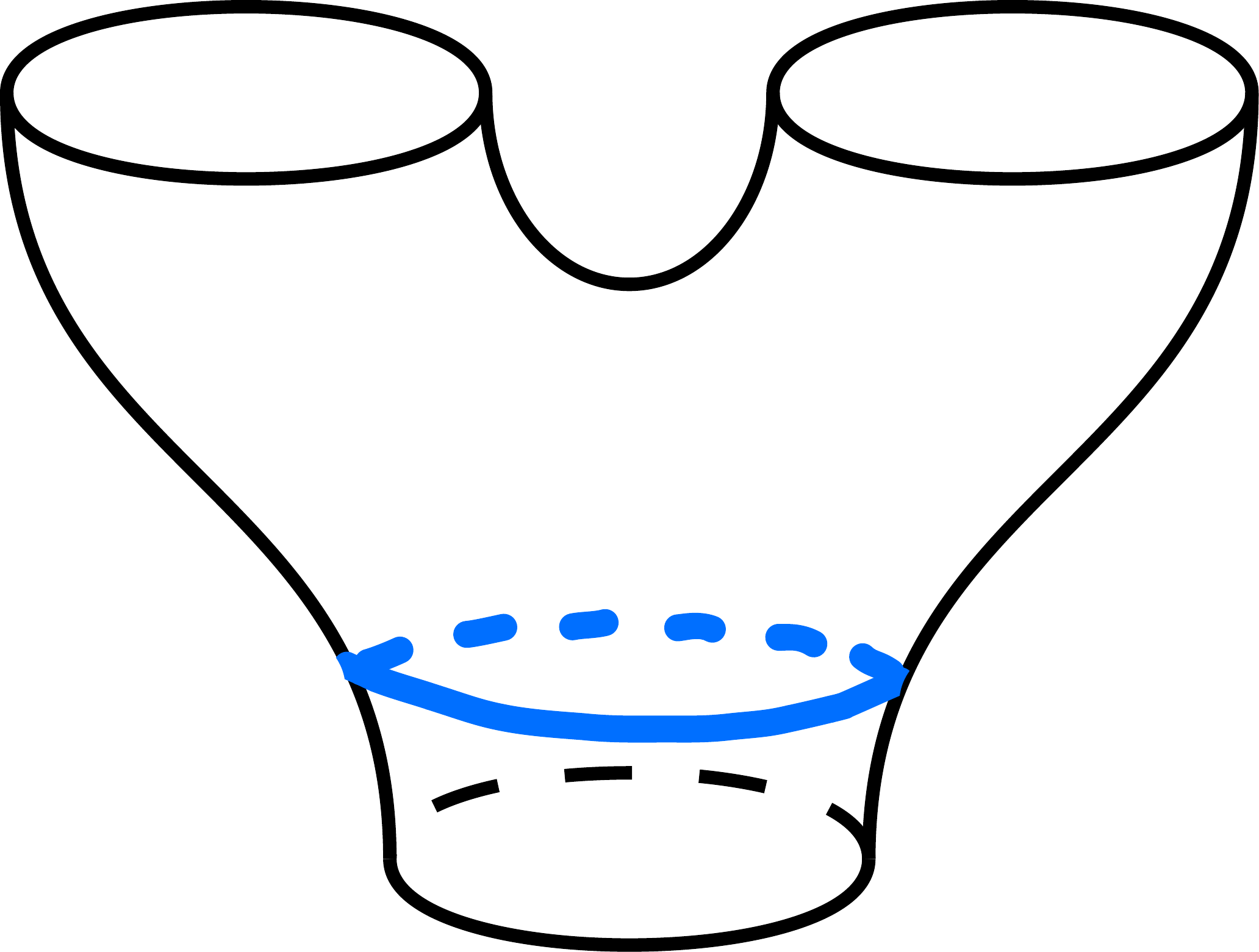}
			\end{aligned}
			\;\right\}
\end{equation}

\noindent In the second case, one likewise sees that

\begin{equation} \label{eq:basis_3punc_sphere_1}
			H(\spants;B')
			\quad
			=
			\quad
			\text{span}\left\{\;
			\begin{aligned}
			\includegraphics[scale=0.06]{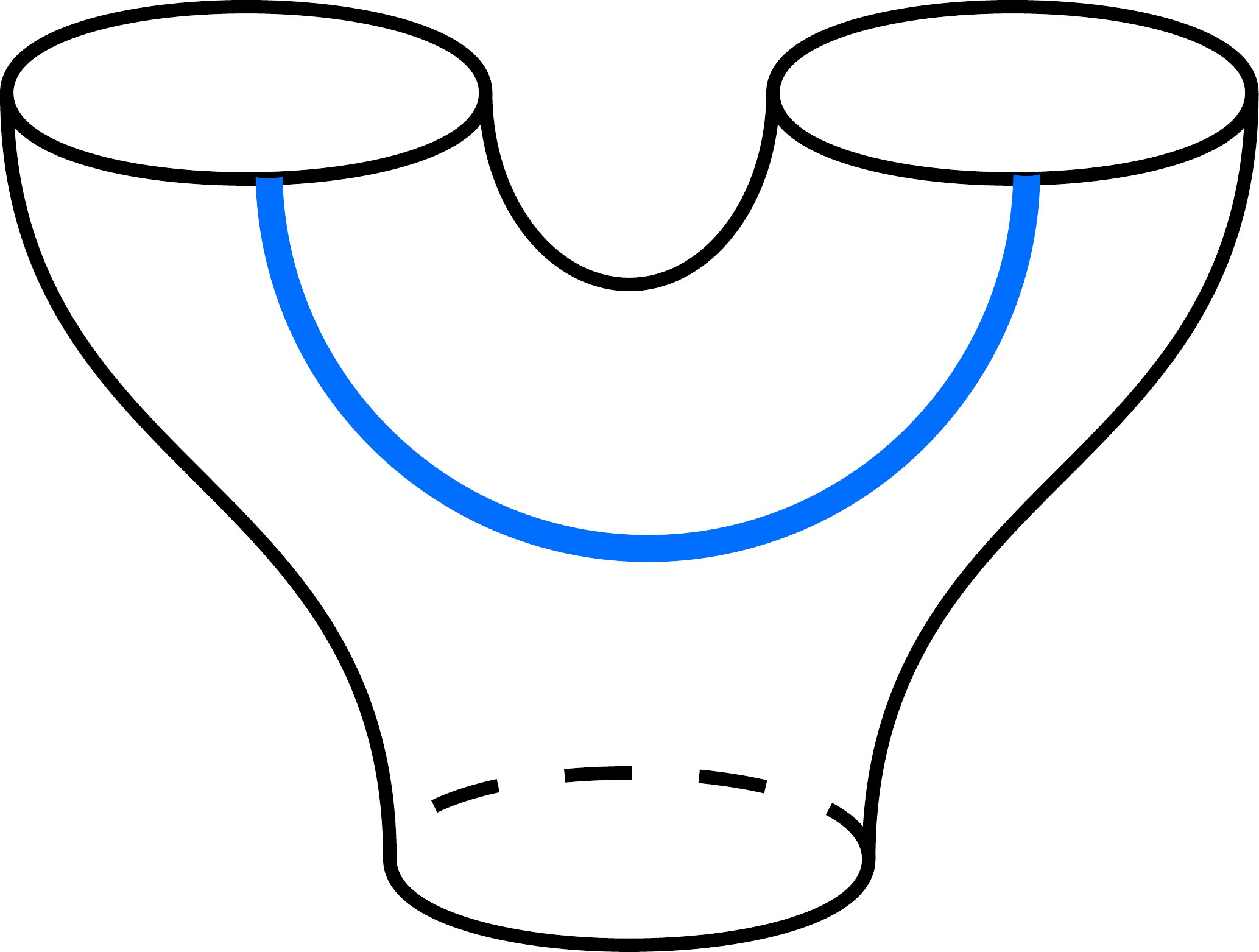}\;,\;
			\end{aligned}
			\begin{aligned}
			\includegraphics[scale=0.06]{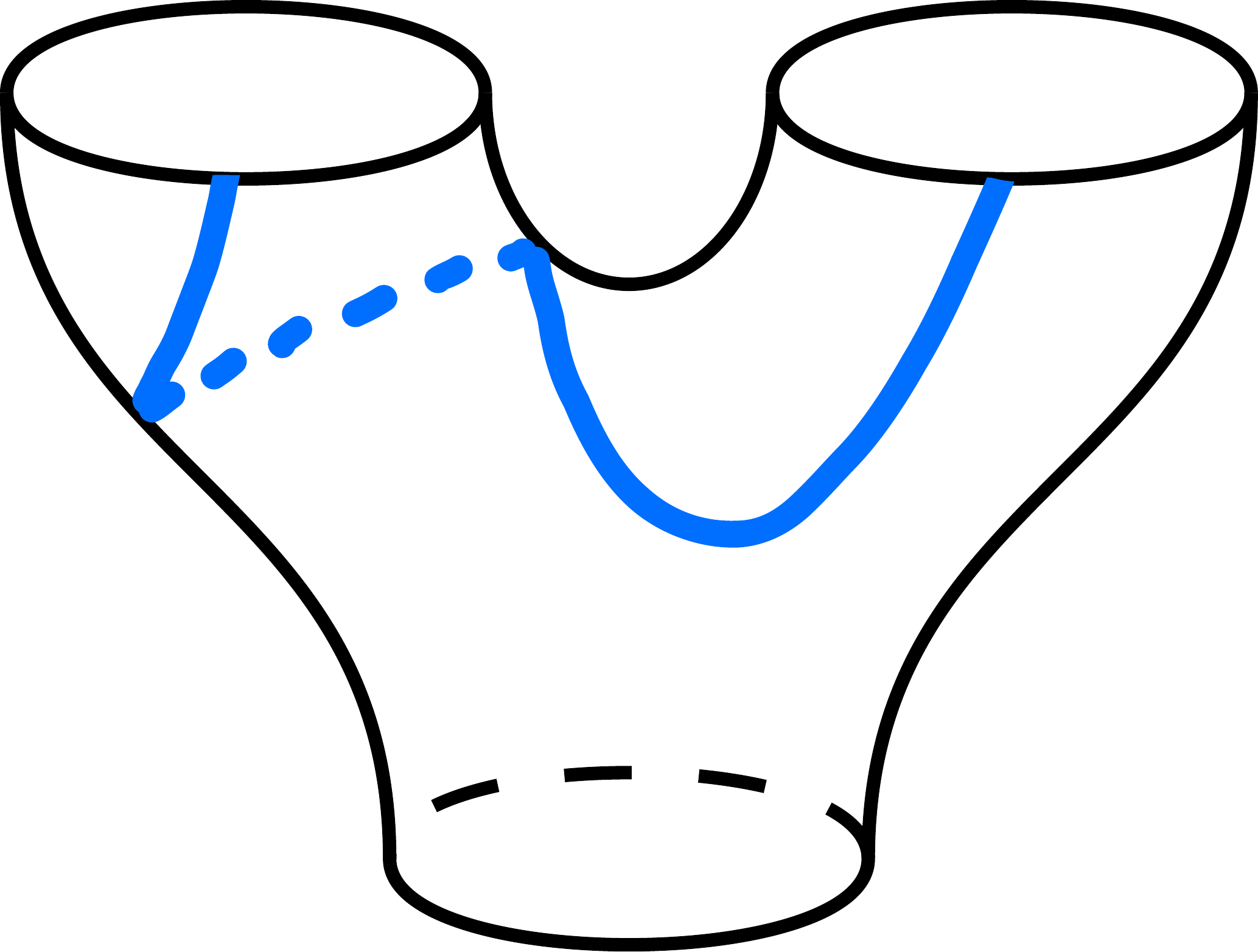}\;,\;
			\end{aligned}
			\begin{aligned}
			\includegraphics[scale=0.06]{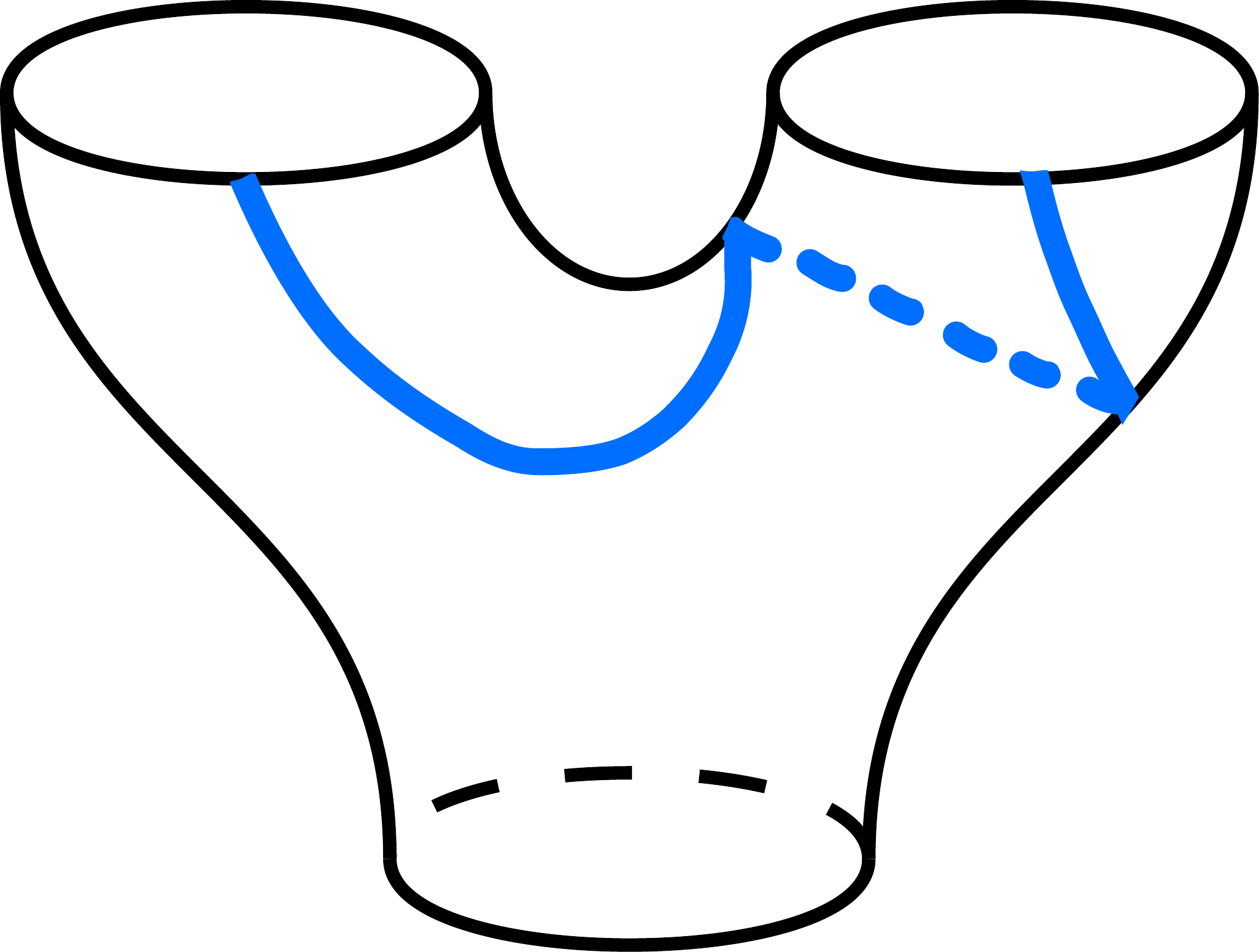}\;,\;
			\end{aligned}
			\begin{aligned}
			\includegraphics[scale=0.06]{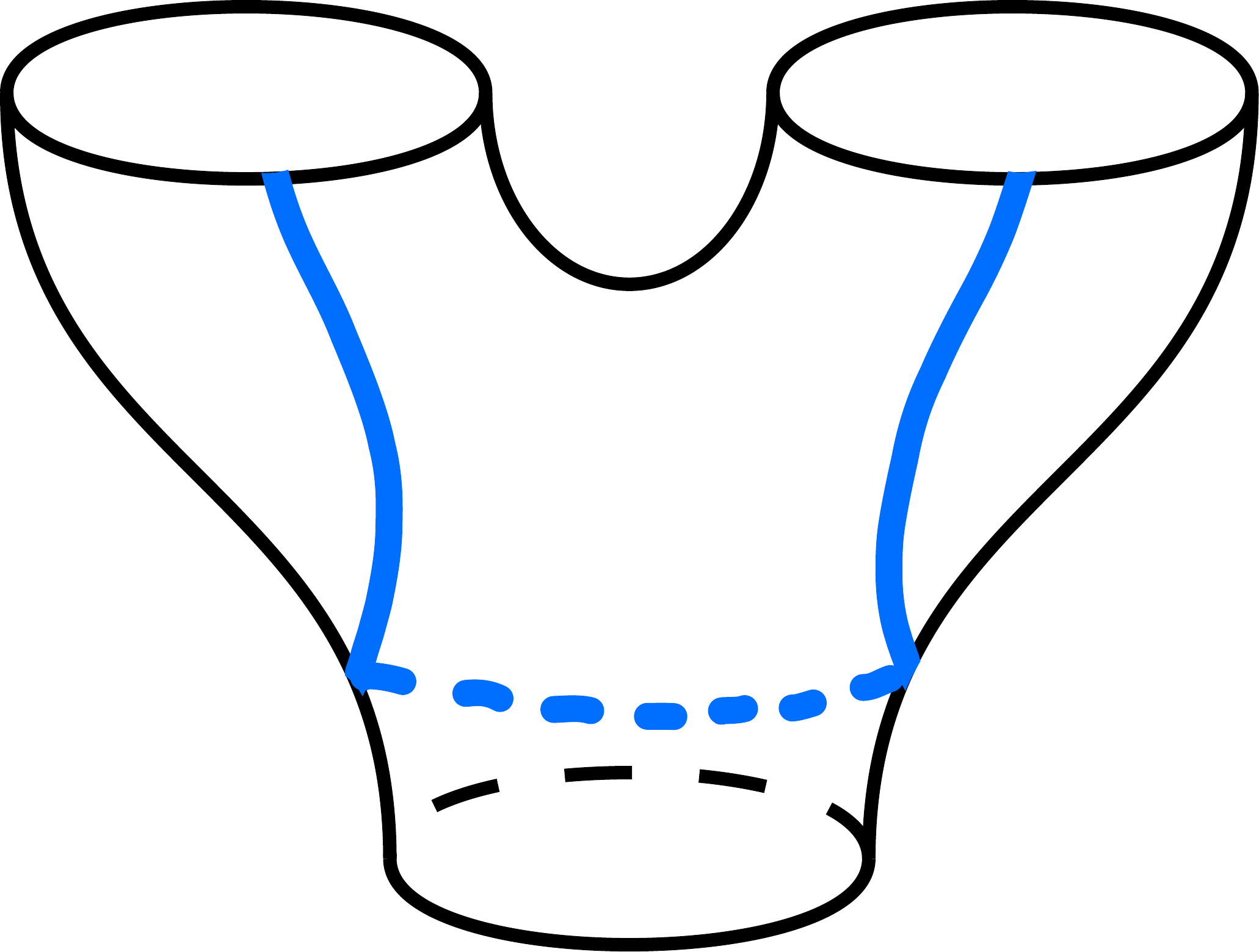}
			\end{aligned}
			\;\right\}
\end{equation}

\noindent In either case, therefore, $\Dim \, H(\spants) = 4 = 2^{2 \cdot 0 + 3 - 1}$.\\

\noindent For a composite surface we split the dimension calculation into two cases, that of $\Sigma_{g,0}$ and $\Sigma_{g,n}$ (for $n \geq 1$). These can be built by repeatedly gluing $\Sigma_{0,1}$ and $\Sigma_{0,3}$ together:
\scalecobordisms{0.4}
\[
\Sigma_{g,0} = \quad \begin{aligned}
	\begin{tz}[xscale=2.1, yscale=1.2]
\node (1) at (0,0)
{
$\begin{tikzpicture}
	\node[Cup, top] (A) at (0,0) {};
	\node[Copants, anchor=belt] (B) at ([yshift=5pt] A) {};
	\node[Pants, top, anchor=leftleg] (C) at (B.leftleg) {};
	\node (D) at ([yshift = 15pt] C.belt) {\vdots};	
	\node[Copants, anchor=belt] (E) at (D.north) {};
	\node[Pants, top, anchor=leftleg] (F) at (E.leftleg) {};
	\node[Cap] (G) at ([yshift=5pt] F.belt) {};
	\node[yscale=6.7, anchor=south] (M) at (0.6,-0.75) {$\}$};
	\node at (M.east) {$\, g$};
\end{tikzpicture}$
};
\end{tz}
\end{aligned}
\qquad
\Sigma_{g,n} = \quad \begin{aligned}
\begin{tz}[xscale=2.1, yscale=1.2]
\node (1) at (0,0)
{
$\begin{tikzpicture}
	\node[Cup, top] (A) at (0,0) {};
	\node[Copants, anchor=belt] (B) at ([yshift=5pt] A) {};
	\node[Pants, top, anchor=leftleg] (C) at (B.leftleg) {};
	\node (D) at ([yshift = 15pt] C.belt) {\vdots};	
	\node[Copants, anchor=belt] (E) at (D.north) {};
	\node[Pants, top, anchor=leftleg] (F) at (E.leftleg) {};
	\node[Copants, anchor=belt] (G) at ([yshift=5pt] F.belt) {};
	\node[Copants, top, anchor=belt] (H) at (G.leftleg) {};
	\node (I) at ([yshift=15pt, xshift=-10pt] H.leftleg) {$\ddots$};
	\node[Cobordism Top End] (J) at (G.rightleg) {};
	\node[Copants, top, anchor=belt] (K) at ([yshift=25pt, xshift=-20pt] H.leftleg) {};
	\node[yscale=6.6] (L) at (0.8,1.2) {$\}$};
	\node[anchor=west] at (L.east) {$\!\! g$};
	\node[yscale=5.5, anchor=south] (M) at (0.8,1.85) {$\!\! \}$};
	\node[anchor=west] at (M.east) {$\!\! n-1$};	
\end{tikzpicture}$
};
\end{tz}
\end{aligned}
\]

\noindent This decomposition allows us to use Theorem \ref{thm:gluing} to compute $\Dim \, H(\Sigma_{g,n},B)$.

\begin{lemma} In the extended toric code, the dimension of the string net space is
\[
 \Dim H(\sgn ; B) = \begin{cases} 2^{2g}, & n = 0 \\
 2^{2g+n-1}, & n \geq 1 
 \end{cases}
\]
\end{lemma}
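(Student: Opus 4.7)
My plan is to reduce the problem to uniform $B_0$ boundary conditions and then recognise the string-net space as a homology group algebra. By Lemma \ref{lem:zerotoone}, the map $f_A$ (pairing $B_1$ points into arcs and smoothing the resulting crossings) is an isomorphism, so the dimension depends only on the underlying surface. Accordingly it suffices to compute $\Dim H(\Sigma_{g,n}; B_0^{\boxtimes n})$.

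By Remark \ref{homology_remark}, multicurves subject to the empty boundary value on $\Sigma_{g,n}$ are precisely $\mathbb{Z}/2\mathbb{Z}$-valued $1$-cycles, and the string-net equivalence relation coincides with being homologous. Therefore $H(\Sigma_{g,n}; B_0^{\boxtimes n}) \cong \mathbb{C}[H_1(\Sigma_{g,n}; \mathbb{Z}/2\mathbb{Z})]$, whose dimension is $|H_1(\Sigma_{g,n}; \mathbb{Z}/2\mathbb{Z})|$. For $n = 0$ the surface is closed of genus $g$, so $H_1 \cong (\mathbb{Z}/2\mathbb{Z})^{2g}$ and $|H_1| = 2^{2g}$. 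For $n \geq 1$ the surface $\Sigma_{g,n}$ is homotopy equivalent to a wedge of $2g + n - 1$ circles (visible by collapsing a spanning tree in the $1$-skeleton arising from the pants decomposition displayed just above), so $H_1 \cong (\mathbb{Z}/2\mathbb{Z})^{2g+n-1}$ and $|H_1| = 2^{2g+n-1}$.

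The main obstacle for a purely string-net proof --- i.e., one that does not pass through homology --- would be carrying out an inductive step with Theorem \ref{thm:gluing}: one must compute the quotient direct sum $\bigoplus_L \bigotimes_\sigma H(\sigma)/\sim$, and the profunctor relation $\sim$ is awkward to track head-on. Concretely, a direct gluing induction amounts to observing that $H(\Sigma_{0,3}; B_0^{\boxtimes 3})$ is a free rank-$2$ module over the cylinder endomorphism algebra $\Hom_{\BV}(B_0, B_0) \cong \mathbb{C}[\mathbb{Z}/2\mathbb{Z}]$ from each of its boundaries, while $H(\Sigma_{0,1}; B_0)$ is the trivial module; attaching a pants then doubles the dimension while capping with a cup halves it, reproducing the same formula. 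Passing through Remark \ref{homology_remark} is the cleaner route, and Lemma \ref{lem:zerotoone} supplies the bridge to arbitrary boundary conditions.
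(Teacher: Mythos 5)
Your proof is correct, but it takes a genuinely different route from the paper's. Both arguments start by invoking Lemma \ref{lem:zerotoone} to reduce to the all-$B_0$ case; from there, however, the paper runs an explicit gluing induction via Theorem \ref{thm:gluing}, exhibiting string-net bases for $\Sigma_{0,1}$, $\Sigma_{0,3}$ and $\Sigma_{1,2}$ and then assembling $\Sigma_{g,n}$ from a cup by attaching $g$ handles (each multiplying the dimension by $4$) and $n-1$ pants (each multiplying by $2$), capping off in the closed case. You instead pass through Remark \ref{homology_remark}, identifying $H(\Sigma_{g,n};B_0^{\boxtimes n})$ with $\mathbb{C}[H_1(\Sigma_{g,n};\mathbb{Z}/2\mathbb{Z})]$ and reading off the dimension from standard algebraic topology. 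The paper explicitly acknowledges this route (``the dimension calculation is a standard result in algebraic topology'') but deliberately avoids it so as to stay inside the string-net formalism; your argument is shorter, at the cost of outsourcing the real content to the fact that the string-net moves generate exactly the homology relation (Remark \ref{homology_remark}, resting on \cite{johnson1980spin}), whereas the paper's induction produces explicit bases that are reused elsewhere. One slip in your closing aside about a gluing-based alternative: capping with a cup does \emph{not} uniformly halve the dimension --- passing from $\Sigma_{g,1}$ to $\Sigma_{g,0}$ it leaves the dimension unchanged, since the single boundary circle of $\Sigma_{g,1}$ is already null-homologous; this is precisely why the closed answer is $2^{2g}$ rather than $2^{2g-1}$. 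As that remark concerns an approach you do not actually use, it does not affect the validity of your main argument.
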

\begin{proof}
\noindent Using Theorem \ref{thm:gluing}, one can show that

\begin{equation}
    H(\shandle;B_0^{\boxtimes 2})
    =
    \text{span}\left\{\;
    \begin{aligned}
	\includegraphics[scale=0.04]{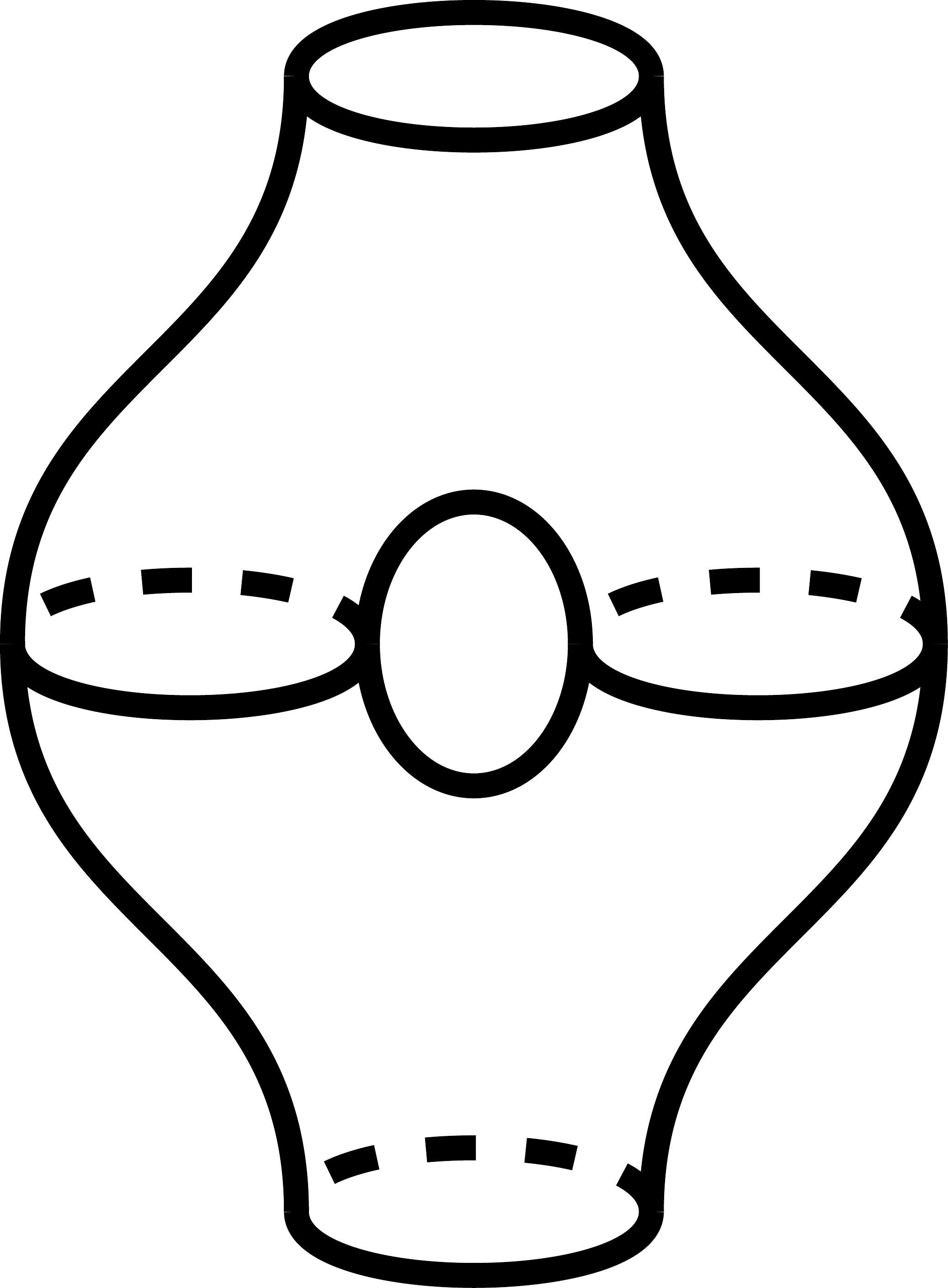}\;,\;
	\end{aligned}
	\begin{aligned}
	\includegraphics[scale=0.04]{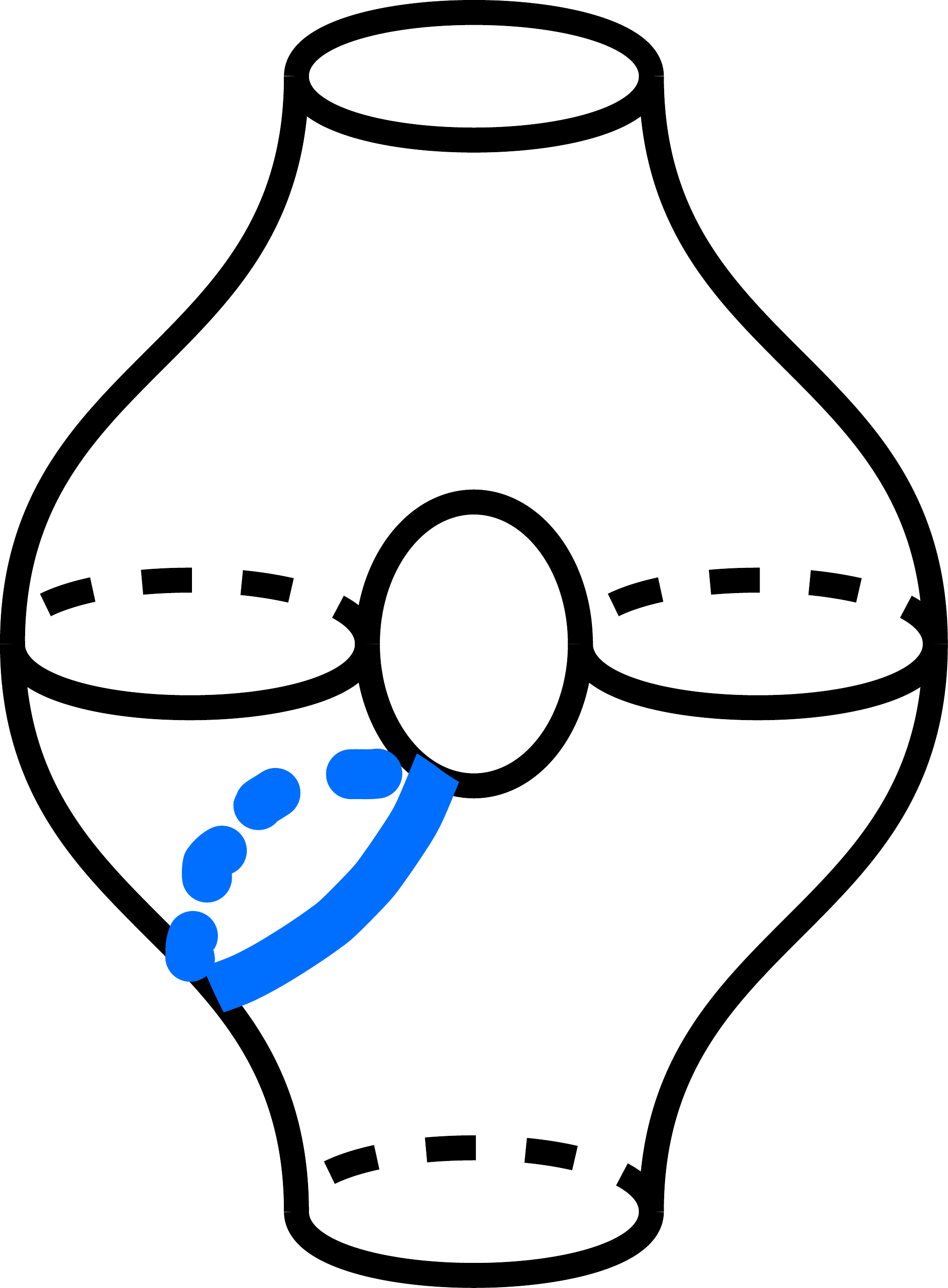}\;,\;
	\end{aligned}
	\begin{aligned}
	\includegraphics[scale=0.04]{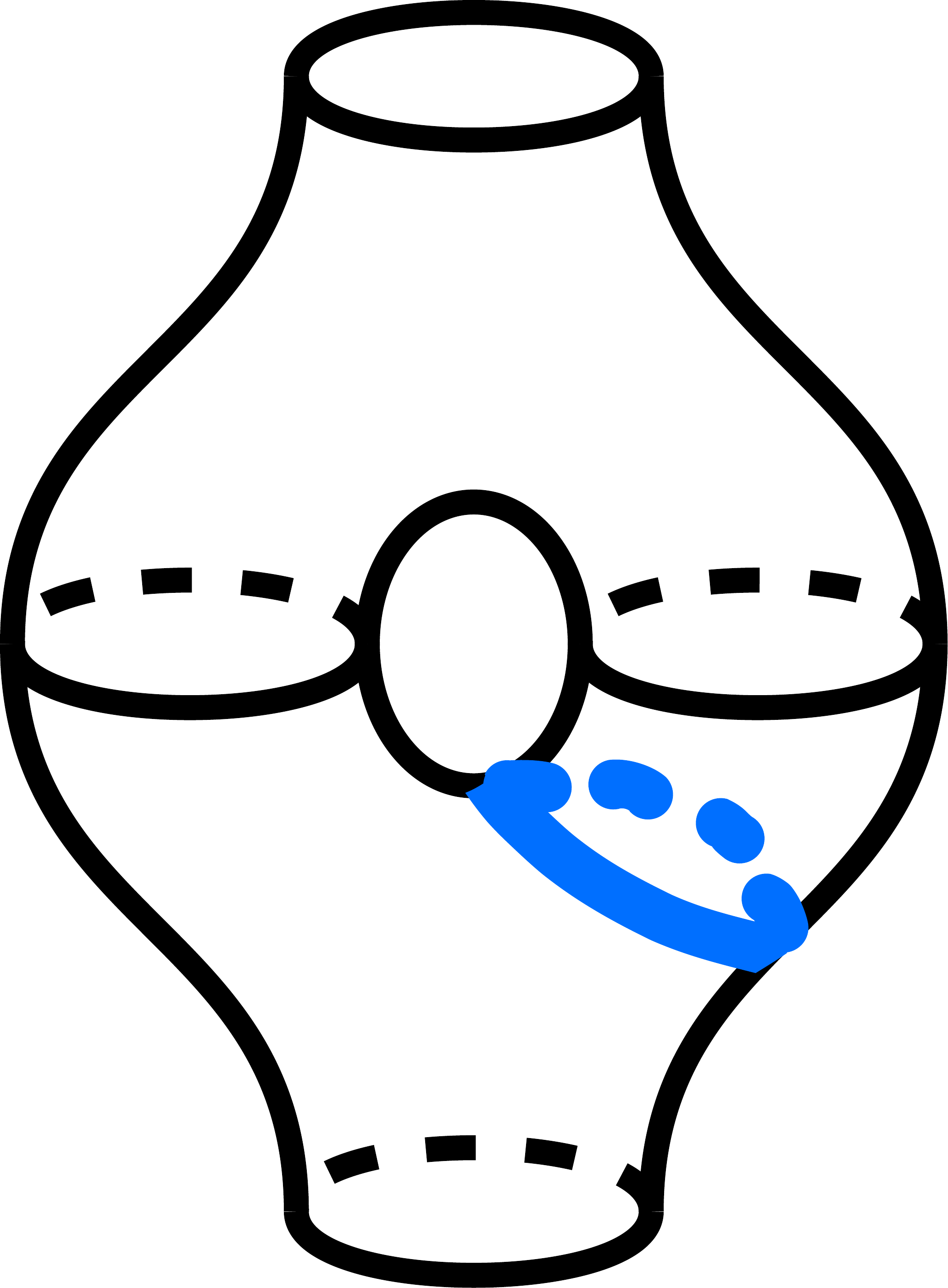}\;,\;
	\end{aligned}
	\begin{aligned}
	\includegraphics[scale=0.04]{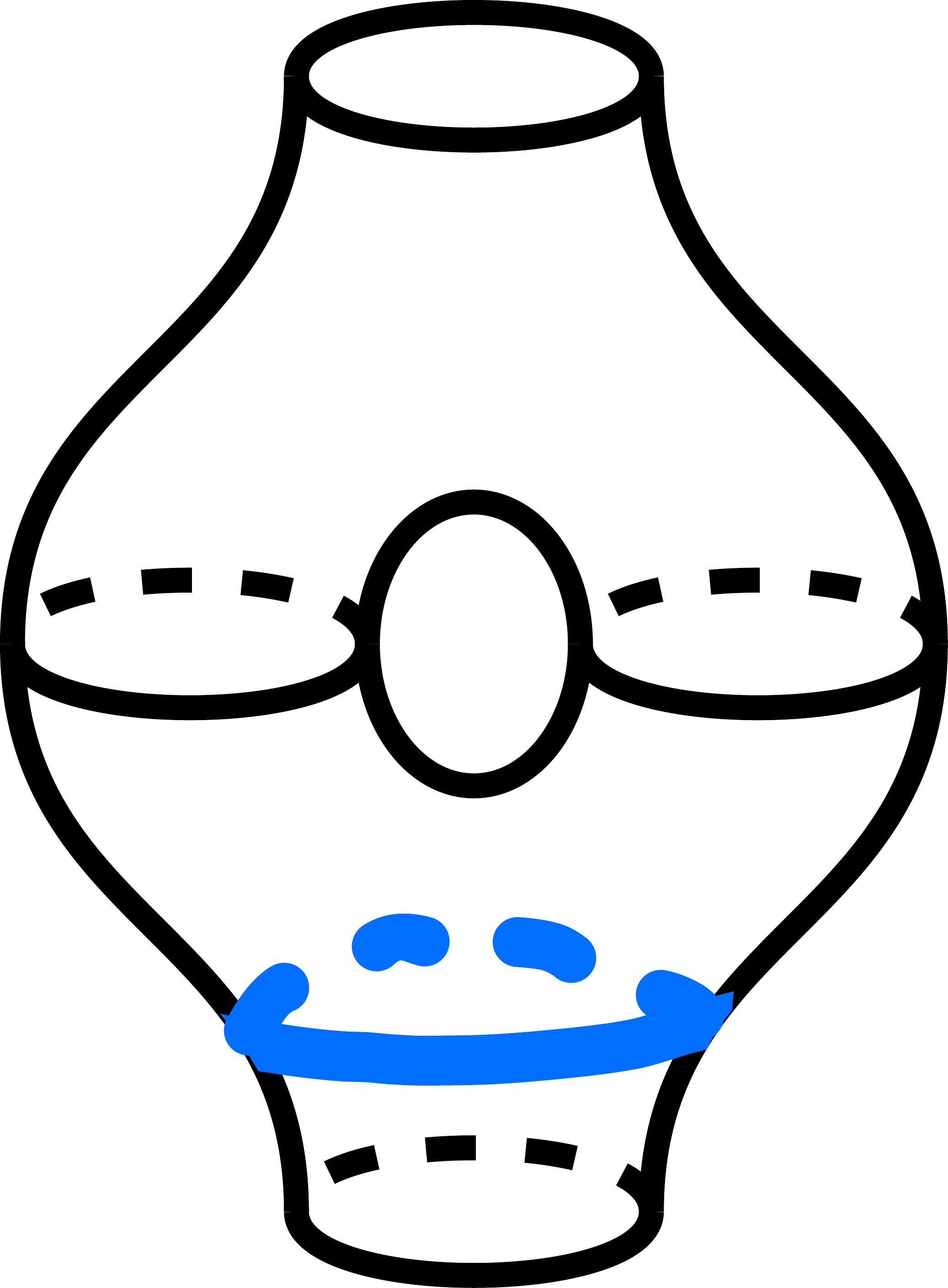}\;,\;
	\end{aligned}
	\begin{aligned}
	\includegraphics[scale=0.04]{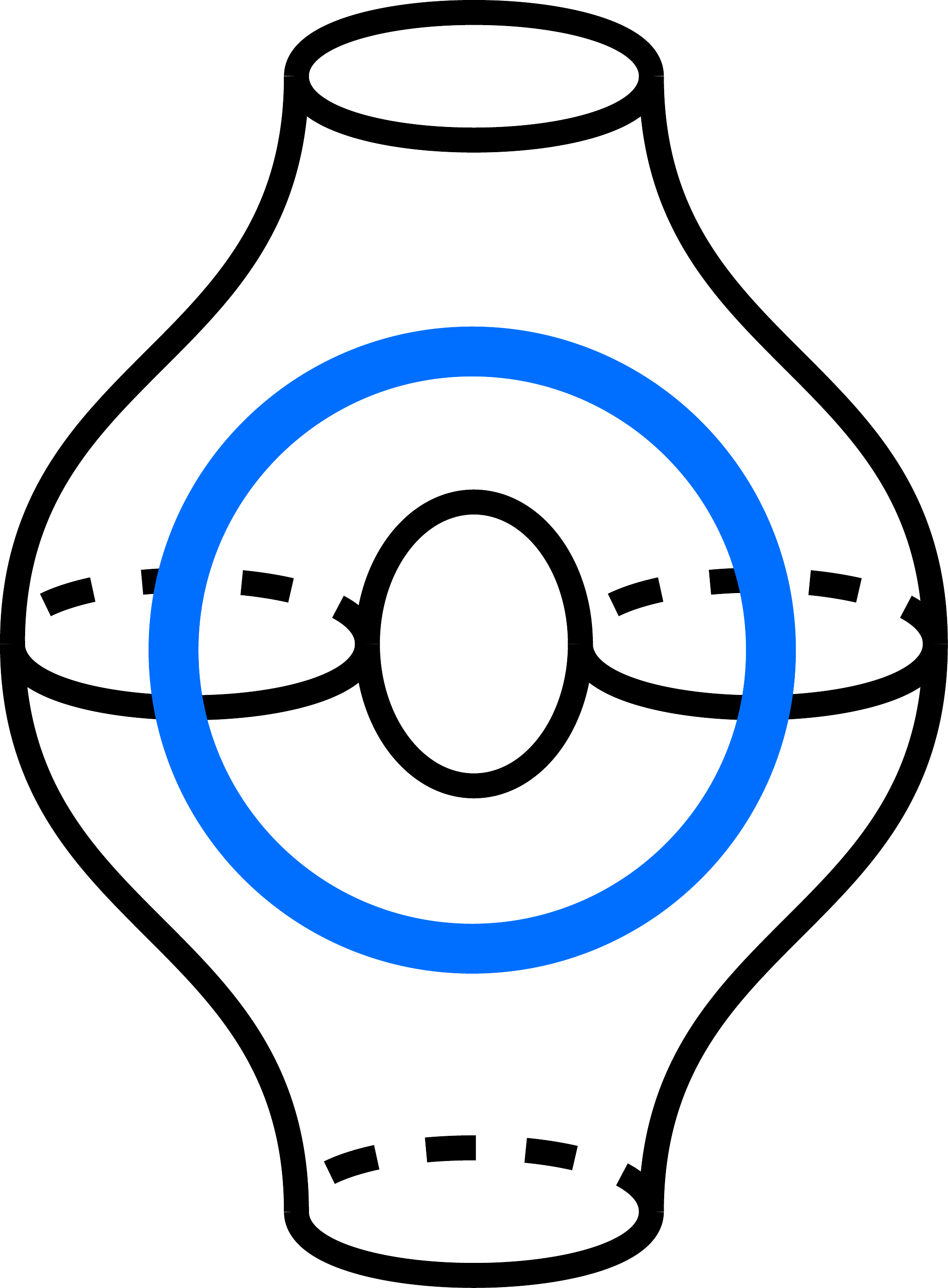}\;,\;
	\end{aligned}
	\begin{aligned}
	\includegraphics[scale=0.04]{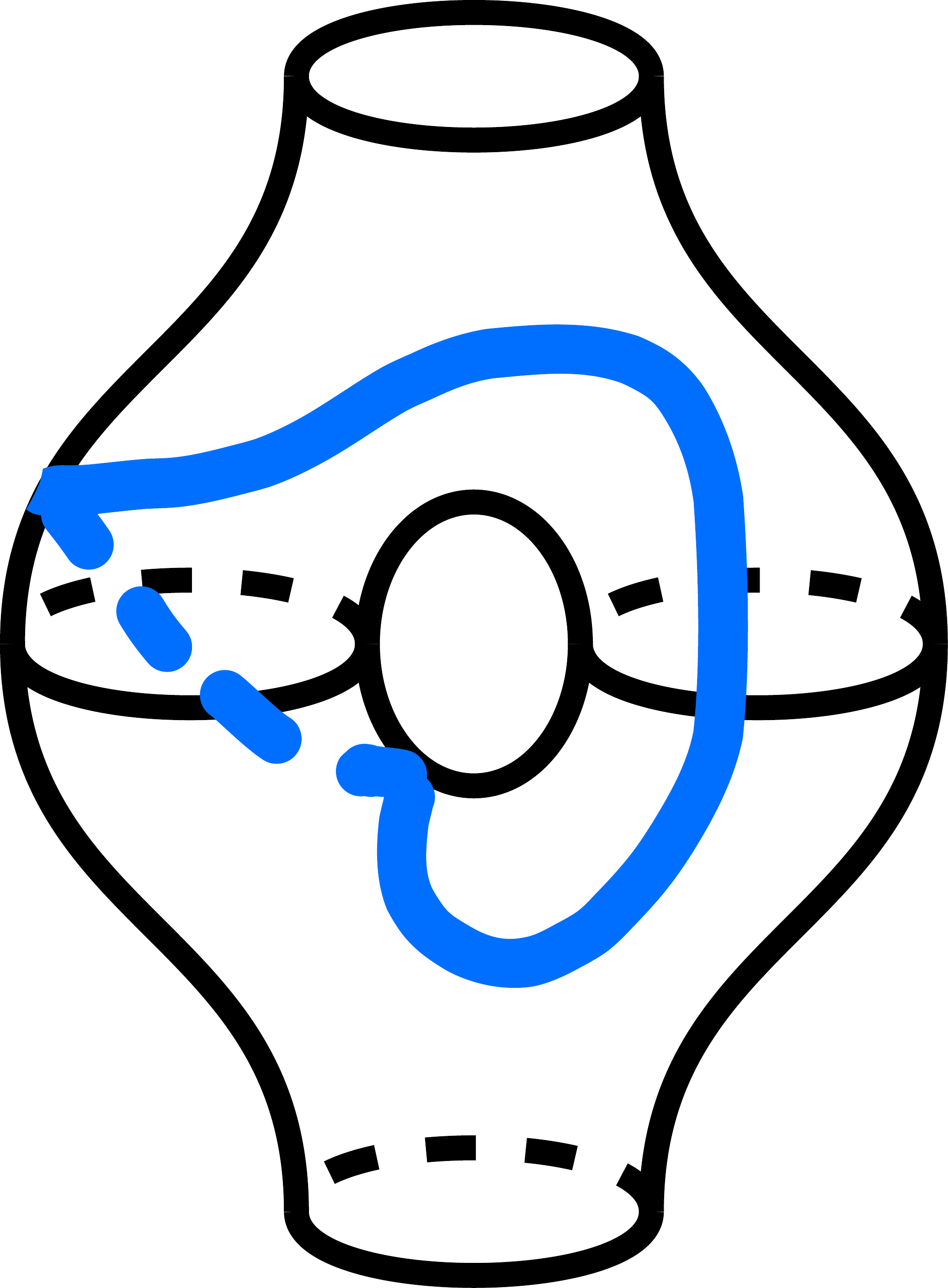}\;,\;
	\end{aligned}
	\begin{aligned}
	\includegraphics[scale=0.04]{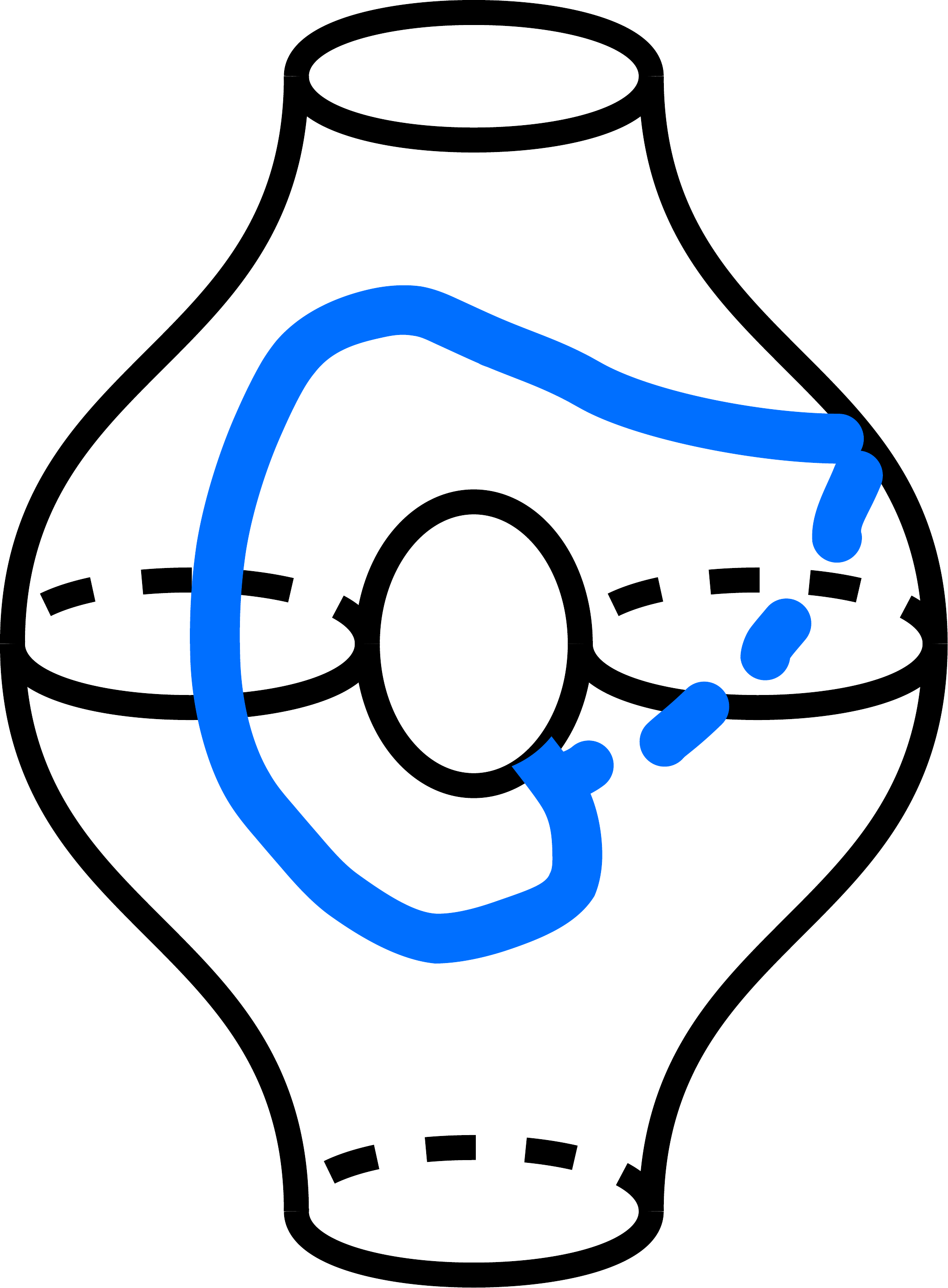}\;,\;
	\end{aligned}
	\begin{aligned}
	\includegraphics[scale=0.04]{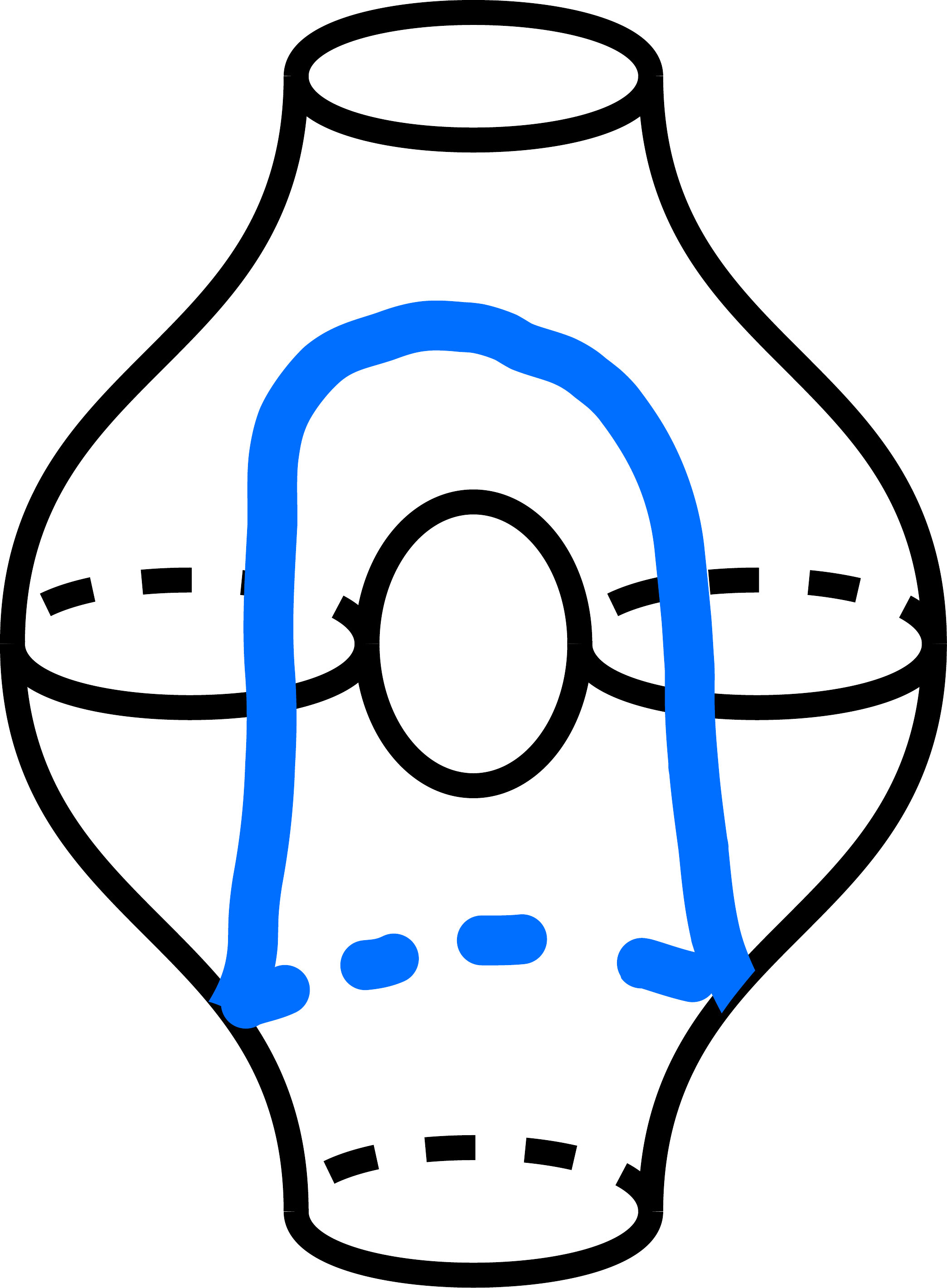}
	\end{aligned}\;\right\}
\end{equation}

\noindent So, for $n \geq 1$, we can compute the dimension of \sgn ~by the following procedure:

\begin{itemize}
	\item [(1)] Start with the cup (i.e. \scup), whose string-net space is 1-dimensional, as was shown above.
	\item [(2)] Glue on $g$ copies of \shandle. Each such iteration quadruples the dimension.
	\item [(3)] Glue on $(n-1)$ copies of \spants. Each such iteration doubles the dimension.
\end{itemize}
We conclude that $\Dim H(\Sigma_{g, n}; B) = 2^{2g + n - 1}$ for $n \geq 1$, independent of the boundary condition $B$.
In the case n = 0 we leave out step (3) and glue on a cap (i.e. $\Sigma_{0,1}$) to close the hole. This gluing leaves the dimension unchanged. 
\end{proof}

\subsection{Mapping class group action}

Every 1-2-3 TQFT $Z$ gives rise to a system of mapping class group representations
\[
	  \rho_{\Sigma_{g,n}} : \Gamma(\Sigma_{g,n}) \rightarrow \Aut(Z(\Sigma_{g,n}))
\]
where $\Sigma_{g,n}$ is a closed surface of genus $g$ with $n$ boundary circles, and $\Gamma(\Sigma_{g,n})$ is its mapping class group. For simplicity we focus here on closed surfaces ($n=0$).

In contrast to other approaches of constructing TQFTs (surgery on links, or state-sum models), the string-net model gives a very straightforward geometric way to describe this mapping class group representation. The string-net must simply be {\em pushed forward along the diffeomorphism}. 

However, there is a caveat. In this paper we have not actually defined the string-net TQFT geometrically, but rather combinatorially via generators-and-relations (Sections \ref{invertible-2-gen} and \ref{noninvertible-2-gen}). We therefore need the following lemma.

\begin{lemma} \label{action_of_mcg_lemma} The assignment in Sections \ref{invertible-2-gen} and \ref{noninvertible-2-gen} of linear maps to generating 2-morphisms of $\Bord$, when evaluated on cobordisms arising as the mapping cylinders of diffeomorphisms, is equal to the push-forward map along the diffeomorphism.
\end{lemma}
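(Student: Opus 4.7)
The plan is to verify the statement in three steps: (i) check the claim directly on each invertible 2-generator, (ii) extend the claim to composites using functoriality of the pushforward, and (iii) show that every mapping cylinder in $\Bord$ can, up to the bicategory's relations, be expressed using only the invertible 2-generators.

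First I would observe that, by construction in Section~\ref{invertible-2-gen}, each of $\alpha,\rho,\lambda,\beta,\theta$ and their inverses is \emph{already defined} to be the pushforward of string-nets along the diffeomorphism (of pants, cylinders, or handlebodies) of which the generating 2-morphism is the mapping cylinder. Next, I would record the compatibilities
\[
    (g \circ f)_* = g_* \circ f_*, \qquad (f \sqcup g)_* = f_* \boxtimes g_*, \qquad (\operatorname{id})_* = \operatorname{id},
\]
which follow immediately from the setwise definition $\gamma_*[\Gamma] = [\gamma(\Gamma)]$ recalled in Remark~\ref{rem:pushforward}. Combining these with step (i), $Z_{SN}$ sends any composite (horizontal or vertical) of invertible 2-generators to the pushforward along the corresponding composite diffeomorphism, and sends identity 2-cells to identity linear maps.

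The hard part will be step (iii): showing that a 3-dimensional bordism $M_f$ that happens to be the mapping cylinder of some surface diffeomorphism $f$ can, modulo the relations of $\Bord$, be written with only the invertible 2-generators appearing. My preferred route is to invoke classical generation results for mapping class groups of compact surfaces with parameterised boundary: any orientation-preserving diffeomorphism is isotopic to a finite product of Dehn twists together with elementary isotopies permuting pants legs, each of which is a mapping cylinder of one of $\alpha,\rho,\lambda,\beta,\theta$ (possibly tensored with identities). A complementary route uses the BDSPV presentation directly: any word in all generators representing $M_f$ can be rewritten, using the biadjunction and invertibility relations, so as to eliminate the genuinely non-invertible generators $\eta,\epsilon,\mu,\nu$, using that $M_f$ is itself invertible as a 2-morphism (with inverse $M_{f^{-1}}$). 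Either approach reduces the lemma to steps (i) and (ii); the main technical subtlety is to ensure that the decomposition respects the chosen parameterisation of the boundary circles, so that the composite diffeomorphism really is $f$ on the nose rather than up to an additional isotopy.
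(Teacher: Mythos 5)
Your steps (i) and (ii) are fine and match the paper's implicit reduction to a generating set of the mapping class group. The genuine gap is step (iii), and unfortunately it is where the entire content of the lemma lives. It is not true that every mapping cylinder can be expressed, modulo the relations, using only the invertible 2-generators. The invertible generators $\alpha,\rho,\lambda,\beta,\theta$ only produce Dehn twists about curves parallel to the cuffs of the chosen pants decomposition, together with braidings of legs; these generate a proper subgroup of $\Gamma(\Sigma_g)$. The Humphries generators $a_2,a_4,\ldots,a_{2g}$ are twists about curves that cross the pants decomposition transversally, and in the BDSPV presentation they are necessarily represented by composites involving the non-invertible 2-handle generators $\epsilon$ and $\epsilon^\dagger$ --- these are exactly the composites $\II$ and $A$ of \eqref{action_of_II} and \eqref{action_of_A}. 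This is also why the lemma statement refers to both Section~\ref{invertible-2-gen} \emph{and} Section~\ref{noninvertible-2-gen}. Your fallback route fails for the same reason: invertibility of the total 2-morphism $M_f$ gives you no rewriting principle that eliminates non-invertible generators from a word representing it; the biadjunction and rigidity relations do not provide such a normal form, and if they did, the explicit computations in the paper's proof would be unnecessary.

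What the paper actually does at this point is the work your proposal skips: for each of the composites $\II$ and $A$ it evaluates the associated composite of linear maps (which mixes push-forwards with the surgery/cloaking operations assigned to $\epsilon^\dagger$, $\theta$, $\epsilon$) on string-net basis vectors and checks, case by case, that the result coincides with the push-forward along the corresponding Dehn twist. To repair your proof you would need to either carry out that verification or find a genuinely new argument for why a composite of the assigned maps that happens to represent a mapping cylinder must act as a push-forward; neither is supplied by steps (i)--(iii) as written.
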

\begin{proof} It suffices to check this on a system of generators for the mapping class group $\Gamma(\Sigma_g)$. Figure \ref{mcg-generators} shows the Humphries generators for $\Gamma(\Sigma_g)$ as Dehn twists $a_i$ about certain closed curves $c_i$. We observe:
\begin{itemize}
	\item The Dehn twists $a_0$ and $a_1$ correspond to the generating 2-morphism $\theta$, which indeed operates on string-nets via the push-forward map (see \eqref{theta_map_action}).
	
	\item The Dehn twists $a_3, a_5, \ldots, a_{2g-1}$ correspond to the following composite of generating 2-morphisms (see \cite[Defn 21]{BDSPV2014}):
	\begin{equation} \label{action_of_II}
		\includegraphics{../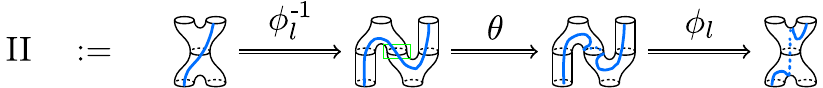}
	\end{equation}
	In equation \eqref{action_of_II}, we have also indicated the action of the associated composite of linear maps on a representative string-net basis vector. We see that the composite $\II$ indeed operates as the push-forward along one of the Dehn-twists $a_3, a_5, \ldots, a_{2g-1}$. (This is also true for the action of $\II$ on the other string-net basis vectors, as the reader will check).

	\item The Dehn twists $a_2, a_4, \ldots, a_{2g}$ correspond to the following composite of generating 2-morphisms (see \cite[Defn 21]{BDSPV2014}):
	\begin{equation} \label{action_of_A}
			\includegraphics{../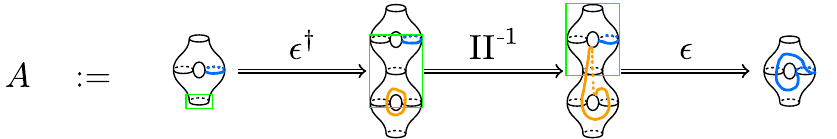}
		\end{equation}
	In equation \eqref{action_of_A}, we have also indicated the action of the associated composite of linear maps on a representative string-net basis vector. We see that the composite $A$ indeed operates as the push-forward along one of the Dehn-twists $a_3, a_5, \ldots, a_{2g-1}$. (This is also true for the action of $A$ on the other string-net basis vectors, as the reader will check).

\end{itemize}
\end{proof}
\begin{figure}
\begin{center}
 \includegraphics{../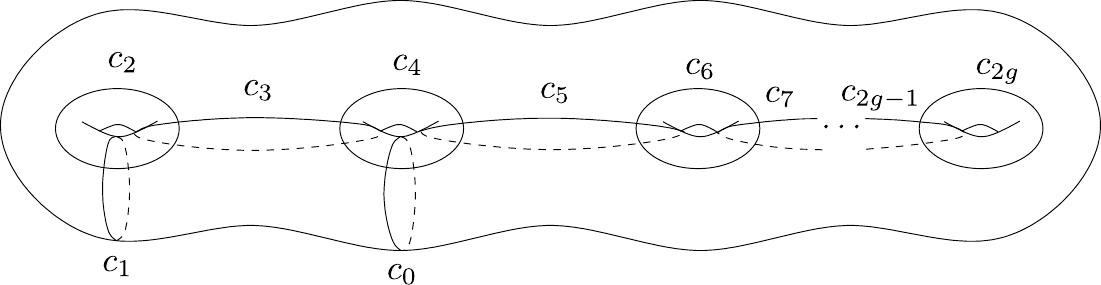}
\end{center}

\caption{\label{mcg-generators} The Humphries generators for $\Gamma(\Sigma_g)$. Taken from \cite{farb2011primer}.}
\end{figure}
Combining this with Remark \ref{homology_remark} gives:
\begin{corollary} \label{push-forward-map-lem} On closed surfaces, the action of the mapping class group on the toric code string-net space is equal to the push-forward on homology:
\[
 Z(\gamma) = \gamma_{*} : \mathbb{C}[H_1(\Sigma_g, \mathbb{Z}/2\mathbb{Z})] \rightarrow \mathbb{C}[H_1(\Sigma_g, \mathbb{Z}/2\mathbb{Z})], \quad \gamma \in \Gamma(\Sigma_g)
\]
\end{corollary}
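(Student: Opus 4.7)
The plan is to combine Lemma \ref{action_of_mcg_lemma}, Remark \ref{homology_remark}, and the functoriality of $Z$ to reduce everything to a statement about generators of the mapping class group.

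First, I would identify the two vector spaces in question. Since $\Sigma_g$ is closed, the empty set is the only admissible boundary condition, so Remark \ref{homology_remark} gives a canonical identification $Z(\Sigma_g) = H(\Sigma_g; \emptyset) = \mathbb{C}[H_1(\Sigma_g, \mathbb{Z}/2\mathbb{Z})]$, under which a multicurve $\Gamma$ corresponds to its homology class $[\Gamma]$. I would then observe that, for any diffeomorphism $\gamma: \Sigma_g \to \Sigma_g$, the push-forward on multicurves $\Gamma \mapsto \gamma(\Gamma)$ from Remark \ref{rem:pushforward} descends, under this identification, precisely to the push-forward on $\mathbb{Z}/2\mathbb{Z}$-homology $[\Gamma] \mapsto \gamma_{*}[\Gamma]$, by functoriality of singular homology.

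Next, the forward-map assignment $\gamma \mapsto \gamma_*$ is itself a group homomorphism $\Gamma(\Sigma_g) \to \Aut(\mathbb{C}[H_1(\Sigma_g, \mathbb{Z}/2\mathbb{Z})])$, as is $\gamma \mapsto Z(\gamma)$ (since $Z$ is a symmetric monoidal pseudofunctor and the mapping cylinder construction is multiplicative up to diffeomorphism: $M_{\gamma_1 \circ \gamma_2} \cong M_{\gamma_1} \circ M_{\gamma_2}$). Therefore, to show that these two homomorphisms agree, it suffices to check they agree on a generating set.

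I would then take the Humphries generators $a_0, a_1, \ldots, a_{2g}$ depicted in Figure \ref{mcg-generators}. Lemma \ref{action_of_mcg_lemma} already establishes that, for each Humphries Dehn twist $a_i$, the linear map $Z(M_{a_i})$ produced by the composite of generating 2-morphisms representing $a_i$ equals the push-forward $(a_i)_*$ at the level of multicurves. Combining the two homomorphism properties with this equality on generators yields $Z(\gamma) = \gamma_*$ as linear maps on multicurves, and hence on $\mathbb{C}[H_1(\Sigma_g, \mathbb{Z}/2\mathbb{Z})]$ under the homology identification above.

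The only real subtlety — which I would flag but not expect to be a serious obstacle — is checking that the various notions of push-forward commute with the identifications: namely that the push-forward of a multicurve respects the equivalence relation generated by isotopy, $F$-moves, and loop contraction (which is immediate, since a diffeomorphism sends each local move to a move of the same type), and that multicurve push-forward corresponds to homology push-forward (which is standard). The genuine content of the corollary is packaged into Lemma \ref{action_of_mcg_lemma}; once that is in hand, the corollary is a short functoriality argument.
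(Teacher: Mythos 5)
Your proposal is correct and follows exactly the paper's route: the paper derives the corollary in one line by combining Lemma \ref{action_of_mcg_lemma} (agreement with push-forward, checked on the Humphries generators) with the homology identification of Remark \ref{homology_remark}. You have simply made explicit the details the paper leaves implicit (the homomorphism property of both assignments and the compatibility of multicurve push-forward with the local moves), which is harmless.
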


\subsection{Invariants of lens spaces}
Given relatively prime integers $p, q \in \mathbb{Z}$, the lens space $L(p,q)$ is the 3-dimensional manifold obtained by gluing two solid tori $T$ and $T'$ together along a diffeomorphism $h : \partial T \rightarrow \partial T'$ whose action on integral homology is given by
\[
 h_*(\alpha) = q \alpha' + p \beta'
\]
where $\alpha$ (resp. $\alpha'$) is the meridian of $T$ (resp. $T'$) and $\beta'$ is the longitude of $T'$ (see \cite{prasolov1997knots}).

Now, the toric code is known to correspond to the finite group model TQFT where $G = \mathbb{Z}/2\mathbb{Z}$. And, the invariant of a closed 3-manifold $M$ in the finite group model is simply a weighted sum of the principal $G$-bundles on $M$,
\[
 Z(M) = \frac{|\Hom(\pi_1 M, G)|}{|G|} \, .
\]
Since $\pi_1( L(p,q)) = \mathbb{Z}/p\mathbb{Z}$, we conclude that from the framework of counting principal bundles,
\begin{equation} \label{count_prin_bundles}
 Z(L(p,q)) = \begin{cases} 1 & \text{ if $p$ is even} \\
  \frac{1}{2} & \text{ if $p$ is odd} 
  \end{cases}
\end{equation}
We find it instructive here to rederive this result from our string-net framework. 

We can create $T$ in terms of our generating 2-morphisms as the composite $\epsilon^\dagger \circ \nu$, and $T'$ as the reversed composite $\nu^\dagger \circ \epsilon$. Therefore, $L(p,q)$ can be expressed in terms of the generators as

\begin{equation}
\label{lens_eq}
L(p,q) =
\begin{tz}
\draw[green] (0,0) rectangle (\cobwidth, \cobwidth);
\end{tz}
\Rarrow{\nu}
\begin{tz}
    \node[Cap] (A) at (0,0) {};
    \node[Cup] (B) at (0,0) {};
    \selectpart[green, inner sep=1pt] {(A-center)};
\end{tz}
\Rarrow{\epsilon ^\dagger}
\begin{tz}
    \node[Cap] (A) at (0,0) {};
    \node[Pants, anchor=belt] (B) at (A) {};
    \node[Copants, anchor=leftleg] (C) at (B.leftleg) {};
    \node[Cup] (D) at (C.belt) {};
    \selectpart[green] {(A-center) (B-leftleg) (B-rightleg) (C-belt)};
\end{tz}
\Rarrow{\hat{h}}
\begin{tz}
    \node[Cap, bot=false] (A) at (0,0) {};
    \node[Pants, anchor=belt] (B) at (A) {};
    \node[Copants, anchor=leftleg] (C) at (B.leftleg) {};
    \node[Cup] (D) at (C.belt) {};
    \node (E) [Cobordism Bottom End 3D] at (0,0) {};    
    \selectpart[green] {(E) (B-leftleg) (B-rightleg) (C-belt)};
\end{tz}
\Rarrow{\epsilon}
\begin{tz}
    \node[Cap] (A) at (0,0) {};
    \node[Cup] (B) at (0,0) {};
    \selectpart[green, inner sep=1pt] {(A-center)};
\end{tz}
\Rarrow{\nu ^\dagger}
\begin{tz}
\draw[green] (0,0) rectangle (\cobwidth, \cobwidth);
\end{tz}
\end{equation}
Here, $\hat{h}$ is the composite of the generating 2-morphisms which represents the diffeomorphism $h$. The utility of the string-net framework is that to evaluate $Z(\hat{h})$ it is not necessary to compute the composite $\hat{h}$: we can simply push-forward the string-net along the diffeomorphism $h$, using Lemma \ref{action_of_mcg_lemma}. (This is in contrast to a surgery-on-a-link description of the TQFT, where $\hat{h}$ must be calculated. This is not difficult but it does involve a continued fraction expansion, see \cite[Section 3]{jeffrey1992chern}). 

Note that when we construct the solid torus $T$ as $\epsilon^\dagger \circ \nu$, the meridian $\alpha$ and longitude $\beta$ of $T$ are given by
\[
	\includegraphics{../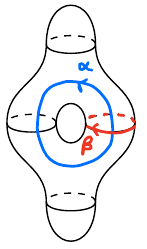}
\]
because $\alpha$ is the loop which contracts in $T$, not $\beta$. Therefore, evaluating \eqref{lens_eq} in the string-net model, up to the application of $Z(\hat{h})$, gives
\scalecobordisms{0.5}
\begin{align}
Z(L(p,q)) & = \quad
\begin{aligned}
\begin{tikzpicture}
\draw[green] (0,0) rectangle (0.5,0.5);
\end{tikzpicture}
\end{aligned}
\quad
\xmapsto{Z(\nu)} 
\quad
\frac{1}{2}\,
\begin{aligned}
\includegraphics[scale=0.02]{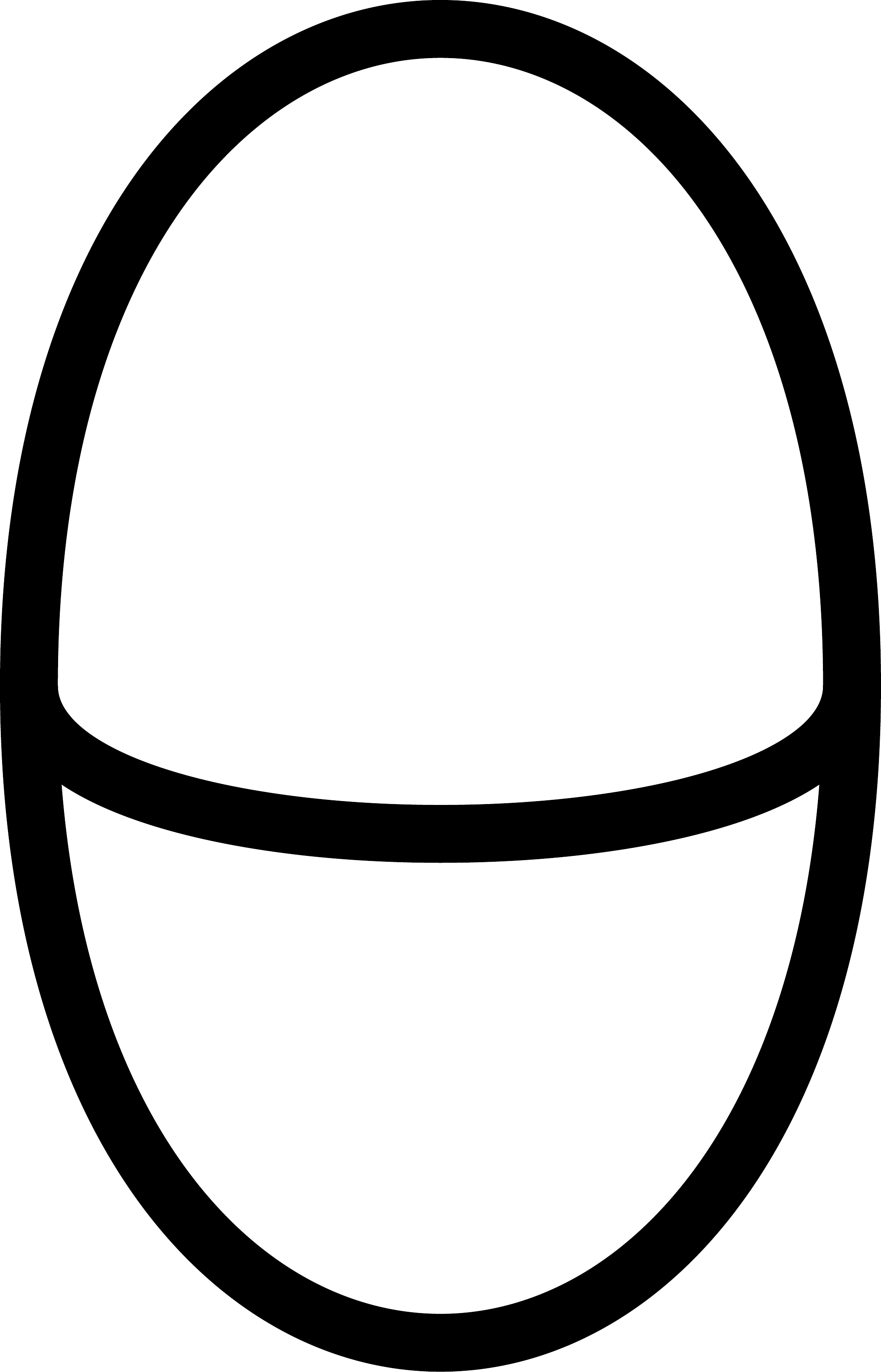}
\end{aligned}
% \begin{tz}
%     \node[Cap] (A) at (0,0) {};
%     \node[Cup] (B) at (0,0) {};
% \end{tz}
\quad
\xmapsto{Z(\epsilon ^\dagger)}
\quad
\frac{1}{2} \, \left(
% \begin{tz}
%     \node[Cap] (A) at (0,0) {};
%     \node[Pants, anchor=belt] (B) at (A) {};
%     \node[Copants, anchor=leftleg] (C) at (B.leftleg) {};
%     \node[Cup] (D) at (C.belt) {};
% \end{tz}
\begin{aligned}
\includegraphics[scale=0.04]{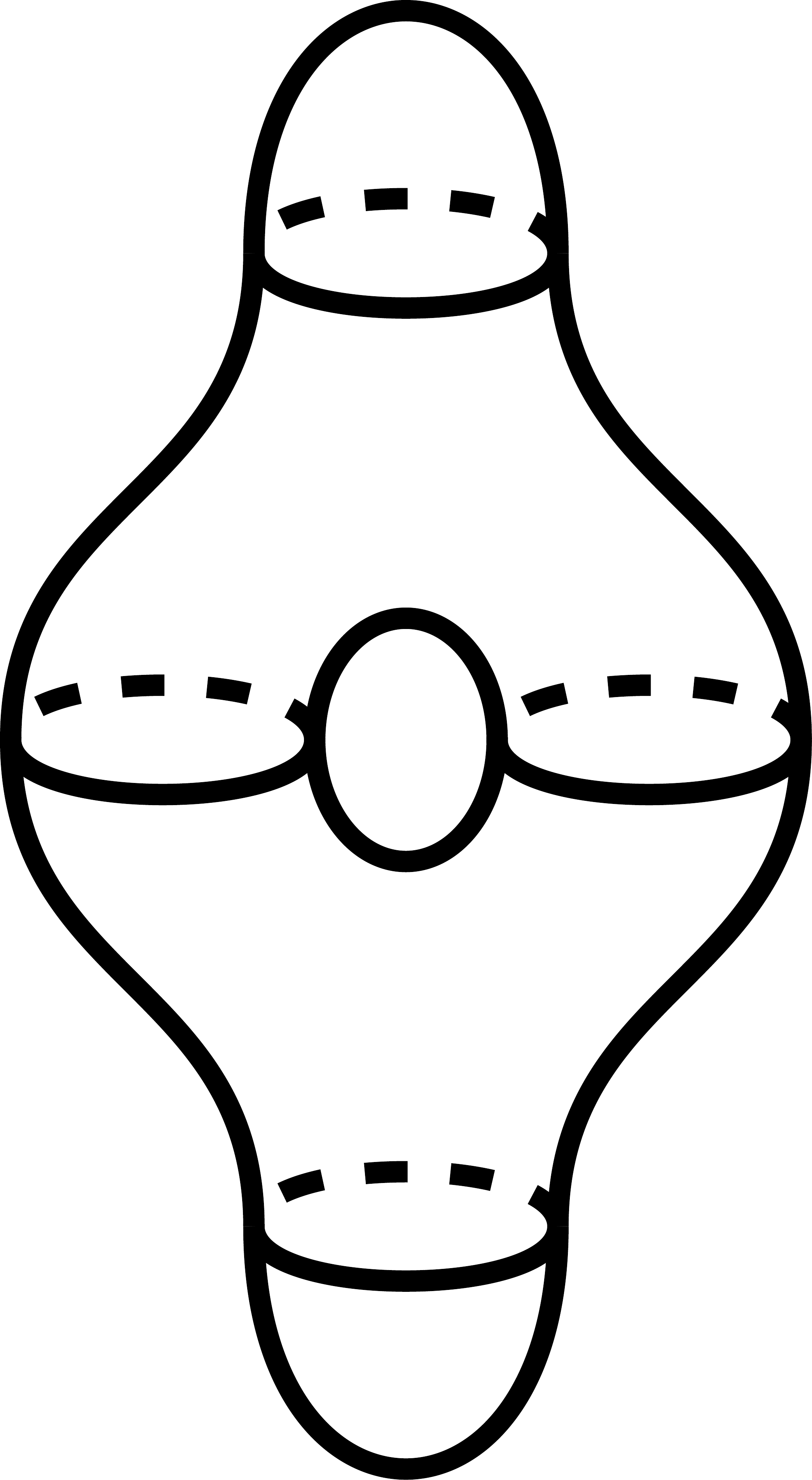}
\end{aligned}
 \, + \,
\begin{aligned}
\includegraphics[scale=0.04]{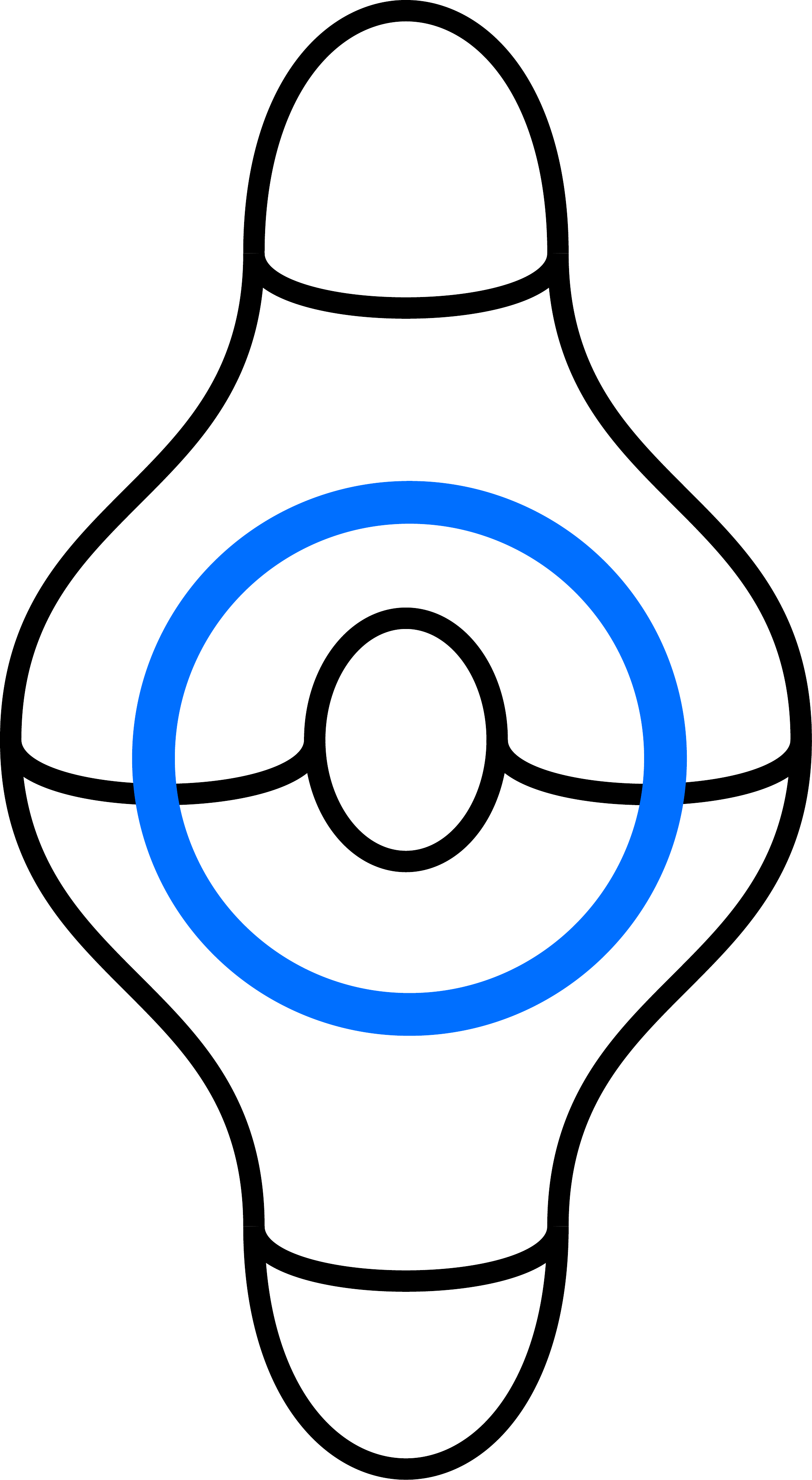}
\end{aligned}
\right) \\
& \xmapsto{Z(\hat{h})}
\, \left(
% \begin{tz}
%     \node[Cap] (A) at (0,0) {};
%     \node[Pants, anchor=belt] (B) at (A) {};
%     \node[Copants, anchor=leftleg] (C) at (B.leftleg) {};
%     \node[Cup] (D) at (C.belt) {};
% \end{tz}
\begin{aligned}
\includegraphics[scale=0.04]{meridianblank.pdf}
\end{aligned}
 \, + \,
 \begin{aligned}
\includegraphics[scale=0.04]{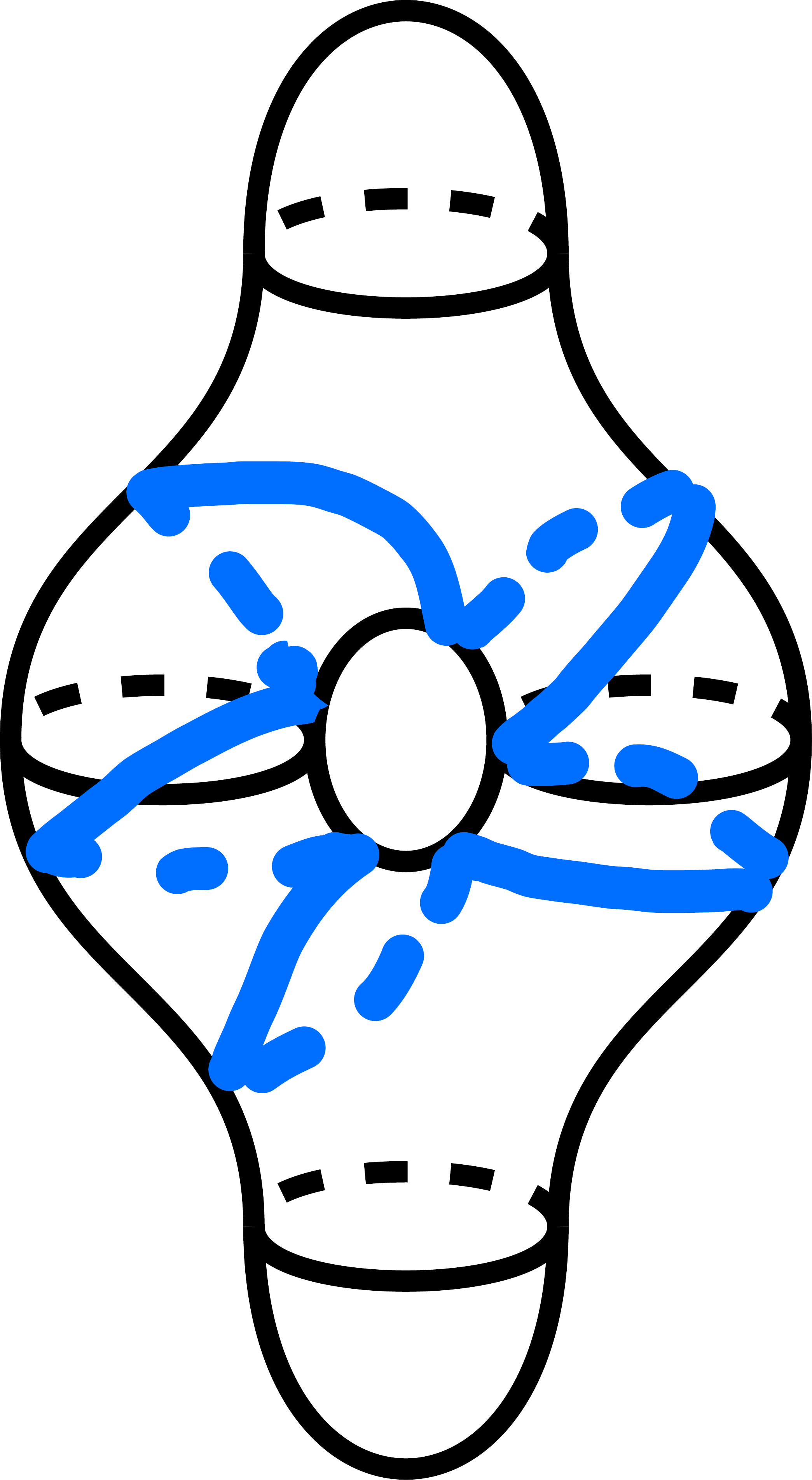}
\end{aligned}
\right)
\end{align}
where the symbolic string-net in the second term wraps $p$ times around the longitude of $T'$. Hence, when we apply $Z(\epsilon)$ to this term, $p$ strands will need to be cut. If $p$ is odd, this will result in $0$ as in \eqref{epsilon_dag_example_map}, while if $p$ is even, this will result in a collection of loops via the F-move, which collectively evaluate to $1$ via loop contraction. Hence, continuing the calculation,
\begin{align*}
 Z(L(p,q)) &= \cdots \\
 &\xmapsto{Z(\epsilon)} \, \frac{1}{2} \left(
%  \begin{tz}
%     \node[Cap] (A) at (0,0) {};
%     \node[Cup] (B) at (0,0) {};
% \end{tz}
\begin{aligned}
\includegraphics[scale=0.02]{meridiansphere.pdf}
\end{aligned}
\, + \, \delta_{p, \text{even}} \,
%  \begin{tz}
%     \node[Cap] (A) at (0,0) {};
%     \node[Cup] (B) at (0,0) {};
% \end{tz}
\begin{aligned}
\includegraphics[scale=0.02]{meridiansphere.pdf}
\end{aligned}
\right) \,\,
 \xmapsto{Z(\nu^\dagger)} 
\, \frac{1}{2} \left( \,
 \begin{tz}
\draw[green] (0,0) rectangle (0.5, 0.5);
\end{tz}
\, + \, \delta_{p, \text{even}} \,
 \begin{tz}
\draw[green] (0,0) rectangle (0.5, 0.5);
\end{tz}
\, \right) \\
 &= \begin{cases} 1 \text{ if $p$ is even} \\
  \frac{1}{2} \text{ if $p$ is odd} 
  \end{cases} 
\end{align*}
reproducing \eqref{count_prin_bundles}.

\bibliographystyle{amsplain}
\bibliography{mybibliography}

\end{document}